\newtheorem{proposition}{Proposition}[section]
\newtheorem{lemma}[proposition]{Lemma}
\newtheorem{corollary}[proposition]{Corollary}
\newtheorem{theorem}[proposition]{Theorem}
\theoremstyle{definition}
\newtheorem{definition}[proposition]{Definition}
\newtheorem{example}[proposition]{Example}
\newtheorem{examples}[proposition]{Examples}
\newtheorem{observation}[proposition]{Observation}
\theoremstyle{remark}
\newtheorem{remark}[proposition]{Remark}
\newcommand{\proplabel}[1]{\label{prop:#1}}
\newcommand{\propref}[1]{Proposition~\ref{prop:#1}}
\newcommand{\lemlabel}[1]{\label{lem:#1}}
\newcommand{\lemref}[1]{Lemma~\ref{lem:#1}}
\newcommand{\thelabel}[1]{\label{the:#1}}
\newcommand{\theref}[1]{Theorem~\ref{the:#1}}
\newcommand{\corlabel}[1]{\label{cor:#1}}
\newcommand{\corref}[1]{Corollary~\ref{cor:#1}}
\newcommand{\remlabel}[1]{\label{rem:#1}}
\newcommand{\deflabel}[1]{\label{def:#1}}
\newcommand{\defref}[1]{Definition~\ref{def:#1}}
\newcommand{\exalabel}[1]{\label{ex:#1}}
\newcommand{\exaref}[1]{Example~\ref{ex:#1}}
\def\Hom{{\rm Hom}}
\def\Ker{{\rm Ker}}
\def\Im{{\rm Im}}
\def\ot{\otimes}
\def\mc{\mathcal}
\def\-{{\rm-}}
\def\ov{\overline}
\def\un{\underline}
\def\Proj{\rm Proj}
\def\Spec{\rm Spec}
\def\Cov{\rm Cov}
\def\wt{\widetilde}
\def\gen{\rm gen}
\def\cl{\rm cl}
\def\ann{\rm ann}
\def\Ann{\mathcal{A}{\rm nn}}
\def\tors{\rm tors}
\def\mapright#1{\smash{\mathop{\longrightarrow}\limits^{#1}}}
\def\hookmapright#1{\smash{\mathop{\hookrightarrow}\limits^{#1}}}
\begin{document}
\title{Localizations and Sheaves of Glider Representations}
\author[F. Caenepeel]{Frederik Caenepeel}
\address{Department of Mathematics, University of Antwerp, Antwerp, Belgium}
\email{Frederik.Caenepeel@uantwerpen.be}
\author[F. Van Oystaeyen]{Fred Van Oystaeyen}
\address{Department of Mathematics, University of Antwerp, Antwerp, Belgium}
\email{Fred.Vanoystaeyen@uantwerpen.be}
\subjclass[2010]{16W70}
\keywords{Filtered ring, fragment}
\thanks{The first author is Aspirant PhD Fellow of FWO}

\begin{abstract}
The notion of a glider representation of a chain of normal subgroups of a group is defined by a new structure, i.e. a fragment for a suitable filtration on the group ring. This is a special case of general glider representations defined for a positively filtered ring $R$ with filtration $FR$ and subring $S = F_0R$. Nice examples appear for chains of groups, chains of Lie algebras, rings of differential operators on some variety or $V$-gliders for $W$ for algebraic varieties $V$ and $W$. This paper aims to develop a scheme theory for glider representations via the localizations of filtered modules. With an eye to noncommutative geometry we allow schemes over noncommutative rings with particular attention to so-called almost commutative rings. We consider particular cases of $\Proj~ R$ (e.g. for some P.I. ring $R$) in terms of prime ideals, $R$-tors in terms of torsion theories and $\un{\mc{W}}(R)$ in terms of a noncommutative Grothendieck topology based on words of Ore set localizations.
\end{abstract}
\maketitle

\section{Introduction and Motivation}

A glider representation generalizes the notion of a module. Start from a ring $R$ with a filtration $FR$ given by an ascending chain: $\ldots \subset F_nR \subset F_{n+1}R \subset \ldots$, indexed by the integers $\mathbb{Z}$ and such that $1\in F_0R, F_{n}RF_mR \subset F_{n+m}R$ for every $n,m \in \mathbb{Z}$. We also assume that the filtration is exhaustive, $\cup_n F_nR = R$, and also that it is separated, $\cap_n F_nR = 0$. Then obviously $S = F_0R$ is a subring of $R$ and each $F_nR$ is an $S$-bimodule.\\
A glider representation for $FR$ is an $S$-submodule $M$ of an $R$-module $\Omega$ together with a descending chain $M=M_0 \supset \cdots \supset M_n \supset \cdots$ such that $F_mRM_n \subset M_{n-m}$ for $m\leq n$; so $F_mRM_n \not\subset M_{n-m}$ for $m>n$ but here of course $F_mRM_n \subset \Omega$ always. This definition is a special case of a fragment as defined in \cite{EVO}, \cite{NVo1}, \cite{NVo2} but is more general than a natural fragment also defined in loc. cit.\\
One may generalize parts of module theory to the situation of glider representations but often at the cost of nontrivial modifications, e.g. the theory of irreducibility and semisimplicity. The existence of a filtration $FR$ with $F_0R = S$ is a rather weak link between $S$ and $R$ but in special cases it is enough to obtain some structural theory relating $R$ and $S$ and even the intermediate $F_nR$ connecting $S$ and $R$. Two types of filtrations are motivating us, with many applications in mind. First type is the so called standard filtration induced on $R$ via an epimorphism $K[\un{X}] \to R$ or $K<\un{X}> \to R$, where $K[\un{X}]$ is the ring of polynomials on a set of variables $\un{X}$ and $K<\un{X}>$ is the free $K$-algebra on $\un{X}$. In each case the filtration on $K[\un{X}]$ or $K<\un{X}>$ is obtained by letting $\oplus_\alpha KX_\alpha$ be the part of filtration degree 1 and $F_n = F_1^n$ for $n\geq 0$. So the standard filtrations are positive, well-known examples are the Weyl algebras $A_n(K)$, rings of differential operators on a smooth variety, and also $K[V]$ the coordinate rings of algebraic varieties over a field $K$. For a variety $W$ and a morphism $V \to W$ such that $K[W] \subset K[V]$ we may define a standard filtration,$fK[V]$, with $f_0K[V] = K[W]$ by selecting arbitrary $K[W]$-ring generators for $K[V]$. The Rees ring or blow-up ring $\wt{R}$ may be viewed as a graded subring $\sum_n F_nRX^n \subset R[X]$, the associated graded is $G(R) = \oplus_{n \in \mathbb{Z}} F_nR/F_{n-1}R$. For commutative $R$ we know that $\Proj ~\wt{R}$ may be seen as the projective closure of $\Spec ~R$ by ``glueing'' $\Proj ~G(R)$ to it as the part at infinity, so if $R = K[V]$ then $V^* = \Proj ~ \wt{R}$, $V^* \supset V$ as an open and $V_\infty \subset V^*$ as its closed complement. This is in fact a meta-property that remains valid in the noncommutative situation, after suitably defining all geometric concepts, cf. \cite{VoAG}, \cite{VoWi}.\\
The second motivational type of filtrations is obtained from a tower of subrings $S = R_0 \subset \ldots \subset R_i \subset \ldots \subset R_n = R$, which is a filtration of length $n$ but we view it as a $\mathbb{Z}$-filtration with $F_mR = R$ for $m\geq n$, $F_{-d}R = 0$ for $d > 0$. Cases of special interest are chains of grouprings $K \subset KG_1 \subset \ldots \subset KG_n = R$ for a chain of normalizing subgroups $1 \subset G_1 \subset \ldots \subset G_n =G$, chains of envelopping algebras of Lie algebras or chains of iterated Ore extensions (hence many quantum groups appear as examples)! One very interesting example we have in mind is almost commutative rings, both with filtrations of the first type or of the second given by chains of such rings. Roughly said an almost commutative ring is one where $G(R)$ is commutative, e.g. a PBW-deformation of a commutative ring. Special examples are envelopping algebras and rings of differential operators.\\
In this paper we look at scheme theory of gliders but over the topological space defined over $R$. Since gliders are not $R$-modules this scheme structure is not obvious! For modules there is a theory of structure schemes over a base scheme, the structure sheaf of the ring; in fact this is classical for a commutative ring in Algebraic geometry, but it can also be done for a noncommutative ring in terms of $\Spec~R$, cf. \cite{MVo}, $R$-tors the lattice of torsion theories, cf. \cite{Gol}, or a noncommutative topology, cf. \cite{Vovita} or a noncommutative Grothendieck topology, cf. \cite{VoAG}. So the problem we face here is to construct a structure sheaf of a glider representation over the topological space associated to the ring $R$. The glider contains extra information connected to the structure of the chain used to construct $R$ from $S$ by an iterated construction step (in the filtration). There is algebraic information, e.g. as in the case of glider representations of groups, see \cite{CVo} where we dealt with the Clifford theory. There is also geometric information, not only in the commutative case but also in noncommutative geometry and it is this possibility we want to start investigating here.\\
Now the scheme theory classically depends on localization of modules at torsion theories (cf. \cite{Gol}, \cite{G}, \cite{Ros}) on $R$-mod. Hence, the first step is to study a quotient filtration on filtered rings and modules, the second step is to extend this to glider representations by modifying the localization technique. There are some technical problems if you want the localized filtration of a separated filtration on the ring $R$ to remain separated. We will need a notion of $\kappa$-separatedness for a torsion theory $\kappa$ and this leads to the definition of a so called strong characteristic variety. Classically, for a ring of differential operators $R$ , see \cite{Bj}, (or more general for an almost commutative ring) and a filtered $R$-module $M$, the characteristic variety $\chi(M)$ is defined by the variety $V(\ann G(M))$ where $\ann G(M)$ is the annihilator in $G(R)$ of the graded module $G(M)$. We are interested in a subvariety $\xi(M) \subset \chi(M)$ which contains those prime ideals $P$ such that $G(M)$ is $G(R) - P$-torsion free. For such a prime $P$, the localization at $G(R) - P$ is the stalk at $P$ in the structure sheaf of $G(M)$. We will generalize this to filtered rings with noncommutative associated graded $G(R)$ and define a strong characteristic variety as a closed subset in $G(R)$-tors. We will prove that the separatedness of the quotient filtration at some localization at a torsion theory on $R$-mod, say $\kappa$, follows from torsion freeness of $G(R)$ (also $G(M)$ for a module) at $G\kappa$ some left exact radical associated to $\kappa$ on $G(R)$-gr. Hence we obtain in some sense a noncommutative version of the characteristic variety!\\
In fact we present two related but different approaches, one on the level of filtered $R$-modules, $R$-filt, one on the level of graded modules $\wt{R}$-gr. In the first approach we start from a localization on $R$-mod applied to filtered modules, leading to a scheme theory for gliders $M \subset \Omega$ over the lattice $R$-tors with separability related to the strong characteristic variety $\xi(\Omega)$ as indicated above. The second approach starts from a graded localization at $\wt{\kappa}$ on $\wt{R}$-gr, cf. \cite{Nasvo}, and links it to a localization $\kappa$ on $R$-mod plus a graded localization $G\kappa$ on $G(R)$-gr for those $\wt{\kappa}$ in some affine part of the localizations of $\wt{R}$, i.e. those $\wt{\kappa}$ corresponding well to some $\kappa$ on $R$-mod via dehomogenization, while the complement of this affine part reduces to the correspondence of $\wt{\kappa}$ and some $G(\kappa)$-gr. So in the second approach the scheme theory fits in the philosophy of the projective closure via $\wt{R}$-gr (of the affine part that may be viewed as $R$-tors) and the geometry ``at infinity'' via $G(R)$-tors.\\
The localization of a glider $M \subset \Omega$ may then be obtained as the part of degree zero of the quotient filtration on a localization of $\Omega$, much in the spirit of the construction of a projective scheme (see also \cite{NVo2} for further projective aspects of fragments). A very interesting class of examples is given by localization at Ore sets (geometrically corresponding to affine open subsets!), they allow a scheme theory over a noncommutative Grothendieck topology (cf. \cite{VoAG}, \cite{VoWi}). This requires some facts about consecutive multiple localizations, but since composition of noncommutative localization functors is not a localization functor (see \cite{Vo1}), this forces the consideration of ``localization" at left exact preradicals, adding some extra, though unavoidable technicality. The gain is that a global section theorem a la Serre then holds.\\
After basic facts in Section 2, we present the theory of separated quotient filtrations in Section 3 and the scheme theory in Section 4. The structure sheaves for gliders are indeed obtainable over the topological space defined on the global level ober $R$, even in the noncommutative case, and there is also a version of the global section theorem of J.-P. Serre.\\
There is a lot of work in progress, e.g. chains of Lie algebras, rings of differential operators. In order to give somewhat more motivation for the geometry problems, we look at the end of the paper to a few examples/settings related to classical algebraic geometry, which we also intend to work out further in forthcoming work.

\section{Preliminaries}

Let $R$ be a ring and denote by $R$-mod the category of left $R$-modules. A \emph{preradical} $\rho$ of $R$-mod is a subfunctor of the identity functor. The class of preradicals of $R$-mod is denoted by $\mc{Q}(R)$. A preradical $\rho$ such that $\rho\rho = \rho$ is said to be \emph{idempotent}. A preradical $\rho$ such that $\rho(M/\rho(M)) = 0$  for all $M \in R$-mod, is said to be \emph{radical}.  To a preradical, we associate two classes $(\mc{T}_\rho, \mc{F}_\rho)$ of $R$-modules given by
\begin{eqnarray*}
\mc{T}_\rho &=& \{ M \in R\hbox{-}{\rm mod},~ \rho(M) = M\}~~ (\rm{torsion~class})\\
\mc{F}_\rho &=& \{ M \in R\hbox{-}{\rm mod},~\rho(M) = 0\} ~~~(\rm{torsion~free~class}).
\end{eqnarray*}
A left exact idempotent radical is called a \emph{kernel functor}. In fact, it suffices for a preradical $\rho$ to be left exact and radical in order to be a kernel functor by

\begin{proposition}\cite[Proposition 2.11]{Vovita}\\
For $\rho \in \mc{Q}(R)$, the following are equivalent:
\begin{enumerate}
\item $\rho$ is left exact;
\item For every submodule $N$ of $M$, $\rho(N) = \rho(M) \cap N$;
\item $\rho$ is idempotent and $\mc{T}_\rho$ is closed under subobjects.
\end{enumerate}
\end{proposition}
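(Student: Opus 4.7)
The plan is to prove the cycle (1) $\Rightarrow$ (2) $\Rightarrow$ (3) $\Rightarrow$ (1), using throughout the following elementary observation: because $\rho$ is a subfunctor of the identity, for every morphism $f : A \to B$ the induced map $\rho(f)$ is literally the restriction of $f$ to $\rho(A)$, so it is a monomorphism whenever $f$ is. In particular, for any submodule $N \subset M$ one gets the inclusion $\rho(N) \subset N \cap \rho(M)$ for free, and only the reverse inclusion in (2) will require real input.

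First, for (1) $\Rightarrow$ (2), given $N \subset M$ I would apply left exactness to the quotient map $\pi : M \to M/N$, whose kernel is $N$. Since $\rho(\pi)$ is the restriction of $\pi$, one has $\ker \rho(\pi) = \{ m \in \rho(M) : m \in N \} = N \cap \rho(M)$; preservation of this kernel by $\rho$ then yields exactly $\rho(N) = N \cap \rho(M)$.

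Next, for (2) $\Rightarrow$ (3) I would specialise the intersection formula twice: with $N = \rho(M)$ to obtain idempotence $\rho(\rho(M)) = \rho(M) \cap \rho(M) = \rho(M)$, and with $M \in \mc{T}_\rho$ and $N$ an arbitrary submodule to obtain $\rho(N) = N \cap M = N$, so that $N \in \mc{T}_\rho$. For (3) $\Rightarrow$ (1), given any $f : A \to B$ with kernel $K$, the aim is to show $\rho(K) = \ker \rho(f) = K \cap \rho(A)$; the inclusion $\rho(K) \subset K \cap \rho(A)$ is automatic from the opening observation, and for the reverse one notes that $\rho(A) \in \mc{T}_\rho$ by idempotence, so that its submodule $K \cap \rho(A)$ lies in $\mc{T}_\rho$ by hereditariness. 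Combining this with the free inclusion $\rho(K \cap \rho(A)) \subset \rho(K)$ coming from $K \cap \rho(A) \hookrightarrow K$ gives $K \cap \rho(A) = \rho(K \cap \rho(A)) \subset \rho(K)$, closing the loop.

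The main thing to keep straight is which inclusions come for free from $\rho$ being a subfunctor of the identity and which actually use idempotence or closure of $\mc{T}_\rho$ under subobjects; the asymmetry between these two directions in (2) is the only substantive point in the argument, and once it is recognised each implication reduces to a short direct verification.
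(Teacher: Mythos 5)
The paper does not prove this proposition; it merely cites it as Proposition~2.11 of \cite{Vovita}, so there is no in-paper proof to compare against. Your blind proof is correct and complete: the cycle $(1)\Rightarrow(2)\Rightarrow(3)\Rightarrow(1)$ is the natural route, the opening observation that $\rho(f)$ is a restriction of $f$ (so $\rho$ preserves monomorphisms and gives $\rho(N)\subset N\cap\rho(M)$ for free) is exactly the right tool, and each implication is handled cleanly. In particular you correctly identify that $(1)\Rightarrow(2)$ is just applying $\rho$ to $0\to N\to M\to M/N$ and identifying $\ker\rho(\pi)$ with $N\cap\rho(M)$, that $(2)\Rightarrow(3)$ follows by specialising $N=\rho(M)$ and then using $\rho(M)=M$, and that $(3)\Rightarrow(1)$ reduces to showing $K\cap\rho(A)\subset\rho(K)$, which you obtain from idempotence (so $\rho(A)\in\mc{T}_\rho$), hereditariness of $\mc{T}_\rho$, and the free inclusion coming from $K\cap\rho(A)\hookrightarrow K$. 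One small thing worth making explicit: a preradical is automatically an additive functor (since $\rho(f)=f|_{\rho(A)}$), so ``preserves kernels'' is genuinely equivalent to ``left exact''; you use this silently but it is standard.
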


The pair of the torsion class and the torsion free class for a kernel functor $\kappa$ determines a hereditary torsion theory. Details about general torsion theory and localization can be found in \cite{Gol}, \cite{G}. The localization functor $Q_\kappa: R{\rm-mod} \to R{\rm-mod}$ associates to an $R$-module $M$ its localization $Q_\kappa(M)$. This localization is defined by associating to $\kappa$ a Gabriel filter $\mc{L}(\kappa)$ of left ideals:
$$\mc{L}(\kappa) = \{ L {\rm~left~ideal~of~} R, \kappa(R/L) = R/L\},$$
which satisfies the following four properties from \cite{G}
\begin{enumerate}
\item If $I \in \mc{L}(\kappa)$ and $I \subset J$ for some left ideal $J$, then $J \in \mc{L}(\kappa)$;
\item If $I,\, J \in \mc{L}(\kappa)$, then $I \cap J \in \mc{L}(\kappa)$;
\item If $I \in \mc{L}(\kappa), r \in R$, then $(I:r) = \{ a \in R, ar \in I\} \in \mc{L}(\kappa)$;
\item If $M$ is an $R$-module, then $ m \in \kappa(M)$ if and only if $\exists I \in \mc{L}(\kappa)$ such that $Im = 0$.
\end{enumerate}
The reader should be aware that idempotent in \cite{G} means radical in \cite{Vovita}. In fact, there is a one-to-one correspondence between filters of left ideals $\mc{L}$ satisfying the four properties above and left exact idempotent radicals, i.e. kernel functors! If $\mc{L}$ only satisfies the first three, then one loses the radical property and the localization theory becomes different.\\

We reintroduce fragments over a filtered ring $FR$, with $F_0R = S$. As indicated in the introduction, the original definition of a fragment as given in \cite{NVo1} is slightly confusing. Indeed, elucidating the associativity condition $\bf{f_3}$ given there, rises the question what happens if one works for example with a (finite) algebra filtration. To overcome this confusion, we alter the definition of a fragment over a filtered ring.

\begin{definition}
Let $FR$ be a positive filtration. A (left) $FR$-fragment $M$ is a (left) $S$-module together with a descending chain of subgroups
$$M_0 = M \supset M_1 \supset \cdots \supset M_i \supset \cdots$$
satisfying the following properties\\
$\bf{f_1}$. For every $i \in \mathbb{N}$ there is given an operation of $F_iR$ on $M_i$ by $\varphi_i: F_iR \times M_i \to M,~(\lambda,m) \mapsto \lambda.m$, satisfying $\lambda.(m + n) = \lambda.m + \lambda.n, 1.m = m, (\lambda + \delta).m = \lambda.m + \delta.m$ for $\lambda,\delta \in F_iR$ and $m,n \in M_i$.\\

$\bf{f_2}$. For every $i$ and $j \leq i$ we have a commutative diagram
$$\xymatrix{ M & M_{i-j} \ar@{_{(}->}[l]^i \ar@{^{(}->}[r]_i & M\\
F_iR \times M_i \ar[u]^{\varphi_i} & F_jR \times M_i \ar@{_{(}->}[l]^{i_F} \ar[u] \ar@{^{(}->}[r]_{i_M} & F_jR \times M_j \ar[u]_{\varphi_j}}$$

$\bf{f_3}$. For every $i,j,\mu$ such that $F_iRF_jR \subset F_\mu R$ we have $F_jRM_\mu \subset M_i^* \cap M_{\mu - j}$ in which 
$$M_i^* = \{ m \in M,~F_iRm \subset M\}.$$
Moreover, the following diagram is commutative
$$\xymatrix{
F_iR \times F_jR \times M_\mu \ar[d]_{F_iR \times \varphi_\mu} \ar[rr]^{m \times M_\mu} && F_\mu R \times M_\mu \ar[d]_{\varphi_\mu} \\
F_iR \times M_{\mu - j} \ar[rr]^{\ov{\varphi_i}} && M},$$
in which $\ov{\varphi_i}$ stands for the action of $F_iR$ on $M_i^*$ and $m$ is the multiplication of $R$. Observe that the left vertical arrow is defined, since $1 \in F_0R$ implies that $F_jR \subset F_\mu R$. 
\end{definition}
In the sequel we simply call an $FR$-fragment $M$ a fragment, if no ambiguity on the filtration of the ring exists. All results from \cite{EVO}, \cite{NVo1} and \cite{NVo2} remain valid under this new definition. For convenience of the reader, we try to recall all notions concerning fragments if encountered in this paper.

\begin{example}\exalabel{zeropart}
Let $FR$ be filtration. The degree zero part $F_0R$ becomes an $F^+R$-fragment by putting $(F_0R)_n = F_{-n}R$, where $F^+R$ is given by $F^+_{-n}R = 0, F^+_nR = F_nR, n \geq 0$.
\end{example}

 For a fragment $M$, the star operation gives rise to a new chain $M \supset M_1^* \supset \cdots \supset M_i^* \supset \cdots$, denoted by $M^*$. This $M^*$ is again a fragment because $F_jRM_i^*$ for $j\leq i$ has
$$F_{i-j}RF_jM_i^* \subset F_iRM_i^* \subset M_0^* = M,$$
so $F_jRM_i^* \subset M_{i-j}^*$. If $F_iRF_jR \subset F_\mu R$, then $F_jRM_\mu \subset M_i^*$ holds by definition of the star operation.

\begin{definition}
A fragment $M$ that is contained in an $R$-module $ M \subset \Omega$ such that all operations of $F_iR$ on $M_j$ are induced by the module structure of $\Omega$ are called glider representations or glider fragments. If $M_i = M_i^*$ holds for all $i$, we say that the fragment is natural (in $\Omega$). 
\end{definition}
For glider fragments the condition $f_3$ may be reduced to $F_iRM_\mu \subset M_{\mu - i}$ because all $\phi_i$ are induced from the scalar multiplication of $R$. We may assume for such fragments that $RM = \Omega$. That not every fragment needs to be glider follows from the following example.

\begin{example}
Let $\mathbb{Z} \subset \mathbb{Q} \subset \mathbb{Q} \subset \cdots$ be a filtration of $\mathbb{Q}$. Then 
$$ \mathbb{Q} \times \mathbb{F}_2 \supset \mathbb{Q} \times \{ 0 \} \supset 0 \supset \cdots$$
is a fragment, but $\mathbb{F}_2$ cannot be embedded in a $\mathbb{Q}$-module.
\end{example}

A positive filtration $FA$ with $F_0A = B$ is said to be a standard filtration if $FA$ is finite in the sense that every $F_nA$ is a finitely generated left $B$-module and $F_nA = (F_1A)^n$ for every $n \in \mathbb{N}$, or equivalently: $F_nAF_mA = F_{n+m}A$ for every $n,m \in \mathbb{N}$. 

\begin{examples}
\begin{enumerate}
\item
Let $A$ be a $K$-algebra ($K$ some field) with a positive filtration $FA$ such that $F_0A = K$ and every $F_nA$ a finite dimensional $K$-vectorspace, $F_nA = (F_1A)^n$ for every $n\in \mathbb{N}$. 

\item
$A = K<X_1,\ldots,X_n>/I = K<a_1,\ldots,a_n>$ an affine $K$-algebra, $F_1A = Ka_1 \oplus \cdots \oplus Ka_n$, $F_nA = (F_1A)^n$. Generic example for $I = 0$\\
Every $A,FA$ as in 1. is obtained as an example in 2. because if $F_1A = Ka_1 \oplus \cdots \oplus Ka_n$ then $A = K<a_1,\ldots,a_n>$.
\item Let $fA$ be a strong filtration, i.e. $f_nAf_mA = f_{n+m}A$ for all $n,m \in \mathbb{Z}$. Then $fA$ is finite over $f_0A$. The positive filtration $FA$ given as $f^+A$, $F_nA = f_nA$ for $n \geq 0$ is a standard filtration.
\end{enumerate}
\end{examples}

\begin{definition}
Let $A$ have standard filtration $FA$. An $FA$-fragment $M$ is said to be a standard fragment if and only if $F_nAM_m = M_{m-n}$ for all $n \leq m$, equivalently: for all $n \in \mathbb{N}, F_1AM_n = M_{n-1}$.
\end{definition}
By a morphism $f$ between fragments $M, N$, we mean a $B$-linear map $f: M \to N$ such that $f(M_n) \subset N_n$ and $f(am) = af(m)$ for $a \in F_nA, m \in M_n$ for all $n$. If $f$ is injective, we call $M$ a subfragment of $N$. Moreover, if $M_n = N_n \cap M$ for all $n$, we say $M$ is a strict subfragment. In this case $(N/M)_n = N_n/M_n$ with canonical operations gives a fragment structure on $N/M$. Clearly, $M \subset N$ is strict if and only if $F_nA(N_n \cap M) \subset M$ for all $n$.
\begin{proposition}
If $f: M \to N$ is a fragment morphism then $\Im f$ is a fragment. In case $M$ is standard, so is $\Im f$.
\end{proposition}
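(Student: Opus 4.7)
My plan is to equip $\Im f$ with the chain $(\Im f)_n := f(M_n)$, a descending chain of subgroups of $N$ with $(\Im f)_0 = \Im f$, and endow it with the fragment structure inherited from $N$ via the inclusion $\Im f \hookrightarrow N$. The ordinary action $F_nA \times (\Im f)_n \to \Im f$ is the restriction of $\varphi_n^N$: for $a \in F_nA$, $m \in M_n$ the morphism property gives $af(m) = f(am) \in \Im f$, so the action lands in $\Im f$ and is well defined independently of the chosen representative $m$. Axiom $\bf{f_1}$ is then inherited from $N$. For $\bf{f_2}$, when $j \leq i$, the restriction of $\varphi_j^N$ to $f(M_i) \subset N_i \subset N_j$ sends $af(m) = f(am) \in f(M_{i-j}) = (\Im f)_{i-j}$ by $\bf{f_2}$ for $M$; the commutative diagram then follows by applying $f$ to the one for $M$.

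The main obstacle is $\bf{f_3}$. The containment $F_jA(\Im f)_\mu \subset (\Im f)_{\mu-j}$ is immediate from the morphism property and $\bf{f_3}$ for $M$. The star condition is subtle because fragment morphisms are only required to intertwine the ordinary actions $\varphi_n$, not the extended actions $\ov{\varphi_i}$ defined on the larger sets $M_i^*$. To bypass this I would exploit the commutative diagram that is itself part of $\bf{f_3}$. Fix $a \in F_jA$, $m \in M_\mu$, $b \in F_iA$ with $F_iAF_jA \subset F_\mu A$. The element $y := f(am) = af(m)$ lies in $F_jAN_\mu \subset N_i^*$ by $\bf{f_3}$ for $N$, so $\ov{\varphi_i}^N(b,y)$ is defined, and the diagram for $N$ computes it as $(ba)f(m)$. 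The diagram for $M$ gives $\ov{\varphi_i}(b,am) = (ba)m$ in $M$, and since $ba \in F_\mu A$ with $m \in M_\mu$ the morphism property applies to this element, yielding $f((ba)m) = (ba)f(m)$. Combining, $\ov{\varphi_i}^N(b, f(am)) = f((ba)m) \in \Im f$, so $f(am) \in (\Im f)_i^*$ (where the star is defined via the ambient $N$-action restricted to $\Im f$). Commutativity of the $\bf{f_3}$ diagram for $\Im f$ then transfers from the one for $M$ under $f$.

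For the final claim, assume $M$ is standard, so $F_nAM_m = M_{m-n}$ for all $n \leq m$. Applying $f$ and using the morphism property element-wise gives $F_nA(\Im f)_m = F_nAf(M_m) = f(F_nAM_m) = f(M_{m-n}) = (\Im f)_{m-n}$, so $\Im f$ is standard; the reverse containment uses that any $f(m') \in (\Im f)_{m-n}$ with $m' \in M_{m-n}$ can, by standardness of $M$, be written as $\sum a_k m_k$ with $a_k \in F_nA$, $m_k \in M_m$, and then $f(m') = \sum a_k f(m_k) \in F_nA(\Im f)_m$.
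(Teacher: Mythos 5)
Your proposal is correct and matches the paper's approach; the paper's proof is a one-line verification of the standardness claim (using the $F_1A$ characterization) and leaves the fragment axioms implicit, whereas you carefully check $\bf{f_1}$–$\bf{f_3}$ as well. Your treatment of the star condition in $\bf{f_3}$ — bypassing the fact that fragment morphisms only intertwine the $\varphi_n$ by routing through the commutative $\bf{f_3}$ diagram for $N$ and $M$ — is a correct handling of a subtlety the paper does not address explicitly, and the final redundant remark about the reverse containment does no harm since the equality $F_nAf(M_m)=f(F_nAM_m)$ already gives both directions.
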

\begin{proof}
$(\Im f)_{n-1} = f(M_{n-1}) = f(F_1AM_n) = F_1Rf(M_n) = F_1A(\Im f)_n$.
\end{proof}
\begin{corollary}
If $N$ is a strict subfragment of a standard fragment $M$ then $M/N$ is standard.
\end{corollary}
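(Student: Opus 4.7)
The plan is to deduce the corollary immediately from the preceding proposition applied to the canonical projection $\pi : M \to M/N$. The strictness hypothesis is exactly what is needed to put a fragment structure on $M/N$, and once $\pi$ is recognised as a fragment morphism, the proposition does the rest.

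First I would verify that $\pi$ is a morphism of fragments. By the discussion preceding the corollary, $N$ being a strict subfragment of $M$ ensures that $(M/N)_n = M_n/N_n$ with the canonical operations is a fragment, and that these operations are induced from those of $M$. Hence $\pi(M_n) = M_n/N_n = (M/N)_n$, and for $a \in F_n A$ and $m \in M_n$ one has $\pi(am) = \overline{am} = a\,\overline{m} = a\,\pi(m)$ by the very definition of the $F_n A$-action on $(M/N)_n$. Also $\pi$ is $B$-linear. Thus $\pi$ satisfies the conditions to be a fragment morphism.

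Next, $\pi$ is surjective, so $\Im\pi = M/N$. Since $M$ is standard by hypothesis, the preceding proposition immediately yields that $\Im\pi = M/N$ is standard, which is exactly the claim.

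There is no real obstacle here; the only thing to be careful about is to check explicitly that the quotient map respects the descending chain and the actions $\varphi_i$, but both of these are built into the definition of the fragment structure on $M/N$ as given just above the statement. Should one prefer a self-contained argument, the same conclusion can be obtained directly from $F_1 A \,M_n = M_{n-1}$ by writing any $\overline{m} \in M_{n-1}/N_{n-1}$ as $\pi\bigl(\sum_i a_i m_i\bigr) = \sum_i a_i\,\overline{m_i}$ with $a_i \in F_1 A$ and $m_i \in M_n$, giving $(M/N)_{n-1} \subset F_1 A \, (M/N)_n$; the reverse inclusion is just the fragment axiom $\mathbf{f_2}$ for $M/N$.
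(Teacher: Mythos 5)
Your proof is correct and essentially matches the paper's (very terse) argument: the paper's ``Direct from $(M/N)_n = M_n/N_n$'' is precisely the one-line computation you sketch at the end, and your main framing --- recognising $\pi: M \to M/N$ as a surjective fragment morphism and invoking the preceding proposition --- is the same idea phrased through that proposition rather than by redoing its computation.
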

\begin{proof}
Direct from $(M/N)_n = M_n / N_n$ for every $n$.
\end{proof}

Let  $FA$ be a standard filtration with $F_0A = B$. The graded associated ring $G(A)$ is given by $\oplus_n F_nA/F_{n-1}A$ and for $a \in F_nA \setminus F_{n-1}A$, we denote by $\sigma(a)$ the image of $a$ in $G(A)_n$. Consider an Ore set $S$ in $A$ such that $\sigma(S)$ is an Ore set in $G(A)$ and $S \nsubseteq B $. If $G(A)$ is $\sigma(S)$-torsion free then $A$ is $S$-torsion free; indeed, if for $s \in S$, $sa = 0$, then $\sigma(s)\sigma(a) = 0$ by definition of the product in $G(A)$, hence $\sigma(a) \in t_{\sigma(S)}(G(A))$ and $\sigma(a) \neq 0$.
\begin{lemma}\lemlabel{stronglyfiltered}
In the situation as above: $S^{-1}A$ is strongly filtered, hence $F^+S^{-1}A$ is a standard filtration.
\end{lemma}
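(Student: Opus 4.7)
I would work with the quotient filtration on $S^{-1}A$ defined by $F_nS^{-1}A = \bigcup_{\ell, \, t \in S \cap F_\ell A} t^{-1}F_{n+\ell}A$ (with sums). Since $G(A)$ is $\sigma(S)$-torsion free, the standard Ore-localization-of-filtered-rings machinery gives that this filtration is well-defined, exhaustive, and separated, with $G(S^{-1}A) \cong \sigma(S)^{-1}G(A)$ as graded rings. Because $S \not\subset B$, I can fix $s \in S$ whose principal symbol $\sigma(s) \in G(A)_k$ has $k \geq 1$; the torsion-freeness then forces $s^{-1}$ to have filtration degree exactly $-k$, so $ss^{-1} = s^{-1}s = 1$ witnesses the identity sitting in $F_kS^{-1}A \cdot F_{-k}S^{-1}A$ and $F_{-k}S^{-1}A \cdot F_kS^{-1}A$.

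The aim is $F_nS^{-1}A \cdot F_mS^{-1}A = F_{n+m}S^{-1}A$ for all $n, m \in \mathbb{Z}$, the $\subset$ direction being automatic. I first handle $n, m \geq 0$. Take $x \in F_{n+m}S^{-1}A$ and write $x = t^{-1}a$ with $t \in S \cap F_\ell A$ and $a \in F_{n+m+\ell}A$, where $\ell$ can be made as large as needed via the Ore condition (replace $(t,a)$ by $(ut,ua)$ for suitable $u \in S$). Standardness of $FA$ then gives $F_{n+m+\ell}A = F_{n+\ell}A \cdot F_mA$, so $a = \sum_i b_ic_i$ with $b_i \in F_{n+\ell}A$ and $c_i \in F_mA$. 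Hence $x = \sum_i (t^{-1}b_i)c_i$ with $t^{-1}b_i \in F_nS^{-1}A$ and $c_i \in F_mA \subset F_mS^{-1}A$, establishing the factorization.

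For arbitrary $n, m \in \mathbb{Z}$, I reduce to the nonnegative case by conjugation with a power of $s$. Choose $j \geq 0$ so that $n + jk \geq 0$ and $m + jk \geq 0$. Then $s^jxs^j \in F_{n+m+2jk}S^{-1}A = F_{n+jk}S^{-1}A \cdot F_{m+jk}S^{-1}A$ by the previous paragraph, so $s^jxs^j = \sum_i y_iz_i$ with $y_i \in F_{n+jk}$ and $z_i \in F_{m+jk}$. The identity $x = s^{-j}(s^jxs^j)s^{-j}$ then yields $x = \sum_i (s^{-j}y_i)(z_is^{-j})$, with $s^{-j}y_i \in F_{-jk}S^{-1}A \cdot F_{n+jk}S^{-1}A \subset F_nS^{-1}A$ and $z_is^{-j} \in F_mS^{-1}A$, which is the desired factorization. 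The ``hence'' clause is the special case $n, m \geq 0$ of strong filtration.

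The main technical obstacle is not the combinatorics but verifying cleanly that $s^{-1}$ has filtration degree exactly $-k$ and, relatedly, that the powers $s^{\pm j}$ behave as homogeneous degree-shifts; both rest squarely on the $\sigma(S)$-torsion-freeness of $G(A)$, via the isomorphism $G(S^{-1}A) \cong \sigma(S)^{-1}G(A)$. Granting that, the Ore manipulations (enlarging $\ell$, sliding $s^{\pm j}$ through products) are direct, and the two-step decomposition above suffices.
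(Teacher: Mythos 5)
Your argument is correct in substance and handles the same technical core as the paper (well-definedness of the quotient filtration and good behavior of degrees, both resting on $\sigma(S)$-torsion-freeness of $G(A)$), but the decomposition strategy differs. The paper peels a single factor at a time: for $x \in F_nS^{-1}A$ one picks $s\in S$ with $p=\deg\sigma(s)$ large enough that $n+p\geq 1$, writes $sx = \sum' a_1^i\cdots a_{n+p}^i$ with each $a_j^i\in F_1A$ (using standardness of $FA$), and observes $x=\sum' s^{-1}(a_1^i\cdots a_{n+p-1}^i)a_{n+p}^i$ with the first factor in $F_{n-1}S^{-1}A$ and the second in $F_1S^{-1}A$; this gives $F_nS^{-1}A=F_{n-1}S^{-1}A\cdot F_1S^{-1}A$ for every $n\in\mathbb{Z}$, and the $n=0$ case is then invoked as the criterion for strong filtration. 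You instead prove $F_nF_m=F_{n+m}$ directly when $n,m\geq 0$ by splitting $F_{n+m+\ell}A = F_{n+\ell}A\cdot F_mA$, and then extend to all $n,m$ by sandwiching with $s^{\pm j}$ for a fixed $s\in S\setminus B$. Your route avoids having to cite the (true, but unstated in the paper) fact that $F_0 = F_{-1}F_1$ alone forces strong filtration, at the cost of the extra conjugation bookkeeping; the paper's route is shorter but leans on that background fact. Both hinge on the same point that $s^{-1}$ has quotient-filtration degree exactly $-\deg\sigma(s)$, which in turn uses $\sigma(S)$-torsion-freeness — you are right to flag that as the load-bearing step. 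One small point worth tightening in your write-up: when you write $x = t^{-1}a$ with $t\in S\cap F_\ell A$, you should insist $t\in \dot{F}_\ell A$ (i.e.\ $\deg\sigma(t)=\ell$) so that $t^{-1}\in F_{-\ell}S^{-1}A$ and hence $t^{-1}b_i\in F_nS^{-1}A$ as claimed.
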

\begin{proof}
The quotient filtration $FS^{-1}A$ is defined by putting: 
$$F_nS^{-1}A = \{ x \in S^{-1}A {\rm~such~that~} sx \in A {\rm~for~} s\in A, {\rm~say~} \sigma(s) \in G(A)_p {\rm~entails~} sx \in F_{n+p}A\}.$$
To check whether this is well-defined, look at an $x \in S^{-1}A$ such that $sx \in R$ and $sx \in F_{n+p}A$. Assume there is a $t \in S$, $t \in F_qA \setminus F_{q-1}A, tx \in A$ but assume $tx \notin F_{n+q}A$. By exhaustivity of $FA$, $tx \in \dot{F}_{n+q+\delta}A$ for some $\delta >0$, where $\dot{F}_nA = F_nA \setminus F_{n-1}A$.
Since $S$ is an Ore set, there are $s' \in S, t' \in R$ such that $s't = st'$. Since $G(A)$ is $\sigma(S)$-torsion free, we have for every $s \in S$ that $s\dot{F}_nA \subset \dot{F}_{n +p}A$ where $ p = \deg\sigma(s)$. From $tx \in \dot{F}_{n+q+\delta}A$ we get $s'tx \in  \dot{F}_{n + q + \delta + p'}A$ where $p' = \deg\sigma(s')$. Put $q' = \deg\sigma(t')$. Again by torsion freeness, $p' + q = \deg(s't) = \deg(st') = p + q'$, so $\delta$ must equal zero, a contradiction. We leave it to the reader to check that this indeed gives a separated filtration on $S^{-1}A$. Exhaustivity follows from exhaustivity of $FA$. Clearly, if $s \in F_pA \cap S$, then $s^{-1} \in F_{-p}S^{-1}A$ and $F_nS^{-1}A \cap A = F_nA$ for all $n \in \mathbb{N}$.\\
Consider $x \neq 0$ in $F_nS^{-1}A$, then $sx \in F_{n+p}A$ for some $s \in S, p = \deg\sigma(s)$. Since $F_{n+p}A = (F_1A)^{n+p}$ we may write $sx = \sum_i'a_1^i\cdots a_{n+p}^i$ for $a_j^i \in F_1A$.\\
Thus we have that 
$x = \sum_i's^{-1}(a_1^i\cdots a_{n+p-1}^i)a_{n+p}^i$
 with $s^{-1}(a_1^i\cdots a_{n+p-1}^i)$ in $F_{n-1}S^{-1}A$ and $a_{n+p}^i \in F_1A \subset F_1S^{-1}A$. Hence $F_nS^{-1}A = F_{n-1}S^{-1}AF_1S^{-1}A$. For $n=0$ this yields the strongly graded condition.
\end{proof}

Now consider an Ore set $S$ in $A$ with $S \cap B$ Ore in $A$ (and thus also in $B$) and such that $\sigma(S)$ is Ore in $G(A)$; take for example a saturated Ore set in $A$ over an Ore set intersection with $B$. Assume $G(A)$ is $\sigma(S)$-torsion free and $S \nsubseteq B$. We look at $S \setminus B$. By torsion freeness, $S \setminus B$ is multiplicatively closed.
\begin{lemma}
$S \setminus B$ is an Ore set in $A$.
\end{lemma}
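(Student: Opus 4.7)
The plan is to verify the two Ore conditions (left reversibility and the left Ore condition proper) for $S \setminus B$ in $A$, leveraging the fact that $\sigma$-torsion freeness of $G(A)$ turns the filtration degree into a strictly multiplicative invariant on products of elements of $S$.

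First I would record the basic multiplicative behaviour: if $s \in S$ with $s \in F_pA \setminus F_{p-1}A$ and $t \in S$ with $t \in F_qA \setminus F_{q-1}A$, then $\sigma(s)\sigma(t) \in G(A)_{p+q}$ is nonzero by $\sigma(S)$-torsion freeness of $G(A)$, so $st \in F_{p+q}A \setminus F_{p+q-1}A$ and $\deg\sigma(st) = p + q$. In particular $S \setminus B$ is multiplicatively closed (which the text already notes) and, more importantly, left multiplication by an element of $S \setminus B$ strictly raises the filtration degree. Fix once and for all an element $u \in S \setminus B$, which exists since $S \not\subseteq B$.

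Now the core step, the left Ore condition. Take $s \in S \setminus B$ and $a \in A$. Applying left Ore of $S$ in $A$, choose $t \in S$ and $b \in A$ with $ta = bs$. If $t \notin B$ we are already done. Otherwise $t \in S \cap B$, and we left-multiply the identity by $u$ to get $(ut)a = (ub)s$. By the remark above, $\deg\sigma(ut) = \deg\sigma(u) + \deg\sigma(t) \geq 1$, so $ut \in S \setminus B$, and we have the desired relation $s'a = a's$ with $s' = ut \in S \setminus B$, $a' = ub \in A$.

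For left reversibility, suppose $au = 0$ for some $a \in A$ and $u \in S \setminus B$. One could apply right torsion-freeness directly, but more cleanly I would just use left Ore of $S$ to produce some $t \in S$ with $ta = 0$; if $t \notin B$ we are done, otherwise premultiply by a fixed element of $S \setminus B$ exactly as before to replace $t$ by $ut \in S \setminus B$, still annihilating $a$. The only point to observe is that no obstruction arises from $t$ having degree zero, because the torsion-freeness guarantees that multiplying by $u$ cannot collapse $ut$ back into $B$. I don't foresee any real obstacle: once the degree-additivity lemma coming from $\sigma(S)$-torsion freeness is in hand, the verification of both Ore axioms is a single-line reduction to the Ore property of $S$ itself.
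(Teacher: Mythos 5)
Your proof is correct and follows essentially the same route as the paper: apply the Ore condition for $S$, and if the resulting cofactor $t$ lies in $S\cap B$, premultiply to push its degree up, using that $\sigma(S)$-torsion freeness makes filtration degree additive on products from $S$. The only cosmetic difference is that you premultiply by a fixed auxiliary $u\in S\setminus B$, whereas the paper premultiplies by the given $s\in S\setminus B$ itself; both give an element of strictly positive degree and hence in $S\setminus B$, and your handling of the second Ore condition matches the paper's subsequent remark that $A$ is $S$-torsion free.
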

\begin{proof}
Consider $a \in A, s \in S \setminus B$, then since $S$ is Ore in $A$ $\exists s' \in S, a' \in A$ such that $s'a = sa'$. If $s' \in S \setminus B$ then we're done. Assume $s' \in S \cap B$, then (ss')a = (sa')s with $ss' \in S \setminus B$ ($\deg\sigma(ss') > 0$!) and $sa' \in A$.
\end{proof}
\begin{remark}
Since $G(A)$ is $\sigma(S)$-torsion free, $A$ is $S$-torsion free, hence if $as = 0$ for $a \in A,s\in S \setminus B$ then $s'a = 0$ for some $s' \in S$ but then $a = 0$ since $t_S(A) = 0$. So the second Ore condition is trivialy satisfied.
\end{remark}

\begin{proposition}
$S \setminus B$ is an Ore set in $(S\cap B)^{-1}A$.
\end{proposition}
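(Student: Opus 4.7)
The plan is to verify the two Ore axioms for $S\setminus B$ inside the ring $(S\cap B)^{-1}A$. Take $\xi\in (S\cap B)^{-1}A$, written as a left fraction $\xi=s_0^{-1}a$ with $s_0\in S\cap B$ and $a\in A$, together with $s\in S\setminus B$.

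For the first Ore condition, I would first produce a multiplier that lies merely in $S$, and then upgrade it to $S\setminus B$ by the same device used in \lemref{stronglyfiltered}. Concretely: apply the Ore property of $S$ in $A$ to $a$ and $s$ to obtain $u\in S$ and $v\in A$ with $ua=vs$. Then $us_0\in S$ (by multiplicative closure of $S$) and
$$(us_0)\cdot\xi \;=\; us_0\cdot s_0^{-1}a \;=\; ua \;=\; vs \;=\; v\cdot s,$$
with $v\in A\subset (S\cap B)^{-1}A$. If $us_0$ already lies in $S\setminus B$ we are done; otherwise, pick any $t\in S\setminus B$ (which exists by the hypothesis $S\not\subset B$) and replace $us_0$ by $t(us_0)$. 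Torsion-freeness of $G(A)$ over $\sigma(S)$ guarantees $\sigma\bigl(t(us_0)\bigr)=\sigma(t)\sigma(us_0)\neq 0$ with degree $\deg\sigma(t)+\deg\sigma(us_0)>0$, so $t(us_0)\in S\setminus B$. Then $\bigl(t(us_0)\bigr)\cdot\xi=(tv)\cdot s$ with $tv\in (S\cap B)^{-1}A$, establishing the first Ore condition.

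For the second Ore condition, suppose $\xi s=0$ in $(S\cap B)^{-1}A$. Since $a,s\in A$, we have $\xi s=s_0^{-1}(as)$, and vanishing of this fraction means there exists $s_0'\in S\cap B$ with $s_0'(as)=0$, i.e.\ $(s_0'a)s=0$ in $A$. Because $S$ is Ore in $A$, the second Ore condition for $S$ yields some $\sigma\in S$ with $\sigma(s_0'a)=0$; since $G(A)$ is $\sigma(S)$-torsion free, $A$ is $S$-torsion free, and therefore $s_0'a=0$. But this says $a$ is annihilated by an element of $S\cap B$, hence $\xi=s_0^{-1}a=0$ in $(S\cap B)^{-1}A$, so $s'\xi=0$ for any $s'\in S\setminus B$ (which exists since $S\not\subset B$). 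The second Ore condition is thus trivially verified.

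The main obstacle is purely the first part: guaranteeing that the Ore multiplier ends up in $S\setminus B$ rather than in $S\cap B$. This is exactly the same phenomenon handled in the previous lemma, and is resolved in the same way---using $S\not\subset B$ to produce a $t\in S\setminus B$, and the $\sigma(S)$-torsion-freeness of $G(A)$ to ensure that multiplying by $t$ preserves membership in $S$ while strictly raising the filtration degree.
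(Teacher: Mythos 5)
Your proof is correct and uses essentially the same ingredients as the paper's, only organized a little differently. Where the paper clears the denominator and applies the preceding lemma (that $S\setminus B$ is Ore in $A$) directly to $s_0\xi\in A$ and $s\in S\setminus B$, obtaining in one step a multiplier $t\in S\setminus B$ with $ts_0\in S\setminus B$, you instead invoke the Ore condition for the full set $S$ first and then upgrade the multiplier into $S\setminus B$ by hand using $\sigma(S)$-torsion-freeness of $G(A)$; that upgrade is precisely the argument inside the proof of the earlier lemma, so the two routes coincide in substance. Your explicit verification of the second Ore condition is a harmless addition --- the paper treats it as immediate from the same torsion-freeness.
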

\begin{proof}
Take $s \in S \setminus B, b \in (S\cap B)^{-1}A$, thus for some $s_0 \in S \cap B, s_0b \in A$ (note $A \hookmapright{} (S \cap B)^{-1}A$ since $A$ is $t_{S\cap B}$-torsion free). By the previous lemma, $\exists t \in S \setminus B, a \in A$ such that $ts_0b = as$; since $ts_0 \in S \setminus B$ and $a \in A \subset (S \cap B)^{-1}A$ the Ore condition for $S \setminus B$ in $(S \cap B)^{-1}A$ follows.
\end{proof}
\begin{corollary}
$Q_{S \setminus B}(Q_{S \cap B}(A)) = Q_S(A) = S^{-1}A.$
\end{corollary}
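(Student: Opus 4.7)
The plan is to invoke the universal property of Ore localization in both directions and conclude that the iterated localization agrees with $S^{-1}A$ via mutually inverse ring homomorphisms.

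First I collect the Ore and torsion-freeness data assembled above: $S \cap B$ is Ore in $A$ by assumption, the preceding Lemma gives that $S \setminus B$ is Ore in $A$, and the preceding Proposition gives that $S \setminus B$ is Ore in $(S \cap B)^{-1}A = Q_{S\cap B}(A)$. By the preceding Remark, $A$ is $S$-torsion free, and hence also $(S\cap B)$- and $(S\setminus B)$-torsion free, so the canonical localization maps $A \hookrightarrow S^{-1}A$ and $A \hookrightarrow Q_{S\setminus B}(Q_{S\cap B}(A))$ are injective.

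For the forward map, inside $Q_{S\setminus B}(Q_{S\cap B}(A))$ every element of $S\cap B$ is inverted by the first localization and every element of $S\setminus B$ by the second; since $S = (S\cap B) \cup (S\setminus B)$, all of $S$ is invertible in the iterated localization. The universal property of $S^{-1}A$ then produces a unique ring homomorphism $\varphi\colon S^{-1}A \to Q_{S\setminus B}(Q_{S\cap B}(A))$ extending $A \hookrightarrow Q_{S\setminus B}(Q_{S\cap B}(A))$. Conversely, in $S^{-1}A$ every element of $S\cap B$ is a unit, giving by the universal property of $(S\cap B)^{-1}A$ a unique map $\psi_1\colon (S\cap B)^{-1}A \to S^{-1}A$; since $S\setminus B$ is likewise invertible in $S^{-1}A$, a second application of the universal property extends $\psi_1$ uniquely to $\psi\colon Q_{S\setminus B}(Q_{S\cap B}(A)) \to S^{-1}A$. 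Both compositions $\psi \circ \varphi$ and $\varphi \circ \psi$ extend the identity on $A$, so by uniqueness they are the identity maps, and $\varphi$ is the desired ring isomorphism.

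The point requiring care — and the reason the argument is not a tautology — is that, as stressed in the introduction, composition of noncommutative localization functors need not itself be a localization functor. What rescues the present situation is precisely the fact that $S\setminus B$ remains an Ore set after inverting $S\cap B$, which was the purpose of the preceding Proposition. With that Ore condition in hand the standard universal property is available at each stage, and the two-step argument above closes without further hypotheses.
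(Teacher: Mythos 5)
Your argument is correct and is the natural one: once the preceding proposition guarantees that $S\setminus B$ is an Ore set in $(S\cap B)^{-1}A$, the iterated Ore localization exists and the universal property at each stage yields mutually inverse maps. The paper states this corollary without proof, so there is nothing to compare against directly, but your two-step universal-property argument is precisely what is expected to fill the gap. One minor remark: for $\varphi\circ\psi$ to be the identity you actually need to apply the uniqueness clause twice (first to see it restricts to the canonical map on $(S\cap B)^{-1}A$, then to conclude it is the identity on the outer localization), whereas a single application suffices for $\psi\circ\varphi$; this is a one-line refinement, not a gap.
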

Symmetrically, we have
\begin{proposition}
$S\cap B$ is an Ore set in $(S \setminus B)^{-1}A$.
\end{proposition}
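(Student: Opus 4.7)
The plan is to mirror the argument of the preceding proposition, exchanging the roles of $S \cap B$ and $S \setminus B$: I would express the given element of the larger localization $(S \setminus B)^{-1}A$ in the canonical form $t^{-1}a$ (with $t \in S \setminus B$ and $a \in A$), then push the known Ore condition for $S \cap B$ in $A$ through the equation $tx = a$, and use the $\sigma(S)$-torsion freeness of $G(A)$ to control the degree of the new denominator that appears.

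Concretely, take $b \in S \cap B$ and $x \in (S \setminus B)^{-1}A$; by construction of the localization choose $t \in S \setminus B$ with $a := tx \in A$. Apply the given left Ore condition for $S \cap B$ in $A$ to the pair $(a,b)$: there exist $b_1 \in S \cap B$ and $a_1 \in A$ with $b_1 a = a_1 b$. Substituting $a = tx$ rewrites this as $(b_1 t)\,x = a_1 b$. The product $b_1 t$ lies in $S$ by multiplicative closure, and by $\sigma(S)$-torsion freeness of $G(A)$ one has $\sigma(b_1 t) = \sigma(b_1)\sigma(t)$, so $\deg \sigma(b_1 t) = \deg \sigma(t) > 0$, placing $b_1 t$ in $S \setminus B$. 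Consequently $b_1 t$ is already a unit in $(S \setminus B)^{-1}A$, and from $(b_1 t)x = a_1 b$ we obtain $x = [(b_1 t)^{-1} a_1]\, b$. Setting $b' = 1 \in S \cap B$ and $y = (b_1 t)^{-1} a_1 \in (S \setminus B)^{-1}A$ verifies the first Ore condition $b' x = y b$.

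For the second (regularity) Ore condition, suppose $xb = 0$ in $(S \setminus B)^{-1}A$ for some $x$ and some $b \in S \cap B$. Writing $tx = a \in A$ for suitable $t \in S \setminus B$ gives $ab = 0$ in $A$, and since $S \cap B$ is Ore in $A$ there exists $b_2 \in S \cap B$ with $b_2 a = 0$; then $a \in t_S(A) = 0$ (using that $A$ is $S$-torsion free, which in turn follows from the $\sigma(S)$-torsion freeness of $G(A)$ as in the Remark above), so $x = 0$ and the condition is vacuous.

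The only non-routine step is the passage $\sigma(b_1 t) = \sigma(b_1)\sigma(t)$: without the $\sigma(S)$-torsion freeness of $G(A)$ this product could collapse to zero, in which case $b_1 t$ might drop into $B$ and the argument would break. Everything else is routine bookkeeping once one identifies that the hypothesis reduces the claim to an instance of the $(S \cap B)$-Ore property already available in $A$.
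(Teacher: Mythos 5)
Your proof is correct and follows essentially the same route as the paper: express $x$ as $t^{-1}a$ with $t \in S\setminus B$ and $a = tx \in A$, push the Ore condition for $S\cap B$ in $A$ through to get $(b_1 t)x = a_1 b$, use $\sigma(S)$-torsion freeness of $G(A)$ to see that $\deg\sigma(b_1 t) = \deg\sigma(t) > 0$ so that $b_1 t \in S\setminus B$ is already invertible in the localization, and conclude with $s' = 1$. You are, if anything, more careful than the paper in two respects: you spell out why $\sigma(b_1 t) = \sigma(b_1)\sigma(t)$ (the paper asserts $\deg(s't) = \deg(t)$ without comment), and you also verify the regularity (second Ore) condition, which the paper leaves to the standing torsion-freeness hypotheses.
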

\begin{proof}
Take $b \in (S \setminus B)^{-1}A$, then $\exists t \in S \setminus B, tb \in A$. For given $s \in S \cap B \exists s' \in S \cap B$ such that $s'tx = a's$ with $a' \in A$ ($S \cap B$ Ore in $A$). $s't \in S \setminus B$ since $\deg(s't) = \deg(t) > 0$, hence $(s't)^{-1} \in (S \setminus B)^{-1}A$ and thus $1. x = (s't)^{-1}a's$ with $1 \in S \cap B$ and $(s't)^{-1}a' \in (S \setminus B)^{-1}A$. 

\end{proof}
\begin{corollary}
$Q_{S \cap B}Q_{S \setminus B}(A) = Q_S(A) = Q_{S \setminus B}Q _{S \cap B}(A).$
\end{corollary}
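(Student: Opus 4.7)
The first equality $Q_{S\setminus B}Q_{S\cap B}(A)=Q_S(A)$ is already the content of the previous corollary, so the real work is in the second equality $Q_{S\cap B}Q_{S\setminus B}(A)=Q_S(A)$. The preceding proposition has verified the crucial ingredient: $S\cap B$ is an Ore set in $(S\setminus B)^{-1}A$, so the iterated localization $Q_{S\cap B}Q_{S\setminus B}(A)$ is well-defined, and the torsion-freeness discussion (together with the remark after \lemref{stronglyfiltered}) ensures that the canonical maps are injective, so we may regard $A\hookrightarrow (S\setminus B)^{-1}A\hookrightarrow Q_{S\cap B}Q_{S\setminus B}(A)$.

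The plan is to apply the general fact that if $T_1,T_2\subset A$ are multiplicative Ore sets with $T_2$ Ore in $T_1^{-1}A$, then $Q_{T_2}Q_{T_1}(A)$ coincides with the Ore localization at the multiplicative closure of $T_1T_2$. Here $T_1=S\setminus B$, $T_2=S\cap B$, and since every $s\in S$ lies in $(S\setminus B)\cup(S\cap B)$ the multiplicative closure is exactly $S$. Thus I would first show that every element of $S$ is invertible in $Q_{S\cap B}Q_{S\setminus B}(A)$, which is immediate: if $s\in S\cap B$ it is inverted at the second step, and if $s\in S\setminus B$ it is already inverted at the first step. By the universal property of $Q_S(A)$, this yields a canonical ring homomorphism $Q_S(A)\to Q_{S\cap B}Q_{S\setminus B}(A)$.

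For the reverse direction, I would show that every $y\in Q_{S\cap B}Q_{S\setminus B}(A)$ can be written in the form $s^{-1}a$ with $s\in S$, $a\in A$. Indeed, $y=b^{-1}x$ with $b\in S\cap B$ and $x\in (S\setminus B)^{-1}A$, and $x=t^{-1}a'$ with $t\in S\setminus B$, $a'\in A$. Using that $S\cap B$ is Ore in $(S\setminus B)^{-1}A$, find $b'\in S\cap B$, $a''\in A$ with $b't^{-1}=t_1^{-1}a''$ for some suitable element, but more directly one may apply the Ore condition for $S$ itself in $A$ to $(b,t)$ to get a common left multiple $s=ub=vt\in S$, whence $y=b^{-1}t^{-1}a'$ can be brought to the form $s^{-1}a$ with $a\in A$. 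This gives the inverse ring map $Q_{S\cap B}Q_{S\setminus B}(A)\to Q_S(A)$, and by the universal property the two maps are mutual inverses.

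The main technical obstacle is the bookkeeping in the last step: expressing a fraction of fractions as a single fraction over $S$. This is where the two Ore conditions (that $S$ is Ore in $A$, and that $S\cap B$ is Ore in $(S\setminus B)^{-1}A$) must be combined carefully, using the torsion-freeness of $A$ over $S\setminus B$ (ensured by the $\sigma(S)$-torsion freeness of $G(A)$ and the remark after the previous lemma) to conclude that different representations of $y$ agree. Once this is set up the verification is routine and symmetric to the proof of the previous corollary.
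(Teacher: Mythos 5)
Your proof is correct and takes essentially the same route the paper implicitly intends: the equality $Q_{S\setminus B}Q_{S\cap B}(A)=Q_S(A)$ is the previous corollary, and the new equality $Q_{S\cap B}Q_{S\setminus B}(A)=Q_S(A)$ follows from the proposition that $S\cap B$ is Ore in $(S\setminus B)^{-1}A$ together with the standard observation that iterated Ore localization equals localization at the multiplicatively generated set, here $S$ itself. One small simplification: in your last step you do not need a common left multiple of $b$ and $t$, since $y=b^{-1}t^{-1}a'=(tb)^{-1}a'$ already has the desired form with $tb\in S$ (as $S$ is multiplicatively closed) and $a'\in A$.
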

\begin{corollary}
The localizations at $S\cap B$ and $S \setminus B$ are compatible (see \cite{Vo1}).
\end{corollary}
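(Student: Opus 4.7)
The plan is to recognise the final statement as an immediate packaging of the two preceding corollaries, once the definition of compatibility from \cite{Vo1} is unpacked. Recall that two kernel functors $\kappa,\lambda$ on $A$-mod are called \emph{compatible} when the iterated localizations commute, i.e. $Q_\kappa Q_\lambda = Q_\lambda Q_\kappa$, and the common value is itself a localization functor, namely $Q_{\kappa \vee \lambda}$. For Ore-set torsion theories $\kappa_{T}$ and $\kappa_{T'}$ with $T,T'$ Ore in $A$, the join $\kappa_T \vee \kappa_{T'}$ is the Ore-set torsion theory associated to the multiplicative set generated by $T \cup T'$, provided that set is itself Ore.

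First I would verify the identification $\kappa_S = \kappa_{S \cap B} \vee \kappa_{S \setminus B}$. Since every element of $S$ lies either in $S \cap B$ or in $S \setminus B$, the multiplicative closure of $(S \cap B)\cup(S \setminus B)$ is exactly $S$ (using that $S$ itself is multiplicatively closed), and we already know $S$ is Ore in $A$ by hypothesis. Hence the join kernel functor exists and equals $\kappa_S$, so that $Q_{\kappa_S} = Q_S$.

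Second, I would invoke the two preceding corollaries, which assert
\[
Q_{S\cap B}\,Q_{S\setminus B}(A) \;=\; Q_S(A) \;=\; Q_{S\setminus B}\,Q_{S\cap B}(A).
\]
Combined with the identification of $Q_S$ with $Q_{\kappa_{S \cap B}\vee \kappa_{S \setminus B}}$ from the previous step, this gives simultaneously the commutation of $Q_{S\cap B}$ and $Q_{S\setminus B}$ and the identification of the common composite as the localization at the join, which is exactly the definition of compatibility in \cite{Vo1}.

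The only point requiring a little care—and thus the main potential obstacle—is ensuring that the equalities in the two preceding corollaries, which were stated on the single module $A$, upgrade to an equality of \emph{functors} on $A$-mod, since compatibility is a statement about the localization functors and not merely about a single ring. This is, however, standard once both $S \cap B$ and $S \setminus B$ have been shown to be Ore sets in the appropriate localized rings, as the localization functor at an Ore set is given by the usual module of fractions and the computation runs uniformly in the module argument; torsion freeness of $A$ at $S$ (consequence of $\sigma(S)$-torsion freeness of $G(A)$) is what allows the step to be carried out without further hypotheses.
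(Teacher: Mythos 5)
Your proposal is correct and follows the same route the paper intends: the corollary is stated without proof precisely because it is meant to be read off from the two preceding corollaries, which you do. Your extra observation — that a ring-level equality of iterated localizations must be promoted to a functorial one since compatibility in the sense of \cite{Vo1} is a statement about the localization functors — is a genuine point and you resolve it correctly: for Ore-set localizations the functor is $S^{-1}A\otimes_A(-)$, so the identity of the localized rings (equivalently, of the $A$-bimodules $Q_{S\cap B}Q_{S\setminus B}(A)$, $Q_{S\setminus B}Q_{S\cap B}(A)$, and $S^{-1}A$) immediately yields the natural isomorphism of functors, with $\sigma(S)$-torsion freeness of $G(A)$ guaranteeing the underlying torsion-freeness of $A$ used along the way.
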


Let $FA$ be the positive part of a strong filtration on $A$ (e.g. when $A$ is $S^{-1}A'$ for some standard filtered $A'$). Consider a standard fragment $M$ over $FA$. 
\begin{proposition}
If $N \subset M$ is a strict sub fragment, then $N$ is standard.
\end{proposition}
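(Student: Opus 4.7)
The inclusion $F_1AN_n\subset N_{n-1}$ is automatic since $N$ is itself a fragment, so the real task is the reverse containment $N_{n-1}\subset F_1A\,N_n$. The plan is to exploit the strong filtration on $A$ (with negative part $f_{-1}A$) to manufacture a suitable decomposition of a general element $x\in N_{n-1}$.

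First I would use the strong condition $f_1A\cdot f_{-1}A=f_0A=B$ to pick elements $c_j\in F_1A=f_1A$ and $d_j\in f_{-1}A$ with $\sum_j c_jd_j=1$. For any $x\in N_{n-1}$ this gives the tautology $x=\sum_j c_j(d_jx)$, so the whole proof reduces to showing $d_jx\in N_n$ for each $j$.

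To see that $d_jx\in M_n$, I would use that $M$ is standard: write $x\in M_{n-1}=F_1A\,M_n$ as $x=\sum_i a_im_i$ with $a_i\in F_1A=f_1A$ and $m_i\in M_n$. Then
$$d_jx=\sum_i (d_ja_i)\,m_i,\qquad d_ja_i\in f_{-1}A\cdot f_1A=f_0A=B=F_0A,$$
and by fragment axiom $\bf{f_2}$ applied with $j=0,\,i=n$, the action of $F_0A$ sends $M_n$ into $M_n$. Hence each summand lies in $M_n$, so $d_jx\in M_n$. This is the key (and only slightly tricky) step, since it reinterprets the negative-degree element $d_j$ as a composition that stays inside the positive filtration acting on the fragment.

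To see that $d_jx\in N$, observe that $d_j\in f_{-1}A\subset f_0A=B$, and $N$ is by assumption a $B$-submodule of $M$. Since $N$ is a \emph{strict} subfragment, $N_n=N\cap M_n$, so combining the two inclusions gives $d_jx\in N_n$ as required. Therefore $x=\sum_j c_j(d_jx)\in F_1A\,N_n$, proving $N_{n-1}=F_1A\,N_n$ for every $n\geq 1$, i.e.\ $N$ is standard. The main conceptual obstacle is simply recognising that the strong filtration supplies the ``partition of unity'' $\sum c_jd_j=1$ that lets one trade a standard decomposition in $M$ for one in $N$; once this is in place, the argument is formal.
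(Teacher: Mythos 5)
Your proposal is correct and follows essentially the same line of argument as the paper: both proofs use the strong filtration relation $f_1A\cdot f_{-1}A = f_0A = B \ni 1$ (your explicit partition of unity $\sum_j c_jd_j = 1$ is exactly the paper's implicit use of $1\in F_1A\,F_{-1}A$), show that $F_{-1}A\cdot x \subset M_n$ via $f_{-1}A\,f_1A\subset B$ and $BM_n\subset M_n$, and then invoke strictness $N_n=N\cap M_n$ together with $f_{-1}A\subset B$ and $B$-stability of $N$ to conclude $F_{-1}A\,x\subset N_n$, hence $x\in F_1A\,N_n$.
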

\begin{proof}
$N_m = M_m \cap N$ thus $F_1AN_m \subset N_{m-1} = M_{m-1} \cap N = F_1AM_m \cap N.$ If $n \in F_1AM_m \cap N$ then $F_{-1}A n \subset F_{-1}AF_1A M_m = M_m$ and $F_{-1}A n \subset N \cap M_m = N_m$ or $ n \in F_1AN_m$. Thus $F_1AN_m = F_1AM_m \cap N = M_{m-1} \cap N = N_{m-1}$, i.e. $N$ is standard.
\end{proof}
\begin{proposition}
If $FA$ is the positive part of a strong filtration on $A$ and $M$ is a natural $FA$-fragment, then $M$ is standard.
\end{proposition}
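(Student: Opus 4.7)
The fragment axiom already gives the inclusion $F_1A\cdot M_n\subset M_{n-1}$, so the only thing to prove is the reverse inclusion $M_{n-1}\subset F_1A\cdot M_n$ for every $n\geq 1$. The plan is to use the fact that a strong filtration on $A$ allows one to decompose $1$, together with naturality to control where the intermediate elements of the decomposition land.

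More precisely, since $fA$ is strong, $f_1A\cdot f_{-1}A=f_0A$, so there exist finitely many elements $y_i\in f_1A=F_1A$ and $x_i\in f_{-1}A\subset f_0A=S$ with $\sum_i y_ix_i=1$. Given $m\in M_{n-1}$, I would then write, inside the ambient $R$-module $\Omega$,
\[
m=1\cdot m=\sum_i y_i(x_im),
\]
where associativity of the $R$-action on $\Omega$ legitimises the bracketing. Since $x_i\in S$, each $x_im$ lies in $M$. To conclude that this exhibits $m$ as an element of $F_1A\cdot M_n$, I would show $x_im\in M_n$, and this is where naturality enters: by assumption $M_n=M_n^\ast$, so it suffices to verify $F_nA\cdot(x_im)\subset M$.

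This last inclusion follows directly from the strong filtration. Indeed, strongness yields $F_nA\cdot x_i\subset f_nA\cdot f_{-1}A=f_{n-1}A=F_{n-1}A$, hence
\[
F_nA\cdot(x_im)\subset F_{n-1}A\cdot m\subset M_0=M,
\]
using $m\in M_{n-1}$ and the fragment axiom $F_{n-1}A\cdot M_{n-1}\subset M_0$. Thus $x_im\in M_n^\ast=M_n$, and so $m\in F_1A\cdot M_n$, as required.

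The step I expect to require the most care is the second one: the decomposition of $1$ really does need the \emph{negative} part $f_{-1}A$ of the underlying strong filtration, which is not part of the positive filtration $FA$ on which the fragment structure is defined, but still acts on $M$ by virtue of $f_{-1}A\subset S$. Once one notices this, naturality is precisely the tool that forces $x_im$ back into $M_n$; without naturality one only gets $x_im\in M_n^\ast$, which in general is strictly larger.
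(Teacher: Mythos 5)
Your proof is correct and follows essentially the same route as the paper's: both show that for $m\in M_{n-1}$ the set $F_{-1}A\,m$ lands in $M_n$ by using strongness ($F_nA\,F_{-1}A=F_{n-1}A$) together with naturality $M_n=M_n^*$, and then recover $m$ from $F_1A\cdot(F_{-1}A\,m)\subset F_1A\,M_n$. You are merely more explicit than the paper about the decomposition $1=\sum_i y_ix_i$ with $y_i\in F_1A$, $x_i\in F_{-1}A$, which the paper leaves implicit in the step ``$m\in F_1AM_n$''.
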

\begin{proof}
$M_n = \{ m \in M , F_nRm \subset M \}$, so if $m \in M_{n-1}$ then $F_{-1}A m \subset M_n$ since $F_nAF_{-1}Am = F_{n-1}Am \subset M$. Thus $m \in F_1AM_n \subset M_{n-1}$, i.e. $M$ is standard.
\end{proof}

Sometimes it suffices that $S\cap B$ is Ore in $B$. For example,  if $A$ is an extension of $B$, then
$S\cap B$ Ore in $B$ implies that $S\cap B$ is Ore in $A$. Recall that $A$ is an extension of $B$ if $A = BZ_B(A)$,
where
$$Z_B(A) = \{a\in A,~ ba = ab {\rm~for~all~} b \in B\}.$$
In this case $S\cap B$ Ore in $B$ implies $S\cap B$ Ore in $A$. Indeed, take $a \in A$, $s \in S \cap B$, say $a = \sum b_iz_i$ with $b_i \in B, z_i \in Z_B(A)$. Take $s_i \in S \cap B$ such that $s_ib_i = b_i' s$ for all $i$ (finite sum) and take $s'' \in \cap Bs_i, s'' \in S \cap B$ ($S\cap B$ Ore in $B$ and $1 \in S\cap B$). Then we obtain that $s''b_i = b_i''s$ with $b_i'' \in B$, for all $i$. Hence 
$$s''a = \sum s''b_iz_i = \sum b_i''sz_i = \sum b_i''z_is = (\sum b_i''z_i)s,$$
with $\sum b_i''z_i \in A$, whence $S\cap B$ is Ore in $A$.\\

By the above, we can assume that $S$ is Ore in $A$ such that $S \cap B = 1$ and such that $\sigma(S)$ is an Ore set of $G(A)$, the latter being $\sigma(S)$-torsion free. Now, let $M$ be a standard glider fragment, $M \subset \Omega = AM$. We consider on $\Omega$ the filtration $f\Omega, f_{-n}\Omega = M_n, n \geq 0$ and $f_m\Omega = F_mAM$ for $m \geq 0$. The associated graded $G_f(\Omega)$ has for the negative part $g(M) = \oplus_{i \geq 0}M_i/M_{i+1}$ and this is itself a fragment with respect to the positive part of the grading filtration of $G(A)$ (see \cite{NVo1}).
We assume $G_f(\Omega)$ is $\sigma(S)$-torsion free, thus also elements of $g(M)$ cannot be annihilated by elements of $\sigma(S)$. We also assume (however not necessary) that the body $B(M) = \cap_n M_n = 0$. We define for $d \in \mathbb{Z}$
$$Q_S(\Omega)_d = \{ z \in Q_S(\Omega), sz \in F_{p +d}\Omega, s \in S \cap \dot{F}_pA \}.$$
Just as in the proof of \lemref{stronglyfiltered} one shows that the $d$ is depending only on $z$ and not on the specific $s$. In fact, $Q_S(\Omega) = \cup_d Q_S(\Omega)_d$ is the quotient filtration on $S^{-1}\Omega$. 

\begin{lemma}
Let $X = FX$ be a filtered $FA$-module, with $FA$ strongly filtered. Then $X$ is strongly filtered.
\end{lemma}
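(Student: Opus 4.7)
The plan is to verify the module version of strongness, namely $F_nA\cdot F_mX = F_{n+m}X$ for all $n,m \in \mathbb{Z}$. One inclusion, $F_nA\cdot F_mX \subset F_{n+m}X$, is immediate from the filtered $FA$-module axioms and requires no hypothesis on $FA$.

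For the reverse inclusion, I would exploit the strongness of $FA$ itself. Since $F_nA\cdot F_{-n}A = F_0A$ for every $n \in \mathbb{Z}$, we can write
\[
1 = \sum_i a_i b_i,\quad a_i \in F_nA,\ b_i \in F_{-n}A.
\]
Given any $x \in F_{n+m}X$, apply this expression to $x$:
\[
x = 1\cdot x = \sum_i a_i(b_i x).
\]
The filtered module axiom gives $b_i x \in F_{-n}A \cdot F_{n+m}X \subset F_m X$, and hence $x \in F_nA\cdot F_m X$. This yields $F_{n+m}X \subset F_nA\cdot F_mX$, completing the equality.

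There is no real obstacle here: the argument is essentially the classical fact that a strongly graded ring makes every graded module strongly graded, carried over to the filtered setting. The only point worth checking carefully is that the argument is uniform over all integer values of $n$ and $m$ (positive, negative, or zero), which is automatic because strongness of $FA$ was assumed in every degree and the decomposition of $1$ above is available for every $n \in \mathbb{Z}$.
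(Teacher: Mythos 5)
Your proof is correct and uses essentially the same idea as the paper: exploit strongness of $FA$ to write $1$ as a sum $\sum a_i b_i$ with $a_i \in F_nA$, $b_i \in F_{-n}A$ (the paper does this implicitly at the subgroup-product level, for $n=1$ only, then asserts the general case follows). The two arguments differ only in presentation.
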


\begin{proof}
Look at $F_1AF_{n-1}X \subset F_nX$. Since $F_{-1}AF_nX \subset F_{n-1}$ we have that $F_1AF_{n-1}X \supset F_1AF_{-1}F_nX = F_nX$ or $F_1AF_{n-1}X = F_nX$ for every $n$. From this, one easily deduces that $FX$ is strongly filtered.
\end{proof}

By \lemref{stronglyfiltered} we obtain that $S^{-1}\Omega$ is a strongly filtered and standard filtration. We define the localization $Q_S(M)$ of $M$ at $S$ to be $F_0S^{-1}M$. We have
\begin{proposition}
$Q_S(M)$ with $Q_S(M)_d = F_{-d}S^{-1}\Omega$ is a standard fragment with respect to $F^+S^{-1}A$. Moreover, $Q_S(M) \cap \Omega = M$ and $Q_S(M)_d \cap \Omega = M_d, d \geq 0$.
\end{proposition}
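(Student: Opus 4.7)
The plan is to leverage the strongly-filtered structure produced by \lemref{stronglyfiltered}, together with its module-version extension. Since $S^{-1}A$ carries a strongly filtered filtration, the module lemma (the $X = FX$ lemma just above the proposition) applied to $FX = F^+S^{-1}\Omega$ yields that $F^+S^{-1}\Omega$ is itself strongly filtered over $F^+S^{-1}A$. This immediately makes $Q_S(M)_d = F_{-d}S^{-1}\Omega$ a descending chain with $F_nS^{-1}A \cdot F_{-m}S^{-1}\Omega \subset F_{n-m}S^{-1}\Omega$ for all $n,m$, and in particular with $F_nS^{-1}A \cdot Q_S(M)_n \subset Q_S(M) = F_0S^{-1}\Omega$.

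Given this, the fragment axioms $\bf{f_1}$, $\bf{f_2}$, $\bf{f_3}$ are essentially formal: the operations $\varphi_i \colon F_iS^{-1}A \times Q_S(M)_i \to Q_S(M)$ are the restrictions of the $S^{-1}A$-action on $S^{-1}\Omega$, and all commutativity diagrams reduce to the module axioms of $S^{-1}\Omega$. In particular, $Q_S(M)$ is a glider fragment inside $S^{-1}\Omega$, so only the reduced form of $\bf{f_3}$ is needed. Standardness follows from strongly-filteredness, since $F_1S^{-1}A \cdot F_{-n}S^{-1}\Omega = F_{-n+1}S^{-1}\Omega$ is precisely $F_1S^{-1}A \cdot Q_S(M)_n = Q_S(M)_{n-1}$.

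The substantial step is the identification $Q_S(M)_d \cap \Omega = M_d$ for $d \geq 0$ (whose $d=0$ case recovers $Q_S(M) \cap \Omega = M$). The inclusion $M_d \subset Q_S(M)_d$ is immediate: since the filtration on $A$ is positive, $F_{-1}A = 0$, hence $1 \in S \cap \dot{F}_0A$, and for $m \in M_d = f_{-d}\Omega$ we have $1 \cdot m \in f_{0-d}\Omega$, placing $m$ in $F_{-d}S^{-1}\Omega$. For the converse, take a nonzero $\omega \in \Omega \cap F_{-d}S^{-1}\Omega$. Because $B(M) = \cap_n M_n = 0$ and $f\Omega$ is exhaustive and separated, there is a unique $e \in \mathbb{Z}$ with $\omega \in \dot{f}_e\Omega := f_e\Omega \setminus f_{e-1}\Omega$, so $\sigma(\omega) \in G_f(\Omega)_e$ is nonzero. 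Pick $s \in S \cap \dot{F}_pA$ with $s\omega \in f_{p-d}\Omega$. By the assumed $\sigma(S)$-torsion freeness of $G_f(\Omega)$, the product $\sigma(s)\sigma(\omega) \in G_f(\Omega)_{e+p}$ is nonzero; since this product equals $\sigma(s\omega)$ by the definition of multiplication in $G_f(\Omega)$, we conclude $s\omega \in \dot{f}_{e+p}\Omega$. Combined with $s\omega \in f_{p-d}\Omega$, this forces $e + p \leq p - d$, i.e.\ $e \leq -d$, hence $\omega \in f_{-d}\Omega = M_d$.

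The main obstacle is exactly this last step: the argument relies on the existence of a well-defined \emph{sharp} filtration degree for each nonzero $\omega \in \Omega$, which is where the hypotheses $B(M) = 0$, separatedness, and $\sigma(S)$-torsion freeness of $G_f(\Omega)$ all enter simultaneously. This mirrors the well-definedness argument for the quotient filtration in \lemref{stronglyfiltered}; once set up, everything else is bookkeeping.
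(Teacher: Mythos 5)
Your proof is correct and follows the paper's own approach: the fragment and standardness claims come from the strongly-filtered structure of $S^{-1}\Omega$ (via \lemref{stronglyfiltered}, the module lemma preceding the proposition, and \exaref{zeropart}), and the equality $Q_S(M)_d \cap \Omega = M_d$ follows from $\sigma(S)$-torsion-freeness of $G_f(\Omega)$ forcing multiplication by $s$ not to lower filtration degree. The only addition is that you explicitly verify the inclusion $M_d \subset Q_S(M)_d$ via $1 \in S$, which the paper leaves implicit.
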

\begin{proof}
The first statement follows from \exaref{zeropart}. If $\omega \in Q_S(M) \cap \Omega$ then $s\omega \in F_p\Omega$ for some $s$ with $\deg\sigma(s) = p$. Since multiplication by $s$ cannot lower the degree of an element of $\Omega$ ($G(\Omega)$ is $\sigma(S)$-torsion free!) it follows that $\omega \in F_0\Omega = M$. Similar, if $\omega \in Q_S(M)_d \cap \Omega$, $s\omega \in F_{p -d}\Omega$, hence $\omega \in F_{-d}\Omega, d \geq 0$, i.e. $w \in M_d$.
\end{proof}

\begin{remark}
In \cite{NVo2}, the authors independently define $Q_S(M)$ and observe afterwards that this corresponds with the degree zero part $F_0S^{-1}\Omega$. If one additionally assumes $M$ to be finitely generated, one can take the completion to obtain $Q_S(M)~\widehat{} = Q^q_S(\Omega)$, the quantum localization of $\Omega$ at $\sigma(S)$. Since localizing at an Ore set is perfect, we get a filtration on $Q^\mu_S(\Omega)$ (see \cite{LiVo}) and $Q_S(M)~\widehat{}~$~becomes a fragment with respect to the positive filtration on this microlocalization.
\end{remark}
\begin{observation}
For $w \in F_0S^{-1}\Omega$ we have $s_p\omega \in F_p\Omega$ for some positive $p$, but with respect to $S^{-1}$ in $S^{-1}A$ we have that for some $s_n$ with $ \deg s_n = n > p$, $s_n^{-1}s_p\omega \in F_{p-n}\Omega = M_{n-p}$. Thus $Q_S(M)/M$ need not be $S$-torsion but it is $S^{-1}$-torsion, since $s_n^{-1}s_p \in S^{-1}$ ($S^{-1}$ is the multiplicative set generated by elements of $S$ and their inverses).\\
Observe moreover that $Q_S(M)$ is natural, since $S^{-1}A$ is strongly filtered. 
\end{observation}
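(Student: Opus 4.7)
The observation consists of two independent assertions: that $Q_S(M)/M$ is $S^{-1}$-torsion (with $S^{-1}$ the multiplicative submonoid of $S^{-1}A$ generated by $S$ together with the inverses of its elements), although not necessarily $S$-torsion, and that $Q_S(M)$ is natural with respect to $F^+S^{-1}A$. I would treat the two claims separately.

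For the torsion part, given $w \in Q_S(M) = F_0S^{-1}\Omega$, the definition of the quotient filtration produces an $s_p \in S \cap \dot F_p A$ with $s_p w \in F_p\Omega$. Using $S \not\subseteq B$ and multiplicative closure of $S$, I pick $s_n \in S \cap \dot F_n A$ with $n > p$, and set $t = s_n^{-1}s_p \in F_{p-n}S^{-1}A$. Then $t$ belongs to the monoid $S^{-1}$, and from $t \cdot F_0S^{-1}\Omega \subset F_{p-n}S^{-1}\Omega = Q_S(M)_{n-p}$ one gets $tw \in Q_S(M)_{n-p}$. To identify this with $M_{n-p}$, I would factor $s_pw$ as $s_n m'$ with $m' \in M_{n-p} \subset \Omega$, using that $S^{-1}\Omega$ is strongly filtered (via \lemref{stronglyfiltered} together with the lemma immediately preceding the observation), so that $tw = m' \in M_{n-p}$. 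This exhibits $t \in S^{-1}$ killing $\bar w$ in $Q_S(M)/M$, hence the quotient is $S^{-1}$-torsion. That it need not be $S$-torsion is seen from the fact that every $s \in S$ has $\deg\sigma(s) \geq 1$ (by $S \cap B = 1$), so multiplication by $s$ strictly raises the filtration and sends $w$ into $F_{\deg s}\Omega$, generically landing outside $M = F_0\Omega$; a Weyl-algebra or polynomial example exhibits this concretely.

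For the naturality claim, my plan is to show $Q_S(M)_d = Q_S(M)_d^*$ for every $d \geq 0$, where the star is defined as $\{z \in Q_S(M) : F_d S^{-1}A \cdot z \subset Q_S(M)\}$. One inclusion is immediate from $F_dS^{-1}A \cdot F_{-d}S^{-1}\Omega \subset F_0S^{-1}\Omega = Q_S(M)$. For the reverse, I would exploit the strongly filtered structure of $S^{-1}A$ (\lemref{stronglyfiltered}): choose any $s_d \in S \cap \dot F_d A$, so that $s_d^{-1} \in F_{-d}S^{-1}A$. If $z \in Q_S(M)_d^*$, then $s_d z \in Q_S(M) = F_0S^{-1}\Omega$, and therefore $z = s_d^{-1}(s_d z) \in F_{-d}S^{-1}A \cdot F_0S^{-1}\Omega \subset F_{-d}S^{-1}\Omega = Q_S(M)_d$. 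This gives the opposite inclusion and hence naturality.

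The main obstacle is verifying the factorisation $s_p w = s_n m'$ with $m' \in \Omega$ itself, rather than merely in $S^{-1}\Omega$: the strongly filtered decomposition in $S^{-1}\Omega$ a priori only guarantees $m' \in F_{p-n}S^{-1}\Omega$. Overcoming this requires combining the Ore property of $S$ in $A$ with the assumed $\sigma(S)$-torsion-freeness of $G_f(\Omega)$ to force the component $m'$ into filtration degree $p-n$ of the original $\Omega$, which by the proposition preceding the observation equals $M_{n-p}$.
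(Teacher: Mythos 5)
Your naturality argument is essentially right and tracks the paper's terse justification ("since $S^{-1}A$ is strongly filtered"), but it contains a small presumption: you pick $s_d \in S \cap \dot{F}_dA$, and such an element of exactly degree $d$ need not exist (e.g.\ $S$ might only contain elements of even degree). The fix is easy and more in the spirit of the strong-filtration assumption: from $F_0S^{-1}A = F_{-d}S^{-1}A \cdot F_dS^{-1}A$ one gets, for $z \in Q_S(M)_d^*$,
\[
z \in F_0S^{-1}A\cdot z = F_{-d}S^{-1}A \cdot F_dS^{-1}A\cdot z \subset F_{-d}S^{-1}A\cdot F_0S^{-1}\Omega \subset F_{-d}S^{-1}\Omega = Q_S(M)_d,
\]
with no need to single out an $s_d$. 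This gives $Q_S(M)_d^* \subset Q_S(M)_d$; the converse inclusion you already have.

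For the torsion claim the gap you flag is real and, in my view, not fillable by the route you sketch. Strong filtration on $S^{-1}\Omega$ only gives $s_n^{-1}s_p w \in F_{p-n}S^{-1}\Omega = Q_S(M)_{n-p}$, and $Q_S(M)_{n-p}$ is in general strictly larger than $M_{n-p}$; the Proposition preceding the Observation only identifies their \emph{intersection with} $\Omega$, i.e.\ $Q_S(M)_{n-p} \cap \Omega = M_{n-p}$. To land in $M_{n-p}$ itself you would need $s_pw \in s_n\Omega$, a divisibility statement inside $\Omega$ for which the Ore property of $S$ and $\sigma(S)$-torsion-freeness of $G_f(\Omega)$ give no leverage: these control degrees, not integrality over $\Omega$. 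Notice also that the Observation itself carries no proof and is plausibly conflating the filtration $F_{p-n}\Omega$ on $\Omega$ (which equals $M_{n-p}$) with the quotient filtration $F_{p-n}S^{-1}\Omega$ on $S^{-1}\Omega$ (which equals $Q_S(M)_{n-p}$); once that distinction is kept, the displayed computation establishes only $s_n^{-1}s_pw \in Q_S(M)_{n-p}$, not $s_n^{-1}s_pw \in M$. So if you want $Q_S(M)/M$ to be $S^{-1}$-torsion in the literal sense (every class killed into $M$ by an element of $S^{-1}$), that assertion requires an argument genuinely beyond what is on the page, and your proposal does not supply one. You are right to single this out as the crux; at present it remains open rather than resolved.
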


We call $Q_S(M)$ the quotient fragment of $M$ over $S^{-1}A$. In the next section we will discuss how we can localize glider fragments at more general kernel functors. We chose to treat this particular case independently, since many geometrical examples are standard filtered rings with graded associated being a commutative domain (e.g. rings of differential operators with the $\Sigma$-filtration).

\section{Quotient filtrations}

We begin by introducing the quotient filtration on the localization of a filtered ring at a kernel functor $\kappa$ on $R$-mod; this is inherent in \cite{SVo} but there it is applied for saturated localizations because one is mainly interested in lifting a localization from the associated graded ring to arrive at microlocalizations at multiplicatively closed sets $S$ of a filtered ring $FR$. For details on filtered rings we refer to \cite{LiVo} and for generalities on localization one can look at \cite{VerVo}. In general, however, it is not possible to lift the kernel functor $\kappa$ to kernel functors on the Rees or associated graded level. Bearing the (noncommutative) geometry of filtered rings in mind, we therefore develop the theory starting from either some kernel functor on $R$-mod or in a second approach from a kernel functor $\widetilde{\kappa}$ on the Rees ring $\widetilde{R}$ of a filtered ring $FR$, in which case we will obtain a nice relation between the graded and filtered levels.\\

Consider a filtered ring $R$ with Zariskian filtration $FR$. We write $G(R)$ for the associated graded ring and $\widetilde{R}$ for the (graded) Rees ring of $FR$; we use notation and terminology from \cite{LiVo}.\\
A filtration $FM$ on an $R$-module $M$ will always be assumed exhaustive, i.e. $\cup_n F_nM = M$; $FR$ will always be assumed to be separated, i.e. $\cap_n F_nR = 0$.  The associated graded $G(R)$-module of $FM$ will be denoted by $G(M)$, the Rees module by $\widetilde{M}$. Since $FR$ is separated, there exists for homogenous $\widetilde{r} \in \widetilde{R}$ a unique integer $n$ such that $\widetilde{r} = rX^{n(r)}$, where $r \in \dot{F}_nR = F_{n}R - F_{n - 1}R$. We denote this integer by $n = \deg(r)$. Recall: $R{\rm-filt} \to \mc{F}_X, M \mapsto \widetilde{M}$ is an equivalence of categories, where $\mc{F}_X$ denotes the full subcategory of $X$-torsionfree $\widetilde{R}$-modules. Here, $X$ stands for the central element of degree 1 such that $\widetilde{R} = \sum_n F_nRX^n \subset R[X,X^{-1}]$.\\
Let $R$ be a Zariski filtered ring and let $\kappa$ be a localization functor (i.e. a kernel functor) on $R$-mod given by its Gabriel filter $\mc{L}(\kappa)$. One may define 
$$\mc{L}(G(\kappa)) = \{ J {\rm~left~ideal~of~}G(R), J \supset G(L) {\rm~for~some~} L \in \mc{L}(\kappa)\}$$
 and show that this gives rise to a left exact preradical $G(\kappa)$ on $G(R)$-gr in general and to a kernel functor if $G(R)$ is a commutative  domain.
 
\begin{lemma}
$\mc{L}(G(\kappa))$ defines a topology on $G(R)$, i.e. the associated functor $G(\kappa)$ is a kernel functor. If $G(R)$ is a commutatve domain, then $\mc{L}(G(\kappa))$ is a Gabriel filter, i.e. $G(\kappa)$ is an idempotent kernel functor.
\end{lemma}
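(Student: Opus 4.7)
The statement decomposes into two parts. First I verify the three topology axioms of the preliminaries for $\mc{L}(G(\kappa))$ in full generality, which gives a left exact preradical; then, under the hypothesis that $G(R)$ is a commutative domain, I verify the fourth (radical/idempotency) axiom.

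\medskip

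For the three topology axioms: axiom (1) is immediate from the definition. For axiom (2), given $J_i \supseteq G(L_i)$ with $L_i \in \mc{L}(\kappa)$, the elementary inclusion $F_n(L_1 \cap L_2) \subseteq F_nL_1 \cap F_nL_2$ yields $G(L_1 \cap L_2) \subseteq G(L_1) \cap G(L_2) \subseteq J_1 \cap J_2$, and $L_1 \cap L_2 \in \mc{L}(\kappa)$. For axiom (3), given $J \supseteq G(L)$ and $r \in G(R)$, I would decompose $r = r_{d_1} + \cdots + r_{d_k}$ into homogeneous parts. Since $G(L)$ is graded, $\bigcap_i (G(L):r_{d_i}) \subseteq (G(L):r) \subseteq (J:r)$, reducing the problem to homogeneous $r = \sigma(\rho)$ with $\rho \in F_dR$. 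The axiom (3) for $\kappa$ furnishes $(L:\rho) \in \mc{L}(\kappa)$, and the key inclusion $G((L:\rho)) \subseteq (G(L):r)$ follows from: for $\sigma(s) \in G((L:\rho))_n$ one has $s\rho \in L \cap F_{n+d}R$, whence the product in $G(R)$ is $\sigma(s) \cdot r = \sigma(s\rho) \in G(L)_{n+d}$ if the degree is attained and is $0 \in G(L)$ otherwise. A finite intersection via axiom (2) then places $(J:r)$ in $\mc{L}(G(\kappa))$.

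\medskip

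For the radical property when $G(R)$ is a commutative domain, I verify the equivalent axiom: if a left ideal $J$ and some $I \in \mc{L}(G(\kappa))$ satisfy $(J:r) \in \mc{L}(G(\kappa))$ for all $r \in I$, then $J \in \mc{L}(G(\kappa))$. A preliminary reduction replaces $J$ by its largest graded subideal $J^{\gr}$: for homogeneous $r$ one has $(J^{\gr}:r) = (J:r)^{\gr}$, and this still contains the graded ideal $G(L_r)$. Pick $L_0 \in \mc{L}(\kappa)$ with $G(L_0) \subseteq I$, and for each homogeneous generator $r = \sigma(\rho_r)$ of $G(L_0)$ choose a lift $\rho_r \in L_0$ and $L_r \in \mc{L}(\kappa)$ with $G(L_r) \subseteq (J^{\gr}:r)$. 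Assemble
\[
L := \sum_r L_r \rho_r \subseteq R.
\]
The commutative domain assumption enters precisely here: in a commutative domain $G(R)$ one has $\sigma(s)\sigma(\rho_r) = \sigma(s\rho_r)$ with no degree drop for nonzero symbols, so every homogeneous piece of $G(L)$ is a sum of products $\sigma(s_j)\sigma(\rho_{r_j}) \in J^{\gr}$, giving $G(L) \subseteq J^{\gr} \subseteq J$. It remains to show $L \in \mc{L}(\kappa)$, which I would prove by induction on filtration degree combined with idempotency of $\kappa$: for $a \in L_0 \cap F_nR$, using a lift of the symbol decomposition $\sigma(a) = \sum c_i r_i$ in $G(L_0)_n$, write $a = \sum c_i' \rho_{r_i} + a'$ with $a' \in L_0 \cap F_{n-1}R$, so that $(L:a) \supseteq \bigcap_i (L:\rho_{r_i}) \cap (L:a')$ lies in $\mc{L}(\kappa)$ inductively (using $(L:\rho_{r_i}) \supseteq L_{r_i}$); idempotency of $\kappa$ applied to $L_0 \in \mc{L}(\kappa)$ then promotes this to $L \in \mc{L}(\kappa)$.

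\medskip

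The main obstacle is the final paragraph: producing a single global witness $L \in \mc{L}(\kappa)$ with $G(L) \subseteq J$, rather than a mere collection of local witnesses $L_r$. This requires the commutative domain hypothesis on $G(R)$, which guarantees that $\sigma$ is multiplicative on nonzero products and so lets the $G(R)$-side bookkeeping be lifted faithfully back to $R$, together with the idempotency of $\kappa$ on $R$-mod, which glues the local data $\{L_r\}$ into the single witness $L$.
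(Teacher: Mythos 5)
Your verification of the three topology axioms runs parallel to the paper's (the same reduction to homogeneous $\ov{y}$ in axiom~(3), the same use of $(L:\rho) \in \mc{L}(\kappa)$), and your preliminary reduction to $J^{\gr}$ is a sound addition the paper doesn't bother with. The radical step is structurally in the same spirit — Noetherianness of $G(R)$ to get finitely many generators of $G(L_0)$, commutativity of $G(R)$ to control degrees, idempotency of $\kappa$ to glue — but the witness ideal you construct creates a gap that the paper's choice avoids.

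The paper's witness is the product $JI$ with $J = \bigcap_k J_{i_k}$, a product of two members of $\mc{L}(\kappa)$, hence in $\mc{L}(\kappa)$ for free (Gabriel filters are closed under products of left ideals). Your witness $L = \sum_r L_r\rho_r$ is not of product form, so $L \in \mc{L}(\kappa)$ needs a separate argument, and the induction you propose is not well-founded. A Zariskian filtration need not be bounded below (the $\mathfrak{m}$-adic filtration on a Noetherian local ring is Zariskian, separated, and has $F_{-n}R = \mathfrak{m}^n \neq 0$ for all $n$), so the decomposition $a = \sum_i c_i'\rho_{r_i} + a'$ with $a' \in L_0 \cap F_{n-1}R$ has no base case. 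There is also a small bookkeeping slip: from $a = \sum_i c_i'\rho_{r_i} + a'$ one gets $(L:a) \supseteq \bigcap_i (L:c_i'\rho_{r_i}) \cap (L:a')$, and $(L:c_i'\rho_{r_i}) \supseteq (L_{r_i}:c_i')$; the coefficients $c_i'$ must be fed through axiom~(3), not dropped by quoting only $(L:\rho_{r_i}) \supseteq L_{r_i}$.

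Both defects are repairable, and the repair points to why the paper's route is cleaner. Since $FR$ is Zariskian, the induced filtration on $L_0$ is good, so a finite set $\rho_1,\dots,\rho_m \in L_0$ whose symbols generate $G(L_0)$ can be chosen to generate $L_0$ itself as a left ideal. Then every $a \in L_0$ is $\sum_r c_r\rho_r$ with no remainder term, $(L:a) \supseteq \bigcap_r (L_r : c_r) \in \mc{L}(\kappa)$, and the idempotency axiom for $\kappa$ applied to $L_0$ yields $L \in \mc{L}(\kappa)$ with no induction at all. Alternatively, intersect the $L_r$ into a single $J' := \bigcap_r L_r \in \mc{L}(\kappa)$ and take the witness $J'L_0 \in \mc{L}(\kappa)$ by closure under products, with $G(J'L_0) \subseteq G(J')G(L_0) \subseteq \sum_r G(L_r)\sigma(\rho_r) \subseteq J^{\gr}$ — this is precisely the paper's argument. (Both your $G(L) \subseteq J^{\gr}$ and the paper's $G(J)G(I) = G(JI)$ quietly pass over whether the symbol of a sum or product lies in the graded ideal generated by the symbols when degree cancellation occurs; since the paper takes this for granted too, I am not charging it against you.)
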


\begin{proof}
a) If $J \in \mc{L}(G(\kappa))$ and $M \supset J$ then obviously $M \in  \mc{L}(G(\kappa))$.\\
b) If $I, J \in  \mc{L}(G(\kappa))$ then $I \cap J \supset G(H_1) \cap G(H_2) \supset G(H_1 \cap H_2)$ for some $H_1,H_2 \in \mc{L}(\kappa)$ with $G(H_1) \subset I$ and $G(H_2) \subset J$. Thus $I\cap J \in  \mc{L}(G(\kappa))$.\\
c) If $L \in \mc{L}(G(\kappa))$ and $\ov{y} \in G(R)$ then we have to find $ H \in \mc{L}(G(\kappa))$ such that $H\ov{y} \subset L$ and it is sufficient to do this for homogeneous $\ov{y}$ since $G(R)$ is graded, say $y \in \dot{F}_{\deg(y)}R$. Since $L \supset G(I)$ for some $I \in \mc{L}(\kappa)$, there exists an $H \in \mc{L}(\kappa)$ such that $Hy \subset I$. For homogeneous $\ov{h} \in G(H)$, say $h \in \dot{F}_{\deg(h)}R$ is a representative for $\ov{h}$, it holds that $\ov{h}\ov{y} = \ov{hy}\delta_{\deg(hy),\deg(h)+\deg(y)} \in G(R)_{\deg(h)+\deg(y)}$ ($hy \in \dot{F}_{\deg(hy)}R$, with $\deg(yh) \leq \deg(y) + \deg(h)$). Thus $G(H)\ov{y} \subset G(Hy) \subset G(I) \subset L$ with $G(H) \in \mc{L}(G(\kappa))$ as desired.\\
Assume now that $G(R)$ is a domain. Let $L \in \mc{L}(G(\kappa))$ and $H \subset L$ such that $L/H$ is $G(\kappa)$-torsion. We have to establish that $H \in \mc{L}(G(\kappa))$. If $I \in \mc{L}(\kappa)$ is such that $G(I) \subset L$ then for every $\ov{i} \in G(I)$ there is a $G(J_i) \in \mc{L}(G(\kappa))$ such that $G(J_i)\ov{i} \subset H$. Since $FR$ is Zariskian, $G(R)$ is Noetherian, so $G(I)$ is finitely generated, say by $\ov{i_1},\ldots, \ov{i_m}$. Then $J = \cap_{k=1}^m J_{i_k} \in \mc{L}(\kappa)$ satisfies $G(J)G(I) = G(JI) \subset H$ ($G(R)$ commutative domain!) We conclude that $H \in \mc{L}(G(\kappa))$ since the Gabriel filter $\mc{L}(\kappa)$ is closed under multiplication.\end{proof}

Remark that if $G(R)$ is a domain and $\mc{L}(\kappa)$ is symmetric, then the same conclusion holds. Fortunately, we only need the notion of $G(\kappa)$- or $\ov{\kappa}$-torsion, so in fact we can forget about the above lemma and simply define

\begin{definition}
Let $M$ be a graded $G(R)$-module. An element $m \in M$ is called $\ov{\kappa}$-torsion if there exists $I \in \mc{L}(\kappa)$ such that $G(I)m = 0$.  $\ov{\kappa}(M)$ is the set of $\ov{\kappa}$-torsion elements of $M$. If $\ov{\kappa}(M) = 0$, we say that  $M$ is $\ov{\kappa}$-torsion free.
\end{definition}

\begin{lemma}\lemlabel{1}
Let $M$ be a filtered $R$-module. Then $G(\kappa(M)) \subset \ov{\kappa}(G(M))$. This induces an epimorphism 
$$\xymatrix{ G(M/\kappa(M)) \ar@{>>}[r] & G(M)/\ov{\kappa}G(M)}$$
\end{lemma}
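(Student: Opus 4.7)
Proof plan.

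The first ingredient is to equip $\kappa(M)$ with the induced filtration $F_n\kappa(M)=\kappa(M)\cap F_nM$ and $M/\kappa(M)$ with the quotient filtration $F_n(M/\kappa(M))=(F_nM+\kappa(M))/\kappa(M)$, so that $G(\kappa(M))$ sits inside $G(M)$ degreewise via the natural inclusion $F_n\kappa(M)/F_{n-1}\kappa(M)\hookrightarrow F_nM/F_{n-1}M$, and $G(M)\twoheadrightarrow G(M/\kappa(M))$ is a graded epimorphism. Both filtrations are chosen precisely so that the short exact sequence $0\to\kappa(M)\to M\to M/\kappa(M)\to 0$ is strict, and I will invoke the standard fact (which is part of the machinery of filtered modules from \cite{LiVo}) that applying $G$ to a strict exact sequence of filtered modules yields an exact sequence of graded $G(R)$-modules; equivalently, one checks directly that the kernel of $G(M)\to G(M/\kappa(M))$ in degree $n$ consists of classes $\overline{m}$ with $m\in F_nM\cap(F_{n-1}M+\kappa(M))$, which coincides with the image of $F_n\kappa(M)/F_{n-1}\kappa(M)$. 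In particular
$$G(M)/G(\kappa(M))\;\cong\;G(M/\kappa(M)).$$

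For the inclusion $G(\kappa(M))\subset\overline{\kappa}(G(M))$, it suffices to treat a homogeneous element $\overline{m}\in G(\kappa(M))_n$ since $\overline{\kappa}(G(M))$ is a graded submodule. Pick a representative $m\in F_n\kappa(M)\subset\kappa(M)$; since $m\in\kappa(M)$ there is a left ideal $I\in\mathcal{L}(\kappa)$ with $Im=0$. Equipping $I$ with the induced filtration, a homogeneous element $\overline{i}\in G(I)_p$ has a representative $i\in I\cap F_pR$, so $im\in F_{p+n}M$ and $im=0$, whence the graded product $\overline{i}\cdot\overline{m}$, defined as the class of $im$ in $F_{p+n}M/F_{p+n-1}M$, vanishes. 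Hence $G(I)\overline{m}=0$, and since $G(I)\in\mathcal{L}(\overline{\kappa})$ by the very definition of $\mathcal{L}(G(\kappa))$, we conclude $\overline{m}\in\overline{\kappa}(G(M))$.

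Combining the two ingredients gives the desired epimorphism: the composition
$$G(M/\kappa(M))\;\cong\;G(M)/G(\kappa(M))\;\twoheadrightarrow\;G(M)/\overline{\kappa}(G(M))$$
is well defined and surjective because $G(\kappa(M))\subset\overline{\kappa}(G(M))$ by the preceding paragraph, and surjectivity is inherited from $G(M)\twoheadrightarrow G(M)/\overline{\kappa}(G(M))$.

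The only delicate point is the first step, namely that the sequence of associated graded modules is exact; but this is not really an obstacle because strictness of the embedding $\kappa(M)\hookrightarrow M$ and of the quotient map $M\twoheadrightarrow M/\kappa(M)$ for the induced/quotient filtrations is immediate. The conceptual content of the lemma is the annihilator argument in the second paragraph: if $Im=0$ then every graded piece of $G(I)$ annihilates the principal symbol of $m$, which is exactly what one needs to transfer $\kappa$-torsion from the filtered level to $\overline{\kappa}$-torsion on the associated graded.
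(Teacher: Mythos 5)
Your proof is correct and follows essentially the same route as the paper: the key step is the annihilator argument showing that if $Im=0$ for some $I\in\mc{L}(\kappa)$ then $G(I)\ov{m}=0$, which is precisely what the paper does. You spell out the strictness of the sequence $0\to\kappa(M)\to M\to M/\kappa(M)\to 0$ and the resulting exactness of $G$ more explicitly than the paper, but this is implicit there and does not constitute a different method.
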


\begin{proof}
Let $\ov{n} \in G(\kappa(M))_{\deg(n)}$ and take a representative $n \in \dot{F}_{\deg(n)}\kappa(M)$. There exists $I \in \mc{L}(\kappa)$ such that $In = 0$, hence $G(I)\ov{n} = 0$ in $G(M)$.
\end{proof}
Observe that we do not know whether $G(M)/\ov{\kappa}G(M)$ is $\ov{\kappa}$-torsion free, since we do not know in general whether $\ov{\kappa}$ is radical! In order to define a filtration on the localization, we need the notion of $\kappa$-separatedness.
\begin{definition}
Let $\kappa$ be a kernel functor with associated filter $\mc{L}(\kappa)$ and let $M$ be a filtered $R$-module. If for some $I \in \mc{L}(\kappa)$ such that if $m \in \dot{F}_nM$ with $F_\gamma Im \subset F_{\gamma + n- 1}M$ for all $\gamma$ implies that $m \in \kappa(M)$, then we say that $M$ is $\kappa$-separated.
\end{definition}
In fact, we do not need that $\kappa$ is radical, that is, the definition can also be applied to left exact preradicals. 

\begin{lemma}\lemlabel{2}
If $M/\kappa(M)$ is $\kappa$-separated, then $G(\kappa(M)) = \ov{\kappa}(G(M))$ and $G(M/\kappa(M)) = G(M)/\ov{\kappa}G(M)$.
\end{lemma}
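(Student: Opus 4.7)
My plan is to upgrade the epimorphism $G(M/\kappa(M)) \twoheadrightarrow G(M)/\ov{\kappa}G(M)$ of \lemref{1} to an isomorphism by establishing the reverse inclusion $\ov{\kappa}G(M) \subset G(\kappa(M))$. Since $\ov{\kappa}G(M)$ is a graded submodule of $G(M)$ (because every $G(I)$ is graded for $I \in \mc{L}(\kappa)$), it suffices to treat homogeneous elements; both identities will then follow quickly.

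Given a nonzero homogeneous $\ov{m} \in \ov{\kappa}G(M)$ of degree $n$, I would pick a representative $m \in \dot{F}_nM$ and a witness $I \in \mc{L}(\kappa)$ with $G(I)\ov{m} = 0$. Spelling this out in homogeneous pieces gives $F_\gamma I \cdot m \subset F_{\gamma+n-1}M$ for every $\gamma$: for $i \in \dot{F}_\gamma I$ this records $\ov{i}\,\ov{m}=0$ in $G(M)_{\gamma+n}$, while for $i \in F_{\gamma-1}I$ it is automatic from the module axiom $F_{\gamma-1}R \cdot F_nM \subset F_{\gamma+n-1}M$. Projecting along $\pi : M \to M/\kappa(M)$ preserves this estimate, so $F_\gamma I \cdot \pi(m) \subset F_{\gamma+n-1}(M/\kappa(M))$ for every $\gamma$.

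Here I would invoke $\kappa$-separatedness of $M/\kappa(M)$: were $\pi(m) \in \dot{F}_n(M/\kappa(M))$, the definition would force $\pi(m) \in \kappa(M/\kappa(M)) = 0$, contradicting $\pi(m) \notin F_{n-1}(M/\kappa(M))$. Hence $\pi(m) \in F_{n-1}(M/\kappa(M))$, so one can write $m = m' + k$ with $m' \in F_{n-1}M$ and $k \in \kappa(M)$. Then $k = m - m' \in F_nM \cap \kappa(M) = F_n\kappa(M)$ and $\ov{m} = \ov{k} \in G(\kappa(M))_n$, yielding the reverse inclusion and thus the first identity.

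For the second identity, I would observe that with the induced filtration $F_n\kappa(M) = F_nM \cap \kappa(M)$ and the quotient filtration on $M/\kappa(M)$, the short exact sequence $0 \to \kappa(M) \to M \to M/\kappa(M) \to 0$ is strict; applying $G$ to a strict short exact sequence yields the exact sequence $0 \to G(\kappa(M)) \to G(M) \to G(M/\kappa(M)) \to 0$, so $G(M/\kappa(M)) \cong G(M)/G(\kappa(M)) = G(M)/\ov{\kappa}G(M)$ by the first identity. The main obstacle is genuinely the first step: one must translate the graded torsion datum $G(I)\ov{m}=0$ into the precise filtration inequality demanded by the definition of $\kappa$-separatedness, and then apply that hypothesis on $M/\kappa(M)$ (where $\kappa(M/\kappa(M)) = 0$ is what makes the conclusion usable) rather than on $M$ itself; the rest is formal bookkeeping with strict filtrations.
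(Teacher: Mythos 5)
Your proof is correct and follows the same overall strategy as the paper's: reduce to the inclusion $\ov{\kappa}G(M)\subset G(\kappa(M))$, pick a homogeneous $\ov{m}$ of degree $n$ with witness $I$, derive the estimate $F_\gamma I\,\pi(m)\subset F_{\gamma+n-1}(M/\kappa(M))$, and apply $\kappa$-separatedness of $M/\kappa(M)$. Two genuine differences are worth noting. First, to get the estimate the paper invokes the Zariskian hypothesis to produce a good filtration $F_\gamma I=\sum_j F_{\gamma-n_j}R\,i_j$ and then sums the bounds $i_jm\in F_{n+\deg(i_j)-1}M$, whereas you obtain the same bound more directly by the dichotomy $F_\gamma I = \dot F_\gamma I \sqcup F_{\gamma-1}I$: the first piece is handled by the vanishing $\sigma(i)\ov{m}=0$ (valid because $G(I)\hookrightarrow G(R)$ is injective, the inclusion $I\hookrightarrow R$ being strict for the induced filtration), and the second is automatic from the module axiom. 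Your route is more elementary and shows this step does not actually need the Zariskian assumption. Second, the paper's endgame asserts $z\in\kappa(M)$ outright; strictly speaking the separatedness hypothesis only rules out $\pi(m)\in\dot F_n(M/\kappa M)$, so what one really concludes is $\pi(m)\in F_{n-1}(M/\kappa M)$, from which $\ov{m}\in G(\kappa(M))_n$ follows by the decomposition $m=m'+k$, $m'\in F_{n-1}M$, $k\in F_nM\cap\kappa(M)$ that you spell out. Your version makes this last step explicit and is the cleaner account of what the paper intends. The derivation of the second identity from the first via strict exactness of $0\to\kappa(M)\to M\to M/\kappa(M)\to 0$ matches the paper exactly.
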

\begin{proof}
Let $\ov{z} \in \ov{\kappa}(G(M))_n$ and let $I \in \mc{L}(\kappa)$ such that $G(I)\ov{z} = 0$. Let $z \in \dot{M}_n$ be a representative for $\ov{z}$. Since $FR$ is Zariskian, the induced filtration on $I$ is good, so there exist $i_1,\ldots,i_k \in I$ and $n_1,\ldots, n_k$ such that $F_nI = \sum_{j=1}^k F_{n - n_j}Ri_j$ for all $n$. Moreover, $\deg(i_j) \leq n_j$ (we even have equality if this is a minimal generating set, since $FR$ is Zariskian). For any $j$, $\ov{i_j}\ov{z} = 0$ in $G(M)_{n + \deg(i_j)}$, i.e. $i_jz \in F_{n + \deg(i_j)- 1}M$. Hence 
$$F_\gamma I\ov{z} = \sum_{j=1}^k F_{\gamma - n_j}R i_j\ov{z} \subset \sum_{j=1}^k F_{\gamma - n_j}R F_{n + \deg(i_j)- 1}M/\kappa(M) \subset F_{\gamma + n- 1}M/\kappa(M).$$
Since $M/\kappa(M)$ is $\kappa$-separated, $z \in \kappa(M)$, or $\ov{z} \in G(\kappa(M))$.
\end{proof}

\begin{example}
In general, we don't have that $G(\kappa(M)) = \ov{\kappa}(G(M))$. Consider for example the ring $R = \mathbb{C}[X,Y]/(XY-1)$ with standard filtration. Then the associated graded ring is $G(R) = \mathbb{C}[x,y]/(xy)$ with $x = \ov{X}, y = \ov{Y}$. Let $S = S_X$ be the multiplicative set $\{1,X,X^2,\ldots\}$. Then $\ov{\kappa_S}$ is given by the multiplicative set $\ov{S} = S_x = \{1,x,x^2,\ldots\}$ in $G(R)$. Since $R$ is a domain, we have that $G(\kappa_S(R)) = G(0) = 0$. But $\kappa_{\ov{S}}(G(R)) = (y)$. 
\end{example}

\begin{lemma}\lemlabel{tf}
If $G(M)$ is $\ov{\kappa}$-torsion free then $M$ is $\kappa$-separated.
\end{lemma}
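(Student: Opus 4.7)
The plan is to pass the hypothesis to the associated graded and get a contradiction from $\overline{\kappa}$-torsion freeness. Unwinding the definition, I assume $G(M)$ is $\overline{\kappa}$-torsion free and must show: whenever $m \in \dot{F}_nM$ and there exists $I \in \mc{L}(\kappa)$ with $F_\gamma I \cdot m \subset F_{\gamma + n - 1}M$ for all $\gamma$, then $m \in \kappa(M)$. The strategy is to show that under the torsion freeness assumption, no such $m$ can have $\overline{m} \neq 0$ in $G(M)_n$, so the implication holds (in fact vacuously for $m \notin \kappa(M)$).

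Next, I would take the principal symbol $\overline{m} \in G(M)_n$, which is nonzero precisely because $m \in \dot{F}_n M = F_nM \setminus F_{n-1}M$. For any homogeneous element $\overline{i} \in G(I)_\gamma$, choose a representative $i \in I \cap \dot{F}_\gamma R$. Since the filtration on $I$ is the one induced from $FR$, $i \in F_\gamma R \cap I = F_\gamma I$, so by hypothesis $im \in F_\gamma I \cdot m \subset F_{\gamma + n - 1}M$. Hence the image of $im$ in $G(M)_{\gamma + n}$ vanishes, and by definition of the product in the associated graded this gives $\overline{i}\cdot \overline{m} = 0$. Since $G(I)$ is generated as an abelian group by its homogeneous components, this yields $G(I)\cdot \overline{m} = 0$ with $G(I) \in \mc{L}(G(\kappa))$, i.e.\ $\overline{m} \in \overline{\kappa}(G(M))$.

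Finally, the assumption $\overline{\kappa}(G(M)) = 0$ forces $\overline{m} = 0$, contradicting $m \in \dot{F}_nM$. So the hypothesis of the $\kappa$-separatedness condition cannot be satisfied by any $m$ with $\overline{m} \neq 0$; the only $m$ for which the degree-lowering condition $F_\gamma I m \subset F_{\gamma + n - 1}M$ can hold are those whose symbol vanishes, which is inconsistent with $m \in \dot{F}_nM$. Therefore $M$ is $\kappa$-separated.

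The only mild subtlety is making sure the homogeneous lift $i \in I \cap \dot{F}_\gamma R$ actually lies in $F_\gamma I$ so that the hypothesis applies directly; this is automatic from the definition of the induced filtration on $I$ and requires no extra argument. No appeal to the Zariskian property, good filtrations, or radicality of $\overline{\kappa}$ is needed for this implication.
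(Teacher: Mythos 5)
Your argument is correct and follows essentially the same route as the paper: pass to symbols, observe that the degree-dropping hypothesis $F_\gamma I m \subset F_{\gamma+n-1}M$ forces $G(I)\ov{m}=0$, and invoke $\ov{\kappa}$-torsion freeness to kill $\ov{m}$, contradicting $m \in \dot{F}_nM$. The paper's version is terser (it just asserts "Then clearly $G(I)\ov{m}=0$") and adds the side remark that $\kappa(M)=0$ under these hypotheses, but the core reasoning is identical to yours.
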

\begin{proof}
Let $ m \in \dot{F}_nM$, $\ov{m} = \sigma(m) \in G(M)_n$ and assume that for $I \in \mc{L}(\kappa)$ and all $\gamma, F_\gamma I m \subset F_{\gamma + n - 1}M$. Then clearly: $G(I) \ov{m} = 0$. Since $G(M)$ is $\ov{\kappa}$-torsion free, $\ov{m} = 0$, a contradiction, unless $m = 0 \in \kappa(M)$. In fact $\kappa(M) = 0$ because if $Jm = 0$ for some $J \in \mc{L}(\kappa)$ then $J_\gamma m = 0 \subset F_{\gamma + n - 1}M$ for $m \in \dot{F}_nM$ ($m \neq 0$). As above, this leads to a contradiction.
\end{proof}
\begin{corollary}\corlabel{almostcom}
If $G(R)$ is a commutative domain then $R$ is $\kappa$-separated for every kernel functor (or more general, for every preradical) $\kappa$ on $R$-mod.
\end{corollary}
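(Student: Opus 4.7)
The plan is to derive the corollary immediately from \lemref{tf} applied with $M=R$. That is, it suffices to verify that $G(R)$, viewed as a graded module over itself, is $\ov{\kappa}$-torsion free; \lemref{tf} will then yield that $R$ is $\kappa$-separated without any further work.

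So I would pick an arbitrary $\ov{r} \in \ov{\kappa}(G(R))$ and, by definition, produce some $I \in \mc{L}(\kappa)$ with $G(I)\ov{r}=0$. The key observation is that $G(I)\neq 0$ as soon as $I \neq 0$: any nonzero $i \in I$ lies in a unique $\dot{F}_pR = F_pR \setminus F_{p-1}R$ by exhaustivity and separatedness of $FR$, so $\sigma(i)=i+F_{p-1}R$ is a nonzero homogeneous element of $G(I)_p \subset G(R)_p$. Specializing $G(I)\ov{r}=0$ to this $\sigma(i)$, and using that $G(R)$ is a \emph{commutative domain} with $\sigma(i)\neq 0$, forces $\ov{r}=0$. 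Hence $\ov{\kappa}(G(R))=0$, which is exactly the hypothesis of \lemref{tf}.

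The only situation not handled above is the degenerate case where $I=0$ belongs to $\mc{L}(\kappa)$; this is equivalent to $\kappa(R)=R$, and then every $m \in R$ already lies in $\kappa(R)$, so the defining implication of $\kappa$-separatedness is vacuously true. Finally, the extension from kernel functors to arbitrary left exact preradicals $\kappa$ requires no modification: both the definition of $\kappa$-separatedness and the proof of \lemref{tf} only refer to membership in $\mc{L}(\kappa)$ and to $\ov{\kappa}$-torsion, and the symbol-submodule argument above is insensitive to the radical property. I do not anticipate any real obstacle here; the whole corollary ultimately rests on the elementary observation that in a separated filtration a nonzero ideal has a nonzero symbol submodule, together with the defining property of a domain.
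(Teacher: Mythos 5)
Your proof is correct and takes the same route as the paper, which leaves the corollary as an unproved consequence of \lemref{tf}: you apply that lemma with $M=R$ and then spell out why $G(R)$ being a domain forces $\ov{\kappa}(G(R))=0$, namely that any nonzero $I\in\mc{L}(\kappa)$ has a nonzero principal symbol in $G(R)$, which then cannot annihilate a nonzero element. The separate handling of $0\in\mc{L}(\kappa)$ and the remark that the argument works equally for left exact preradicals are both accurate. One small observation worth noting: your argument never actually uses commutativity of $G(R)$, only that it is a (possibly noncommutative) domain; the commutativity hypothesis is included in the statement because of the surrounding ``almost commutative'' context (e.g.\ it is used earlier in the section to guarantee that $\mc{L}(G(\kappa))$ is a Gabriel filter), not because this particular corollary requires it.
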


Before we state and prove the theorem on localizations in our first approach, we deduce the existence of a strong characteristic variety, which will turn out to be a closed set in $R$-tors such that the localizations remain separated.\\

For rings of differential operators $R$, see \cite{Bj}, \cite{LiVo}, and a filtered $R$-module $M$ the characteristic variety of $M$ is defined over the commutative ring $G(R)$ by taking $\ann_{G(R)}G(M)= I$ and letting the characteristic variety $\chi(M)$ be given by $V(I)$ in $\Spec~G(R)$. For a prime $P \in \chi(M)$, we have that $G(R) - P \cap I = \emptyset$, but this is a more general statement than to say that $G(M)$ is $\ov{\kappa}_P$-torsion free, the statement we want by \lemref{tf}. So we have to look at a smaller set of prime ideals. Now in the generality considered here, i.e. both $R$ and $G(R)$ being noncommutative, where we may assume $R$ to be positively filtered and $G(R)$ Noetherian (hence $FR$ is a Zariskian filtration), we introduce the \emph{strong characteristic variety} for an arbitrary separated filtered $R$-module, $M$ say, as a subset $V = \xi(M)$ in $G(R)$-tors. The following theorem will yield a separated quotient filtration on the localization of $M$ at $\kappa$ to which $\ov{\kappa} \in \xi(M)$ in $G(R)$-tors is associated.\\

Define $\Ann(M)$ with script $\mc{A}$ as the set
 $$ \Ann(M) = \{ \ov{L},~\ov{L} {\rm~left~ideal~of~} G(R),~\ov{L}\ov{m} = 0 {\rm~for~some~} \ov{m} \in G(M)\}.$$
 Define $\xi(M) = V \subset G(R)$-tors by putting 
 $$V = \{ \ov{\kappa},~ \mc{L}(\ov{\kappa}) \cap \Ann(M) = \emptyset\}.$$
 Thus we have for $\ov{\kappa} \in V$ that $G(M)$ is $\ov{\kappa}$-torsion free, hence $M$ is $\kappa$-separated by \lemref{tf}. Observe that $\ov{\kappa(M)} = \ov{\kappa}(G(M))$ and $G(M / \kappa M) = G(M)/\ov{\kappa}G(M)$ by \lemref{2}. Now $V$ is closed in the gen-topology of $G(R)$-tors (see Section 4 for the definition of the gen-topology). Indeed, for $\ov{\tau} \in G(R)-\tors - V$ we have $\mc{L}(\tau) \cap \Ann(M) \neq \emptyset$ or $\ov{\tau}(G(M)) \neq 0$. If $\ov{\gamma} \geq \ov{\tau}$ then $\ov{\gamma}(G(M)) \neq 0$ too or $\ov{\gamma} \in G(R)-\tors - V$, proving that $\gen(\ov{\tau}) \subset G(R)-\tors - V$, i.e. that $G(R)-\tors - V$ is gen-open, and thus $V$ is gen-closed. If $\kappa \in R-\tors$ is such that $\ov{\kappa} \in V$ (observe that the whole construction works for $\ov{\kappa}$ a left exact preradical, then $V$ is constructed in the noncommutative topology $G(R)$-pretors) then $\kappa \in V_R$ (i.e. those $\tau \in R-\tors$ with $\ov{\tau} \in V \subset G(R)$-pretors). Hence by the following theorem, $Q_\kappa(M)$ has a separated quotient filtration. Moreover, $G(\kappa(M)) = \ov{\kappa}(G(M)) =0$ yields that $\kappa(M) = 0$ as $\kappa(M)$ is separated filtered by $FM \cap K(M)$.\\
 
Going back to the classical setting of rings of differential operators, let $P \in \chi(M) - \xi(M)$ (where we associated $P$ to the torsion theory $\kappa_P$). We still obtain a filtered localization $S^{-1}M$, where $S$ is an Ore set yielding $\ov{S} = G(R) - P$, but the filtration is no longer separated. Indeed, lifts of $\ov{S}$-torsion elements are in the core of the filtration (see proof of theorem). However, $P \in \chi(M)$, so the core of the filtration is not the whole of the localized module! Hence, even in the classical setting of almost commutative rings there are questions of how both characteristic varieties are related. This is work in progress, but we include an example
\begin{example}
Let $A_1 = \mathbb{C}<X,\partial_X>/(X\partial_X - \partial_XX -1)$ be the first Weyl algebra and consider the holonomic module $M = A_1/A_1P$, where $P = X\partial_X$. The sigma-filtration on $A_1$ is good and Zariskian, so we have that the induced filtration on $M$ is good and \newline $G(M) = \mathbb{C}[X,\ov{\partial_X}]/(X\ov{\partial_X})$. Hence the characteristic variety is $\chi(M) = V((X\ov{\partial_X}))$ and in particular $(X) \in \chi(M)$. However, $(X) \notin \xi(M)$, because $\ov{\partial_X} \in G(A_1) - (X)$ kills $X \in G(M)$. So in general, the characteristic variety is strictly bigger than the strong characteristic variety.
\end{example}

\begin{theorem} \thelabel{locfilt}
Let $FM$ be a filtered module over the Zariskian filtered ring $FR$ such that $M/\kappa(M)$ is $\kappa$-separated. The localized module $Q_\kappa(M)$ has a filtration $FQ_\kappa(M)$ making the localization morphism $j_\kappa: M \to Q_\kappa(M)$ into a strict filtered morphism. Similar for $FR$ such that $R/\kappa(R)$ is $\kappa$-separated, $j_\kappa: R \to Q_\kappa(R)$ is a strict filtered morphism and a filtered ring morphism. Moreover, $Q_\kappa(M)$ with the quotient filtration defined here is a filtered $Q_\kappa(R)$-module with respect to the quotient filtration on $Q_\kappa(R)$. 
\end{theorem}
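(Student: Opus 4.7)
The plan is to define the quotient filtration on $Q_\kappa(M)$ via the description $Q_\kappa(M) = \varinjlim_{I \in \mc{L}(\kappa)} \Hom_R(I, M/\kappa(M))$ and then verify its properties by transporting the Ore-set bookkeeping of \lemref{stronglyfiltered} to the torsion-theoretic setting. Throughout, $M/\kappa(M)$ and $R/\kappa(R)$ carry the canonical quotient filtrations $F_n(M/\kappa(M)) = (F_nM + \kappa(M))/\kappa(M)$, which are exhaustive because $FM$ is.

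I declare $q \in F_n Q_\kappa(M)$ precisely when there exist $I \in \mc{L}(\kappa)$ and a representative $\phi : I \to M/\kappa(M)$ of $q$ with $\phi(F_\gamma I) \subset F_{\gamma+n}(M/\kappa(M))$ for every $\gamma \in \mathbb{Z}$. That each $F_n Q_\kappa(M)$ is an additive subgroup and that the $F_n$ form an ascending chain is immediate. Exhaustivity uses that $FR$ is Zariskian: the induced filtration on any $I \in \mc{L}(\kappa)$ is good, so $I$ admits finitely many homogeneous generators $i_1,\ldots,i_k$ of degrees $n_1,\ldots,n_k$, and one picks $n$ large enough so that $\phi(i_j) \in F_{n+n_j}(M/\kappa(M))$ for each $j$.

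The crucial point is separatedness. Assume $q \in \bigcap_{n \in \mathbb{Z}} F_n Q_\kappa(M)$ and fix a representative $\phi_0 : I_0 \to M/\kappa(M)$. For each $N \geq 0$, the hypothesis $q \in F_{-N}Q_\kappa(M)$ supplies a degree-$(-N)$ representative which agrees with $\phi_0$ on a common $\kappa$-dense ideal $K_N \subset I_0$, forcing $\phi_0(F_\gamma K_N) \subset F_{\gamma-N}(M/\kappa(M))$ for all $\gamma$. Given $i_0 \in I_0$ of degree $d$ with image $\bar m_0 := \phi_0(i_0) \in \dot F_e(M/\kappa(M))$, the Gabriel-filter ideal $J_N := (K_N : i_0) \in \mc{L}(\kappa)$ satisfies $F_\gamma J_N \cdot \bar m_0 \subset F_{\gamma+d-N}(M/\kappa(M))$. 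Choosing $N = \max(1,\, d-e+1)$ makes the right-hand side sit inside $F_{\gamma+e-1}(M/\kappa(M))$ for every $\gamma$, whereupon $\kappa$-separatedness of $M/\kappa(M)$ forces $\bar m_0 \in \kappa(M/\kappa(M)) = 0$. Running this for every $i_0$ gives $\phi_0 = 0$, hence $q=0$.

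Strictness of $j_\kappa : M \to Q_\kappa(M)$ follows in both directions: for $m \in F_nM$ the canonical representative $r \mapsto r[m]$ has degree $n$, so $j_\kappa(F_nM) \subset F_nQ_\kappa(M)$; conversely, matching a degree-$n$ representative of $j_\kappa(m)$ against $r \mapsto r[m]$ on a common ideal $J \in \mc{L}(\kappa)$ yields $F_\gamma J \cdot [m] \subset F_{\gamma+n}(M/\kappa(M))$, and the same separatedness argument forces $[m] \in F_n(M/\kappa(M))$, i.e.\ $m \in F_nM + \kappa(M)$. The ring case is the instance $M=R$; to see that multiplication preserves filtration degrees, compose representatives: if $\phi : I \to R/\kappa(R)$ has degree $n$ and $\psi : J \to M/\kappa(M)$ has degree $m$, then on the $\kappa$-dense ideal $I' = \{i \in I : \phi(i) \text{ acts into } J\}$ one obtains a degree-$(n+m)$ representative $i \mapsto \psi(\phi(i))$ of the product, yielding $F_nQ_\kappa(R)\cdot F_mQ_\kappa(R) \subset F_{n+m}Q_\kappa(R)$ and likewise $F_nQ_\kappa(R)\cdot F_mQ_\kappa(M) \subset F_{n+m}Q_\kappa(M)$. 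I expect the separatedness step to be the main obstacle, since that is the only place where $\kappa$-separatedness is genuinely used; all remaining assertions are formal manipulations with representatives.
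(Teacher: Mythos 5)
Your proposal is correct and follows essentially the same route as the paper's proof: both define the filtration on $Q_\kappa(M)$ by the condition $F_\gamma I q \subset F_{\gamma+n}(M/\kappa(M))$ for some $I\in\mc{L}(\kappa)$, use the Zariskian hypothesis to get a good (finitely generated) filtration on ideals in $\mc{L}(\kappa)$ for exhaustivity, invoke $\kappa$-separatedness of $M/\kappa(M)$ for both strictness of $j_\kappa$ and separatedness of the quotient filtration, and obtain multiplicativity by composing representatives via an $(I:x)$-type ideal. The direct-limit/representative phrasing you chose is just the $\Hom$-description of the same localization filtration the paper writes via $Ix\subset M$.
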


\begin{proof}
We have a strict exact sequence $0 \to \kappa M \to M \to M/\kappa M \to 0$, hence by exactness of $G$ on strict sequences we obtain an exact sequence
$$ 0 \to G(\kappa M) \to G(M) \to G( M/ \kappa M) \to 0$$
of graded $G(R)$-modules. Since $M/\kappa(M)$ is $\kappa$-separated, $G(\kappa M) = \ov{\kappa}G(M)$ and $G(M/\kappa M)$ equals $G(M)/\ov{\kappa}G(M)$. This allows us to reduce the situation to the case where $M$ is $\kappa$-torsion free and the canonical map $j_\kappa: M \to Q_\kappa(M)$ is injective.\\
For $x \neq 0$ in $Q_\kappa(M)$ there is an $I \in \mc{L}(\kappa)$ such that $Ix \subset M$. Since $FR$ is Zariskian, the filtration induced by $FR$ on $I$ is a good filtration, hence for all $n \in \mathbb{Z}, F_nI = \sum_{i=1}^s F_{n - d_i}R\zeta_i$, for $\zeta_1,\ldots,\zeta_s \in I$ with $\deg \sigma(\zeta_i) \leq d_i \in \mathbb{Z}$. Then $\zeta_i x \in F_{d_i + \gamma_i}M$ for some $\gamma_i \in \mathbb{Z}$. Put $\gamma = \max\{\gamma_i, i = 1,\ldots,s\}$, then $Ix \subset M$ and for all $n \in \mathbb{Z}$
\begin{equation} \label{fn}
  F_nIx \subset F_{n+\gamma}M,
  \end{equation}
 because $F_nI x \subset \sum F_{n - d_i}R\zeta_i x \subset F_{n - d_i + d_i + \gamma}M = F_{n +\gamma}M$.\\
Since $FM$ is separated (the original $FM$ was separated, so $FM/\kappa M$ is separated as well) and $Ix \neq 0$ as $Q_\kappa(M)$ is $\kappa$-torsion free, there is a minimal $\gamma$ such that for all $n, F_nIx \subset F_{n+\gamma}M$. For this $\gamma$ we have that there is an $n \in \mathbb{Z}$ such that $F_nI x\subset F_{n+\gamma}M - F_{n + \gamma - 1}M$. Let us check that $\gamma$ depends on $x$ but not on the $J \in \mc{L}(\kappa)$ chosen. Pick $\zeta \in F_nI$, $n$ as just mentioned, such that $\zeta x \in F_{n+\gamma}M - F_{n + \gamma - 1}M$ and now look at $J \in \mc{L}(\kappa)$ such that $Jx \subset M$, with $F_\alpha J x \subset F_{\alpha + \gamma - 1 }M$ for all $\alpha \in \mathbb{Z}$. Then $(J : \zeta) \in \mc{L}(\kappa)$ and for all $m\in \mathbb{Z}$, 
$$F_m(J : \zeta) \zeta x \subset (F_{m + n}J)x \subset F_{m+n + \gamma - 1}M.$$
Since $M$ is $\kappa$-separated, $\zeta x \in \kappa(M) = 0$, a contradiction. So we define
\begin{eqnarray*}
F_\gamma Q_\kappa(M) &=& \{q \in Q_\kappa(M),  \exists I \in \mc{L}(\kappa) {\rm~such~that~} F_nIq \subset F_{n+ \gamma}M {\rm~for~ all~} n \in \mathbb{Z}\}\\
&=& \{ q \in Q_\kappa(M), v(q) \leq \gamma\},
\end{eqnarray*}
where the \emph{filtration degree} $v$ is given by $v(q) = \gamma$ where $\gamma$ is as constructed above. It is obvious that $F_\gamma Q_\kappa(M)$ defines an ascending chain of additive subgroups of $Q_\kappa(M)$. Now first look at $M=R$ and $x,y \in Q_\kappa(R)$ such that $I_1x \subset R, I_2y \subset R$ and both satisfying \eqref{fn}. Put $J = (I_2:x)$, then for all $n \in \mathbb{Z}$: $F_n(J \cap I_1)x \subset F_{n + v(x)}R \cap I_2 = F_{n + v(x)}I_2$, and also
$$F_n(J \cap I_1)xy \subset F_{n+v(x)}I_2y \subset F_{n + v(x) + v(y)}R.$$
Hence $FQ_\kappa(R)$ makes $Q_\kappa(R)$ into a filtered ring. That $F_nR \subset F_nQ_\kappa(R)$ is obvious (we reduced to the torsion free case), on the other hand $E_n = F_nQ_\kappa(R) \cap R$ is the $\kappa$-closure of $F_nR$ in $R$, say $y \in E_n, Iy \subset R$ with $F_\alpha Iy \subset F_{\alpha + n}R$ for all $\alpha \in \mathbb{Z}$. If $y \notin F_nR$, say $y \in \dot{F}_\delta R$ with $\delta > n$, then $F_\alpha I y \subset F_{\alpha + \delta - 1}R$ as $n \leq \delta - 1$. By $\kappa$-separatedness, $y = 0$ a contradiction. Hence $F_nQ_\kappa(R) \cap R = F_nR$ follows. This establishes that $j_\kappa: R \to Q_\kappa(R)$ is a strict filtered map of rings (note that $R \to R/\kappa R$ is already strict so the restriction to the $\kappa$-torsion free case did not harm this).\\
Now for $x \in Q_\kappa(R), y \in Q_\kappa(M)$ with $I_1x \subset R, I_2y \subset M$ both satisfying \eqref{fn}, put $J = (I_2:x)$ and as in the preceding argument it follows that $Q_\kappa(M)$ is a filtered $Q_\kappa(R)$-module. Also the proof that $F_nQ_\kappa(M) \cap M = F_nM$ goes through in the same way as for $R$. Finally observe that $FQ_\kappa(M)$ (resp. $FQ_\kappa(R)$) is separated. Indeed, put $E = \cap F_nQ_\kappa(M)$, then $E \cap M = \cap F_nM = 0$, but for $z \in E, Iz \subset M$ for some $I \in \mc{L}(\kappa)$, thus $Iz \subset E \cap M = 0$ contradicting that $Q_\kappa(M)$ is $\kappa$-torsion free, unless $E = 0$. 
\end{proof}

\begin{example}\exalabel{glider}
Suppose we are in the `almost commutative' case, that is $G(R)$ is a commutative domain (then $Q_\kappa(R)$ has a filtration for every kernel functor $\kappa$ by \corref{almostcom}). Let $M$ be a glider representation, $M \subset \Omega = RM$. We consider on $\Omega$ the filtration $f\Omega$ given by
\begin{equation} \label{filtratienietstandaard}
\begin{array}{l}
F_n\Omega =\sum_{i - j =n} F_iRM_j,\\
F_{-n}\Omega = M_n, \quad n \geq 0.
\end{array}
\end{equation} The associated graded $G_f(\Omega)$ has for the negative part $g(M) = \oplus_{i \geq 0}M_i/M_{i+1}$ and this is itself a fragment with respect to the positive part of the grading filtration of $G(A)$ (see \cite{NVo1}). If $\Omega / \kappa(\Omega)$ is $\kappa$-separated, e.g. when $G_f(\Omega / \kappa(\Omega))$ is $\ov{\kappa}$-torsion free, or when $G_f(\Omega)$ is a faithful $G(R)$-module, then the degree 0 part $F_0Q_\kappa(\Omega)$ becomes a fragment w.r.t the positive part of the filtration $F^+Q_\kappa(R)$ by \exaref{zeropart}. We call $Q_\kappa(M) := F_0Q_\kappa(\Omega)$ the localized fragment of $M$ w.r.t the kernel functor $\kappa$.\\
As a particular case, if $S$ is an Ore set in a standard filtered ring $FR$, we obtain the quotient fragment $Q_S(M)$ from the previous section. In fact the standard assumption is no longer needed in our more general setting.
\end{example}

\begin{example}\exalabel{coordinate} ~\\
$(i)$  Let $V$ be an affine variety with coordinate ring $\Gamma(V)$. Then one considers the ring of differential operators $D(V)$ with the $\Sigma$-filtration. This is a standard filtered ring with $F_0D(V) = \Gamma(V)$ and with graded associated being a commutative domain. In forthcoming work, we will exploit this setting more, investigating for example the link between Ore sets in $D(V)$ and $\Gamma(V)$ (as the previous section uncovered) or the characteristic variety of a glider fragment. For details on rings of differential operators we refer to \cite{Bj}.\\
$(ii)$ Let $V$ and $W$ be varieties embedded in resp. $\mathbb{A}_n(K), \mathbb{A}_m(K)$ such that $\Gamma(V) \subset \Gamma(W)$,
$\Gamma(V) = K[a_1,\ldots, a_n], \Gamma(W) = K[a_1,\ldots,a_m][b_1,\ldots, b_{m-n}]$. We view the $V$-filtration on $\Gamma(W)$ given by
$$F_0\Gamma(W) = \Gamma(V),$$
$$F_1\Gamma(W) = \Gamma(V)b_1 + \cdots + \Gamma(V)b_{n-m} = \Gamma(V)[B^1],$$
where $B^1 = Kb_1 + \cdots + Kb_{n-m}$, $B= \{b_1,\ldots, b_{n-m}\}, \Gamma(W) = \Gamma(V)[B]$. For $n \geq 1$ we define
$$F_n\Gamma(W) = \Gamma(V)[B^n],$$
where $B^n = \sum_{i_1,\ldots, i_n} Kb_{i_1}\ldots b_{i_n}$, $b_{i_j} \in B, j =1, \ldots, n$.\\
$W$-glider representations over $V$ are then by definition $M \subset \Omega$, $\Omega$ a $\Gamma(W)$-module, $M$ an $F\Gamma(W)$-fragment structure induced by the $\Gamma(W)$-module $\Omega$.\\
There are other interesting geometric filtrations on $\Gamma(W)$, e.g. the ring filtration $\Gamma(V) \subset \Gamma(V)[b_1] \subset \ldots \subset \Gamma(V)[b_1, \ldots, b_d] = \Gamma(W)$, where each $\Gamma(V)[b_1,\ldots, b_i]$ corresponds to a $W_i$ with $\Gamma(V) \subset \Gamma(W_i) \subset \Gamma(W)$ and $W_i$ embedded in $\mathbb{A}_{n+i}(K)$ say, $W = W_d \to W_{d-1} \to \ldots \to W_1 \to V$. This commutative theory of glider representations has to be connected to the geometric properties of $V$ and $W$, this is work in progress.
\end{example}

Let $M$ be a filtered $R$-module. A submodule $N$ of $M$ is said to be closed if $\cap_\gamma N + F_\gamma M = N$. If $FM$ is good and if $FR$ is Zariskian then for every $N \subset M$ we have that $N$ is closed (see \cite{LiVo}). In particular if $FM$ is good then $\kappa M$ is closed and therefore $M/\kappa M$ with the induced filtration is separated. Indeed if $y\in M$ maps to $F_\gamma(M/ \kappa M)$ for all $\gamma$, then $y \in \kappa M + F_\gamma M$ for all $\gamma$, hence $y \in \cap_\gamma \kappa M + F_\gamma M = \kappa M$ and $y$ maps to zero. Without the assumption that $FM$ is good we have
\begin{proposition}
If $M$ is $\kappa$-separated then $\kappa M$ is closed and $M/\kappa M$ is separated.
\end{proposition}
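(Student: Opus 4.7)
The plan is to reduce both conclusions to a single statement and then obtain a contradiction via the $\kappa$-separatedness hypothesis.

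First, I would observe that the two conclusions are essentially equivalent. The induced filtration on $M/\kappa M$ is given by $F_\gamma(M/\kappa M) = (F_\gamma M + \kappa M)/\kappa M$, so
\[
\bigcap_\gamma F_\gamma(M/\kappa M) = 0 \iff \bigcap_\gamma (F_\gamma M + \kappa M) \subset \kappa M,
\]
and the right-hand inclusion is exactly the closedness of $\kappa M$. So the whole proof boils down to showing $\bigcap_\gamma (F_\gamma M + \kappa M) = \kappa M$.

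To show this I would argue by contradiction. Pick $y \in \bigcap_\gamma(F_\gamma M + \kappa M)$ with $y \notin \kappa M$; in particular $y \neq 0$, so by exhaustivity (and separatedness of $FM$) there is a minimal $n$ with $y \in \dot F_n M = F_n M \setminus F_{n-1}M$. Applying the hypothesis at $\gamma = n-1$, decompose $y = k + f$ with $k \in \kappa M$ and $f \in F_{n-1}M$. Choose any $I \in \mathcal{L}(\kappa)$ annihilating $k$, equip $I$ with the induced filtration $F_\alpha I = I \cap F_\alpha R$, and compute, for every $\alpha \in \mathbb{Z}$,
\[
F_\alpha I \cdot y \;=\; F_\alpha I \cdot (k+f) \;=\; F_\alpha I \cdot f \;\subset\; F_\alpha R \cdot F_{n-1}M \;\subset\; F_{\alpha+n-1}M.
\]
Since $y \in \dot F_n M$ and $I \in \mathcal{L}(\kappa)$ realises this uniform degree drop, the $\kappa$-separatedness of $M$ forces $y \in \kappa M$, contradicting the choice of $y$. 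This establishes closedness of $\kappa M$, and the separatedness of $M/\kappa M$ follows by the equivalence noted above.

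The only step requiring real care is the selection of a single ideal $I$ that produces the degree drop simultaneously for every filtration level $F_\alpha I$. This is precisely what is needed to invoke the definition of $\kappa$-separatedness, and my plan handles it by choosing $I$ from a single decomposition $y = k+f$ at the critical value $\gamma = n-1$; then $Iy = If$ inherits the lower degree of $f$ automatically, regardless of which piece $F_\alpha I$ of $I$ is used. The appeal to separatedness of $FM$ to extract the integer $n$ is the other subtle point, but it is consistent with the standing conventions of the section.
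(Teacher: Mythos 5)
Your proof is correct and follows essentially the same route as the paper: reduce both conclusions to $\bigcap_\gamma(F_\gamma M + \kappa M) = \kappa M$, pick $y$ in the intersection with $y \notin \kappa M$, extract the degree $n$ from separatedness of $FM$, decompose $y = k+f$ at level $n-1$, annihilate $k$ by some $I \in \mathcal{L}(\kappa)$, and apply $\kappa$-separatedness. The paper's phrasing chooses an arbitrary $\gamma \le n-1$ rather than specifically $\gamma = n-1$, but the estimate and the invocation of the hypothesis are identical.
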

\begin{proof}
Suppose $y \in \cap_\gamma \kappa M + F_\gamma M$ and $y \notin \kappa M$. Then $F_\gamma M \neq 0$ even $F_\gamma M \not\subset \kappa M$ for all $\gamma$. Since $FM$ is separated, $y \in \dot{F}_nM$ for some $n \in \mathbb{Z}$. Take $\gamma \leq n-1, y \in \kappa M + F_\gamma M$, say $y = t_\gamma + f_\gamma$ where $t_\gamma \in \kappa M, f_\gamma \neq 0$ in $F_\gamma M$. Choose $I$ such that $It_\gamma = 0$ and look at $F_\tau Iy = F_\tau I f_\gamma \subset F_{\tau + \gamma}M \subset F_{\tau + n - 1}M$. This holds for all $\tau \in \mathbb{Z}$, so since $M$ is $\kappa$-separated, $y \in \kappa M$ follows. Thus $\kappa M$ is closed and $M/\kappa M$ is separated for the induced filtration from $FM$. 
\end{proof}
\begin{corollary}\corlabel{sep}
If $M$ is $\kappa$-separated, then $FQ_\kappa(M)$ is a separated filtration.
\end{corollary}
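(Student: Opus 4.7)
The plan is to reduce to \theref{locfilt}, whose concluding paragraph already establishes separatedness of $FQ_\kappa(M)$ provided its hypothesis that $M/\kappa(M)$ is $\kappa$-separated holds. The preceding Proposition delivers half of what is needed, namely that $\kappa M$ is closed and $M/\kappa M$ is separated for the induced filtration, so the main task is to upgrade the assumption ``$M$ is $\kappa$-separated'' to the same property for $M/\kappa M$, after which the theorem applies directly.

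To see that $M/\kappa M$ inherits $\kappa$-separatedness, I would take $\bar m \in \dot F_n(M/\kappa M)$ and $I \in \mc{L}(\kappa)$ with $F_\gamma I \bar m \subset F_{\gamma+n-1}(M/\kappa M)$ for all $\gamma$, and lift $\bar m$ to some $m \in \dot F_n M \setminus \kappa M$ (possible because $\bar m \notin F_{n-1}(M/\kappa M)$). This only yields the weaker inclusion $F_\gamma I m \subset F_{\gamma+n-1}M + \kappa M$, and the heart of the argument is to absorb the $\kappa M$-slack into a new ideal in $\mc{L}(\kappa)$. Using that $FR$ is Zariskian, $I$ is finitely generated with good filtration $F_\gamma I = \sum_k F_{\gamma-d_k}R\cdot i_k$; writing $i_k m = f_k + t_k$ with $f_k \in F_{d_k+n-1}M$ and $t_k \in \kappa M$, choose $L_k \in \mc{L}(\kappa)$ annihilating $t_k$ and set $J = \sum_k L_k i_k$. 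One verifies $J \in \mc{L}(\kappa)$ by examining the short exact sequence $0 \to I/J \to R/J \to R/I \to 0$, since each $R i_k / L_k i_k \cong R/(L_k + (0:i_k))$ is $\kappa$-torsion and $R/I$ is too. The good filtration on $J$ combined with $L_k t_k = 0$ then gives, for $j = \sum_k l_k i_k \in F_\gamma J$ with $l_k \in F_{\gamma-d_k}L_k$, the inclusion $jm = \sum_k l_k f_k \in F_{\gamma+n-1}M$; so $F_\gamma J m \subset F_{\gamma+n-1}M$ for every $\gamma$. Applying $\kappa$-separatedness of $M$ to $m$ and $J$ forces $m \in \kappa M$, contradicting the choice of lift, hence $\bar m = 0$.

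With $M/\kappa M$ now $\kappa$-separated, \theref{locfilt} applies to produce the filtration $FQ_\kappa(M)$, and its final paragraph yields separatedness: for $z \in E := \bigcap_\gamma F_\gamma Q_\kappa(M)$ and any $I \in \mc{L}(\kappa)$ with $Iz \subset M/\kappa M$, strictness of $j_\kappa$ together with the preceding Proposition give $Iz \subset E \cap (M/\kappa M) = \bigcap_\gamma F_\gamma(M/\kappa M) = 0$, so $z \in \kappa(Q_\kappa(M)) = 0$. The main obstacle is the construction and verification of $J$: one cannot merely intersect the $L_k$ because cross terms $r l_k$ with $r \in R$ need not remain in $L_k$, and it is precisely the product-form $L_k i_k$ together with the Zariskian good-filtration decomposition that makes the bookkeeping on degrees produce a clean $F_\gamma J m \subset F_{\gamma+n-1}M$ rather than the weaker $F_{\gamma+n-1}M + \kappa M$.
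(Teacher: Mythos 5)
Your final paragraph essentially reproduces the paper's own argument: starting from $E := \bigcap_\gamma F_\gamma Q_\kappa(M)$, using strictness of $j_\kappa$ and the preceding Proposition to get $E \cap M/\kappa M = \bigcap_\gamma F_\gamma(M/\kappa M) = 0$, and then deducing $E = 0$ from $\kappa$-torsion-freeness of $Q_\kappa(M)$. That part is correct, and the paper does exactly this without further comment. You are right that the paper is a bit cavalier about the hypothesis: the filtration $FQ_\kappa(M)$ is only furnished by \theref{locfilt} under the assumption that $M/\kappa M$ is $\kappa$-separated, whereas the Corollary only posits $\kappa$-separatedness of $M$; noticing this mismatch is a genuine point.

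However, the middle paragraph you supply to bridge the two conditions has a gap in the degree bookkeeping. Having lifted $\bar m$ to $m \in \dot F_n M$ with $F_\gamma I\, m \subset F_{\gamma+n-1}M + \kappa M$, you set $J = \sum_k L_k i_k$ and then assert that $F_\gamma J\, m \subset F_{\gamma+n-1}M$ because elements of $F_\gamma J$ can be written as $\sum_k l_k i_k$ with $l_k \in F_{\gamma - d_k}L_k$. But the filtration that the definition of $\kappa$-separatedness requires is the filtration induced on $J$ by $FR$, namely $F_\gamma J = J \cap F_\gamma R$, and this need not be expressible as $\sum_k F_{\gamma-d_k}L_k\, i_k$. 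The Zariskian hypothesis tells you the induced filtration on $J$ is good, i.e. of the form $\sum_l F_{\gamma - e_l}R\, j_l$ for \emph{some} generators $j_l$, but those generators have no reason to respect the product decomposition $L_k i_k$. An arbitrary $j \in J \cap F_\gamma R$ can certainly be written as $\sum_k l_k i_k$ with $l_k \in L_k$, but there is then no control on the filtration degrees of the $l_k$: cancellation among the summands $l_k i_k$ may make the total degree of $j$ much smaller than $\max_k(\deg l_k + d_k)$. Thus the chain $jm = \sum_k l_k f_k \in F_{\gamma+n-1}M$ fails, and you only establish the inclusion for the (possibly proper) subset $\sum_k F_{\gamma-d_k}L_k\, i_k \subset F_\gamma J$. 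Without this degree control, $\kappa$-separatedness of $M$ cannot be invoked for the pair $(m,J)$, and the proposed implication ``$M$ $\kappa$-separated $\Rightarrow$ $M/\kappa M$ $\kappa$-separated'' is not established by this route. (In the paper this implication is only proved later, in the $\widetilde{\kappa}$-pseudo-affine setting, by passing to the associated graded via \lemref{sep2}; the common use-case for \corref{sep} is when $G(M)$ is $\ov{\kappa}$-torsion free, in which case $\kappa M = 0$ and the two conditions coincide, rendering the bridge unnecessary.)
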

\begin{proof}
If $\cap_\gamma F_\gamma Q_\kappa(M) = E$, then $E \cap M/ \kappa M = \cap F_\gamma (M / \kappa M)$. By the proposition, $E \cap M/ \kappa M = 0$. Now if $E\neq 0$ then, say $x \neq 0, x \in E, Ix \subset M/\kappa M$ for some $I \in \mc{L}(\kappa)$. Thus $Ix \subset E \cap M/\kappa M = 0$ but then $x \in \kappa(Q_\kappa(M)) = 0$.
\end{proof}
The situation in the above corollary occurs for example when $G(M)$ is $\ov{\kappa}$-torsion free, i.e. when $\ov{\kappa}G(M) = 0$.\\

Suppose now that we have two kernel functors $\kappa$ and $\tau$. We can consider the composition $\tau\kappa$, which in general is only a preradical. In other words, $\tau\kappa$ is not a localization functor, that is the associated torsion class $\mc{T}_{\tau\kappa}$ is not hereditary. The latter is defined by the $R$-modules $M$ such that there exists a submodule $N\subset M$ such that $N$ is $\tau$-torsion and $M/N$ is $\kappa$-torsion, or equivalently $M/\tau(M)$ is $\kappa$-torsion. The associated filter of left ideals is denoted by $\mc{L}(\tau\kappa)$ and consists of left ideals containing a left ideal $\sum_{\alpha}^{'} I_\tau x_\kappa^\alpha$ where $I_\kappa = \sum_\alpha^{'} Rx_\kappa^\alpha \in \mc{L}(\kappa)$ and $I_\tau \in \mc{L}(\tau)$. We denote this by $I_\tau \cdot I_\kappa$ ($FR$ is Zariskian so $R$ is Noetherian and the existence of a finite set of generators for any left ideal $I$ is guaranteed).

\begin{lemma}
The filter $\mc{L}(\tau\kappa)$ satisfies the first three properties from the introduction.
\end{lemma}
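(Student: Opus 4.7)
The plan is to verify the three Gabriel-filter conditions for $\mc{L}(\tau\kappa)$ separately, using the characterization that $L \in \mc{L}(\tau\kappa)$ iff $L$ contains some product $I_\tau \cdot I_\kappa$ with $I_\tau \in \mc{L}(\tau)$ and $I_\kappa \in \mc{L}(\kappa)$. Condition (1) is immediate: enlarging $L$ preserves the inclusion of such a product.

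For condition (2), given $L_1 \supset I_\tau \cdot I_\kappa$ and $L_2 \supset J_\tau \cdot J_\kappa$, I would exhibit the explicit witness $(I_\tau \cap J_\tau) \cdot (I_\kappa \cap J_\kappa)$, which is trivially contained in both $I_\tau \cdot I_\kappa$ and $J_\tau \cdot J_\kappa$, hence in $L_1 \cap L_2$. Both factors of this witness lie in the respective Gabriel filters because each of $\mc{L}(\tau)$ and $\mc{L}(\kappa)$ is already closed under binary intersection by its own (2).

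The substantive work is condition (3): showing $(L:r) \in \mc{L}(\tau\kappa)$ for $L \in \mc{L}(\tau\kappa)$ and $r \in R$. My strategy is to construct an explicit product contained in $(L:r)$. Starting from a witness $I_\tau \cdot I_\kappa \subset L$, set $J_\kappa = (I_\kappa : r) \in \mc{L}(\kappa)$ by (3) for $\kappa$. Invoking the Zariskian hypothesis on $FR$, the ring $R$ is Noetherian, so both $I_\kappa$ and $J_\kappa$ are finitely generated; write $I_\kappa = \sum_\alpha R x_\kappa^\alpha$ and $J_\kappa = \sum_\beta R y_\beta$ with finite index sets. Each $y_\beta r \in I_\kappa$ can then be expanded as $\sum_\alpha s_{\beta,\alpha}\, x_\kappa^\alpha$ for some $s_{\beta,\alpha} \in R$. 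Define $J_\tau = \bigcap_{\beta,\alpha} (I_\tau : s_{\beta,\alpha})$: each factor lies in $\mc{L}(\tau)$ by (3) for $\tau$, and the finite intersection stays in $\mc{L}(\tau)$ by iterating (2) for $\tau$. A short direct computation expanding $\bigl(\sum_\beta a_\beta y_\beta\bigr) r$ with $a_\beta \in J_\tau$ and regrouping by $\alpha$ confirms that $J_\tau \cdot J_\kappa \cdot r \subset I_\tau \cdot I_\kappa \subset L$, so $J_\tau \cdot J_\kappa \subset (L:r)$ as desired.

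The main obstacle is precisely (3): one must absorb each scalar $s_{\beta,\alpha}$ that arises on the right after multiplying by $r$ back into $I_\tau$, which is the content of applying (3) for $\tau$ to each $s_{\beta,\alpha}$ individually and then intersecting. Noetherianness of $R$, inherited from the Zariskian hypothesis, is indispensable here, because without finiteness of the index sets $\{x_\kappa^\alpha\}$ and $\{y_\beta\}$ the intersection defining $J_\tau$ might escape $\mc{L}(\tau)$. Note that the radical axiom is conspicuously absent from this plan, which is consistent with the earlier remark that $\tau\kappa$ is in general only a left exact preradical and not a kernel functor.
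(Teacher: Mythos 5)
Your treatment of property (3) is exactly the paper's argument (same decomposition, same use of the Zariskian/Noetherian hypothesis to extract finite generating sets, same absorption of the scalars $s_{\beta,\alpha}$ by intersecting the ideals $(I_\tau : s_{\beta,\alpha})$), and your remark on why the radical axiom is out of reach is apt; the paper itself only proves (3), so you have gone further.

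However, the sketch for property (2) has a genuine gap. The product $I_\tau \cdot I_\kappa = \sum_\alpha I_\tau x^\alpha_\kappa$ is not a monotone operation on pairs of left ideals: it depends on the chosen generators of the second factor, and if $A' \subset A$ and $B' \subset B$ are containments of left ideals, it is \emph{not} true in general that $A' \cdot B' \subset A \cdot B$. Concretely, if $I_\kappa \cap J_\kappa = \sum_\gamma R z_\gamma$ and $z_\gamma = \sum_\alpha c_{\gamma\alpha} x^\alpha_\kappa$, then $(I_\tau \cap J_\tau) z_\gamma = \sum_\alpha (I_\tau \cap J_\tau) c_{\gamma\alpha} x^\alpha_\kappa$, and $(I_\tau \cap J_\tau) c_{\gamma\alpha}$ need not lie in $I_\tau$ because $I_\tau$ is only a \emph{left} ideal and right multiplication by $c_{\gamma\alpha}$ does not preserve it. So the claimed ``trivial'' containment $(I_\tau \cap J_\tau)\cdot(I_\kappa\cap J_\kappa) \subset I_\tau\cdot I_\kappa$ fails in the noncommutative setting (it would hold if $I_\tau, J_\tau$ were two-sided, but that is not assumed). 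The fix is precisely the same device you used for (3): keep $K_\kappa = I_\kappa \cap J_\kappa$ with finite generators $z_\gamma$, expand each $z_\gamma$ both over the $x^\alpha_\kappa$ and the generators of $J_\kappa$, and take the $\tau$-factor to be the finite intersection of the ideals $(I_\tau : c)$ and $(J_\tau : c')$ over all the scalars $c, c'$ that arise; then the product $K_\tau \cdot K_\kappa$ does land inside $L_1 \cap L_2$.
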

\begin{proof}
We only prove the third property. Let $I \cdot J \in \mc{L}(\tau\kappa)$, with $J = \sum_\alpha^{'} Rx_\kappa^\alpha$ and take $r \in R$. Then there exists $y_\kappa^\beta \in R$ with  $(J:r) = \sum_{\beta}^{'}Ry_\kappa^\beta  \in \mc{L}(\kappa)$. Let $ y_\kappa^\beta r = \sum r_{\beta,\alpha} x_\kappa^\alpha \in J$. We find an $I' \in \mc{L}(\tau)$ such that $I' r_{\beta,\alpha} \subset I$ for all $\alpha, \beta$. Then
$$ I' \cdot (J:r)r = \sum_{\beta} I' y_{\kappa}^\beta r = \sum_{\alpha,\beta} I' r_{\beta,\alpha} x_\kappa^\alpha \subset I \cdot J.$$
\end{proof}

However we don't know in general whether $\tau\kappa$ is radical, we still can consider the canonical morphism $j_{\tau\kappa} : M \to Q_\kappa Q_\tau(M)$ for any $R$-module $M$. This is just the composition 
$$ \xymatrix{ M \ar[r]^{j_\tau} & Q_\tau(M) \ar[r]^{j_\kappa^{'}} & Q_\kappa Q_\tau(M),}$$
in which the latter is the $\kappa$-localization morphism of $Q_\tau(M)$. We have 
$$j_{\tau\kappa}(M) = \frac{M/\tau(M)}{M/\tau(M) \cap \kappa Q_\tau(M)} = \frac{M/\tau(M)}{\kappa(M/\tau(M))},$$
whence $j_{\tau\kappa}(M) = 0$ if and only if $M \in \mc{T}_{\tau\kappa}$ and this implies $Q_\kappa Q_\tau (M) \in \mc{T}_{\tau\kappa}$. Hence, for any $R$-module $M$ we have that $Q_\kappa Q_\tau (M)/j_{\tau\kappa}(M) \in \mc{T}_{\tau\kappa}$.

\begin{proposition} \cite[Proposition 3.10]{Vovita}\\
For $M \in R$-mod, the following statements are equivalent
\begin{enumerate}
\item $j_{\tau\kappa}(M) = 0$;
\item $M \in \mc{T}_{\tau\kappa}$;
\item For every $m \in M$ there is an $L \in \mc{L}(\tau\kappa)$ such that $Lm =0$.
\end{enumerate}
\end{proposition}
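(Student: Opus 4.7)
The plan is to pivot on the identity $j_{\tau\kappa}(M) = (M/\tau(M))/\kappa(M/\tau(M))$ derived just before the proposition, and on the explicit description of $\mc{L}(\tau\kappa)$ as the filter generated by products $I_\tau \cdot I_\kappa$.

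First I would dispatch $(1) \Leftrightarrow (2)$ directly from that identity: by definition $M \in \mc{T}_{\tau\kappa}$ means $M/\tau(M) \in \mc{T}_\kappa$, i.e. $\kappa(M/\tau(M)) = M/\tau(M)$, and this is precisely the condition for the quotient $(M/\tau(M))/\kappa(M/\tau(M))$ to vanish.

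Next, for $(2) \Rightarrow (3)$, I would take $m \in M$; since $M/\tau(M) \in \mc{T}_\kappa$, its class $\bar m$ is $\kappa$-torsion, so there exists $I_\kappa \in \mc{L}(\kappa)$ annihilating $\bar m$. Because $FR$ is Zariskian, $R$ is Noetherian, so $I_\kappa$ is finitely generated, say $I_\kappa = \sum_{\alpha=1}^s R x_\kappa^\alpha$. Each $x_\kappa^\alpha m$ lies in $\tau(M)$, so I pick $J_\alpha \in \mc{L}(\tau)$ with $J_\alpha\, x_\kappa^\alpha m = 0$; then $I_\tau = \bigcap_{\alpha=1}^s J_\alpha \in \mc{L}(\tau)$ and the product $I_\tau \cdot I_\kappa = \sum_\alpha I_\tau x_\kappa^\alpha$ annihilates $m$, exhibiting a member of $\mc{L}(\tau\kappa)$ as required.

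For $(3) \Rightarrow (2)$, I would pick an arbitrary $m \in M$ with $Lm = 0$ for some $L \in \mc{L}(\tau\kappa)$. By definition $L \supset I_\tau \cdot I_\kappa$ for some $I_\tau \in \mc{L}(\tau)$ and $I_\kappa = \sum_\alpha R x_\kappa^\alpha \in \mc{L}(\kappa)$; hence $I_\tau (x_\kappa^\alpha m) = 0$ for each $\alpha$, so every $x_\kappa^\alpha m$ lies in $\tau(M)$, and consequently $I_\kappa$ annihilates $\bar m \in M/\tau(M)$. Thus $\bar m \in \kappa(M/\tau(M))$. Since $m$ was arbitrary, $M/\tau(M) \in \mc{T}_\kappa$, i.e. $M \in \mc{T}_{\tau\kappa}$, closing the loop.

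The only step needing a little care is $(2) \Rightarrow (3)$, where I must produce a single $I_\tau \in \mc{L}(\tau)$ that works for all generators of $I_\kappa$ simultaneously; this is exactly where Noetherianity of $R$ (guaranteed by the Zariskian hypothesis on $FR$) is used to keep the intersection inside $\mc{L}(\tau)$. Everything else is a direct unpacking of the definitions, so I don't anticipate any genuine obstacle.
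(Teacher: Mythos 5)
Your proof is correct and follows essentially the same route the paper itself supplies around this cited result: $(1)\Leftrightarrow(2)$ is read directly off the displayed identity $j_{\tau\kappa}(M)=(M/\tau M)/\kappa(M/\tau M)$ preceding the proposition, and $(2)\Leftrightarrow(3)$ is exactly the content of the paper's subsequent Lemma on $\tau\kappa(M)$, proved there by the same device (Noetherianity from the Zariskian hypothesis giving a finite generating set $\{x_\kappa^\alpha\}$ for $I_\kappa$, then intersecting finitely many $J_\alpha\in\mc{L}(\tau)$). The paper simply cites \cite{Vovita} rather than writing out a proof, but what you have written is precisely the argument the surrounding text implies.
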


The above proposition shows that for any $x \neq 0$ in the double localization $Q_\kappa Q_\tau (M)$ there exists an $I \cdot J \in \mc{L}(\tau\kappa)$ such that $I\cdot Jx \subset j_{\tau\kappa}(M)$. One also verifies that the kernel $\Ker(j_{\tau\kappa})$ equals $\tau\kappa(M)$, the elements of $M$ annihilated by an ideal of $\mc{L}(\tau\kappa)$.

\begin{lemma}\lemlabel{tors}
Let $M$ be a filtered $R$-module. Then $ m \in \tau\kappa(M)$ if and only if $ \ov{m} \in \kappa(M/ \tau M)$.
\end{lemma}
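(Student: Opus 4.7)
The plan is to unpack both sides of the equivalence directly from the definitions and check that the condition $I\cdot J \cdot m = 0$ (with $I \in \mc{L}(\tau)$, $J \in \mc{L}(\kappa)$) is essentially the same as saying $J m \subset \tau M$. The key technical ingredient will be that $J$ can be chosen to have a finite generating set, so that torsion witnesses for finitely many elements of $\tau M$ can be intersected in $\mc{L}(\tau)$.

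For the forward implication, I would start with $m \in \tau\kappa(M)$, pick $I\cdot J \in \mc{L}(\tau\kappa)$ annihilating $m$, and write $J = \sum_\alpha' R x_\kappa^\alpha \in \mc{L}(\kappa)$. From $(I\cdot J)m = 0$, one reads off $I\, x_\kappa^\alpha m = 0$ for every $\alpha$, so each $x_\kappa^\alpha m$ lies in $\tau(M)$. Thus $J m \subset \tau M$, and therefore $J \bar m = 0$ in $M/\tau M$, i.e. $\bar m \in \kappa(M/\tau M)$.

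For the reverse implication, take $J \in \mc{L}(\kappa)$ with $J\bar m = 0$, i.e.\ $Jm \subset \tau M$. Since $FR$ is Zariskian, $R$ is Noetherian, so $J$ has a finite generating set $x_\kappa^1,\dots,x_\kappa^s$ as used in the definition of $I\cdot J$. For each $i$, pick $I_i \in \mc{L}(\tau)$ with $I_i x_\kappa^i m = 0$, and set $I = \bigcap_{i=1}^s I_i$, which is still in $\mc{L}(\tau)$ (filters are closed under finite intersections). Then $(I\cdot J)m = \sum_i I\, x_\kappa^i m = 0$, giving an element of $\mc{L}(\tau\kappa)$ annihilating $m$, hence $m \in \tau\kappa(M)$.

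The only real step that needs justification is the finite generation used to intersect the $I_i$; everything else is a direct translation of definitions. I do not expect any obstacle beyond pointing out the Noetherian hypothesis (already invoked just before the statement in the construction of $\mc{L}(\tau\kappa)$), so the proof should fit in just a few lines.
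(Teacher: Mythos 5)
Your argument is correct and is essentially identical to the paper's: both directions rest on the same unpacking of $\mc{L}(\tau\kappa)$ and the same Noetherian finiteness trick (intersect finitely many $\tau$-witnesses to handle the finite generating set of $J$). The paper writes out the direction "$\ov m \in \kappa(M/\tau M)\Rightarrow m\in\tau\kappa(M)$" and dismisses the converse as "similar"; you have merely spelled out both, so there is nothing to flag.
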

\begin{proof}
If $ \ov{m} \in \kappa(M/\tau M)$ with original $m \in M$, then for some $I = \sum_\alpha^{'} Rx_\kappa^\alpha \in \mc{L}(\kappa)$ we have $Im \subset \tau M$. Let $J \in \mc{L}(\tau)$ such that $Jx_\kappa^\alpha m = 0$ for all $\alpha$, then $J \cdot I m = 0$, or $m \in \tau\kappa(M)$. The converse is similar. 
\end{proof}
The previous lemma shows that $\kappa(M/\tau M) = \tau\kappa(M)/\tau(M)$, whence
$$j_{\tau\kappa}(M) = \frac{M /\tau M}{\kappa(M/ \tau M)} = \frac{M/ \tau M}{\tau\kappa{M}/\tau M} \cong M / \tau\kappa{M}.$$
\begin{corollary}
Let $M$ be a filtered $R$-module. Then $M/\tau\kappa M$ is $\tau\kappa$-separated if and only if $(M/\tau M)/\kappa(M/\tau M)$ is $\kappa$-separated.
\end{corollary}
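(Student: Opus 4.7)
The plan is to exploit the isomorphism $M/\tau\kappa M \cong (M/\tau M)/\kappa(M/\tau M)$ established in the paragraph preceding the corollary via \lemref{tors}. Call this common filtered $R$-module $N$; both statements of the corollary are then assertions about the single module $N$, differing only in whether separatedness is tested against the filter $\mc{L}(\tau\kappa)$ (with conclusion in $\tau\kappa(N)$) or against $\mc{L}(\kappa)$ (with conclusion in $\kappa(N)$). Under the second presentation of $N$ one has $\kappa(N)=0$, and the containment $\mc{L}(\kappa)\subset\mc{L}(\tau\kappa)$ holds trivially since any $I\in\mc{L}(\kappa)$ equals $R\cdot I$.

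For the direction $(\Rightarrow)$, suppose $N$ is $\tau\kappa$-separated and take $I\in\mc{L}(\kappa)$ together with $\bar m\in\dot F_n N$ satisfying $F_\gamma I\bar m\subset F_{\gamma+n-1}N$ for all $\gamma$. Since $I\in\mc{L}(\tau\kappa)$, $\tau\kappa$-separatedness forces $\bar m\in\tau\kappa(N)$. Now apply \lemref{tors} to $N$ itself: this translates $\bar m\in\tau\kappa(N)$ into the condition that the image of $\bar m$ in $N/\tau N$ lies in $\kappa(N/\tau N)$. Tracking this image back through the second presentation $N\cong(M/\tau M)/\kappa(M/\tau M)$, where the outer $\kappa$ already vanishes, pins down $\bar m\in\kappa(N)=0$, as required for $\kappa$-separatedness.

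For the direction $(\Leftarrow)$, suppose $N$ is $\kappa$-separated and take $L=I_\tau\cdot I_\kappa\in\mc{L}(\tau\kappa)$ together with $\bar m\in\dot F_n N$ satisfying $F_\gamma L\bar m\subset F_{\gamma+n-1}N$ for all $\gamma$. Since $FR$ is Zariskian, the left ideal $I_\kappa$ carries a good induced filtration with finitely many generators $z_1,\ldots,z_s$ of respective degrees $d_1,\ldots,d_s$, and a routine estimate gives $F_\gamma I_\tau(z_j\bar m)\subset F_{\gamma+d_j+n-1}N$ for every $j,\gamma$. Splitting on the filtration degree $n_j$ of $z_j\bar m$: either $n_j<d_j+n$ and $z_j\bar m$ drops in degree automatically, or $n_j=d_j+n$ and \lemref{tors} identifies the top symbol of $z_j\bar m$ as $\tau$-torsion in $N$. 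Aggregating over $j$, $I_\kappa\bar m\subset\tau(N)$, which by the definition of $\mc{L}(\tau\kappa)$ means precisely $\bar m\in\tau\kappa(N)$, as required.

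The principal obstacle is the degree-tracking in $(\Leftarrow)$: the test with $L=I_\tau I_\kappa$ only controls the composite action on $\bar m$, not the $I_\kappa$-action alone, so Zariskianness (good filtration on $I_\kappa$, Noetherianity of $R$) together with \lemref{tors} is needed to peel off the $I_\tau$-layer and absorb it into the $\tau$-torsion submodule of $N$. Once this is done, the two separatedness conditions on $N$ match under the identification supplied by \lemref{tors}.
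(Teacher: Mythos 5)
The corollary is stated in the paper without proof, as an immediate consequence of the preceding identification $M/\tau\kappa M\cong(M/\tau M)/\kappa(M/\tau M)$ of filtered modules. Your plan of working with the single module $N$ and comparing the two separatedness conditions is the right frame, but both directions contain a genuine gap.

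In the $(\Rightarrow)$ direction you correctly obtain $\bar m\in\tau\kappa(N)$ from $\mc{L}(\kappa)\subset\mc{L}(\tau\kappa)$. The next step is where the argument breaks down. \lemref{tors} applied to $N$ says only that the \emph{image} of $\bar m$ in $N/\tau N$ lies in $\kappa(N/\tau N)$; equivalently $\bar m\in\tau\kappa(N)$, which you already had. It does not say $\bar m\in\kappa(N)$. Since $\tau\kappa$ is only a left exact preradical and (as the paper explicitly flags) it is not known to be radical, there is no reason for $\tau N$ or $\tau\kappa(N)$ to vanish for $N=M/\tau\kappa M$, and ``tracking the image back'' is conflating $\kappa(N/\tau N)$, a subgroup of $N/\tau N$, with $\kappa(N)$, a subgroup of $N$. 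What you would actually need to finish this direction is the assertion $\tau\kappa(N)=0$, and your argument never establishes it.

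In the $(\Leftarrow)$ direction, the degree estimate $F_\mu I_\tau(z_j\bar m)\subset F_{\mu+d_j+n-1}N$ is fine, but the conclusion you draw from it is not. When $n_j=d_j+n$ the estimate is exactly the hypothesis of $\tau$-separatedness at $z_j\bar m$, and concluding $z_j\bar m\in\tau(N)$ would require $N$ to be $\tau$-separated, which is not part of the hypothesis (only $\kappa$-separatedness of $N$ is). The appeal to \lemref{tors} here is also misplaced: that lemma characterizes $\tau\kappa$-torsion elements via $\kappa$-torsion modulo $\tau$, it does not ``identify top symbols as $\tau$-torsion,'' and indeed it says nothing at all about the associated graded. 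Likewise, in the case $n_j<d_j+n$, ``$z_j\bar m$ drops in degree'' has no bearing on whether $z_j\bar m\in\tau(N)$; degree drop and $\tau$-torsion are independent phenomena. So the assertion ``aggregating over $j$, $I_\kappa\bar m\subset\tau(N)$'' is not supported by what precedes it, and with it the claim $\bar m\in\tau\kappa(N)$ collapses.

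In short, the structural idea — use the module identification, compare filters, peel off the $\tau$-layer — is sensible, but both implications rely on conclusions ($\bar m\in\kappa(N)$ in one direction, $z_j\bar m\in\tau(N)$ in the other) that do not follow from the quoted tools. Both gaps stem from implicitly treating $\tau\kappa$ (respectively $\tau$) as if it behaved like a radical on $N$, which is exactly the property one cannot assume here.
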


\begin{proposition}\proplabel{filttwo}
Let $\tau, \kappa$ be a kernel functors and $M$ a filtered $R$-module. If $M/\tau\kappa M$ is $\tau\kappa$-separated, then the double localization module $Q_\kappa Q_\tau(M)$ has a filtration $FQ_\kappa Q_\tau(M)$ making the localization morphism $j_{\tau\kappa}(M)$ into a strict filtered morphism. Similar for $R/\tau\kappa R$ being $\tau\kappa$-separated, $j_{\tau\kappa}: R \to Q_\kappa Q_\tau(R)$ is a strict filtered morphism and a filtered ring morphism. \end{proposition}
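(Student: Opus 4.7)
The plan is to imitate the proof of \theref{locfilt} essentially verbatim, with the Gabriel filter $\mc{L}(\kappa)$ replaced throughout by the preradical filter $\mc{L}(\tau\kappa)$. Inspection of that proof shows that the only properties of $\mc{L}(\kappa)$ actually used are the first three axioms of a Gabriel filter, together with $\kappa$-separatedness of the relevant quotient; radicality of $\kappa$ (the fourth axiom) is never invoked. The lemma preceding the proposition guarantees that $\mc{L}(\tau\kappa)$ satisfies (1)--(3), and the definition of separatedness makes perfect sense for the left exact preradical $\tau\kappa$ (as was explicitly noted after \defref{}-style definition of $\kappa$-separatedness), so the hypothesis that $M/\tau\kappa(M)$ is $\tau\kappa$-separated is meaningful and is exactly the right replacement.

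First I would reduce to the case $\tau\kappa(M) = 0$. By \lemref{tors} and the corollary following it, $j_{\tau\kappa}(M) \cong M/\tau\kappa(M)$ carries the induced filtration from $FM$, is $\tau\kappa$-separated by hypothesis, and equals the image of $j_{\tau\kappa}$ because $\Ker(j_{\tau\kappa}) = \tau\kappa(M)$. Replacing $M$ by $M/\tau\kappa(M)$ therefore makes $j_{\tau\kappa}$ injective at no cost, and the bottom quotient by which the target $Q_\kappa Q_\tau(M)$ localizes is now $\tau\kappa$-torsion free.

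Next, for a nonzero $x \in Q_\kappa Q_\tau(M)$, pick $L \in \mc{L}(\tau\kappa)$ with $Lx \subset M$; such an $L$ exists by the discussion immediately preceding the proposition. Since $FR$ is Zariskian, the induced filtration on $L$ is good, so the computation from the proof of \theref{locfilt} produces a minimal integer $v(x) = \gamma$ with
$$F_nLx \subset F_{n+\gamma}M \qquad \text{for every } n \in \mathbb{Z},$$
and one defines $F_\gamma Q_\kappa Q_\tau(M) = \{x : v(x) \leq \gamma\}$. Independence of $v(x)$ from the chosen $L$ is the crux of the argument: given a competing $L' \in \mc{L}(\tau\kappa)$ with a strictly smaller shift, choose $\zeta \in F_nL$ with $\zeta x \in F_{n+\gamma}M \setminus F_{n+\gamma - 1}M$, note that $(L' : \zeta) \in \mc{L}(\tau\kappa)$ by property (3) of the preceding lemma, and deduce $F_m(L' : \zeta)\zeta x \subset F_{m+n+\gamma - 1}M$ for all $m$. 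Applying $\tau\kappa$-separatedness of $M$ forces $\zeta x \in \tau\kappa(M) = 0$, contradicting the choice of $\zeta$.

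The remaining assertions --- strictness $F_nQ_\kappa Q_\tau(M) \cap M = F_nM$, multiplicativity of the filtration on $Q_\kappa Q_\tau(R)$ together with the ring-morphism property of $j_{\tau\kappa} : R \to Q_\kappa Q_\tau(R)$, and the fact that $Q_\kappa Q_\tau(M)$ becomes a filtered $Q_\kappa Q_\tau(R)$-module --- transcribe the corresponding paragraphs of the proof of \theref{locfilt} line by line, using colon ideals in $\mc{L}(\tau\kappa)$ (property (3)) wherever the Gabriel-filter colon closure was invoked, and using $\tau\kappa$-separatedness of $R/\tau\kappa(R)$ to identify $F_nQ_\kappa Q_\tau(R)\cap R$ with $F_nR$. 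The main obstacle is precisely the well-definedness of $v(x)$, because $\tau\kappa$ is merely a left exact preradical and not a kernel functor; but the hypothesis of $\tau\kappa$-separatedness was tailored to rescue exactly that step, so no further technical ingredient is needed beyond what is already in \theref{locfilt}.
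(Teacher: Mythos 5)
Your reduction in the first paragraph is exactly the step the paper's proof warns against: it says to follow \theref{locfilt} \emph{``without reducing to the torsion free case ($\tau\kappa$ no longer radical!)''}. When you claim that, after replacing $M$ by $M/\tau\kappa(M)$, the module being localized ``is now $\tau\kappa$-torsion free,'' you are asserting $\tau\kappa\bigl(M/\tau\kappa(M)\bigr)=0$; but that is precisely the radical property, and the whole point of this discussion is that the composite preradical $\tau\kappa$ is \emph{not} radical in general (indeed, that is why a separate proposition is needed at all instead of a direct appeal to \theref{locfilt}). Your hypothesis only gives $\tau\kappa$-\emph{separatedness} of $M/\tau\kappa(M)$, which by the definition in the paper yields membership in $\tau\kappa(\cdot)$, not vanishing; \lemref{tf} shows separatedness implies torsion-freeness only under the extra assumption that the associated graded is $\ov{\kappa}$-torsion free, which is not available here. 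Consequently the contradiction you extract in the well-definedness step, ``forces $\zeta x \in \tau\kappa(M)=0$,'' has no justification once $M$ has been replaced: the separatedness hypothesis delivers only $\zeta x \in \tau\kappa\bigl(M/\tau\kappa(M)\bigr)$, which need not be $0$. The same unproved vanishing is used again where you identify $F_nQ_\kappa Q_\tau(R)\cap R$ with $F_nR$.

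There is a second, independent difficulty with the replacement: $\tau\kappa(M)/\tau(M)=\kappa(M/\tau M)$ sits inside the $\tau$-torsion-free module $M/\tau M$, so $\tau\kappa(M)$ is neither $\tau$-torsion nor $\kappa$-torsion over $\tau(M)$, and one cannot conclude $Q_\kappa Q_\tau\bigl(M/\tau\kappa(M)\bigr)\cong Q_\kappa Q_\tau(M)$ the way one would when factoring out genuine $\kappa$- or $\tau$-torsion. The paper sidesteps both issues by \emph{not} touching $M$: it filters $Q_\kappa Q_\tau(M)$ relative to the subobject $j_{\tau\kappa}(M)$ directly, as in the displayed formula in its proof, so that the only inputs are the first three filter axioms of $\mc{L}(\tau\kappa)$ (supplied by the lemma preceding the proposition) and $\tau\kappa$-separatedness of $j_{\tau\kappa}(M)$, with $j_{\tau\kappa}(M)$ regarded inside the $\kappa$-torsion-free module $Q_\kappa Q_\tau(M)$ rather than as a free-standing module. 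Your instinct that only the first three filter properties are needed is correct, but implementing it via the torsion-free reduction smuggles in exactly the fourth.
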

\begin{proof}
One follows the proof of \theref{locfilt} without reducing to the torsion free case ($\tau\kappa$ no longer radical!) and one checks that only the first three properties for filters of ideals are used. Explicitly
$$F_\gamma Q_\kappa Q_\tau (M) = \{ q \in Q_\kappa Q_\tau(M), \exists I \cdot J \in \mc{L}(\tau\kappa) {\rm~such~that~} F_n(I \cdot J)q \subset F_{n+q}j_{\tau\kappa}(M) {\rm~for~all~} n \in \mathbb{Z} \}.$$
\end{proof}

If the filtered module $M/\tau M$ is $\tau$-separated, we obtain a filtered morphism $FQ_\tau(M)$. If $Q_\tau(M)/\kappa(Q_\tau(M))$ is $\kappa$-separated, we get a filtration $FQ_\kappa Q_\tau(M)$. If however, $M/\tau\kappa(M)$ is also $\tau\kappa$-separated, then we obtain a second filtration $F'Q_\kappa Q_\tau(M)$. We clearly have an inclusion $F' \subset F$ of filtered modules. In case $\kappa$ is coming from a $\wt{\kappa}$ (see second approach below), then we obtain equality, for in this case we have for $I = \sum_\alpha^{'}Rx_\kappa^\alpha \in \mc{L}(\kappa)$ and $J \in \mc{L}(\tau)$ that $F_\gamma(J \cdot I) = \sum_{\alpha}F_{\gamma - \deg(x_\kappa^\alpha)}J x_\kappa^\alpha$ for any $\gamma$, since $(\wt{J}\wt{I})_\gamma = \sum_{\sigma + \tau = \gamma}\wt{J}_\sigma \wt{I}_\tau$. We do not know whether we have equality in general. If one works with Ore sets in the `almost commutative' case everything holds on the other hand
\begin{lemma}
Let $S, T$ be Ore sets in $R$ with $G(R)$ being a commutative domain. Then under the above assumptions, $F_nQ_S Q_T(M) \subset  F_n'Q_S Q_T(M)$ for all $n \in \mathbb{Z}$.
\end{lemma}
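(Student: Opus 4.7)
The plan is to take $x \in F_nQ_SQ_T(M)$, descend through the two-step definition of $F$, and reconstruct a single principal witness $K = Rts \in \mc{L}(\tau\kappa)$ with $t \in T, s \in S$ placing $x$ in $F_n'Q_SQ_T(M)$. Throughout write $\tau = \kappa_T$, $\kappa = \kappa_S$. The key computational fact (the only place we use the almost commutative hypothesis) is that for any nonzero $a \in R$ one has $F_\alpha(Ra) = F_{\alpha - \deg a}R\cdot a$; indeed, in a commutative graded domain $\sigma(r)\sigma(a) \neq 0$ whenever both factors are nonzero, so $ra \in F_\alpha R$ forces $\deg r + \deg a \leq \alpha$.

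First, apply \theref{locfilt} to the second localization: there exists $J \in \mc{L}(\kappa)$ with $F_\alpha J\cdot x \subset j_\kappa(F_{\alpha + n}Q_T(M))$ for every $\alpha$. Since $S$ is left Ore, $J \cap S \neq \emptyset$; pick $s \in J \cap S$, set $d_s = \deg s$, and shrink $J$ to $Rs \subset J$. The principal-ideal identity gives $F_\alpha(Rs) = F_{\alpha - d_s}R\cdot s$, and choosing $\alpha = d_s$ yields $sx \in j_\kappa(F_{n + d_s}Q_T(M))$; write $sx = j_\kappa(y)$ with $y \in F_{n + d_s}Q_T(M)$.

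Repeat one level down. By \theref{locfilt} applied to $\tau$ and $y$ there is $I \in \mc{L}(\tau)$ with $F_\gamma I\cdot y \subset j_\tau(F_{\gamma + n + d_s}M)$. Using Ore-ness of $T$ pick $t \in I \cap T$ with $d_t = \deg t$, replace $I$ by $Rt$, apply the identity to $Rt$, and take $\gamma = d_t$ to conclude $ty = j_\tau(m)$ for some $m \in F_{n + d_s + d_t}M$. Pushing through $j_\kappa$,
$$tsx \;=\; t\cdot j_\kappa(y) \;=\; j_\kappa(ty) \;=\; j_\kappa j_\tau(m) \;=\; j_{\tau\kappa}(m) \;\in\; F_{n + d_t + d_s}\,j_{\tau\kappa}(M).$$
Now close the loop with $K := Rts$, which lies in $\mc{L}(\tau\kappa)$ since $Rt \in \mc{L}(\tau)$, $Rs \in \mc{L}(\kappa)$ and $Rt\cdot Rs = Rts$ in the product filter defined before the lemma. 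A third application of the principal-ideal identity gives $F_\alpha K = F_{\alpha - d_t - d_s}R\cdot ts$, so
$$F_\alpha K\cdot x \;=\; F_{\alpha - d_t - d_s}R\cdot(tsx) \;\subset\; F_{(\alpha - d_t - d_s) + (n + d_t + d_s)}\,j_{\tau\kappa}(M) \;=\; F_{\alpha + n}\,j_{\tau\kappa}(M),$$
using that $j_{\tau\kappa}(M) = M/\tau\kappa(M)$ is a filtered $R$-module via the image filtration. By \propref{filttwo} this is exactly the condition witnessing $x \in F_n'Q_SQ_T(M)$.

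The main obstacle is the principal-ideal identity $F_\alpha(Ra) = F_{\alpha - \deg a}R\cdot a$: outside the commutative-domain setting the product $\sigma(r)\sigma(a)$ may vanish for nonzero symbols, the degree of $ra$ drops, and spurious low-degree elements enter $F_\alpha(Ra)$. That identity is invoked three times — on $Rs$, on $Rt$, and on $Rts$ — and each use converts an iterated-localization witness into the single-step $\tau\kappa$-witness required by \propref{filttwo}. Everything else is strict-filtered bookkeeping with the canonical maps $j_\tau$, $j_\kappa$ and $j_{\tau\kappa}$, together with the Ore property which lets us shrink each filter to a principal ideal generated by a single element of $T$ or $S$.
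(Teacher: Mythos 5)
Your proof is correct and is essentially the paper's argument with all details filled in: the paper's one-line hint invokes exactly the two facts you use — $Rt\cdot Rs = Rts$ and the principal-ideal identity $F_\gamma(Rts) = F_{\gamma-\deg(ts)}R\cdot ts$ — and the surrounding bookkeeping (extract $s\in J\cap S$ and $t\in I\cap T$ from the witnessing filters, push $tsx$ down to $j_{\tau\kappa}(m)$, verify the $\mc{L}(\tau\kappa)$-condition of \propref{filttwo}) is precisely what ``the conclusion easily follows'' compresses. One small remark: the full (hard) direction $F_\alpha(Ra)\subset F_{\alpha-\deg a}R\cdot a$ is only genuinely needed for $Rts$ in the final step; for $Rs$ and $Rt$ you only use the trivial containment $a\in F_{\deg a}(Ra)$, so the dependence on $G(R)$ being a domain is really concentrated in the additivity $\deg(ts)=\deg t+\deg s$ and in the last display, as your closing paragraph correctly emphasizes.
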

\begin{proof}
For $s \in S, t \in T$ we have that $Rt \cdot Rs = Rts$ and $F_\gamma(Rts) = F_{\gamma - \deg(st)}R st$, so the conclusion easily follows.
\end{proof}

For two Ore sets $S, T$ we have the following diagram for an $R$-filtered module $M$.
\begin{equation} \label{arrows}
\xymatrix{
Q_T(M) \ar[rr] \ar[rrd] && Q_T(Q_S(M)) \\
Q_S(M) \ar[rr] \ar[rru] && Q_S(Q_T(M))}.
\end{equation}
Suppose that $M$ is such that the corresponding quotients with induced filtrations are $S$-, $T$-, $ST$-, resp. $TS$-separated, that $Q_T(M)/S Q_T(M)$ is $S$-separated and that $Q_S(M)/T Q_S(M)$ is $T$-separated. Both diagonal morphisms are the canonical localization morphisms whence filtered. The upper horizontal arrow is the localized morphism $Q_T(j_S)$, which is also filtered by
\begin{lemma}\lemlabel{filteredmorphism}
Let $f:M \to N$ be a filtered morphism between filtered $FR$-modules $M,N$ and $\kappa$ an idempotent kernel functor. If both $M$ and $N$ are $\kappa$-separated, then $Q_\kappa(f): Q_\kappa(M) \to Q_\kappa(N)$ is a filtered morphism with respect to the localization filtrations.
\end{lemma}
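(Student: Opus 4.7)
The plan is to trace through the definition of the quotient filtration from \theref{locfilt} and observe that $f$ descends to a filtered map on the relevant $\kappa$-torsion-free quotients, then use $R$-linearity of $Q_\kappa(f)$ to transport the filtration witness from the source to the target.

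First I would handle the reduction. Since $\kappa$ is a kernel functor, $f(\kappa M) \subset \kappa N$ (a preradical is a subfunctor of the identity), so $f$ induces a filtered $R$-linear map $\bar f : M/\kappa M \to N/\kappa N$ where both quotients carry the filtrations induced from $FM$ and $FN$ respectively. By \corref{sep} both $Q_\kappa(M)$ and $Q_\kappa(N)$ carry separated filtrations $FQ_\kappa(M)$ and $FQ_\kappa(N)$, and on the canonical copies $j_\kappa(M) \cong M/\kappa M$ and $j_\kappa(N) \cong N/\kappa N$ these coincide with the given filtrations (this was established in the proof of \theref{locfilt}). Moreover, $Q_\kappa(f)$ is obtained by applying the localization functor, hence is $R$-linear (in fact $Q_\kappa(R)$-linear), and its restriction to $j_\kappa(M)$ is exactly $j_\kappa \circ \bar f$.

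Next I would verify strictness of the filtration compatibility. Fix $q \in F_\gamma Q_\kappa(M)$ and, by definition, pick $I \in \mc{L}(\kappa)$ such that
\[
F_n I \cdot q \subset F_{n+\gamma}\,j_\kappa(M) \quad \text{for all } n \in \mathbb{Z}.
\]
Applying the $R$-linear map $Q_\kappa(f)$ and using that its restriction to $j_\kappa(M)$ is the filtered map $\bar f$ (composed with $j_\kappa$ on the target), we compute for every $n$
\[
F_n I \cdot Q_\kappa(f)(q) = Q_\kappa(f)\bigl(F_n I \cdot q\bigr) \subset Q_\kappa(f)\bigl(F_{n+\gamma}\,j_\kappa(M)\bigr) \subset F_{n+\gamma}\,j_\kappa(N),
\]
where the last inclusion uses that $\bar f$ is a filtered morphism between the quotient filtrations. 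Hence the same $I \in \mc{L}(\kappa)$ witnesses $Q_\kappa(f)(q) \in F_\gamma Q_\kappa(N)$, proving $Q_\kappa(f)$ is filtered.

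The only real subtlety is bookkeeping: one must make sure the witness $I$ for $q$ can be transported unchanged, which works because $Q_\kappa(f)$ is $R$-linear and $F_n I$ lies in $R$, so $F_n I \cdot q$ is moved inside $Q_\kappa(f)$ without any extra closure operation. The $\kappa$-separatedness of $M$ and $N$ is used implicitly through \theref{locfilt} to know that $FQ_\kappa(M)$ and $FQ_\kappa(N)$ are well-defined and that the strict equalities $F_\gamma Q_\kappa(-) \cap j_\kappa(-) = F_\gamma j_\kappa(-)$ hold, so there is no hidden discrepancy between the filtration levels on $j_\kappa(M)$ inherited from $Q_\kappa(M)$ and those coming from $FM$. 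No further hypothesis is required.
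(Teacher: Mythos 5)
Your proof is correct and takes essentially the same approach as the paper's, which simply asserts that the result follows from the $R$-linearity of $Q_\kappa(f)$ and the definition of the localization filtration; you have just spelled out the bookkeeping that the paper leaves to the reader.
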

\begin{proof}
This follows from the fact that $Q_\kappa(f)$ is (left) $R$-linear and from the definition of the localization filtration.
\end{proof}

\begin{proposition}\proplabel{filteredmor}
If $\kappa \leq \tau$ and $M$ is $\kappa$-separated and $\tau$-separated then there is a canonical filtered morphism $\rho^\kappa_\tau:Q_\kappa(M) \to Q_\tau(M)$ yielding a commutative diagram
$$\xymatrix{
Q_\kappa(M) \ar[r]^{\rho^\kappa_\tau} & Q_\tau(M)\\
M/\kappa(M) \ar[r]^{\pi^\kappa_\tau} \ar@{^{(}->}[u]_{j_\kappa} & M/ \tau(M)  \ar@{^{(}->}[u]_{j_\tau}}$$
where the vertical morphisms are strict filtered injections and $\pi^\kappa_\tau$ is the canonical filtered epimorphism.
\end{proposition}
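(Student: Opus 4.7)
The plan is to first build $\rho^\kappa_\tau$ as an $R$-linear map via the universal property of $\kappa$-localization, and then verify that it respects the localization filtrations introduced in \theref{locfilt}.

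First I would note that the hypothesis $\kappa\leq\tau$ amounts to $\mc{L}(\kappa)\subset\mc{L}(\tau)$, so every $\kappa$-torsion module is $\tau$-torsion, and in particular $\kappa(M)\subset\tau(M)$. The induced filtrations $F_n(M/\kappa(M))=(F_nM+\kappa(M))/\kappa(M)$ and $F_n(M/\tau(M))=(F_nM+\tau(M))/\tau(M)$ make the canonical projection $\pi^\kappa_\tau$ into a strict filtered epimorphism by inspection.

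Next, to produce $\rho^\kappa_\tau$, I would observe that $Q_\tau(M)$ is $\tau$-torsion free and $\tau$-closed; since $\mc{L}(\kappa)\subset\mc{L}(\tau)$, it is automatically $\kappa$-torsion free and $\kappa$-closed as well. Applying the universal property of $\kappa$-localization to the $R$-linear composition $M\to M/\tau(M)\hookrightarrow Q_\tau(M)$ (which vanishes on $\kappa(M)\subset\tau(M)$ and hence factors through $j_\kappa:M/\kappa(M)\hookrightarrow Q_\kappa(M)$) yields a unique extension $\rho^\kappa_\tau:Q_\kappa(M)\to Q_\tau(M)$. The commutativity of the square is built into the construction, and uniqueness of the extension pins down $\rho^\kappa_\tau$.

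The main step is to check that $\rho^\kappa_\tau$ is filtered. Given $q\in F_\gamma Q_\kappa(M)$, \theref{locfilt} produces $I\in\mc{L}(\kappa)$ with $F_nIq\subset F_{n+\gamma}(M/\kappa(M))$ for every $n\in\mathbb{Z}$. Since $I\in\mc{L}(\tau)$ too and $\rho^\kappa_\tau$ is $R$-linear extending $\pi^\kappa_\tau$ on $M/\kappa(M)$, one obtains
$$F_nI\cdot\rho^\kappa_\tau(q)=\pi^\kappa_\tau(F_nIq)\subset F_{n+\gamma}(M/\tau(M)),$$
and by the defining property of the $\tau$-localization filtration this gives $\rho^\kappa_\tau(q)\in F_{n+\gamma}Q_\tau(M)$. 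I expect the only real obstacle to be the appeal to the universal property, which forces one to recall that $Q_\tau(M)$ is $\kappa$-closed — this is exactly where the hypothesis $\kappa\leq\tau$ enters; everything else is a formal unwinding of definitions and of the explicit filtration formula from \theref{locfilt}.
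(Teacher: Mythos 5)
Your proposal is correct and follows essentially the same route as the paper: obtain $\rho^\kappa_\tau$ from the universal property of $\kappa$-localization (the paper simply cites ``general localization theory'' where you spell out that $Q_\tau(M)$ is $\kappa$-closed because $\kappa\leq\tau$), then verify filteredness by pushing the defining condition $F_nIq\subset F_{n+\gamma}(M/\kappa M)$ through $\rho^\kappa_\tau$ using $R$-linearity and $I\in\mc{L}(\kappa)\subset\mc{L}(\tau)$. Only note the small slip at the end: the conclusion should read $\rho^\kappa_\tau(q)\in F_\gamma Q_\tau(M)$, not $F_{n+\gamma}Q_\tau(M)$.
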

\begin{proof}
The statements about $j_\kappa, j_\tau$ and $\pi^\kappa_\tau$ are obvious. We know that $\pi^\kappa_\tau$ extends to an $R$-linear $\rho^\kappa_\tau$ by general localization theory. Now let $y \in F_nQ_\kappa(M)$, that is, there is an $I \in \mc{L}(\kappa)$ such that for all $\gamma \in \mathbb{Z}, F_\gamma I y \subset F_{\gamma +n} M/ \kappa M$. Applying $\rho^\kappa_\tau$ yields $F_\gamma I \rho^\kappa_\tau(y) = \rho^\kappa_\tau(F_\gamma I y) = \pi^\kappa_\tau(F_\gamma I y) \subset F_{\gamma + n}(M / \tau M)$ since $\pi^\kappa_\tau$ is filtered. Now $F_\gamma I \rho^\kappa_\tau(y) \subset F_{\gamma + n}(M /\tau M)$ for all $\gamma$ means that $\rho^\kappa_\tau(y) \in F_nQ_\tau(M)$ and thus $\rho^\kappa_\tau$ is a filtered morphism.
\end{proof}
With an eye toward the noncommutative site, we add an additional kernel functor $\sigma$. Since $\frac{M}{\tau\kappa(M)} = \frac{M/\tau(M)}{\kappa(M/\tau(M))}$, we have that $\mc{T}_{(\tau\kappa)\sigma} = \mc{T}_{\tau(\kappa\sigma)}$. We denote this torsion class by $\mc{T}_{\tau\kappa\sigma}$ and the associated filter $\mc{L}(\tau\kappa\sigma)$ consists of left ideals containing some $I_\tau \cdot I_\kappa \cdot I_\sigma = \sum_{\alpha,\beta}^{'} I_\tau y_\kappa^\beta x_\sigma^\alpha$ with $I_\tau \in \mc{L}(\tau), I_\kappa = \sum_{\beta}^{'} Ry_\kappa^\beta \in \mc{L}(\kappa), I_\sigma = \sum_{\alpha}^{'} Rx_\sigma^{\alpha} \in \mc{L}(\sigma)$. This filter also satisfies the first three properties from the introduction and the canonical localization morphism
$$j_{\tau\kappa\sigma}: M \mapright{j_\tau} Q_\tau(M) \mapright{j_\kappa^{'}} Q_\kappa Q_\tau(M) \mapright{j_\sigma^{'}} Q_\sigma Q_\kappa Q_\tau(M),$$
has image
$$\frac{M/\tau(M)}{\kappa(M/\tau(M))} / \sigma\left(\frac{M/\tau(M)}{\kappa(M/\tau(M))}\right),$$
which equals $M/ \tau\kappa\sigma(M)$ by \lemref{tors}. Hence, we again have that \newline $Q_\sigma Q_\kappa Q_\tau(M)/ j_{\tau\kappa\sigma}(M) \in \mc{T}_{\tau\kappa\sigma}$ and if $M/\tau\kappa\sigma(M)$ is $\tau\kappa\sigma$-separated, the localization module $Q_\sigma Q_\kappa Q_\tau(M)$ has a filtration $FQ_\sigma Q_\kappa Q_\tau(M)$ such that $j_{\tau\kappa\sigma}$ is strict filtered Observe that in \propref{filteredmor} it actually follows that the $\rho^\kappa_\tau$ is strict. Indeed, $M/ \tau(M) = \frac{M / \kappa(M)}{\tau(M)/\kappa(M)}$ with induced filtration of $M/\kappa(M)$ on $\tau(M)/\kappa(M)$ shows that $\pi^\kappa_\rho$ is strict, which shows strictness of $\rho^\kappa_\tau$.The previous observations imply that the $\rho^{W'}_W: Q_{W'}(M) \to Q_W(M)$ for words in kernel functors $W' \hookmapright{} W$ are also strict filtered maps. Moreover, one checks that the results following \propref{filttwo} remain valid (up to restriction to Ore sets) and that we can extend diagram \eqref{arrows}. In fact, one can consider words of finite length of kernel functors. In this paper we will restrict to Ore sets when dealing with the noncommutative site, but we chose the more general approach using kernel functors as the results most likely will be extendable to the so called noncommutative affine site.\\

Now, we start with a graded kernel functor $\widetilde{\kappa}$ on $\widetilde{R}$-mod and we define 
$$\mc{L}(G\kappa) = \{ L \subset G(R), L \supset \pi(I) {\rm~for~some~} I \in \mc{L}(\widetilde{\kappa})\}.$$
where $\pi: \widetilde{R} \to G(R), \widetilde{r} \mapsto \widetilde{r} \mod \widetilde{R} X$. 
\begin{proposition}\proplabel{Gk}
$G\kappa$ is a kernel functor.
\end{proposition}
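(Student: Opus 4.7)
The plan is to verify the four Gabriel filter axioms listed in the introduction for $\mc{L}(G\kappa)$. The main lever is that $\pi: \widetilde{R} \to G(R)$ is a surjective ring homomorphism and $\widetilde{\kappa}$ is already a (graded) kernel functor, so its filter $\mc{L}(\widetilde{\kappa})$ satisfies all four axioms. The strategy is therefore to lift every question about $G\kappa$ through $\pi$ to $\widetilde{R}$, solve it there, and push the result back down.

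Axioms (1) and (2) are immediate: upward closure is trivial from the definition, and $\pi(I_1) \cap \pi(I_2) \supset \pi(I_1 \cap I_2)$ together with axiom (2) for $\widetilde{\kappa}$ handles intersection. For axiom (3), given $L \supset \pi(I)$ with $I \in \mc{L}(\widetilde{\kappa})$ and $\ov{y} \in G(R)$, I would reduce to homogeneous $\ov{y}$ exactly as in the lemma preceding Definition 3.2 (since $G(R)$ is graded and $\widetilde{\kappa}$ is graded), choose a homogeneous lift $\widetilde{y} \in \widetilde{R}$ with $\pi(\widetilde{y}) = \ov{y}$, and invoke axiom (3) for $\widetilde{\kappa}$ to get $(I:\widetilde{y}) \in \mc{L}(\widetilde{\kappa})$. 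Then $\pi((I:\widetilde{y}))\,\ov{y} = \pi((I:\widetilde{y})\widetilde{y}) \subset \pi(I) \subset L$, so $\pi((I:\widetilde{y})) \subset (L:\ov{y})$ and the latter lies in $\mc{L}(G\kappa)$.

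The real obstacle is axiom (4), the radical property, because the previous lemma showed that starting from a kernel functor on $R$-mod one only obtains a topology on $G(R)$ without commutativity of $G(R)$; here the advantage is that $\widetilde{\kappa}$ is a \emph{bona fide} kernel functor upstairs, and $FR$ Zariskian gives that $\widetilde{R}$ is Noetherian. Concretely, suppose $L \in \mc{L}(G\kappa)$ with $\pi(I) \subset L$ for some $I \in \mc{L}(\widetilde{\kappa})$, and $M \subset L$ with $L/M$ being $G\kappa$-torsion. Pick finitely many homogeneous generators $\widetilde{i}_1,\ldots,\widetilde{i}_k$ of $I$ (Noetherian!); for each $s$, use that $\pi(\widetilde{i}_s) + M$ is $G\kappa$-torsion to produce $J_s \in \mc{L}(\widetilde{\kappa})$ with $\pi(J_s\widetilde{i}_s) \subset M$. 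Set $H = \sum_s J_s \widetilde{i}_s \subset I$; then $\pi(H) \subset M$.

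It remains to show $H \in \mc{L}(\widetilde{\kappa})$, after which $M \in \mc{L}(G\kappa)$ follows. For any $\widetilde{i} = \sum_s \widetilde{r}_s \widetilde{i}_s \in I$, the ideal $J' = \bigcap_s (J_s:\widetilde{r}_s)$ lies in $\mc{L}(\widetilde{\kappa})$ by axioms (2) and (3) for $\widetilde{\kappa}$, and satisfies $J'\widetilde{i} \subset H$; thus every element of $I/H$ is $\widetilde{\kappa}$-torsion. Combining the short exact sequence $0 \to I/H \to \widetilde{R}/H \to \widetilde{R}/I \to 0$ with the fact that $\widetilde{R}/I$ is $\widetilde{\kappa}$-torsion and that the $\widetilde{\kappa}$-torsion class is closed under extensions (because $\widetilde{\kappa}$ is radical), we conclude $\widetilde{R}/H$ is $\widetilde{\kappa}$-torsion, i.e.\ $H \in \mc{L}(\widetilde{\kappa})$. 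This completes the verification of all four axioms, so $G\kappa$ is a kernel functor.
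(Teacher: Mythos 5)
Your proof is correct, and for axioms (1)--(3) it follows essentially the same lifting strategy as the paper: pull the question through $\pi$ to $\widetilde{R}$, invoke the corresponding axiom for $\mc{L}(\widetilde{\kappa})$, and push back down. (The homogeneous reduction you mention for axiom (3) is harmless but unnecessary, since $\pi$ is onto and $(I:\widetilde{y}) \in \mc{L}(\widetilde{\kappa})$ holds for any lift $\widetilde{y}$, homogeneous or not; the paper skips this reduction.)

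Where you genuinely diverge from the paper is in axiom (4). The paper forms $K = \pi^{-1}(J) \cap I$, shows every $\widetilde{i} \in I$ is $\widetilde{\kappa}$-torsion modulo $K$, and concludes $K \in \mc{L}(\widetilde{\kappa})$ --- but it does so only for \emph{graded} $L$ and $J$, and then needs a second paragraph passing from $J$ to its largest graded submodule $J^g$ and arguing via $G\kappa$-torsion of $(J \cap \pi(I))/(J^g \cap \pi(I))$ to reach the non-graded case. Your construction of $H = \sum_s J_s\widetilde{i}_s$ from a finite set of homogeneous generators of (a graded member of) $\mc{L}(\widetilde\kappa)$ contained in $\pi^{-1}(L)$ sidesteps that case distinction entirely: $\pi(H) \subset M$ by construction and $H \in \mc{L}(\widetilde{\kappa})$ by the extension argument you give, regardless of whether $M$ is graded. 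Both arguments ultimately rest on the same two ingredients --- Noetherianity of $\widetilde{R}$ (from $FR$ Zariskian) and closure of the $\widetilde{\kappa}$-torsion class under extensions (from $\widetilde{\kappa}$ being radical) --- so neither is ``more elementary,'' but yours packages them so that the radical axiom comes out in one pass. One small point worth making explicit in a write-up: you should note that $I$ may be replaced by a graded sub-ideal in $\mc{L}(\widetilde{\kappa})$ before choosing homogeneous generators, which is available because $\widetilde{\kappa}$ is a graded kernel functor.
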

\begin{proof}
a) If $L' \supset L$ with $L \in \mc{L}(G\kappa)$, then $L' \in \mc{L}(G\kappa)$.\\
b) If $L_1,L_2 \in \mc{L}(G\kappa)$ then $L_1 \cap L_2 \supset \pi(I_1 \cap I_2), I_1 \cap I_2 \in \mc{L}(\widetilde{\kappa})$.\\
c) Look at $\ov{r} \in G(R)$ and $\widetilde{r} \in \widetilde{R}$ such that  $\pi(\widetilde{r}) = \ov{r}$. Take $L \in \mc{L}(G\kappa)$ and $(L : \ov{r}) \subset G(R)$. Pick $I \in \mc{L}(\widetilde{\kappa})$ such that $L \supset \pi(I)$ then $(I : \widetilde{r}) \in \mc{L}(\widetilde{\kappa})$ since the latter is a Gabriel filter. From $(I : \widetilde{r})\widetilde{r} \subset I$ it follows that $\pi(I : \widetilde{r})\ov{r} \subset \pi(I) \subset L$ with $\pi(I: \widetilde{r}) \in \mc{L}(G\kappa)$, whence $(L:\ov{r}) \in \mc{L}(G\kappa)$.\\
d) Consider $L \in \mc{L}^g(G\kappa)$ and a graded $J \subset L$ such that for all $l \in L$ there is an $I_l \in \mc{L}(G\kappa)$ such that $I_l l \subset J$. Since $L \supset \pi(I)$ for some $I \in \mc{L}(\widetilde{\kappa})$ we may put $K = \pi^{-1}(J) \cap I$, thus $\pi(K) \subset J \cap \pi(I)$. If $\widetilde{i} \in I$, say $l = \pi(\widetilde{i}) \in L$, then there is an $I_l \in \mc{L}(G\kappa)$ such that $I_l l \subset J$. Thus $J_l \widetilde{i} \subset \pi^{-1}(J) \cap I = K$, with $J_l \in \mc{L}(\widetilde{\kappa})$ such that $\pi(J_l) \subset I_l$. This holds for every $\widetilde{i} \in I$, thus $K \in \mc{L}(\widetilde{\kappa})$ and $\pi(K) \in \mc{L}(G\kappa)$. This implies that $J \cap \pi(I) \in \mc{L}(G\kappa)$ and $J \in \mc{L}(G\kappa)$.\\
Since $\mc{L}(\widetilde{\kappa})$ is graded, so is $\mc{L}(G\kappa)$. Therefore, if $ L \in \mc{L}(G\kappa)$ and $J \subset L$ are not graded, then $L \supset \pi(I)$ still holds with $\pi(I)$ a graded ideal and $J \cap \pi(I) \subset \pi(I)$. Let $J^g \subset J$ be the largest graded $G(R)$-submodule of $J$.  Since $(J\cap \pi(I)) / (J^g \cap \pi(I))$ and $\pi(I)/(J \cap \pi(I))$ are $G\kappa$-torsion, so is $\pi(I)/J^g \cap \pi(I)$. By the first part of this proof $J^g \cap \pi(I)$ and thus $J^g \in \mc{L}(G\kappa)$. It follows that $J \in \mc{L}(G\kappa)$.
\end{proof}

We call $G\kappa$ the graded kernel functor on $G(R)$-mod induced by $\widetilde{\kappa}$ on $\widetilde{R}$-mod. We also write $G\kappa = \ov{\kappa}$. Sometimes $\ov{\kappa}$ is trivial, e.g. when $\wt{R}X \in \mc{L}(\wt{\kappa})$ then $0 \in \mc{L}(\ov{\kappa})$ and $\ov{\kappa}$ is indeed trivial. If $\mc{L}^g(\wt{\kappa})$ does not contain $\wt{R}X$ then $\ov{\kappa}$ is non-trivial. Observe moreover that the $\ov{\kappa}$ defined here from $\wt{\kappa}$ is not the same as the $\ov{\kappa}^1$ defined from $\kappa$ as in the first approach. Recall that $\ov{\kappa}^1$ has a filter $\mc{L}(\ov{\kappa}^1) = \{ L \subset G(R), L \supset \pi(\wt{I}) {\rm ~for~some~} I \in \mc{L}(\kappa)\}$, whilee $\mc{L}(\ov{\kappa}) =  \{ L \subset G(R), L \supset \pi(I) {\rm ~for~some~} I \in \mc{L}(\wt{\kappa})\}$. Both filters coincide exactly when $\mc{L}(\ov{\kappa})$ has a filterbasis consisting of $\wt{I}$ with $I \in \mc{L}(\kappa)$, which is the filter in $R$-mod defined by
$$\mc{L}(\kappa) = \{ J \subset R, \widetilde{J} \in \mc{L}(\widetilde{\kappa})\},$$
The $\widetilde{J}$ are always calculated with respect to the $FJ$ induced by $FR$ on $J$. Graded kernel functors $\wt{\kappa}$ on $\wt{R}$-mod satisfying the above property are called pseudo-affine. For now, we forget about this property. Our definition of $\mc{L}(\kappa)$ is satisfactory since
\begin{proposition}
$\mc{L}(\kappa)$ is a Gabriel filter, that is, the associated preradical $\kappa$ is a kernel functor.
\end{proposition}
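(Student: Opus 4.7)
The plan is to verify the four Gabriel filter axioms for $\mc{L}(\kappa) = \{ J \subset R \,:\, \widetilde{J} \in \mc{L}(\widetilde{\kappa})\}$ by transferring them from the corresponding axioms for $\mc{L}(\widetilde{\kappa})$, exploiting that the Rees construction $J \mapsto \widetilde{J} = \bigoplus_n (F_nR \cap J) X^n$ is compatible with the usual ideal operations (inclusion, intersection, transporter), and that $\widetilde{\kappa}$ is a \emph{graded} kernel functor so its filter has the stability properties on homogeneous objects needed below.

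First I would dispatch the three non-radical axioms. Upward closure is immediate: $J \subset J'$ forces $\widetilde{J} \subset \widetilde{J'}$ degreewise, and $\mc{L}(\widetilde{\kappa})$ is upward closed. Closure under finite intersection follows from the identity $\widetilde{J_1 \cap J_2} = \widetilde{J_1} \cap \widetilde{J_2}$, which is a degreewise check $F_nR \cap J_1 \cap J_2 = (F_nR \cap J_1) \cap (F_nR \cap J_2)$. For the transporter axiom, given $I \in \mc{L}(\kappa)$ and $r \in R$, pick $k$ with $r \in F_kR$ and form the homogeneous $\widetilde{r} = rX^k$. I would then check the identity $(\widetilde{I} : \widetilde{r}) = \widetilde{(I:r)}$ degreewise: $aX^m \cdot rX^k = ar X^{m+k}$ lies in $\widetilde{I}$ iff $ar \in F_{m+k}R \cap I = F_{m+k}I$, which—because $a\in F_mR$ and $r \in F_kR$ already force $ar \in F_{m+k}R$—is equivalent to $ar \in I$, i.e. $a \in F_m(I:r)$. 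Since $\widetilde{\kappa}$ is graded and $\widetilde{r}$ is homogeneous, $(\widetilde{I} : \widetilde{r}) \in \mc{L}(\widetilde{\kappa})$, whence $(I:r) \in \mc{L}(\kappa)$.

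For the radical axiom, suppose $I \in \mc{L}(\kappa)$ and $J \subset I$ with $I/J$ being $\kappa$-torsion, so for each $i \in I$ there is $L_i \in \mc{L}(\kappa)$ with $L_i i \subset J$. I would show $\widetilde{I}/\widetilde{J}$ is $\widetilde{\kappa}$-torsion and conclude $\widetilde{J} \in \mc{L}(\widetilde{\kappa})$ by the radical property of $\widetilde{\kappa}$. Since $\widetilde{I}/\widetilde{J}$ is a graded $\widetilde{R}$-module and $\widetilde{\kappa}$ is graded, it suffices to kill each homogeneous class $iX^n + \widetilde{J}$ with $i \in F_nI$. The candidate annihilator is $\widetilde{L_i} \in \mc{L}(\widetilde{\kappa})$: for $aX^m \in \widetilde{L_i}$ with $a \in F_mR \cap L_i$, the product $aiX^{m+n}$ satisfies $ai \in L_i i \subset J$ and $ai \in F_{m+n}R$, so $ai \in F_{m+n}J$ and $\widetilde{L_i}\cdot iX^n \subset \widetilde{J}$. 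Closure of $\mc{L}(\widetilde{\kappa})$ under finite intersection then handles arbitrary (finite) homogeneous sums, and the radical property of $\widetilde{\kappa}$ applied to $\widetilde{J} \subset \widetilde{I}$ finishes the argument.

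The main obstacle is bookkeeping of filtration degrees in the transporter and annihilator identifications, in particular ensuring the equalities $(\widetilde{I}:\widetilde{r}) = \widetilde{(I:r)}$ and $\widetilde{J_1 \cap J_2} = \widetilde{J_1}\cap\widetilde{J_2}$ and that $\widetilde{L_i}\cdot iX^n$ actually lands in $\widetilde{J}$ rather than merely in the ungraded ideal $J \cdot \widetilde{R}$; the fact that $FR$ is separated and $\widetilde{r} = rX^{\deg r}$ is homogeneous is precisely what makes these identifications exact, and the graded-ness of $\widetilde{\kappa}$ is what lets torsion-of-homogeneous-elements propagate to torsion-of-the-quotient.
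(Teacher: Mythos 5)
Your proof is correct and follows essentially the same strategy as the paper: establish the identities $(J_1 \cap J_2)^\sim = \widetilde{J_1}\cap\widetilde{J_2}$ and $\widetilde{(I:r)} = (\widetilde{I}:\widetilde{r})$ for homogeneous $\widetilde{r}$, and reduce the radical axiom to killing homogeneous classes of $\widetilde{I}/\widetilde{J}$ and invoking idempotency and gradedness of $\widetilde{\kappa}$. The only cosmetic difference is that the paper verifies the two identities by chasing individual elements with the minimal-degree convention $\widetilde{r}=rX^{\deg r}$, while you check them degreewise using $\widetilde{J}_n = (F_nR\cap J)X^n$; the substance is the same.
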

\begin{proof}
a) Consider $H \supset J$ with $J \in \mc{L}(\kappa)$, then $\widetilde{H} \supset \widetilde{J}$ yields $\widetilde{H} \in \mc{L}(\widetilde{\kappa})$.\\
b) Take $J_1,J_2 \in \mc{L}(\kappa)$ then $(J_1 \cap J_2)^\sim = \widetilde{J_1} \cap \widetilde{J_2}$. Indeed $(J_1 \cap J_2)^\sim \subset\widetilde{J_1} \cap \widetilde{J_2}$ is obvious. If $\widetilde{j} \in \widetilde{J_1} \cap \widetilde{J_2}$ then $\widetilde{j} \in \widetilde{J_1}$ implies that $\widetilde{j} = j_1X^n$ where $j_1 \in J_1$, $n = \deg(j_1)$. $\widetilde{j} \in \widetilde{J_2}$ yields $\widetilde{j} = j_2X^n$ where $n = \deg\widetilde{j}$ and $\deg(j_2) = n$. From $(j_1 - j_2)X^n = 0$ and $\widetilde{R}$ being $X$-torsion free it follows that $j_1 = j_2$, hence $\widetilde{j} = jX^n$ with $j \in J_1 \cap J_2$, or $\widetilde{J_1} \cap \widetilde{J_2} \subset (J_1 \cap J_2)^\sim$.\\
c) Take $r \in R, J \in \mc{L}(\kappa)$, then $\widetilde{r} \in \widetilde{R}$ and $\widetilde{J} \in \mc{L}(\widetilde{\kappa})$. $(\widetilde{J}: \widetilde{r}) \in \mc{L}(\widetilde{\kappa})$. Let $y \in (J:r)$, i.e. $\widetilde{y} \in (J:r)^\sim$. If $\deg(yr) = \gamma$ with $\gamma \leq \deg(y) + \deg(r) = m + n$, then $\widetilde{yr} = yrX^\gamma, \widetilde{y}\widetilde{r} = \widetilde{yr}X^{ m + n - \gamma}$. Thus $(yr)^\sim \subset \widetilde{J}$ yields $\widetilde{y}\widetilde{r} \in X^{n+m - \gamma}\widetilde{J} \subset \widetilde{J}$, i.e. $\widetilde{y} \in (\widetilde{J}:\widetilde{r})$. Conversely, from $\wt{z}\wt{r} \in \wt{J}$ we have $\wt{z} = zX^q$ for some $z \in R$, then $zrX^{q+n} \in \wt{J}$. $zr \in \dot{F}_\gamma R$ for some $\gamma \leq q +n$, whence $zrX^{q+n} = zrX^\gamma X^{q + n - \gamma} \in \wt{J}$. Since $J \subset R$ has the induced filtration $\wt{R}/ \wt{J}$ is $X$-torsion free. Therefore $zrX^\gamma = \wt{zr} \in \wt{J}$, or $zr \in J$ and $z \in (J:r)$. So we have established $(J:r)^\sim = (\wt{J} : \wt{r})$ and $\wt{J} \in \mc{L}(\wt{\kappa})$ then yields $(J:r) \in \mc{L}(\kappa)$.\\
d) Take $J \in \mc{L}(\kappa), H \subset J$ such that $\forall j \in J, \exists I_j \in \mc{L}(\kappa)$ such that $I_j j \subset H$. So we have $\wt{J} \in \mc{L}(\wt{\kappa}), \wt{H} \subset \wt{J}$ ($FH$ induced by $FR$ like $FJ$). Let us check $\wt{I_j}\wt{j} \subset \wt{H}$ for all homogeneous $\wt{j} \in \wt{J}$; then idempotency of $\wt{\kappa}$ (and the fact that $\wt{\kappa}$ is graded) yields $\wt{H} \in \mc{L}(\wt{\kappa})$ or $H \in \mc{L}(\kappa)$. For $\wt{i_j} \in \wt{I_j}$, $\wt{i_j}\wt{j} = (i_j j)^\sim X^{n_i + n_j - n'}, n_i = \deg\wt{i_j}, n_j = \deg\wt{j}$ and $n' = \deg i_j j$. Thus $\wt{i_j}\wt{j} \in X^{n_i + n_j - n'} \wt{H} \subset \wt{H}, n_i + n_j - n' \geq 0$. This holds for all homogeneous $\wt{i_j} \in \wt{I_j}$, hence $\wt{I_j}\wt{j} \subset \wt{H}$. 
\end{proof}

\begin{lemma}\lemlabel{tilde}
With assumptions and notations as before: For a separated filtered $FR$-module $M$, $\wt{\kappa}(\wt{M}) = \kappa(M)^\sim$ and $\wt{M}/\wt{\kappa}(\wt{M}) = (M/\kappa M)^\sim$.
\end{lemma}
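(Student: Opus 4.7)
The plan is to prove the first equality by a double inclusion on homogeneous elements, using the fact that $\wt{\kappa}$ is a graded kernel functor (so $\wt{\kappa}(\wt{M})$ is graded and $\mc{L}(\wt{\kappa})$ has a filterbasis of graded ideals), and then to deduce the second equality from strict exactness of $0 \to \kappa(M) \to M \to M/\kappa(M) \to 0$ together with the exactness of the Rees functor on strict short exact sequences.

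For the inclusion $\kappa(M)^\sim \subset \wt{\kappa}(\wt{M})$, I would pick a homogeneous generator $mX^n$ of $\kappa(M)^\sim$, that is $m \in \kappa(M) \cap F_n M = F_n\kappa(M)$. Choose $J \in \mc{L}(\kappa)$ with $Jm = 0$. By definition of $\mc{L}(\kappa)$, $\wt{J} \in \mc{L}(\wt{\kappa})$. For any homogeneous $\wt{j} = jX^k \in \wt{J}$ with $j \in F_k J$, the product $\wt{j} \cdot mX^n = (jm)X^{k+n} = 0$ in $\wt{M}$. Hence $\wt{J} \cdot (mX^n) = 0$, so $mX^n \in \wt{\kappa}(\wt{M})$.

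For the reverse inclusion $\wt{\kappa}(\wt{M}) \subset \kappa(M)^\sim$, since $\wt{\kappa}(\wt{M})$ is graded it suffices to work with a homogeneous element $mX^n \in \wt{\kappa}(\wt{M})$ (where $m \in F_n M$). Choose a graded ideal $I \in \mc{L}^g(\wt{\kappa})$ with $I \cdot (mX^n) = 0$. For each homogeneous $\wt{i} = iX^k \in I_k$ (with $i \in F_k R$) we have $(im)X^{k+n} = 0$ in $\wt{M}$, i.e.\ $im = 0$ in $M$. Setting $J := \ann_R(m)$ with its induced filtration $F_kJ = J \cap F_kR$, the above shows $i \in F_kJ$, hence $\wt{i} \in \wt{J}$; since $I$ is generated by its homogeneous elements, $I \subset \wt{J}$. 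Therefore $\wt{J} \in \mc{L}(\wt{\kappa})$, whence $J \in \mc{L}(\kappa)$ by definition, so $m \in \kappa(M)$. Combined with $m \in F_n M$, this gives $mX^n \in \kappa(M)^\sim$.

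For the second equality, observe that with $\kappa(M) \subset M$ carrying the induced filtration and $M/\kappa(M)$ the quotient filtration, the short exact sequence $0 \to \kappa(M) \to M \to M/\kappa(M) \to 0$ is strict. The Rees functor preserves strict exact sequences, so we obtain $0 \to \kappa(M)^\sim \to \wt{M} \to (M/\kappa(M))^\sim \to 0$. Combining with the first equality yields $\wt{M}/\wt{\kappa}(\wt{M}) = (M/\kappa(M))^\sim$. The main delicate point is ensuring in the reverse inclusion that the annihilator $\ann_R(m)$ can be equipped with the induced filtration in such a way that $\wt{\ann_R(m)}$ really contains the given graded $I$; this boils down to carefully tracking degrees on homogeneous generators $iX^k$, which works precisely because $\wt{\kappa}$ is graded and the Rees construction respects the induced filtration on submodules.
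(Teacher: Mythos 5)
Your proof is correct and establishes the same equalities, but the hard inclusion $\wt{\kappa}(\wt{M}) \subset \kappa(M)^\sim$ is handled by a genuinely different (and leaner) argument than the paper's. The paper first verifies that $\wt{\kappa}(\wt{M})$ is both $X$-torsion-free and $X$-closed in $\wt{M}$, so that $\wt{\kappa}(\wt{M}) = \wt{N}$ for a strict filtered submodule $N \subset M$; it then takes an arbitrary $L \in \mc{L}(\wt{\kappa})$ killing $\wt{n}$, replaces $L$ by its $X$-closure $\cl_X(L)$, and invokes the Noetherian (Zariskian) hypothesis to get $X^d\cl_X(L) \subset L$ for some $d$, from which $\cl_X(L)\wt{n} = 0$ follows by $X$-torsion-freeness of $\wt{N}$; finally $\cl_X(L) = \wt{I}$ with $I \in \mc{L}(\kappa)$ and one concludes $n \in \kappa(M)$. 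You instead start from a graded $I \in \mc{L}(\wt{\kappa})$ annihilating a homogeneous $mX^n$ (such an $I$ exists since $\wt{\kappa}$ is graded), read off from the homogeneous components $iX^k$ that $im = 0$ in $M$, hence $i \in F_k(\ann_R(m))$, so $I \subset \wt{\ann_R(m)}$, giving $\ann_R(m) \in \mc{L}(\kappa)$ directly. This sidesteps the $\cl_X$ machinery and the appeal to $\wt{R}$ being Noetherian; the only hypothesis you use is that $\wt{\kappa}$ is graded, which is part of the standing setup. The trade-off is that the paper's route simultaneously exhibits $\wt{\kappa}(\wt{M})$ as the Rees module of a strict submodule, which is conceptually informative, whereas your argument is element-wise. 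The easy inclusion and the derivation of $\wt{M}/\wt{\kappa}(\wt{M}) = (M/\kappa M)^\sim$ from strict exactness of $0 \to \kappa(M) \to M \to M/\kappa(M) \to 0$ match the paper.
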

\begin{proof}
Obviously $\wt{\kappa}(\wt{M})$ is $X$-torsion free, thus $\wt{\kappa}(\wt{M}) = \wt{N}$ for some filtered $R$-submodule $N$ in $M$. If $\wt{m} \in \wt{M}$ is $X$-torsion modulo $\wt{\kappa}(\wt{M})$, i.e. $X^d\wt{m} \in \wt{\kappa}(\wt{M})$ then $\wt{I}X^d\wt{m} = 0$ for some $\wt{I} \in \mc{L}(\wt{\kappa})$, hence $\wt{I}\wt{m} = 0$ or $\wt{m} \in \wt{\kappa}(\wt{M})$. Consequently $\wt{\kappa}(\wt{M})$ is $X$-closed in $\wt{M}$, which means that $N$ is a strict filtered submodule of $M$, i.e. $FN$ is induced by $FM$. If $n \in N$ then $\wt{n} \in \wt{N}$ is $\wt{\kappa}$-torsion so $L\wt{n} = 0$ for some $L \in \mc{L}(\wt{\kappa})$. We have $\cl_X(L) \in \mc{L}(\wt{\kappa})$ and since $FR$ is Zariskian, $\cl_X(L)$ is finitely generated, hence there is an $n \in \mathbb{N}$ such that $X^n\cl_X(L) \subset L$, as $X$ is central in $\wt{R}$. Then $X^n\cl_X(L)\wt{n} = 0$ yields $\cl_X(L)\wt{n} =0$ since $\wt{N}$ is $X$-torsion free. Now $\cl_X(L)$ is $X$-closed in $\wt{R}$ hence $\cl_X(L) =  \wt{I}$ for some $I$ and $I \in \mc{L}(\kappa)$ since $\cl_X(L) \in \mc{L}(\wt{\kappa})$. Now from $\wt{i}\wt{n} = \wt{in}X^\delta$ where $\delta = \deg_F i + \deg_F n - \deg_F in$, $\deg_F$ being defined since $FM$ is separated, we obtain  $in = 0$ and this holds for all $i \in I$ with $I \in \mc{L}(\kappa)$, thus $n \in \kappa(M)$ and $\wt{n} \in (\kappa(M))^\sim$. Consequently $(\kappa(M))^\sim \supset \wt{\kappa}(\wt{M})$. Conversely if $\wt{m}$ is homogeneous in $(\kappa(M))^\sim$ then $\wt{m} = mX^d$ for some $m \in \dot{F}_d(\kappa(M))$, thus $Lm = 0$ for some $L \in \mc{L}(\kappa)$. Then from $(Lm)^\sim \supset \wt{L}\wt{m}$ it follows that $\wt{m} \in \wt{\kappa}(\wt{M})$, therefore $\wt{\kappa}(\wt{M}) = (\kappa(M))^\sim$.\\
We checked that $\wt{M}/\wt{\kappa}(\wt{M})$ is $X$-torsion free above. Moreover $\wt{M}/\wt{\kappa}(\wt{M}) = \wt{M} / \kappa(M)^\sim$. We have an exact sequence of strict filtered morphisms:
$$ 0 \to \kappa(M) \to M \to M/\kappa(M) \to 0$$
Therefore, applying $\sim$ to this sequence. we get an exact sequence in $\wt{R}$-gr: 
$$ 0 \to \kappa(M)^\sim \to \wt{M} \to (M/\kappa(M))^\sim \to 0$$
Thus $\wt{M} / \wt{\kappa}(\wt{M}) = \wt{M} / (\kappa(M))^\sim = (M / \kappa(M))^\sim$.
\end{proof} 

We call a graded kernel functor $\wt{\kappa}$ on $\wt{R}$-mod or a kernel functor $\wt{\kappa}$ on $\wt{R}$-gr  ``affine" if $\wt{R}X \in \mc{L}(\wt{\kappa})$. In this case we write $\wt{\kappa} \in$ Afftors$(\wt{R})$. 
\begin{observation}~
\begin{enumerate}
\item If $\wt{\kappa}$ is affine, then $0 \in \mc{L}(\ov{\kappa})$, and localization on the associated graded level is trivial.
\item $\wt{\kappa}$ is affine if and only if: $I \in \mc{L}(\wt{\kappa})$ if and only if $X^nI \in \mc{L}(\wt{\kappa})$ for all $n\in \mathbb{N}$. \\
\emph{Proof.} If $\wt{R}X \in  \mc{L}(\wt{\kappa})$ and $I \in  \mc{L}(\wt{\kappa})$ then since $FR$ is Zariskian, $I$ is finitely generated so $X^nI = \wt{R}X^n I \in  \mc{L}(\wt{\kappa})$ since $\wt{R}X \in  \mc{L}(\wt{\kappa})$. Conversely $X^nI \in  \mc{L}(\wt{\kappa})$ yields for $I = \wt{R} \in  \mc{L}(\wt{\kappa})$ that $\wt{R}X^n$ and $\wt{R}X$ are in $ \mc{L}(\wt{\kappa})$.
\item $\wt{\kappa}$ is affine if $I \in  \mc{L}(\wt{\kappa})$ if and only if $\cl_X(I) \in  \mc{L}(\wt{\kappa})$.\\
\emph{Proof.} Suppose $\cl_X(I) \in  \mc{L}(\wt{\kappa})$, since $\cl_X(I)$ is finitely generated and $X$ is central in $\wt{R}$, there is an $X^n, n \in \mathbb{N}$ such that $X^n\cl_X(I) \subset I$, thus $\wt{R}X^n\cl_X(I) \subset I$ and hence $I \in  \mc{L}(\wt{\kappa})$.
\item  If $\wt{\kappa}$ is affine then $\mc{L}(\wt{\kappa})$ has a filter basis consisting of $X^n\wt{I}$ with $n \in \mathbb{N}$ and $\wt{I}$ such that $\wt{R}/\wt{I}$ is $X$-torsion free or $\cl_X(\wt{I}) = \wt{I}$, in other words $\wt{I}$ is coming from $I \in \mc{L}(\kappa)$.\\
\emph{Proof.} From observation 3. $I \in \mc{L}(\wt{\kappa})$ iff $\cl_X(I) \in  \mc{L}(\wt{\kappa})$ hence $\cl_X(I) = \wt{J}$ for some $J \in \mc{L}(\kappa)$ and $X^n\wt{J} \subset I$ for some $n \in \mathbb{N}$ as in observation 3., thus $I$ contains some $X^n\wt{J}$ as claimed.
\end{enumerate}
\end{observation}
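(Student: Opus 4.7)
The plan is to prove the four items sequentially, leveraging that $\wt R$ is Noetherian (since $FR$ is Zariskian) and that $X$ is central in $\wt R$ with $\wt R/\wt RX \cong G(R)$.

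For (1), affineness gives $\wt RX \in \mc L(\wt\kappa)$; since $\pi(\wt RX) = 0$ in $G(R) = \wt R/\wt RX$, the definition $\mc L(\ov\kappa) = \{L \subset G(R) \mid L \supset \pi(I) \text{ for some } I \in \mc L(\wt\kappa)\}$ yields $0 \in \mc L(\ov\kappa)$, so every left ideal lies in $\mc L(\ov\kappa)$ and localization on $G(R)$-mod is trivial. For (2), the nontrivial direction reduces by induction on $n$ to the case $n=1$: given $I \in \mc L(\wt\kappa)$, consider the short exact sequence
$$ 0 \to I/XI \to \wt R/XI \to \wt R/I \to 0.$$
Here $\wt R/I$ is $\wt\kappa$-torsion by hypothesis, and $\wt RX$ annihilates $I/XI$ (because $X(i + XI) = 0$), so affineness makes $I/XI$ $\wt\kappa$-torsion as well. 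Since $\wt\kappa$ is a radical, its torsion class is closed under extensions, hence $\wt R/XI$ is $\wt\kappa$-torsion, i.e.\ $XI \in \mc L(\wt\kappa)$. The converse is immediate by taking $I = \wt R$.

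For (3), the direction $I \in \mc L(\wt\kappa) \Rightarrow \cl_X(I) \in \mc L(\wt\kappa)$ is just the upward closure axiom of a Gabriel filter. Conversely, since $\wt R$ is Noetherian, $\cl_X(I)$ is finitely generated, and each generator is killed by some power of $X$ modulo $I$, so a uniform exponent $n$ satisfies $X^n\cl_X(I) \subset I$; by (2), $X^n\cl_X(I) \in \mc L(\wt\kappa)$, whence $I \in \mc L(\wt\kappa)$ by upward closure. For (4), given $I \in \mc L(\wt\kappa)$, item (3) provides $\cl_X(I) \in \mc L(\wt\kappa)$; since $\cl_X(I)$ is $X$-closed graded, the equivalence $R$-filt $\to \mc F_X$ realizes $\cl_X(I) = \wt J$ for a filtered left ideal $J \subset R$, and the defining condition of $\mc L(\kappa)$ forces $J \in \mc L(\kappa)$, while the same $X^n$ exhibits $X^n\wt J \subset I$ as the required filter basis element.

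The main subtlety is item (2): one must invoke that the $\wt\kappa$-torsion class is closed under extensions, which is precisely where $\wt\kappa$ being a radical (and not merely a left exact preradical) is essential. Once (2) is in hand, items (3) and (4) reduce to formal manipulations with $X$-closures together with the standard bijection between $X$-closed graded ideals of $\wt R$ and filtered ideals of $R$.
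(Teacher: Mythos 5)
Your proof is correct and tracks the paper's argument essentially verbatim for items (1), (3), and (4). The one place you take a slightly different route is the nontrivial implication in item (2): the paper multiplies ideals, observing $X^nI = \wt{R}X^n\cdot I$ and invoking closure of the Gabriel filter $\mc{L}(\wt{\kappa})$ under products of its members, whereas you reduce by induction to $n=1$ and argue module-theoretically via the short exact sequence $0\to I/XI\to \wt{R}/XI\to \wt{R}/I\to 0$, using that the $\wt{\kappa}$-torsion class is closed under extensions. These are dual formulations of the same radical property of $\wt{\kappa}$, so the logical content is equivalent. A minor merit of your version is that it makes transparent that the Noetherian hypothesis the paper mentions in this step is not actually needed there, since $X^nI = \wt{R}X^nI$ follows already from centrality of $X$, with no finite-generation input; and your closing remark that item (2) is precisely where radicality (as opposed to mere left-exactness) of $\wt{\kappa}$ enters correctly identifies the same load-bearing hypothesis that the paper is implicitly using through multiplicativity of the Gabriel filter.
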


The complement of affine (graded) localizations of $\wt{R}$-mod consists of the $\wt{\kappa}$ such that $\wt{R}X \notin  \mc{L}(\wt{\kappa})$. Then it is obvious that $0 \notin \mc{L}(\ov{\kappa})$. The noncommutative geometry of $\wt{R}$-mod has an ``open set" of affine torsion theories; this can be identified with $\wt{R}_X$-grtors where $\wt{R}_X \cong R[X,X^{-1}]$ via the torsion theories such that $\wt{\kappa} \geq \wt{\kappa}_X$\footnote{For the lattice structure on $R$-tors we refer to Section 4.} where $\wt{\kappa}_X$ is the kernel functor associated to $\mc{L}(\wt{\kappa}_X) = \{ \wt{R}X^n, n \in \mathbb{N}\}$ and it is known that $\wt{R}_X$-grtors $\cong \gen(\wt{\kappa}_X)$ because $\wt{\kappa}_X$ is central and perfect. Since $R[X,X^{-1}]$ is strongly graded, we have an equivalence of categories $\wt{R}_X$-gr $\cong R$-mod, justifying the statement in the introduction. For non-affine $\wt{\kappa}$ we do not necessarily have $\kappa$ being trivial but in any case $\wt{\kappa}$ is not obtained as an obvious $\sim$-construction from $\kappa$. Indeed, starting from a kernel functor $\kappa$ one could attempt to define $\mc{L}(\wt{\kappa})$ as the filter of left ideals containing some $\wt{I}$ for some $I \in \mc{L}(\kappa)$. However, in proving idempotency one needs the assumption that $X^nJ \in \mc{L}(\wt{\kappa})$ if $J \in \mc{L}(\wt{\kappa})$, but this is just saying that $\wt{\kappa}$ is affine. In the set of non-affine $\wt{\kappa}$ there is nonetheless a subset which relates well to both $\kappa$ and $\ov{\kappa}$, these are the \emph{pseudo-affine kernel functors}.

\begin{definition}
$\wt{\kappa}$ is said to be pseudo-affine if $\mc{L}(\wt{\kappa})$ has a filter basis consisting of $\wt{I}$ with $I \in \mc{L}(\kappa)$.
\end{definition}

In situations where both $\kappa$ and $\ov{\kappa}$ are used or linked we have to restrict to pseudo-affine kernel functors. The most important example is given by Zariskian filtered rings with a domain for the associated graded ring and localizations deriving from Ore sets (see below).\\

Starting from a $\kappa$ on $R$-mod we may define a $\mc{L}(\wt{\kappa})$ on $\wt{R}$-gr by putting $\mc{L}(\wt{\kappa}) = \{ J \subset_g \wt{R}, J \supset \wt{I} {\rm~for~some~} I \in \mc{L}(\kappa)\}$. One easily verifies that the $\wt{\kappa}$ defines a left exact preradical, but it is a priori not idempotent.\\
Even if $\kappa$ is defined from a kernel functor $\wt{\kappa}$ then the $\mc{L}(\wt{\kappa})$ is not obtained from $\mc{L}(\kappa)$ by taking the $\wt{I}, I \in \mc{L}(\kappa)$ for a filterbasis! Of course for a pseudo-affine $\wt{\kappa}$ the $\kappa$ defined as before does yield the $\wt{\kappa}$ in the procedure sketched above. Another way trying to solve this is to define $\mc{L}(\kappa)$ as the filter generated by the filtered left ideals $L$ of $R$ such that $\wt{L} \in \mc{L}(\kappa)$, i.e. not viewing only filtered left ideals $I$ such that $0 \to I \to R$ is a strict filtered exact chain! In fact, in this way we consider the category $R$-filt which has been neglected (see Introduction). This may be possible but the whole torsion theory has to be adapted, so we do not go deeper into this.\\
For pseudo-affine kernel functors we obviously have 
$$ \mc{L}(\ov{\kappa}) = \{L \subset G(R), L \supset \pi(\wt{I}) {\rm~for~some~} I \in \mc{L}(\kappa)\}.$$
As far as sheaves over the lattice $R$-tors will go, we will work only with localizations and quotient filtrations defined in the first approach, say for $R$ with $G(R)$ a domain. As soon as one desires good relations on the sheaf level between the $R$-, the $\wt{R}$- and the $G(R)$-levels, one restricts attention to pseudo-affine graded kernel functors $\wt{\kappa}$ on $\wt{R}$-mod and the associated $\kappa$ on $R$-mod and $\ov{\kappa}$ on $G(R)$-mod. In the next section we will discuss for example the noncommutative site in the `almost commutative' case, where words of Ore sets are considered. To this extent, we have

\begin{lemma}
Let $\wt{S}$ be an Ore set in $\wt{R}$ such that $\wt{S} \cap \wt{R}X = \emptyset$. Define $S \subset R, S = \{ s, \wt{s} \in \wt{S}\}$. Then $S$ is an Ore set in $R$.
\end{lemma}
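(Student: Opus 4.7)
The plan is to push the Ore conditions on $\wt{S}$ in the graded ring $\wt{R}$ down to $R$ by tracking degrees inside the overring $R[X,X^{-1}]$ in which $\wt{R}$ sits. First I interpret the setup: as is customary for Ore sets in a graded ring, I may (and do) assume that $\wt{S}$ consists of homogeneous elements. A homogeneous $\wt{s}\in\wt{S}$ of degree $n_s$ has the form $\wt{s}=sX^{n_s}$ for a unique $s\in F_{n_s}R$, and the hypothesis $\wt{s}\notin\wt{R}X$ says exactly that $s\notin F_{n_s-1}R$, i.e.\ $\deg s=n_s$. This makes $S=\{s\in R:\,sX^{\deg s}\in\wt{S}\}\subset R$ well defined. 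Multiplicative closure then comes for free: if $s_1,s_2\in S$ correspond to $\wt{s}_1,\wt{s}_2$ of degrees $n_1,n_2$, then $\wt{s}_1\wt{s}_2=s_1s_2\,X^{n_1+n_2}$ lies in $\wt{S}$ but not in $\wt{R}X$, forcing $\deg(s_1s_2)=n_1+n_2$, so $s_1s_2\in S$.

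For the left Ore condition, take $s\in S$ with $n=\deg s$ and any $r\in R$ with $m=\deg r$. I form $\wt{s}=sX^n\in\wt{S}$ and $\wt{r}=rX^m\in\wt{R}$ and apply the Ore condition for $\wt{S}$ in $\wt{R}$ to obtain $\wt{s}'\in\wt{S}$, $\wt{r}'\in\wt{R}$ with $\wt{s}'\wt{r}=\wt{r}'\wt{s}$. I take $\wt{s}'$ homogeneous of some degree $p$; the hypothesis $\wt{S}\cap\wt{R}X=\emptyset$ then yields $\wt{s}'=s'X^p$ with $s'\in\dot{F}_pR$, so $s'\in S$. Since $\wt{s}'\wt{r}$ is homogeneous of degree $p+m$, only the homogeneous component of $\wt{r}'$ of degree $p+m-n$ can contribute on the right, so I may replace $\wt{r}'$ by that component and write $\wt{r}'=r'X^{p+m-n}$ with $r'\in F_{p+m-n}R$. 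Reading the resulting identity $s'r\,X^{p+m}=r's\,X^{p+m}$ inside $R[X,X^{-1}]$ gives $s'r=r's$ in $R$, which is the required Ore relation.

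The second Ore condition is handled the same way: if $rs=0$ in $R$ with $s\in S$ and $r\in R$, then $\wt{r}\wt{s}=rs\,X^{n+m}=0$ in $\wt{R}$, and the symmetric Ore condition for $\wt{S}$ produces $\wt{s}'\in\wt{S}$ with $\wt{s}'\wt{r}=0$; making $\wt{s}'=s'X^p$ homogeneous as above, I read off $s'r\,X^{p+m}=0$ in $\wt{R}\subset R[X,X^{-1}]$, hence $s'r=0$ with $s'\in S$. The main obstacle --- and essentially the only place where both hypotheses are really used --- is the bookkeeping step that extracts a \emph{homogeneous} $\wt{s}'$ whose ``leading coefficient'' $s'$ actually sits in $\dot{F}_pR$ and not in $F_{p-1}R$; graded-ness of $\wt{S}$ gives the homogeneity, and $\wt{S}\cap\wt{R}X=\emptyset$ gives that the leading coefficient has full filtration degree $p$, so that $s'\in S$. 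Once those two degree facts are secured, the rest is a straightforward degree comparison inside $R[X,X^{-1}]$.
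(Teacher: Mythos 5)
Your proof is correct and follows essentially the same route as the paper's: lift to the Rees ring, invoke the Ore conditions for $\wt{S}$ there, use $\wt{S}\cap\wt{R}X=\emptyset$ to guarantee that the resulting $s'$ has full filtration degree (hence lies in $S$), and read off the required identity by comparing powers of $X$ inside $R[X,X^{-1}]$. Your explicit step of replacing $\wt{r}'$ by its homogeneous component of the appropriate degree is a small clarification that the paper leaves implicit, but it does not change the argument.
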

\begin{proof}
Take $s,t \in S$, then $\wt{s}\wt{t} = (st)^\sim X^{n(s) + n(t) - n(st)}$ where $n(s) = \def \wt{s}, n(t) = \deg \wt{t}$ and $n(st) = \deg (st)^\sim$. If $n(st) < n(s) + n(t)$ then $\wt{s}\wt{t} \in \wt{S}$ and $\wt{s}\wt{t} \in \wt{R}X$, contradiction. Hence $n(st) = n(s) + n(t)$ and $st \in S$. Also observe that $S$ is an Ore set. Indeed take $s \in S, r \in R$ and look at $\wt{s} \in \wt{S}, \wt{r} \in \wt{R}$ then $\wt{s'} \wt{r} = \wt{r'} \wt{s}$ because $\wt{S}$ is an Ore set. Put $\deg_F s'r = \gamma, \deg_F r's = \tau$ then 
$$\begin{array}{c}
(s' r)^\sim =  s'rX^\gamma, \wt{s'}\wt{r} = (s'rX^\gamma)X^{\deg(s') + \deg(r) - \gamma}\\
(r's)^\sim = r's X^\tau, \wt{r'}\wt{s} = (r'sX^{\tau})X^{\deg(r') + \deg(s) - \tau},
\end{array}$$
where $\deg(s') + \deg(r) = \deg(r') + \deg(s) = \epsilon$ because $\wt{s'} \wt{r} = \wt{r'} \wt{s}$. We conclude that $s'r = r's$. Furthermore if $rs = 0$ for some $r \in R, s\in S$, then $\wt{r}\wt{s} = (rs)^\sim X^\epsilon$ where $\epsilon = \deg(\wt{r}) + \deg(\wt{s}) - \deg(rs)^\sim, \epsilon \geq 0$. Thus $\wt{r}\wt{s} = 0$ and since $\wt{S}$ is an Ore set in $\wt{R}$, there is an $\wt{s'}$ in $\wt{S}$ such that $\wt{s'}\wt{r} = 0$, hence $s'r=0$.
\end{proof}
To an Ore set as in the lemma, we associate $\ov{S} = \pi(\wt{S}),$ where $\pi : \wt{R} \to G(R)$ is the canonical epimorphism. Since $\wt{S} \cap \wt{R}X = \emptyset, 0 \notin \ov{S}$ and obviously $\ov{S}$ is multiplicatively closed. 
\begin{lemma}
In the situation of the previous lemma, $\ov{S}$ satisfies the second Ore condition. In case $G(R)$ is commutative or a domain then $\ov{S}$ is an Ore set.
\end{lemma}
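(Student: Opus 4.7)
The plan is to lift every Ore test in $G(R)$ back to the Rees ring $\wt{R}$, use the Ore properties of $\wt{S}$ there, and push the result down through the canonical epimorphism $\pi\colon\wt{R}\to G(R)=\wt{R}/\wt{R}X$. Two features of the setup are decisive: $X$ is central in $\wt{R}$, so $X$-factors can be moved freely across products, and $\wt{S}\cap\wt{R}X=\emptyset$ ensures that products of elements of $\wt{S}$ never fall into $\wt{R}X$, so their $\pi$-images always lie in $\ov{S}$ and are nonzero.

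For the reversibility (``second Ore'') condition I would start from $\ov{a}\,\ov{s}=0$ in $G(R)$ with $\ov{s}=\pi(\wt{s})$ for some $\wt{s}\in\wt{S}$, and pick any lift $\wt{a}\in\wt{R}$ of $\ov{a}$. Then $\wt{a}\wt{s}\in\ker\pi=\wt{R}X$, so $\wt{a}\wt{s}=\wt{c}X$ for some $\wt{c}\in\wt{R}$. Applying the left Ore condition for $\wt{S}$ to the pair $(\wt{c},\wt{s})$ produces $\wt{s_1}\in\wt{S}$ and $\wt{c_1}\in\wt{R}$ with $\wt{s_1}\wt{c}=\wt{c_1}\wt{s}$. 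Multiplying on the right by $X$ and exploiting centrality yields
\[\wt{s_1}\wt{a}\wt{s}=\wt{s_1}\wt{c}X=\wt{c_1}X\wt{s},\]
so $(\wt{s_1}\wt{a}-\wt{c_1}X)\wt{s}=0$. Reversibility for $\wt{S}$ in $\wt{R}$ then provides $\wt{s_2}\in\wt{S}$ with $\wt{s_2}\wt{s_1}\wt{a}=\wt{s_2}\wt{c_1}X\in\wt{R}X$. Setting $\wt{s'}:=\wt{s_2}\wt{s_1}$, one has $\wt{s'}\in\wt{S}$ by multiplicative closure and $\wt{s'}\notin\wt{R}X$ by the disjointness hypothesis, so $\ov{s'}:=\pi(\wt{s'})$ is a genuine element of $\ov{S}$ satisfying $\ov{s'}\ov{a}=0$.

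For the full Ore condition under the extra hypothesis: when $G(R)$ is commutative the first Ore condition is immediate, using $\ov{s'}=\ov{s}$ and $\ov{a'}=\ov{a}$. When $G(R)$ is a (noncommutative) domain, given $\ov{a}\in G(R)$ and $\ov{s}\in\ov{S}$ I would lift to homogeneous elements $\wt{a}\in\wt{R}$ and $\wt{s}\in\wt{S}$, apply the Ore condition for $\wt{S}$ in $\wt{R}$ to obtain $\wt{s'}\wt{a}=\wt{a'}\wt{s}$ with $\wt{s'}\in\wt{S}$, and project via $\pi$ to get $\ov{s'}\ov{a}=\ov{a'}\ov{s}$ with $\ov{s'}\in\ov{S}$. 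The domain hypothesis enters when comparing homogeneous components: since $G(R)$ has no zero-divisors, degrees add correctly on both sides, so a suitable homogeneous component of $\wt{s'}$ projects to a nonzero element of $\ov{S}$ paired with a matching homogeneous $\ov{a'}=\pi(\wt{a'})$.

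The step I expect to be most technical is the second Ore condition, where producing $\wt{s'}$ requires a \emph{double} use of the Ore properties of $\wt{S}$ (first the left Ore condition applied to $(\wt{c},\wt{s})$, then reversibility applied to $\wt{s_1}\wt{a}-\wt{c_1}X$) together with two distinct uses of the centrality of $X$. By comparison, the first Ore condition for $\ov{S}$ in the commutative/domain case transfers essentially routinely from $\wt{S}$ along $\pi$, with the hypothesis on $G(R)$ serving only to guarantee that homogeneous lifts and homogeneous witnesses can be used throughout.
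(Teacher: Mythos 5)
There is an ambiguity in the paper's terminology that you and the paper's proof resolve in opposite ways. Your reading --- ``second Ore condition'' $=$ reversibility, i.e.\ $\ov{a}\ov{s}=0$ with $\ov{s}\in\ov{S}$ implies $\ov{s'}\ov{a}=0$ for some $\ov{s'}\in\ov{S}$ --- matches how the paper used that phrase in the earlier Remark about $S\setminus B$, but the paper's own proof of this Lemma treats ``second Ore condition'' as the main (left) Ore condition $\ov{s'}\ov{r}=\ov{r'}\ov{s}$ and proves it unconditionally by the direct lift--Ore--project route through $\wt{R}$; the commutative/domain hypothesis is invoked only to make reversibility trivial (commutativity gives $\ov{r}\ov{s}=0\Rightarrow\ov{s}\ov{r}=0$, the domain hypothesis gives $\ov{r}=0$). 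Your lifting argument for reversibility --- writing $\wt{a}\wt{s}=\wt{c}X$, applying the left Ore condition of $\wt{S}$ to $(\wt{c},\wt{s})$, then reversibility of $\wt{S}$ to the annihilated element $\wt{s_1}\wt{a}-\wt{c_1}X$, using centrality of $X$ at both steps --- is correct, genuinely different from the paper's, and in fact stronger: it requires no hypothesis on $G(R)$ at all. Combined with the paper's unconditional argument for the main condition, your argument actually shows that $\ov{S}$ is always an Ore set, without the commutative/domain assumption; the paper's Lemma is stated more conservatively than either argument requires.

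The one place where your proposal goes astray is the main Ore condition in the noncommutative domain case: you claim the domain hypothesis is needed so that ``degrees add correctly'' and a homogeneous component of $\wt{s'}$ survives projection. No such bookkeeping is needed. From $\wt{s'}\wt{a}=\wt{a'}\wt{s}$ one simply applies $\pi$ to obtain $\ov{s'}\ov{a}=\ov{a'}\ov{s}$; the element $\ov{s'}=\pi(\wt{s'})$ is automatically a nonzero member of $\ov{S}$ because $\wt{s'}\in\wt{S}$ and $\wt{S}\cap\wt{R}X=\emptyset$, and there is no constraint on $\ov{a'}=\pi(\wt{a'})$ beyond lying in $G(R)$ --- it is even allowed to vanish. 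So the main Ore condition for $\ov{S}$ follows with no hypothesis on $G(R)$, exactly as the paper's one-line argument shows.
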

\begin{proof}
Given $\ov{r} \in G(R), \ov{s} \in \ov{S}$ and $\wt{r}$, resp. $\wt{s}$ representatives in $\wt{R}$, resp. $\wt{S}$. Then $\exists \wt{s'} \in \wt{S}, \wt{r'} \in \wt{R}$ such that $\wt{s'}\wt{r} = \wt{r'} \wt{s}$, consequently $\pi(\wt{s'})\pi(\wt{r}) = \pi(\wt{r'})\pi( \wt{s})$ or $\ov{s'} \ov{r} = \ov{r'} \ov{s}$ for some $\ov{s'} \in \ov{S}, \ov{r'} \in G(R)$. If $G(R)$ is commutative and $\ov{r}\ov{s} = 0$ then $\ov{s}\ov{r} = 0$ and the first Ore condition holds with $\ov{s'} = \ov{s}$. If $G(R)$ is a domain then $\ov{r}\ov{s} = 0$ with $\ov{s} \neq 0$ yields $\ov{r} = 0$ and of course $\ov{s}\ov{r} = 0$.
\end{proof}
If $G(R)$ is $\ov{S}$-torsion free on the right then in the situation of the lemma's, $\ov{S}$ is an Ore set. The foregoing results also work for right Ore sets and for left and right Ore sets. So summarizing all of this yields:
\begin{proposition}\proplabel{tik}
\item If $G(R)$ is $\pi(\wt{S})$-torsionfree left and right and $\wt{S}$ is as in the lemma's, then $S$ and $\ov{S}$ are Ore sets (left if $\wt{S}$ is left, left and right if $\wt{S}$ is left and right). Moreover, $\wt{\kappa}_{\wt{S}}$ defines $\kappa_S$ and $\ov{\kappa}_{\ov{S}}$ because $\wt{\kappa}_{\wt{S}}$ is pseudo-affine.
\end{proposition}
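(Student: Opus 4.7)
The plan is to dispatch the Ore claims quickly using the two preceding lemmas, then to spend the main effort on the pseudo-affinity statement. The first lemma already shows that $S$ is an Ore set in $R$, and the second lemma supplies the first Ore condition for $\ov{S}$ without any extra hypothesis; so all that is left on the Ore side is to check the second Ore condition for $\ov{S}$. But right $\pi(\wt{S})$-torsion-freeness of $G(R)$ makes this automatic: $\ov{r}\,\ov{s}=0$ with $\ov{s}\in\ov{S}$ forces $\ov{r}=0$, and then $\ov{s}\cdot\ov{r}=0$ holds trivially (take $s'=s$). The symmetric argument handles the right-sided case.

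For pseudo-affinity, the plan is to exhibit the canonical generators of $\mc{L}(\wt{\kappa}_{\wt{S}})$ as Rees modules of filtered ideals of $R$. Concretely, for $\wt{s}=sX^{n(s)}\in\wt{S}$, I would prove the identity
\[
\wt{R}\,\wt{s}\;=\;(Rs)^{\sim},
\]
where $Rs$ carries the filtration induced from $FR$. The inclusion $\wt{R}\wt{s}\subset(Rs)^{\sim}$ is routine from $F_mR\cdot s\subset F_{m+n(s)}R\cap Rs$. The reverse inclusion is the point where the torsion-freeness enters: for a homogeneous element $ysX^k\in(Rs)^{\sim}$ with $y\in\dot{F}_pR$, right $\pi(\wt{S})$-torsion-freeness forces $\sigma(y)\sigma(s)\neq 0$ in $G(R)_{p+n(s)}$, so $\deg(ys)=p+n(s)$; if $p>k-n(s)$ this would contradict $ys\in F_kR$, hence $y\in F_{k-n(s)}R$ and $\wt{y}\wt{s}=ysX^k$ belongs to $\wt{R}\wt{s}$.

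Once $\wt{R}\wt{s}=(Rs)^{\sim}$ is in hand, pseudo-affinity is immediate: the canonical filter basis $\{\wt{R}\wt{s}:\wt{s}\in\wt{S}\}$ of $\mc{L}(\wt{\kappa}_{\wt{S}})$ is exactly a basis of Rees modules $\wt{I}$ with $I=Rs$. I would then unwind the definitions given earlier in this section to read off the associated functors. The condition $\wt{I}\supset\wt{Rs}$ on a filtered ideal $I$ is equivalent to $I\supset Rs$, so $\mc{L}(\kappa)$ is the Gabriel filter of the Ore set $S$, i.e.\ $\kappa=\kappa_S$. Similarly, $\pi(\wt{Rs})=G(Rs)=G(R)\,\sigma(s)=G(R)\ov{s}$ (again using the torsion-freeness, which guarantees that $\sigma$ induces an isomorphism $G(R)_m\to G(Rs)_{m+n(s)}$), so $\mc{L}(\ov{\kappa})$ is the Gabriel filter of $\ov{S}$, i.e.\ $\ov{\kappa}=\ov{\kappa}_{\ov{S}}$.

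The main obstacle is the identification $\wt{R}\wt{s}=(Rs)^{\sim}$: this is precisely where the two-sided $\pi(\wt{S})$-torsion-freeness of $G(R)$ is indispensable, since without it the map $r\mapsto rs$ could lower filtration degree and $\wt{R}\wt{s}$ would be a proper subobject of $(Rs)^{\sim}$. Everything else reduces to comparing two filters of left ideals term by term.
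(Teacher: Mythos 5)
The paper offers no explicit proof for this proposition — it is stated as a summary right after the two Ore lemmas and the sentence on right torsion-freeness — so your proposal is filling a gap rather than re-deriving a given argument, and it does so correctly along exactly the route the paper intends. The Ore part is dispatched as the paper does it; the genuinely new content is the identity $\wt{R}\,\wt{s}=(Rs)^{\sim}$, which is precisely what makes $\wt{\kappa}_{\wt{S}}$ pseudo-affine, and your proof of it (comparing degree-$k$ pieces and using right $\pi(\wt{S})$-torsion-freeness of $G(R)$ to force $\deg(ys)=\deg(y)+n(s)$) is the Rees-level counterpart of the identity $G(Rs)=G(R)\sigma(s)$ that the paper records in \remref{remarkOre}. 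One small notational slip at the end of that step: having concluded $y\in F_{k-n(s)}R$, the element $ysX^{k}$ equals $\bigl(yX^{k-n(s)}\bigr)\wt{s}$, which is $\wt{y}\,\wt{s}$ only when $\deg(y)=k-n(s)$; but since $yX^{k-n(s)}\in\wt{R}$ in any case, the membership $ysX^{k}\in\wt{R}\wt{s}$ — which is all you need — is correct.
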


In the `almost commutative' case, we have for an Ore set $S$ in $R$ that $\wt{S} = \{ \wt{s}, s \in S\}$ is an Ore set in $\wt{R}$ not intersecting $\wt{R}X$. For a word in Ore sets, say $S_1S_2\ldots S_n$, we have that $(Rs_1\ldots s_n)^\sim = R\wt{s_1}\ldots\wt{s_n}$ ($G(R)$ domain!), so the preradical $\kappa_{S_1}\kappa_{S_2}\ldots \kappa_{S_n}$ with associated filter $\mc{L}(\kappa_{S_1}\kappa_{S_2}\ldots \kappa_{S_n})$ is indeed pseudo-affine.\\

Let's return to our aim in defining a quotient filtration. The following two lemmas and corollary correspond to \lemref{1}, \lemref{2} and \lemref{tf} from the results starting from a kernel functor $\kappa$ on $R$-mod. Since our approach is different, we do include the proofs in this setting, although they are similar. 

\begin{lemma}
$G(\kappa(M)) \subset \ov{\kappa} G(M)$ and there is an epimorphism \newline $G(M/\kappa(M)) \twoheadrightarrow  G(M)/ \ov{\kappa}G(M)$.
\end{lemma}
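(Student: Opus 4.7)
The plan is to mimic Lemma~3.3 from the first approach, with the only change being that now $\overline{\kappa}$ is built from $\widetilde{\kappa}$ rather than from $\kappa$; the existence of a suitable element of $\mathcal{L}(\overline{\kappa})$ annihilating the symbol follows directly from the definitions $\mathcal{L}(\kappa) = \{J \subset R, \widetilde{J} \in \mathcal{L}(\widetilde{\kappa})\}$ and $\mathcal{L}(\overline{\kappa}) = \{L \subset G(R), L \supset \pi(I) \text{ for some } I \in \mathcal{L}(\widetilde{\kappa})\}$.

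For the first inclusion, I would start with a homogeneous element $\overline{n} \in G(\kappa(M))_d$ and pick a representative $n \in \dot{F}_d \kappa(M)$, where $\kappa(M)$ carries the induced filtration from $FM$. Since $n \in \kappa(M)$, there exists $I \in \mathcal{L}(\kappa)$ with $In = 0$. By definition of $\mathcal{L}(\kappa)$ we have $\widetilde{I} \in \mathcal{L}(\widetilde{\kappa})$, so $\pi(\widetilde{I}) \in \mathcal{L}(\overline{\kappa})$. I would then verify $\pi(\widetilde{I}) \overline{n} = 0$ in $G(M)$: a homogeneous generator of $\pi(\widetilde{I})$ of degree $k$ is $\sigma_k(i)$ for some $i \in F_k R \cap I$, and since $in = 0 \in F_{d+k-1}M$ the product $\sigma_k(i) \cdot \overline{n}$ vanishes in $G(M)_{d+k}$. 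Hence $\overline{n} \in \overline{\kappa}(G(M))$, giving $G(\kappa(M)) \subset \overline{\kappa} G(M)$.

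For the epimorphism, I would invoke strictness. The chain $0 \to \kappa(M) \to M \to M/\kappa(M) \to 0$ is strict exact when $\kappa(M)$ and $M/\kappa(M)$ carry the filtrations induced from $FM$, so applying $G$ yields the exact sequence
\begin{equation*}
0 \longrightarrow G(\kappa(M)) \longrightarrow G(M) \longrightarrow G(M/\kappa(M)) \longrightarrow 0
\end{equation*}
of graded $G(R)$-modules. In particular $G(M/\kappa(M)) \cong G(M)/G(\kappa(M))$. Composing the resulting isomorphism with the canonical projection $G(M)/G(\kappa(M)) \twoheadrightarrow G(M)/\overline{\kappa}G(M)$, which is well defined by the inclusion proved above, yields the desired epimorphism $G(M/\kappa(M)) \twoheadrightarrow G(M)/\overline{\kappa}G(M)$.

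The only step requiring any thought is confirming that $\pi(\widetilde{I})$ (rather than some $G(J)$ with $J \in \mathcal{L}(\kappa)$) lies in $\mathcal{L}(\overline{\kappa})$ and does annihilate $\overline{n}$; once this is noted, everything reduces to the graded-associated symbol calculus. The mild subtlety is that the definition of $\mathcal{L}(\overline{\kappa})$ in the second approach uses $\pi(I)$ for $I \in \mathcal{L}(\widetilde{\kappa})$, not necessarily of the form $\widetilde{J}$ — but our particular $\widetilde{I}$ is of this special form, so this causes no difficulty.
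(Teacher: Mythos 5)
Your proof is correct and takes essentially the same route as the paper: start from a homogeneous element of $G(\kappa(M))$, lift to $\kappa(M)$, use the dictionary $\mathcal{L}(\kappa) \to \mathcal{L}(\wt{\kappa}) \to \mathcal{L}(\ov{\kappa})$ via $I \mapsto \wt{I} \mapsto \pi(\wt{I})$ to produce an annihilator in $\mathcal{L}(\ov{\kappa})$, then use strict exactness of $0\to\kappa(M)\to M\to M/\kappa(M)\to 0$ for the epimorphism. The paper phrases the inclusion by appealing to $\kappa(M)^\sim = \wt{\kappa}(\wt{M})$ (its \lemref{tilde}) and then projecting via $\pi_{\wt{M}}$, whereas you verify $\pi(\wt{I})\ov{n}=0$ directly in $G(M)$ by the symbol calculus — but these are the same observation presented at the Rees versus the associated-graded level.
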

\begin{proof}
Note that $G(\kappa(M)) = \pi_{\wt{M}}(\kappa(M)^\sim) = \pi_{\wt{M}}(\wt{\kappa}(\wt{M})$, where $\pi_{\wt{M}}: \wt{M} \to G(M) = \wt{M}/X\wt{M}$ is the canonical epimorphism. Hence $G(\kappa(M))$ is $\ov{\kappa}$-torsion since $\wt{I}\wt{m} = 0$ for $\wt{m} \in \kappa(M)^\sim$ and $\wt{I} \in \mc{L}(\wt{\kappa})$ yields $\wt{I} \pi_{\wt{M}}(\wt{m}) = \pi(\wt{I}) \pi_{\wt{M}}(\wt{m}) = 0$ with $\pi(\wt{I}) \in \mc{L}(\ov{\kappa})$. Thus $G(\kappa(M)) \subset \ov{\kappa}(G(M))$. 
\end{proof}
\begin{lemma}\lemlabel{sep2}
If $M$ is $\kappa$-separated and $\kappa$ is pseudo-affine or else if $M/\kappa(M)$ is $\kappa$-separated then $G(\kappa(M)) = \ov{\kappa}(G(M))$ and \\
$G(M / \kappa(M)) = G(M) / \ov{\kappa}G(M)$.
\end{lemma}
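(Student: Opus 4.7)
The plan is to mimic the structure of \lemref{2} from the first approach, but threading through the second-approach definition of $\ov{\kappa}$ via $\wt{\kappa}$. The inclusion $G(\kappa(M))\subset\ov{\kappa}(G(M))$ is already in the preceding lemma, so the whole point is the reverse inclusion; once that is in hand, the equality $G(M/\kappa(M))=G(M)/\ov{\kappa}G(M)$ falls out by applying $G$ to the strict exact sequence $0\to\kappa(M)\to M\to M/\kappa(M)\to 0$. Strictness of this sequence follows from the fact that $\kappa(M)$ is closed in $M$ in either hypothesis: if $M$ is $\kappa$-separated this is the proposition just before \corref{sep}, and if $M/\kappa(M)$ is $\kappa$-separated one can repeat that argument on the quotient.

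For the reverse inclusion, pick a homogeneous $\ov{z}\in\ov{\kappa}(G(M))_n$ and lift it to $z\in\dot{F}_nM$. By definition of $\ov{\kappa}$ there is $L\in\mc{L}(\ov{\kappa})$ with $L\ov{z}=0$, i.e.\ $L\supset\pi(J)$ for some $J\in\mc{L}(\wt{\kappa})$. In the pseudo-affine case we may, by the defining property, replace $J$ by some $\wt{I}$ with $I\in\mc{L}(\kappa)$, so $\pi(\wt{I})\ov{z}=0$. This is the crucial reduction: it puts us back in the situation of the first-approach lemma \lemref{2}, where the filter on $G(R)$ comes from actual ideals of $R$.

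Now since $FR$ is Zariskian, the induced filtration on $I$ is good: choose generators $i_1,\dots,i_k\in I$ with $F_\alpha I=\sum_{j=1}^kF_{\alpha-n_j}R\cdot i_j$ and $\deg\sigma(i_j)\le n_j$. Each $\ov{i_j}\in\pi(\wt{I})$ annihilates $\ov{z}$, which translates to $i_jz\in F_{n+\deg(i_j)-1}M\subset F_{n+n_j-1}M$. Combining these, for every $\gamma\in\mathbb Z$
\[
F_\gamma I\,z\;\subset\;\sum_{j=1}^kF_{\gamma-n_j}R\cdot i_jz\;\subset\;\sum_{j=1}^kF_{\gamma+n-1}M\;=\;F_{\gamma+n-1}M.
\]
Under the first hypothesis ($M$ is $\kappa$-separated), the definition of $\kappa$-separatedness gives $z\in\kappa(M)$, hence $\ov{z}\in G(\kappa(M))$. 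Under the second hypothesis one runs the same computation in $M/\kappa(M)$ and uses $\kappa$-separatedness of that quotient to reach the same conclusion, exactly as in \lemref{2}.

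The main obstacle is the very first move: matching the two \emph{a priori} different descriptions of $\mc{L}(\ov{\kappa})$. In the pseudo-affine branch this matching is precisely the definition, which is why that hypothesis is imposed; in the ``else'' branch one has to verify that the annihilating ideal $\pi(J)$ on $\ov{z}$ can still be shrunk to one of the form $\pi(\wt{I})$, and this is exactly what the $\kappa$-separatedness of $M/\kappa(M)$ buys, since it lets one work modulo $X$-torsion at the Rees level without losing track of the torsion on $\ov{z}$. Everything else is the Zariskian/good-filtration bookkeeping above.
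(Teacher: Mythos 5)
Your treatment of the pseudo-affine branch is correct and lands on the same key inequality as the paper, $F_\gamma I\,z\subset F_{\gamma+n-1}M$, followed by invoking $\kappa$-separatedness. The internal bookkeeping is a little different: the paper passes to the Rees level, arguing $\wt{I}\wt{z}\subset X\wt{M}$ and then reading off $iz\in F_{\gamma+n-1}M$ from $\wt{i}\wt{z}=izX^{\gamma+n}\in X\wt{M}$, whereas you re-run the good-filtration/generator argument from \lemref{2} directly in $G(M)$. Both are valid; the paper's version explains more transparently why $\pi(\wt{I})\ov{z}=0$ translates to the degree estimate, whereas yours keeps the computation entirely on the associated graded side once pseudo-affinity has replaced $\pi(J)$ by $G(I)$.

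The flaw is in your treatment of the ``else'' clause. You assert that ``the annihilating ideal $\pi(J)$ on $\ov{z}$ can still be shrunk to one of the form $\pi(\wt{I})$, and this is exactly what the $\kappa$-separatedness of $M/\kappa(M)$ buys.'' That cannot be right: the existence, inside every $J\in\mc{L}(\wt{\kappa})$, of some $\wt{I}$ with $I\in\mc{L}(\kappa)$ is precisely the pseudo-affinity of $\wt{\kappa}$, a property of the Gabriel filter alone that has no dependence whatsoever on the module $M$. No separatedness hypothesis on $M/\kappa(M)$ can supply it. In the paper's intended reading, pseudo-affinity is assumed in both alternatives; the ``else'' only relaxes which module is required to be $\kappa$-separated (from $M$ to $M/\kappa(M)$), and the argument in that case is the one of \lemref{2} applied after the same pseudo-affine reduction --- which is what your phrase ``exactly as in \lemref{2}'' correctly gestures at, before the incorrect parenthetical muddies it. So: the pseudo-affine case is handled correctly; the purported explanation of how the second hypothesis replaces pseudo-affinity is a genuine error, because the second hypothesis does not and cannot replace it.
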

\begin{proof}
Take $\ov{z} \in \ov{\kappa}(G(M))_n$, we want to establish that $\ov{z} \in G(\kappa(M))$. The relation between $\kappa, \ov{\kappa}$ and $\wt{\kappa}$ entail that for $\ov{J} \in \mc{L}(\ov{\kappa})$ there is a $J \in \mc{L}(\kappa)$ such that $\ov{J} \supset G(J)$  and for every $J \in \mc{L}(\kappa)$ we have $G(J) \in \mc{L}(\ov{\kappa})$ (because $\kappa$ is pseudo-affine). If $z \in \dot{F}_nM$ is such that $\sigma(z) = \ov{z}$ then $\ov{J}\ov{z} = 0$ for some $\ov{J} \in \mc{L}(\ov{\kappa})$ yields $G(J)\ov{z} = 0$ for some $J \in \mc{L}(\kappa)$. Then $\wt{I}\wt{z} \subset \Ker \pi_{\wt{M}}$ for some $\wt{I} \in \mc{L}(\wt{\kappa}), \wt{I}\wt{z} \subset X\wt{M}$. If $ i \in \dot{F}_\gamma I, I \in \mc{L}(\kappa)$, then $\wt{i}\wt{z} = iz X^{\deg i + \deg z} = iz X^{\gamma + n}$ and $izX^{\gamma + n} = X\wt{m}$ for some $\wt{m} \in \wt{M}_{\gamma + n - 1}$ only if $iz \in F_{\gamma + n - 1}M$. The latters holds for all $\gamma$ and for all $i \in F_\gamma I$, thus we have $F_\gamma I z \subset F_{\gamma + n - 1}M$. The $\kappa$-separatedness of $M$ then yields that $z \in \kappa(M)$ and finally $\ov{z} \in G(\kappa(M))$.\\
The exact sequence of strict filtered morphisms:
$$ 0 \to \kappa(M) \to M \to M / \kappa(M)$$
yields the exact sequence
$$0 \to G(\kappa(M)) \to G(M) \to G(M/\kappa(M)) \to 0$$
and from $G(\kappa(M)) = \ov{\kappa}(G(M))$ it follows that $G(M/\kappa(M)) = \frac{G(M)}{\ov{\kappa}G(M)}$.
\end{proof}
\begin{corollary}
With assumptions as in f lemma we have
\begin{enumerate}
\item  If $M$ is $\kappa$-separated then $G(M/ \kappa(M))$ is $\ov{\kappa}$-torsion free.
\item If $G(M)$ is $\ov{\kappa}$-torsion free then $M$ is $\kappa$-separated.
\end{enumerate}
\end{corollary}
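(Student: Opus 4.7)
The plan is to reduce both parts directly to Lemma~\ref{lem:sep2} together with Proposition~\ref{prop:Gk}, mirroring the pattern of Lemma~\ref{lem:tf} from the first approach. For (1), the assumption that $M$ is $\kappa$-separated (with $\kappa$ pseudo-affine, as standing in this part of the section) places us in the scope of Lemma~\ref{lem:sep2}, which identifies $G(M/\kappa(M))$ with $G(M)/\ov{\kappa}(G(M))$. By Proposition~\ref{prop:Gk} the preradical $\ov{\kappa}$ is a kernel functor on $G(R)$-mod, hence in particular a radical, so the quotient of any $G(R)$-module by its $\ov{\kappa}$-torsion submodule is automatically $\ov{\kappa}$-torsion free. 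That is exactly the content of (1).

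For (2) I would transcribe the argument of Lemma~\ref{lem:tf} into the Rees-ring framework now available. Fix any $I \in \mc{L}(\kappa)$ and suppose $m \in \dot{F}_nM$ satisfies $F_\gamma I m \subset F_{\gamma + n - 1}M$ for every $\gamma \in \mathbb{Z}$; set $\ov{m} = \sigma(m) \in G(M)_n$, which is nonzero since $m \notin F_{n-1}M$. For a homogeneous $i \in \dot{F}_\gamma I$, the inclusion forces $im \in F_{\gamma+n-1}M$, so the product $\sigma(i)\ov{m}$ vanishes in $G(M)_{\gamma+n}$; running $i$ over homogeneous generators yields $G(I)\ov{m} = 0$. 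By pseudo-affineness of $\kappa$ we may write $G(I) = \pi(\wt{I})$ with $\wt{I} \in \mc{L}(\wt{\kappa})$, so $G(I) \in \mc{L}(\ov{\kappa})$ by the definition of $\ov{\kappa}$; hence $\ov{m}$ is $\ov{\kappa}$-torsion in $G(M)$. Torsion-freeness of $G(M)$ then forces $\ov{m} = 0$, contradicting $m \in \dot{F}_nM$. So no such $m$ exists, the defining implication of $\kappa$-separatedness holds vacuously, and $M$ is $\kappa$-separated. The same reasoning, applied to any $m \in \kappa(M)$ with $I$ chosen to annihilate $m$, additionally recovers $\kappa(M) = 0$, just as in the closing remark of Lemma~\ref{lem:tf}.

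The one genuinely delicate step is the implication $I \in \mc{L}(\kappa) \Rightarrow G(I) \in \mc{L}(\ov{\kappa})$. This is precisely where pseudo-affineness is used: in its absence the filter $\mc{L}(\ov{\kappa})$ is described purely from $\mc{L}(\wt{\kappa})$ and need not contain the ideals $G(I)$ attached to $I \in \mc{L}(\kappa)$, so the torsion argument does not transfer. With pseudo-affineness in force — which is already the standing assumption for Lemma~\ref{lem:sep2} — the corollary is a clean rerun of Lemma~\ref{lem:tf} in the graded setting provided by the second approach.
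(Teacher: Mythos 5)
Your proof is correct and essentially reproduces the paper's own argument: part (1) by reading $G(M/\kappa(M)) = G(M)/\ov{\kappa}G(M)$ off \lemref{sep2} and noting $\ov{\kappa}$ is radical by \propref{Gk}, part (2) by running the contradiction argument of \lemref{tf} in the graded setting. The paper lifts to $\wt{M}$ and pushes through $\pi_{\wt{M}}$ rather than working directly at the $G(M)$ level, but this is the same computation in different notation. One small inaccuracy in your commentary: the implication $I \in \mc{L}(\kappa) \Rightarrow G(I) \in \mc{L}(\ov{\kappa})$ is not the place where pseudo-affineness is really needed -- it follows automatically from the definitions, since $\mc{L}(\kappa)$ is defined as the set of $J$ with $\wt{J} \in \mc{L}(\wt{\kappa})$, and $\pi(\wt{I}) = G(I) \in \mc{L}(\ov{\kappa})$ is then immediate from the definition of $\mc{L}(\ov{\kappa})$. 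Pseudo-affineness is what gives you the reverse control, namely that every $\ov{J} \in \mc{L}(\ov{\kappa})$ contains some $G(J)$ with $J \in \mc{L}(\kappa)$, which is what \lemref{sep2} (and hence part (1)) leans on. This misattribution does not introduce a gap in your proof; it only misidentifies the load-bearing hypothesis.
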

\begin{proof}
1. is obvious from foregoing lemma.We prove 2; Suppose $m \in \dot{F}_n M$ is such that $F_\gamma I m \subset F_{n + \gamma - 1}M$ for all $\gamma$ and some $I \in \mc{L}(\kappa)$. Put $\ov{m} = \sigma(m) \in G(M)_n$. Then clearly $\wt{I}\wt{m} \subset X\wt{M}$ with $\wt{I} \in \mc{L}(\wt{\kappa})$, then $\pi_{\wt{M}}(\wt{I}\wt{m}) = 0$ or $\pi(\wt{I})\ov{m} = 0$. Since $\ov{\kappa}G(M) = 0$ and $\pi(\wt{I}) \in \mc{L}(\ov{\kappa})$, we must have $\ov{m} = 0$, contradicting $ m \in \dot{F}_n M$.
\end{proof}
\begin{corollary}
With assumptions as before, If $M$ is $\kappa$-separated then $M/ \kappa(M)$ is $\kappa$-separated.
\end{corollary}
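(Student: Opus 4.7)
The plan is to chain together the two parts of the immediately preceding Corollary. First I would apply part~(1), which guarantees that the hypothesis ``$M$ is $\kappa$-separated'' already forces $G(M/\kappa(M))$ to be $\overline{\kappa}$-torsion free. Then I would apply part~(2), but with $M$ replaced by $M/\kappa(M)$: the $\overline{\kappa}$-torsion freeness of the associated graded of $M/\kappa(M)$ will give that $M/\kappa(M)$ is itself $\kappa$-separated, which is exactly the desired conclusion. So the whole argument is a one-line combination of the two halves of the preceding result.

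Before making these applications, I would verify that $M/\kappa(M)$ is a legitimate input to part~(2), i.e.\ that it is a separated filtered $R$-module so that its Rees module and associated graded are defined in the usual way. This is furnished by the earlier Proposition (just before \corref{sep}): $M$ being $\kappa$-separated implies $\kappa(M)$ is closed in $M$, and hence the induced filtration on $M/\kappa(M)$ is separated. In particular $G(M/\kappa(M))$ is well-defined and, via Lemma \lemref{sep2}, fits into the identification $G(M/\kappa(M)) = G(M)/\overline{\kappa}G(M)$ used in the proof of part~(1).

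The main point requiring care is the transfer of ambient hypotheses to $M/\kappa(M)$: one needs to be in the pseudo-affine framework so that $\overline{\kappa}$ is a kernel functor (cf.\ \propref{Gk}), which is what allows $G(M)/\overline{\kappa}G(M)$ to be automatically $\overline{\kappa}$-torsion free and thus underpins part~(1). These assumptions concern the filtered ring $R$ and the kernel functor $\kappa$ alone, not the specific module, so they apply to $M/\kappa(M)$ for free. With this in hand the corollary is immediate: $M$ $\kappa$-separated $\Rightarrow$ $G(M/\kappa(M))$ is $\overline{\kappa}$-torsion free (preceding Corollary, part~(1)) $\Rightarrow$ $M/\kappa(M)$ is $\kappa$-separated (preceding Corollary, part~(2) applied to the separated filtered module $M/\kappa(M)$).
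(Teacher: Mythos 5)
Your argument is exactly the paper's: cite Lemma \lemref{sep2} (via part (1) of the preceding corollary) to see that $G(M/\kappa(M)) = G(M)/\ov{\kappa}G(M)$ is $\ov{\kappa}$-torsion free, then invoke part (2) applied to the separated filtered module $M/\kappa(M)$ to conclude it is $\kappa$-separated. The extra remarks you make about $M/\kappa(M)$ being a legitimate input (separated, with the induced filtration) and the ambient pseudo-affine assumptions are sound and only make explicit what the paper leaves implicit.
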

\begin{proof}
$G(M/ \kappa(M)) = G(M) / \ov{\kappa} G(M)$ is $\ov{\kappa}$-torsion free thus the filtered $R$-module $M/\kappa(M)$ (with induced filtration by $FM$) is $\kappa$-separated.
\end{proof}

\begin{theorem}\thelabel{locfilt2}
\begin{enumerate}
\item Let $FR$ be Zariskian, $M \in R$-filt is separated and $\wt{\kappa}$ is pseudo-affine. If $M$ is $\kappa$-separated (or $M/\kappa(M)$ is $\kappa$-separated in the affine case) then the localized $Q_\kappa(M)$ has a filtration $FQ_\kappa(M)$ making the localization morphism $j_{\kappa,M}: M \to Q_\kappa(M)$ into a strict filtered morphism (similar for $FR$ when $R$ is $\kappa$-separated, then $j_\kappa: R \to Q_\kappa(R)$ is strict filtered and a ring morphism). We call $FQ_\kappa(M)$ (resp. $FQ_\kappa(R)$) the quotient filtration of $FM$ (resp. $FR$). Moreover $Q_\kappa(M)$ with $FQ_\kappa(M)$ is a filtered $Q_\kappa(R)$-module with respect to $FQ_\kappa(R)$. 
\item In the situation of 1. with $\wt{\kappa}$ pseudo-affine, $Q_\kappa(M)^\sim = Q^g_{\wt{\kappa}}(\wt{M})$, where $Q^g_{\wt{\kappa}}$ is the graded localization functor associated to $\wt{\kappa}$ on $\wt{R}$-gr. 
\item In the situation of 2., we also have $G(Q_\kappa(M)) \subset Q_{\ov{\kappa}}^g(G(M))$, where $Q^g_{\ov{\kappa}}$ is the graded localization functor w.r.t. $\ov{\kappa}$ on $G(R)$-gr.
\end{enumerate}
\end{theorem}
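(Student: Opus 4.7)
The three parts will be handled in sequence, using the first-approach result \theref{locfilt} and the pseudo-affine dictionary between $R$, $\wt{R}$ and $G(R)$. For part (1), I would essentially copy the argument of \theref{locfilt}, feeding in the new hypothesis: pseudo-affineness together with \lemref{sep2} ensures that $M/\kappa(M)$ is $\kappa$-separated whenever $M$ is, so one may reduce to the $\kappa$-torsion free case. Then define
$$F_\gamma Q_\kappa(M) = \{\, q \in Q_\kappa(M) : \exists\, I \in \mc{L}(\kappa) \text{ with } F_nIq \subset F_{n+\gamma}M \text{ for all } n \in \mathbb{Z}\,\}.$$
The Zariskian hypothesis supplies a good filtration on each such $I$, so the defining condition reduces to a finite check and a minimal $\gamma$ exists; the crucial independence of this $\gamma$ from the choice of $I$ follows verbatim from the $\kappa$-separatedness argument in \theref{locfilt}. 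Ring and module filtration properties, strictness of $j_\kappa$ with $F_nQ_\kappa(R)\cap R = F_nR$, and separatedness via \corref{sep} all transfer unchanged.

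For part (2), I would identify $Q_\kappa(M)^\sim$ with $Q^g_{\wt{\kappa}}(\wt{M})$ by verifying the characterising properties of the graded localization. First, $Q_\kappa(M)^\sim$ is $X$-torsion free since it is the Rees module of a separated filtration, and it is $\wt{\kappa}$-torsion free: any $\wt{\kappa}$-torsion element is killed by some $\wt{I}$ with $I \in \mc{L}(\kappa)$ by pseudo-affineness, which forces $Iq = 0$ in the $\kappa$-torsion free module $Q_\kappa(M)$. Next, \lemref{tilde} gives $\wt{M}/\wt{\kappa}\wt{M} = (M/\kappa(M))^\sim$, and this sits $\wt{\kappa}$-densely in $Q_\kappa(M)^\sim$, because the defining condition of $F_\gamma Q_\kappa(M)$ says exactly that $\wt{I}\cdot qX^\gamma \subset \wt{M}$ for some $I \in \mc{L}(\kappa)$. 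Finally, I would obtain $\wt{\kappa}$-closedness via the equivalence $\mc{F}_X \simeq R$-filt restricted to separated filtrations: any $\wt{\kappa}$-essential extension of $Q_\kappa(M)^\sim$ inside $\mc{F}_X$ descends under this equivalence to a $\kappa$-essential extension of $Q_\kappa(M)$ in $R$-mod, which is trivial since $Q_\kappa(M)$ is $\kappa$-closed.

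For part (3), I would construct the inclusion $G(Q_\kappa(M)) \hookrightarrow Q^g_{\ov{\kappa}}(G(M))$ directly. Given $\ov{q} = \sigma(q) \in G(Q_\kappa(M))_\gamma$ with $q \in \dot{F}_\gamma Q_\kappa(M)$, pick a witnessing $I \in \mc{L}(\kappa)$; pseudo-affineness gives $G(I) \in \mc{L}(\ov{\kappa})$ and, via \lemref{sep2}, $G(I)\ov{q} \subset G(M/\kappa(M)) = G(M)/\ov{\kappa}G(M)$, which embeds into $Q^g_{\ov{\kappa}}(G(M))$. Independence of the choice of $I$ follows from $\ov{\kappa}$-torsion freeness of $G(M)/\ov{\kappa}G(M)$. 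For injectivity, if the resulting germ vanishes then some $G(J)\ov{q} = 0$ with $J \in \mc{L}(\kappa)$, which unfolds to $F_n Jq \subset F_{n+\gamma-1}Q_\kappa(M)$ for all $n$, i.e., $q \in F_{\gamma-1}Q_\kappa(M)$ by the defining condition of the quotient filtration, whence $\ov{q} = 0$.

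The main obstacle I anticipate lies in part (2): the $\wt{\kappa}$-closedness must be verified inside $\wt{R}$-gr rather than $\wt{R}$-mod, and it is precisely pseudo-affineness that permits the translation between $\mc{L}(\wt{\kappa})$ and $\mc{L}(\kappa)$ without sacrificing idempotency. Once this graded–filtered dictionary is firmly in place, the remaining verifications are routine adaptations of \theref{locfilt} combined with the graded analogues of the separatedness lemmas already proved above.
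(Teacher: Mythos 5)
Your parts (1) and (3) track the paper's argument closely and are fine: part (1) is indeed verbatim the construction of \theref{locfilt} once \lemref{sep2} is invoked, and part (3) is the paper's direct containment argument, phrased in slightly more ornate ``germ'' language but with the same content.

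Part (2), however, has a genuine gap, and it sits exactly where the real work in the paper occurs. You verify that $Q_\kappa(M)^\sim$ is $X$-torsion free and $\wt{\kappa}$-torsion free, and that $\wt M/\wt\kappa\wt M$ is $\wt\kappa$-dense in it; so far so good. But then your closure argument says: ``any $\wt\kappa$-essential extension of $Q_\kappa(M)^\sim$ inside $\mc{F}_X$ descends \ldots to a $\kappa$-essential extension of $Q_\kappa(M)$, which is trivial.'' Two things are unaddressed here. First, $Q^g_{\wt\kappa}(\wt M)$ is formed in $\wt R$-gr, not in $\mc F_X$, so it is not automatic that the extension $Q_\kappa(M)^\sim\hookrightarrow Q^g_{\wt\kappa}(\wt M)$ lives in $\mc F_X$ at all; you must first prove $Q^g_{\wt\kappa}(\wt M)$ is $X$-torsion free (the paper does this: if $Xq=0$ for $q\neq 0$, pick $\wt J$ with $\wt Jq\subset\wt M$, then $X\wt Jq=0$, whence $\wt Jq=0$ by $X$-torsion freeness of $\wt M$, contradicting $\wt\kappa$-torsion freeness of the localization). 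Second, and more seriously: even granting $Q^g_{\wt\kappa}(\wt M)\in\mc F_X$, the descent only identifies the underlying $R$-modules. To conclude $Q_\kappa(M)^\sim = Q^g_{\wt\kappa}(\wt M)$ as graded $\wt R$-modules you must know that the inclusion $Q_\kappa(M)^\sim\subset Q^g_{\wt\kappa}(\wt M)$ is $X$-saturated, i.e.\ that the quotient $Q^g_{\wt\kappa}(\wt M)/Q_\kappa(M)^\sim$ is $X$-torsion free; otherwise the filtration induced on $Q_\kappa(M)$ by $Q^g_{\wt\kappa}(\wt M)=\wt N$ could be strictly coarser than the quotient filtration $FQ_\kappa(M)$, and the Rees modules would differ even though the $R$-modules coincide. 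This is precisely where pseudo-affineness and $\ov\kappa G(M)=0$ earn their keep: the paper takes a homogeneous $\wt q$ with $X\wt q = mX^{n+1}\in Q_\kappa(M)^\sim$ and $m\in F_{n+1}Q_\kappa(M)$, uses $\wt I\wt q\subset\wt M$ with $\wt I$ the tilde of some $I\in\mc L(\kappa)$ to deduce $F_\gamma Im\subset F_{\gamma+n}M$ for all $\gamma$, and then, if $m\notin F_nQ_\kappa(M)$, gets $G(I)\sigma(m)=0$ with $\sigma(m)\neq 0$, contradicting $\ov\kappa G(M)=0$. Without this computation the identification in (2) is not established. Your instinct that ``the $\wt\kappa$-closedness must be verified inside $\wt R$-gr rather than $\wt R$-mod'' is correct, but the phrase ``inside $\mc F_X$'' is quietly assuming the two $X$-torsion freeness facts that constitute the actual proof.
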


\begin{proof}
The proof of 1. is completely analogous to the proof of \theref{locfilt}.\\
2) First we check that $Q^g_{\wt{\kappa}}(\wt{M})$ is $X$-torsion free. It follows from \lemref{tilde} that
$\wt{M}/\wt{\kappa}(\wt{M}) = (M/\kappa(M))^\sim$, hence the reduction to $M$ $\kappa$-torsion free also leads to $\wt{M}$ being $\wt{\kappa}$-torsion free. Now $\wt{M}/ \wt{\kappa}(\wt{M})$ is $X$-torsion free (in fact it is $\wt{M}$ now). If $q$ is homogeneous in $Q^g_{\wt{\kappa}}$ such that $Xq = 0$ then there is a $\wt{J} \in \mc{L}(\wt{\kappa})$ such that $\wt{J}q \subset \wt{M}$ and also $X\wt{J}q = \wt{J}Xq = 0$ hence $\wt{J}q = 0$ since $\wt{M}$ is $X$-torsion free. Then $\wt{J}q = 0$ contradicts the fact that $Q^g_{\wt{\kappa}}(\wt{M})$ is $\wt{\kappa}$-torsion free. By 1. the exact sequence
$$ 0 \to M \to Q_\kappa(M) \to Q_\kappa(M) / M = T \to 0$$
is strict exact, so we arrive at an exact sequence in $\mc{F}_X$:
$$0 \to \wt{M} \to Q_\kappa(M)^\sim \to \wt{T} \to 0$$
with $\wt{T}$ being $\wt{\kappa}$-torsion ($t \in T$, then $It = 0$ for some $I \in \mc{L}(\kappa), \forall i \in I, it = 0$ yields $\wt{i}\wt{j} = itX^{\deg it}X^{\deg i + \deg t - \deg it} = 0$). Thus $Q_\kappa(M)^\sim \subset Q^g_{\wt{\kappa}}(\wt{M})$. The latter is $X$-torsion free. Let us check that $Q^g_{\wt{\kappa}}(\wt{M}) / Q_\kappa(M)^\sim$ is $X$-torsion free. Suppose that $\wt{q} \in Q^g_{\wt{\kappa}}(\wt{M})_n$ is such that $X\wt{q} \in Q_\kappa(M)^\sim$. Since $\wt{\kappa}$ is pseudo-affine we have for some $I \in \mc{L}(\kappa)$ that $\wt{I}\wt{q} \subset \wt{M}$ and $\wt{I}X\wt{q} \subset X\wt{M}$. Since $X\wt{q}$ is in $(Q_\kappa(M)^\sim_{n+1})$ we may write $X\wt{q} = mX^{n+1}$ with $m \in F_{n+1}Q_\kappa(M)$. If $m \in F_nQ_\kappa(M)$ then $\wt{q} = mX^n \in Q_\kappa(M)^\sim$ as desired. So look at the case $m \notin F_nQ_\kappa(M)$. From $\wt{i}\wt{q} \in \wt{M}_{\gamma +n}$ if $\wt{i} \in \wt{I}_\gamma$, it follows that $\wt{i}X\wt{q} \in \wt{M}_{\gamma + n + 1}$ and $\wt{i}X\wt{q} = \wt{i}(mX^{n+1}) = imX^{\gamma + n + 1}$, thus $\wt{i}\wt{q} = im X^{\gamma +n}$ since $Q^g_{\wt{\kappa}}(\wt{M})$ is $X$-torsion free. From $\wt{i}\wt{q} \in \wt{M}_{\gamma + n}$ we then must have $im \in F_{\gamma + n}M$ (note that $\wt{I}$ is actually the tilde of an $I$ and not just an ideal in $\mc{L}(\wt{\kappa})$). Thus for all $\gamma, F_\gamma I m \subset F_{\gamma + n}M$, and $m \in \dot{F}_{n+1}Q_\kappa(M)$. Then $\sigma(m) \in G(M)_{n+1}$  has $G(I)\sigma(m) = 0$, thus $\sigma(m) \in \ov{\kappa}(G(M))$. Since we reduced to the $\kappa$-torsion free case $\ov{\kappa}(G(M)) = G(\kappa(M)) (M  ~\kappa$-separated) hence $\ov{\kappa}G(M) = 0$, thus $\sigma(m) = 0$, a contradiction. Therefore $Q^g_{\wt{\kappa}}(\wt{M})/Q_\kappa(M)^\sim$ is an $X$-torsion free $\wt{R}$-module and thus $Q^g_{\wt{\kappa}}(\wt{M}) = \wt{N}$ for a filtered $R$-module $N$ such that $ 0 \to Q_\kappa(M) \to N$ is a strict filtered exact sequence. Since $Q^g_{\wt{\kappa}}(\wt{M})/Q_\kappa(M)^\sim$ is $\wt{\kappa}$-torsion, $N / Q_\kappa(M)$ is $\kappa$-torsion, because $(N/Q_\kappa(M))^\sim = Q^g_{\wt{\kappa}}(\wt{M})/Q_\kappa(M)^\sim ( \in \mc{F}_X \subset \wt{R}$-gr). Thus if $n \in N/Q_\kappa(M)$ then $\wt{I}\wt{n} = 0$ for some $\wt{I} \in \mc{L}(\wt{\kappa})$ and $\wt{i}\wt{n}  = (in)^\sim X^{\deg i + \deg n - \deg in} = 0$. Hence $(in)^\sim = 0$ because $(N/Q_\kappa(M))^\sim$ is $X$-torsion free. $N/Q_\kappa(M)$ being $\kappa$-torsion implies that $N/M$ is $\kappa$-torsion, hence $N \subset Q_\kappa(M)$  and therefore we arrive at $Q^g_{\wt{\kappa}}(\wt{M}) = Q_\kappa(M)^\sim$.\\
3) Since $G(\kappa(M)) = \ov{\kappa}G(M)$ follows from the $\kappa$-separatedness of $M$ we may also in this part restrict the problem to the $\kappa$-torsion free case, i.e. suppose $\kappa(M) = 0$ and $\ov{\kappa}G(M) = 0$. Consider $\ov{q} \in G(Q_\kappa(M))_n$, i.e. $\ov{q} = \sigma(q)$ for $q \in \dot{F}_nQ_\kappa(M)$. There is as an $I \in \mc{L}(\kappa)$ such that $F_\gamma I q \subset F_{n+q}M$ for all $\gamma \in \mathbb{Z}$. For $i \in \dot{F}_\gamma I$ we have that $\sigma(i)\sigma(q)$ is either 0, or else $\sigma(i)\sigma(q) = \sigma(iq) \in G(M)$, thus always $\sigma(i)\sigma(q) \in G(M)$ and thus $G(I)\sigma(q) \subset G(M)$ or $\sigma(q) = \ov{q}$ is in $Q_{\ov{\kappa}}(G(M))$. Consequently $G(Q_\kappa(M)) \subset Q_{\ov{\kappa}}(G(M))$.
\end{proof}
\begin{proposition}
In the situation of the theorem, if $\wt{\kappa}$ is perfect then $\ov{\kappa}$ is perfect and $G(Q_\kappa(R)) = Q_{\ov{\kappa}}(G(R)), G(Q_\kappa(M)) = Q_{\ov{\kappa}}(G(M))$.
\end{proposition}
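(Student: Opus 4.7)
\emph{Proof plan.}
The starting point is part (2) of \theref{locfilt2}, which gives $Q_\kappa(M)^\sim = Q^g_{\wt{\kappa}}(\wt{M})$. Quotienting by the central regular element $X$ and using $G(N) = \wt{N}/X\wt{N}$ for any separated filtered $N$, this yields
\[
G(Q_\kappa(M)) \;=\; Q^g_{\wt{\kappa}}(\wt{M})\big/X\,Q^g_{\wt{\kappa}}(\wt{M}).
\]
The plan is to evaluate the right-hand side via perfectness of $\wt{\kappa}$, and then to identify the resulting expression with $Q_{\ov{\kappa}}(G(M))$, simultaneously deducing perfectness of $\ov{\kappa}$.

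Perfectness of $\wt{\kappa}$ provides a natural isomorphism $Q^g_{\wt{\kappa}}(\wt{M}) \cong Q^g_{\wt{\kappa}}(\wt{R}) \otimes_{\wt{R}} \wt{M}$ together with flatness of $Q^g_{\wt{\kappa}}(\wt{R})$ over $\wt{R}$. Since $X$ is central in $\wt{R}$ and is regular on $\wt{R}$, on $\wt{M}$, and on $Q^g_{\wt{\kappa}}(\wt{R})$ (the last fact is established in the proof of part (2) of the theorem), reduction modulo $X$ commutes with the tensor product. Setting $M=R$ gives the ring identification $G(Q_\kappa(R)) = Q^g_{\wt{\kappa}}(\wt{R})/XQ^g_{\wt{\kappa}}(\wt{R})$, which is flat over $G(R)$; for general $M$ the same calculation yields $G(Q_\kappa(M)) = G(Q_\kappa(R)) \otimes_{G(R)} G(M)$.

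To finish, I would identify $G(Q_\kappa(R))$ with $Q_{\ov{\kappa}}(G(R))$ and deduce perfectness of $\ov{\kappa}$. Perfectness of $\wt{\kappa}$ supplies (i) that the canonical map $\wt{R} \to Q^g_{\wt{\kappa}}(\wt{R})$ has $\wt{\kappa}$-torsion kernel and cokernel, and (ii) that $\wt{I}\cdot Q^g_{\wt{\kappa}}(\wt{R}) = Q^g_{\wt{\kappa}}(\wt{R})$ for every $\wt{I} \in \mc{L}(\wt{\kappa})$. Pseudo-affinity of $\wt{\kappa}$ says that $\mc{L}(\wt{\kappa})$ has a filterbasis $\{\wt{I}: I \in \mc{L}(\kappa)\}$ whose reduction $\pi(\wt{I}) = G(I)$ is a filterbasis for $\mc{L}(\ov{\kappa})$. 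Reducing (i) and (ii) modulo $X$ therefore yields that $G(R) \to G(Q_\kappa(R))$ has $\ov{\kappa}$-torsion kernel and cokernel and that $G(I)\cdot G(Q_\kappa(R)) = G(Q_\kappa(R))$ for every $I \in \mc{L}(\kappa)$. Combined with the flatness above, this is the standard criterion making $G(Q_\kappa(R))$ the perfect Gabriel localization of $G(R)$ at $\ov{\kappa}$, provided one also knows that $G(Q_\kappa(R))$ is $\ov{\kappa}$-torsion-free. Once this is secured, the module-level formula $G(Q_\kappa(M)) = Q_{\ov{\kappa}}(G(M))$ is immediate from the tensor product description of perfect localizations and the formula established in the previous paragraph.

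The main technical obstacle is precisely this $\ov{\kappa}$-torsion-freeness, since part (3) of the theorem only gives the inclusion $G(Q_\kappa(R)) \subseteq Q_{\ov{\kappa}}(G(R))$ and one must rule out fresh $\ov{\kappa}$-torsion appearing after reduction modulo $X$. I expect to lift any hypothetical relation $\ov{I}\ov{q} = 0$ in $G(Q_\kappa(R))$ via pseudo-affinity to $\wt{I}\wt{q} \subseteq X\,Q^g_{\wt{\kappa}}(\wt{R})$ for a homogeneous representative $\wt{q}$, translate it into a filtration estimate $F_\gamma I\,q \subseteq F_{\gamma + n - 1}Q_\kappa(R)$ for all $\gamma$, and then invoke the $\kappa$-separatedness of $FQ_\kappa(R)$ (which follows from the chain of corollaries after \lemref{sep2} applied to the already $\kappa$-torsion-free localization) to force $q \in \kappa(Q_\kappa(R)) = 0$, hence $\ov{q} = 0$.
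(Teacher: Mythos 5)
Your approach is essentially the paper's: both hinge on perfectness of $\wt{\kappa}$ giving the tensor-product description of $Q^g_{\wt{\kappa}}$, which lets one commute localization with reduction modulo the central regular element $X$ in the exact sequence $Q_\kappa(M)^\sim = Q^g_{\wt{\kappa}}(\wt{M})$, and both then deduce perfectness of $\ov{\kappa}$ by pushing the equations $\wt{I}\,Q^g_{\wt{\kappa}}(\wt{R}) = Q^g_{\wt{\kappa}}(\wt{R})$ down to $G(I)\,G(Q_\kappa(R)) = G(Q_\kappa(R))$. Where the paper is more economical is at the very last step: rather than running through the full Gabriel criterion for perfect localization rings, it simply notes that $Q_{\wt{\kappa}}(\wt{M})/XQ_{\wt{\kappa}}(\wt{M}) = Q_{\wt{\kappa}}(\wt{M}/X\wt{M}) = Q_{\wt{\kappa}}(G(M))$, and then that $\wt{\kappa}$ induces $\ov{\kappa}$ on $G(R)$-mod so that $Q_{\wt{\kappa}}(G(M)) = Q_{\ov{\kappa}}(G(M))$ outright, giving the module-level equality before touching the perfectness of $\ov{\kappa}$ at all.

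The one point where your plan misdiagnoses the difficulty: you flag $\ov{\kappa}$-torsion-freeness of $G(Q_\kappa(R))$ as the main technical obstacle and sketch a filtration-degree argument invoking $\kappa$-separatedness of $Q_\kappa(R)$. But this is already free of charge: part (3) of \theref{locfilt2} gives an inclusion $G(Q_\kappa(R)) \subset Q^g_{\ov{\kappa}}(G(R))$, and the right-hand side is automatically $\ov{\kappa}$-torsion-free as a localization, hence so is any submodule. So the extra lifting-to-$\wt{R}$ argument you sketch is not needed, and if you did want to pursue it, the reference ``$\kappa$-separatedness of $Q_\kappa(R)$'' does not fall directly out of the corollaries after \lemref{sep2} — those corollaries concern $M/\kappa(M)$, not the localized module $Q_\kappa(M)$ — so you would have to argue it separately. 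Better to avoid it and use the inclusion from part (3).
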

\begin{proof}
Since $R$ and $\wt{R}$ are Noetherian, $\wt{\kappa}$ has finite type and as a graded kernel functor we then have $Q_{\wt{\kappa}}(\wt{M}) = Q^g_{\wt{\kappa}}(\wt{M})$. Hence $Q_{\wt{\kappa}}(\wt{M}) = Q_\kappa(M)^\sim$ and $GQ_\kappa(M) = Q_{\wt{\kappa}}(\wt{M}) /XQ_{\wt{\kappa}}(\wt{M})$. By perfectness of $\wt{\kappa}$: $Q_{\wt{\kappa}}(\wt{M}) /XQ_{\wt{\kappa}}(\wt{M})  = Q_{\wt{\kappa}}(\wt{M}/X\wt{M}) = Q_{\wt{\kappa}}(G(M))$. As $\wt{\kappa}$ induces $\ov{\kappa}$ on $G(R)$-mod (also on $G(R)$-gr), we have $Q_{\wt{\kappa}}(G(M)) = Q_{\ov{\kappa}}(G(M))$. From $Q^g_{\wt{\kappa}}(\wt{M}) = Q_\kappa(M)^\sim$ it then follows that $G(Q_\kappa(M)) = Q_{\ov{\kappa}}(G(M))$.\\
Concerning the perfectness of $\ov{\kappa}$, take $L \in \mc{L}(\ov{\kappa})$. Then $L \supset G(I)$ for some $I \in \mc{L}(\kappa)$. Look at $Q_{\ov{\kappa}}(G(R))G(I), \pi(Q_{\wt{\kappa}}(\wt{R})\wt{I}) = G(Q_\kappa(R)) G(I) \subset Q_{\ov{\kappa}}(G(R))G(I)$. Now by the perfectness of $\wt{\kappa}$, $Q_{\wt{\kappa}}(\wt{R})\wt{I} = Q_{\wt{\kappa}}(\wt{R})$ hence $Q_{\ov{\kappa}}(G(R))G(I) = Q_{\ov{\kappa}}(G(R))$ holds for all $I \in \mc{L}(\kappa)$ and then for all $L \in \mc{L}(\ov{\kappa})$ we have that $Q_{\ov{\kappa}}(G(R))L = Q_{\ov{\kappa}}(G(R))$. This means $\ov{\kappa}$ is perfect.
\end{proof}

\begin{remark}\remlabel{remarkOre}
If $S$ is an Ore set of $R$ then even if $\sigma(S)$ is an Ore set of $G(R)$ we may not have that $\ov{\kappa}_S = \kappa^g_{\sigma(S)}$; indeed $G(Rs) \neq G(R)\sigma(s)$ is possible (we always have $\ov{\kappa}_S \leq \kappa^g_{\sigma(S)}$). However if $G(R)$ is $\sigma(S)$-torsion free then $\sigma(r)\sigma(s) = \sigma(rs)$ for all $r \in R$ and $G(Rs) = G(R)\sigma(s)$ and $\ov{\kappa}_S = \kappa^g_{\sigma(s)}$ follow. So when $R$ is a Zariskian ring with $G(R)$ being a domain then $R$ is $\kappa$-separated for all $\kappa$, and localization at Ore sets behaves very nice (e.g. $\wt{S}$ is an Ore set in $\wt{R}$ then too).
\end{remark}
\begin{proposition}
In the situation of the previous proposition, $\kappa$ is perfect too.
\end{proposition}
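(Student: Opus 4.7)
The plan is to verify the standard characterization that $\kappa$ is perfect if and only if $Q_\kappa(R)\cdot L = Q_\kappa(R)$ for every $L \in \mc{L}(\kappa)$, and to transfer this condition from the already-assumed perfectness of $\wt{\kappa}$ by invoking the identification $Q_\kappa(R)^\sim = Q^g_{\wt{\kappa}}(\wt{R})$ established in \theref{locfilt2}(2), combined with the Rees/dehomogenization equivalence $R$-filt $\simeq \mc{F}_X$.

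First I would fix $L \in \mc{L}(\kappa)$. Since $\wt{\kappa}$ is pseudo-affine (the standing hypothesis also in force in the preceding proposition), the Rees ideal $\wt{L}$ belongs to $\mc{L}(\wt{\kappa})$. Perfectness of $\wt{\kappa}$ then gives $Q^g_{\wt{\kappa}}(\wt{R}) \cdot \wt{L} = Q^g_{\wt{\kappa}}(\wt{R})$, so in particular $1$ lies in the degree-zero part of this product. Noetherianity of $\wt{R}$ (inherited from $FR$ Zariskian) furnishes a finite homogeneous expression $1 = \sum_{j=1}^k \wt{q}_j \wt{l}_j$ with $\wt{q}_j \in Q^g_{\wt{\kappa}}(\wt{R})$ and $\wt{l}_j \in \wt{L}$ of complementary degrees, say $\deg \wt{q}_j = -d_j$ and $\deg \wt{l}_j = d_j$.

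Next I would dehomogenize. Writing $\wt{q}_j = q_j X^{-d_j}$ with $q_j \in F_{-d_j} Q_\kappa(R)$ and $\wt{l}_j = l_j X^{d_j}$ with $l_j \in F_{d_j} L \subset L$, the Rees product computes as $\wt{q}_j \wt{l}_j = (q_j l_j)X^0$, so passing the identity $1 = \sum \wt{q}_j \wt{l}_j$ through the equivalence $\mc{F}_X \simeq R$-filt yields $1 = \sum_{j=1}^k q_j l_j$ inside $F_0 Q_\kappa(R) \subset Q_\kappa(R)$. Since each summand lies in $Q_\kappa(R)\cdot L$, this gives $Q_\kappa(R)\cdot L = Q_\kappa(R)$ for every $L \in \mc{L}(\kappa)$, which is the desired perfectness of $\kappa$.

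The main (and rather minor) obstacle is the bookkeeping of filtration degrees under dehomogenization; but this is exactly the multiplication rule in $\wt{R} = \sum_n F_nR\,X^n$ together with the pseudo-affineness of $\wt{\kappa}$, both already in place. No further separation hypothesis needs to be re-verified, since the setup is precisely the one in which \theref{locfilt2}(2) has just been applied in the proof that $\ov{\kappa}$ is perfect.
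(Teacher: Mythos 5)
Your argument is correct and is essentially identical to the paper's proof: both fix $L\in\mc{L}(\kappa)$, pass to the Rees level via $Q_\kappa(R)^\sim = Q^g_{\wt{\kappa}}(\wt{R})$, use perfectness of $\wt{\kappa}$ to write $1$ as a finite homogeneous sum $\sum_j \wt{q}_j\wt{l}_j$ in degree zero, and dehomogenize to land in $Q_\kappa(R)L$. (Your degree bookkeeping $q_j\in F_{-d_j}Q_\kappa(R)$ is in fact the correct reading; the paper's printed $q_j\in F_nQ_\kappa(R)$ appears to be a sign typo.)
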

\begin{proof}
Consider $I \in \mc{L}(\kappa)$ and $Q_\kappa(R)I$ (we may reduce to the case where $R$ and $M$ are $\kappa$-torsion free as before). Since $\wt{\kappa}$ is perfect $Q_\kappa(R)^\sim = Q_{\wt{\kappa}}^g(\wt{R}) = Q_{\wt{\kappa}}^g(\wt{R})\wt{I}$, hence $1 = \sum_{j=1}^{'}\wt{q}_j \wt{i}_j$ with $\wt{q}_j \in Q_{\wt{\kappa}}^g(\wt{R}), \wt{i}_j \in \wt{I}$ homogeneous and we have $\wt{q}_j \wt{i}_j \in Q^g_{\wt{\kappa}}(\wt{R})_0$, i.e. $\wt{q}_j \in Q^g_{\wt{\kappa}}(\wt{R})_{-n}, \wt{i}_j \in Q^g_{\wt{\kappa}}(\wt{R})_n$ for some $n \in \mathbb{Z}$. Then $\wt{q}_j = q_jX^{-n}$ with $q_j \in F_nQ_\kappa(R), \wt{i}_j = i_jX^n$ with $i_j \in F_n I$ and $1 = \sum_{j=1}^{'} q_ji _j X^0 = \sum_{j=1}^{'} q_ji_j\in Q_\kappa(R)I$. From $Q_\kappa(R)I = Q_\kappa(R)$ for all $I \in \mc{L}(\kappa)$ we have that $\kappa$ is perfect.
\end{proof}

\section{Sheaves of Glider Representations}

In this final section we establish some sheaf theories of glider representations. Simultaneously, we obtain some sheaves of filtered modules, by which we mean an ordinary sheaf of modules, such that on every open set, the sections give a filtered module and the restriction morphisms are also filtered. Concerning the glueing axiom, the glueing element $x$ of a compatible set of sections $x_i$ must have a degree not exceeding the highest appearing degree of the $x_i$. Before we get to this however, we need some additional results. We assume either that all kernel functors $\kappa$ occurring below are coming from some $\wt{\kappa}$, or else we assume that $G(R)$ is a domain and $G(M)$ is a faithful $G(R)$-module. The crucial property we will need is namely that $G(\kappa(\Omega)) = \ov{\kappa}(G(\Omega))$, see \lemref{2} and \lemref{sep2}. First we observe that the filtered morphism $\rho^\kappa_\tau$ for $\kappa \leq \tau$ from \propref{filteredmor} restricts to $F_0Q_\kappa(\Omega) \to F_0Q_\tau(\Omega)$, hence to $Q_\kappa(M) \to Q_\tau(M)$ for a glider $M \subset \Omega$. Additionally, all this remains valid for left exact preradicals.

\begin{remark}
By defining $Q_\kappa(M)$ via $Q_\kappa(\Omega)$ we obtain a notion of localization of a glider representation $M \subset \Omega$ which depends on $\Omega$, so correctly we should adopt the notation $Q_\kappa(M,\Omega)$. However, we will not do this for reason of simplicity. There remains a question therefore, how does $Q_\kappa(M,\Omega)$ depend on $\Omega$? Since $RM \subset \Omega$, and $RM$ in $\Omega$ are not easily related at first sight, this remains nontrivial. We postpone this aspect to forthcoming work. So we write $Q_\kappa(M)$ for $Q_\kappa(M,\Omega)$ when $\Omega$ is fixed.
\end{remark}

\begin{lemma} \lemlabel{sep3}
Let $\Omega$ be a filtered $R$-module and let $\kappa_i$ be kernel functors such that $\Omega$ is $\kappa_i$-separated. Then $\Omega$ is $\wedge \kappa_i$-separated.
\end{lemma}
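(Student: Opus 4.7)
The plan is to unwind the definitions, using standard properties of the lattice $R$-tors. First I would recall that the meet $\wedge_i \kappa_i$ in $R$-tors has torsion class $\bigcap_i \mc{T}_{\kappa_i}$, from which one reads off $\mc{L}(\wedge_i \kappa_i) = \bigcap_i \mc{L}(\kappa_i)$ together with $(\wedge_i \kappa_i)(N) = \bigcap_i \kappa_i(N)$ for any $R$-module $N$ (since an element $m$ is killed by some $L \in \mc{L}(\wedge_i \kappa_i)$ iff the cyclic module $Rm$ lies in every $\mc{T}_{\kappa_i}$ iff $m$ lies in every $\kappa_i(N)$).

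To check $\wedge_i \kappa_i$-separatedness of $\Omega$, I would then take an arbitrary $m \in \dot{F}_n\Omega$ together with some $I \in \mc{L}(\wedge_i \kappa_i)$ satisfying $F_\gamma I m \subset F_{\gamma + n - 1}\Omega$ for all $\gamma \in \mathbb{Z}$, and aim for $m \in (\wedge_i \kappa_i)(\Omega)$. The single ideal $I$ belongs to each $\mc{L}(\kappa_i)$ by the previous paragraph, so it is a valid witness for the $\kappa_i$-separatedness hypothesis applied to the same $m$; for each $i$ this yields $m \in \kappa_i(\Omega)$, and intersecting over $i$ gives $m \in \bigcap_i \kappa_i(\Omega) = (\wedge_i \kappa_i)(\Omega)$, as required.

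There is no real obstacle: the point is simply that the definition of $\kappa$-separatedness is existential in the witness ideal, but in a way that does not conflict across different $\kappa_i$, since any $I \in \mc{L}(\wedge_i \kappa_i)$ automatically serves as a common witness for each factor. No appeal to the associated graded, to the Rees ring, or to \lemref{tf} is needed, and the argument applies uniformly to arbitrary (possibly infinite) families of kernel functors.
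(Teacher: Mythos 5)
Your proof is correct and follows exactly the paper's own argument: given $I \in \mc{L}(\wedge \kappa_i)$ witnessing a failure of separatedness for $m \in \dot{F}_n\Omega$, observe $I \in \mc{L}(\kappa_i)$ for each $i$, apply $\kappa_i$-separatedness to conclude $m \in \kappa_i(\Omega)$, and intersect. The extra lattice-theoretic preliminaries you spell out are implicit in the paper's terser version but do not change the argument.
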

\begin{proof}
Take $x \in \dot{F}_n\Omega$ and suppose there is an $I \in \mc{L}(\wedge \kappa_I)$ such that $F_\gamma I x \subset F_{\gamma + n - 1}\Omega$ for all $\gamma$. Since $I \in \mc{L}(\kappa_i)$ for all $i$, we get that $x \in \kappa_i\Omega$ for all $i$. Hence $x \in \wedge \kappa_i (\Omega)$.
\end{proof}
\begin{lemma}
Let $\tau \geq \kappa$ and $\Omega \in R$-filt such that $\Omega$ is $\tau, \kappa$-separated. Then $\Omega/ \kappa\Omega$ is $\tau$-separated.
\end{lemma}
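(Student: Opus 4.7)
The plan is to reduce the problem back to the $\tau$-separatedness of $\Omega$ itself by modifying the testing ideal using a $\kappa$-annihilator. So take $\bar m \in \dot F_n(\Omega/\kappa\Omega)$ and $I \in \mc L(\tau)$ with $F_\gamma I\bar m \subset F_{\gamma+n-1}(\Omega/\kappa\Omega)$ for every $\gamma\in\mathbb Z$; I need to show $\bar m\in\tau(\Omega/\kappa\Omega)$. First I would lift $\bar m$ to a representative $m\in F_n\Omega$; if $m$ were already in $F_{n-1}\Omega$, then $\bar m$ would lie in $F_{n-1}(\Omega/\kappa\Omega)$, a contradiction, so $m\in\dot F_n\Omega$. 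Unwinding the hypothesis in $\Omega$ gives
\[
F_\gamma I\,m\;\subset\;F_{\gamma+n-1}\Omega+\kappa\Omega\qquad\text{for every }\gamma.
\]

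Next I would use that $FR$ is Zariskian, hence $R$ is Noetherian and $I$ is finitely generated: write $I=\sum_{k=1}^{s}Ri_k$ with $\deg i_k\le d_k$. Specializing the previous inclusion to $\gamma=d_k$ yields a decomposition
\[
i_k m \;=\; a_k+b_k,\qquad a_k\in F_{d_k+n-1}\Omega,\ b_k\in\kappa\Omega.
\]
For each $k$ pick $J_k\in\mathcal L(\kappa)$ with $J_k b_k=0$ and set $J=\bigcap_k J_k\in\mathcal L(\kappa)\subset\mathcal L(\tau)$. Form the product $L:=J\cdot I\in\mathcal L(\tau)$ (as a Gabriel filter, $\mathcal L(\tau)$ is closed under products). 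For the generators $j_a$ of $J$ with $\deg j_a\le c_a$, the elements $j_a i_k$ generate $L$ and $\deg(j_a i_k)\le c_a+d_k$. Because $FR$ is Zariskian the induced filtration on $L$ is good, so $F_\gamma L=\sum_{a,k}F_{\gamma-c_a-d_k}R\cdot j_a i_k$. A direct computation, using $j_a b_k=0$, then gives
\[
F_\gamma L\cdot m \;=\;\sum_{a,k}F_{\gamma-c_a-d_k}R\cdot j_a(a_k+b_k)\;=\;\sum_{a,k}F_{\gamma-c_a-d_k}R\cdot j_a a_k\;\subset\;F_{\gamma+n-1}\Omega
\]
for every $\gamma$, where the last inclusion uses $j_a a_k\in F_{c_a+d_k+n-1}\Omega$.

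Now I can invoke the $\tau$-separatedness of $\Omega$ with the testing ideal $L$ and the element $m\in\dot F_n\Omega$ to conclude $m\in\tau(\Omega)$. Since $\kappa\le\tau$ we have $\kappa(\Omega)\subset\tau(\Omega)$, so the image $\bar m$ lies in $\tau(\Omega)/\kappa(\Omega)\subset\tau(\Omega/\kappa\Omega)$ (the last inclusion because $\tau(\Omega)/\kappa(\Omega)$ is a quotient of the $\tau$-torsion module $\tau(\Omega)$, hence $\tau$-torsion). This shows $\Omega/\kappa\Omega$ is $\tau$-separated.

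\textbf{Main obstacle.} The delicate point is Step~6: one must ensure that after replacing $I$ by the "adjusted" ideal $L=JI$, the relation $F_\gamma L\cdot m\subset F_{\gamma+n-1}\Omega$ holds for \emph{every} $\gamma$ (not just for the specific values at which the decompositions $i_k m=a_k+b_k$ were produced). This is exactly what the Zariskian hypothesis allows, via goodness of the induced filtration on the finitely generated left ideal $L$, so that the generators $j_a i_k$ really control every filtration degree $F_\gamma L$. Everything else is bookkeeping with Gabriel filters and the fact that $\kappa\le\tau$.
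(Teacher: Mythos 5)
Your route is direct and computational, completely different from the paper's argument. The paper works entirely at the level of associated graded modules: from the strict exact sequences it computes
\[
G\bigl((\Omega/\kappa\Omega)/\tau(\Omega/\kappa\Omega)\bigr)\;=\;G(\Omega/\tau\Omega)\;=\;G(\Omega)/\ov\tau G(\Omega),
\]
the middle equality relying on $G(\tau\Omega)=\ov\tau G(\Omega)$, which is available because $\Omega$ is $\tau$-separated \emph{and} the standing hypotheses of Section~4 hold ($\wt\tau$ pseudo-affine, or $G(R)$ a domain); then it concludes that $\Omega/\kappa\Omega$ is $\tau$-separated from $\ov\tau$-torsion freeness of this associated graded. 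You try to bypass the graded machinery, which would be a more elementary proof if it worked.

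Unfortunately there is a genuine gap exactly at the step you flag as the ``main obstacle''. You assert that because $FR$ is Zariskian the induced filtration on $L$ is good, and \emph{therefore}
\[
F_\gamma L=\sum_{a,k}F_{\gamma-c_a-d_k}R\,j_ai_k .
\]
Goodness of the induced filtration $F_\gamma L=F_\gamma R\cap L$ only guarantees that \emph{some} finite generating set with \emph{some} shifts realizes it; it does not say that your chosen generators $j_ai_k$ with the specific shifts $c_a+d_k$ do. One always has $\sum_{a,k}F_{\gamma-c_a-d_k}R\,j_ai_k\subset F_\gamma L$, but the inclusion can be strict: already in $R=k[x]$ with the degree filtration, the ideal $L=(x)$ is generated by $\xi=x+x^2\in\dot F_2R$, yet $F_{-1}R\cdot\xi=0$ while $F_1R\cap L=kx\neq0$. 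The $\tau$-separatedness hypothesis on $\Omega$, as it is used throughout the paper (for instance to deduce $G(I)\,\sigma(m)=0$ from $F_\gamma I\,m\subset F_{\gamma+n-1}M$), is stated for the \emph{induced} filtration on the testing ideal; verifying your estimate only for the possibly smaller generator filtration does not entitle you to invoke it, and passing to the induced filtration via equivalence of good filtrations only gives $F_\gamma L\,m\subset F_{\gamma+n-1+c}\Omega$ for some constant $c\ge0$, which is not enough. Indeed the paper itself, in the discussion after the double-localization filtration (\propref{filttwo}), explicitly records that the identity $F_\gamma(J\cdot I)=\sum_\alpha F_{\gamma-\deg i_\alpha}J\,i_\alpha$ is known only under the pseudo-affine hypothesis (e.g.\ Ore localizations with $G(R)$ a domain) and is an open question in general --- it is precisely the missing ingredient. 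So what you treat as bookkeeping is in fact the substantive issue; your argument would at least need the Section~4 standing hypotheses and a separate proof of the filtration identity, not an appeal to goodness. (There is also a minor ambiguity in $L:=J\cdot I$: the ordinary ideal product $JI$ is not generated by $\{j_ai_k\}$ noncommutatively, so you must take $J\cdot I=\sum_kJi_k$ as in the paper; this \emph{is} generated by the $j_ai_k$, but its induced filtration is still not a priori the one you wrote.)
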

\begin{proof}
From $(\Omega / \kappa(\Omega)) / (\tau(\Omega)/\kappa(\Omega)) = \Omega/\tau(\Omega)$ we have
$$\frac{ G(\Omega/\kappa(\Omega))}{G(\tau(\Omega)/\kappa(\Omega))} = G(\Omega/\tau(\Omega)) = G(\Omega)/ \ov{\tau}G(\Omega)$$
because $G(\tau(\Omega)) = \ov{\tau}G(\Omega)$ since $\Omega$ is $\tau$-separated and because $\tau(\Omega)$ has by definition the induced filtration from $\Omega$. Thus $G((\Omega/\kappa(\Omega))/\tau(\Omega/\kappa(\Omega)))$ is $\ov{\tau}$-torsion free, hence $\Omega/\kappa(\Omega)$ is $\tau$-separated.
\end{proof}
\begin{remark}
There is a direct proof via the $\wt{\kappa}, \wt{\tau}, \kappa \leq \tau$. Although the proof is longer, it highlights the property that the gradation yields $(\wt{J}\wt{I})_{\tau + \gamma} = \sum_{\tau' + \gamma' = \tau + \gamma}\wt{J}_{\tau'} \wt{I}_{\gamma'}$. We omit the proof here.
\end{remark}

Let $M \subset \Omega$ be a glider representation and let $\kappa_i$ be a finite number of kernel functors (preradicals) such that $\Omega
$ is $\kappa_i \wedge \kappa_j$-separated for all $i,j$. Since $\wedge \kappa_i \leq \kappa_i$,
$$Q_{\wedge \kappa_i}(\Omega) \mapright{\rho_i} Q_{\kappa_i}(\Omega)$$
is filtered, so we can restrict to the degree 0 part
$$Q_{\wedge \kappa_i}(M) \mapright{\rho_i} Q_{\kappa_i}(M)$$
\begin{proposition}\proplabel{cover}
With the above assumptions we have that if $x_i \in F_0Q_{\kappa_i}(\Omega) = Q_{\kappa_i}(M)$ are such that $\rho^i_{ij}(x_i) = \rho^j_{ij}(x_j)$ and if there exists an $x \in Q_{\wedge \kappa_i}(\Omega)$ such that $ \rho_i(x) = x_i$ for all $i$, then $x \in F_0Q_{\wedge \kappa_i}(\Omega) = Q_{\wedge \kappa_i}(M)$.
\end{proposition}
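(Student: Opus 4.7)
My plan is to show the contrapositive: assuming $d = v_\tau(x) > 0$ where $\tau = \wedge_i \kappa_i$, I would construct an ideal $I \in \mc{L}(\tau)$ satisfying $F_\gamma I \cdot x \subset F_\gamma(\Omega/\tau\Omega)$ for every $\gamma \in \mathbb{Z}$, contradicting minimality of $d$ and forcing $x \in F_0 Q_\tau(\Omega) = Q_\tau(M)$.

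First I would invoke strictness of each $\rho_i = \rho^\tau_{\kappa_i}$ (noted after \propref{filteredmor}) together with the identification $\ker \rho_i = \kappa_i(Q_\tau(\Omega))$, which follows from the factorisation $Q_\tau(\Omega) \twoheadrightarrow Q_\tau(\Omega)/\kappa_i(Q_\tau(\Omega)) \hookrightarrow Q_{\kappa_i}(\Omega)$. Since $x_i = \rho_i(x)$ lies in $F_0 Q_{\kappa_i}(\Omega) \cap \rho_i(Q_\tau(\Omega))$, strictness produces $y_i \in F_0 Q_\tau(\Omega) = Q_\tau(M)$ with $\rho_i(y_i) = x_i$, so that $z_i := x - y_i$ is $\kappa_i$-torsion in $Q_\tau(\Omega)$. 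I then pick $L_i \in \mc{L}(\kappa_i)$ with $L_i z_i = 0$ and, by definition of $FQ_\tau(\Omega)$, pick $I_{y_i} \in \mc{L}(\tau)$ with $F_\gamma I_{y_i} \cdot y_i \subset F_\gamma(\Omega/\tau\Omega)$ for all $\gamma$. Setting $A_i := L_i \cap I_{y_i} \in \mc{L}(\kappa_i)$ (intersection of two members of the Gabriel filter $\mc{L}(\kappa_i)$, using $I_{y_i} \in \mc{L}(\tau) \subset \mc{L}(\kappa_i)$) and $I := \sum_{i=1}^n A_i$, the inclusion $A_i \subset I$ gives $I \in \mc{L}(\kappa_i)$ for every $i$, hence $I \in \bigcap_i \mc{L}(\kappa_i) = \mc{L}(\tau)$.

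Next, using that $FR$ is Zariskian and each $A_i$ is a finitely generated left ideal with good induced filtration, I would identify $F_\gamma I = \sum_i F_\gamma A_i$, which allows any $\zeta \in F_\gamma I$ to be written as $\zeta = \sum_i a_i$ with $a_i \in F_\gamma A_i$. Since $a_i \in L_i$ annihilates $z_i$ and $a_i \in I_{y_i}$ controls $a_i y_i$, the computation
$$\zeta \cdot x = \sum_i a_i(y_i + z_i) = \sum_i a_i y_i \in F_\gamma(\Omega/\tau\Omega)$$
goes through, which closes the argument.

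The hardest step is the filtration decomposition $F_\gamma(\sum_i A_i) = \sum_i F_\gamma A_i$: in the Zariskian Noetherian setting this should follow from the good-filtration machinery for finitely generated left ideals and the compatibility of good generating sets under sums, but it needs careful verification against potential cancellation pathologies. If it fails on the nose for the induced filtration, one can either work with the ``sum of good generators'' filtration on $I$ (a good filtration possibly differing from the induced one by a bounded shift) and absorb the discrepancy into the constant, or pass to the Rees level and perform the same argument on $\wt{I} = \sum_i \wt{A_i}$, where the homogeneous decomposition is automatic on each graded component and the $X$-torsion-freeness of $Q^g_{\wt\tau}(\wt\Omega)$ ensures no obstruction.
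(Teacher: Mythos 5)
Your overall strategy (use strictness of $\rho_i$ to split $x = y_i + z_i$ with $y_i$ of degree $\le 0$ and $z_i$ being $\kappa_i$-torsion, then assemble the controlling ideals) is a reasonable idea, and the preliminary steps are sound: $\rho_i$ is indeed strict filtered by the discussion after Proposition~\ref{prop:filteredmor}, and $\ker\rho_i = \kappa_i(Q_{\wedge\kappa_j}(\Omega))$ follows from the factorisation you describe (using that $Q_{\kappa_i}(\Omega)$ is $\kappa_i$-torsion free, Noetherianity of $R$, and the idempotency of the Gabriel filters). However, the step you yourself flag as hardest is in fact false, and neither of the two proposed fall-backs rescues it.

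The identity $F_\gamma\left(\sum_i A_i\right) = \sum_i F_\gamma A_i$ for the \emph{induced} filtration on a sum of filtered left ideals fails already in the simplest Zariskian setting: take $R = K[X]$ with the degree filtration, $A_1 = (X+1)$, $A_2 = (X-1)$; then $A_1 + A_2 = R$ and $F_0(A_1+A_2) = K$, but $F_0 A_1 = F_0 A_2 = 0$, so $\sum_i F_0 A_i = 0$. Your argument $\zeta\cdot x = \sum_i a_i y_i \in F_\gamma$ requires a decomposition $\zeta = \sum_i a_i$ with $a_i \in F_\gamma A_i$, and without the above identity no such decomposition exists; individual $a_i$ may have unbounded degree, so $a_iy_i$ is uncontrolled. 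The fall-back via a ``sum-of-good-generators'' filtration only gives $F_\gamma I \subset F'_{\gamma+c} I$ for some constant $c$, whence $F_\gamma I x \subset F_{\gamma + c}\ov{\Omega}$, yielding $v(x)\le c$ rather than $v(x)\le 0$ --- the constant cannot be absorbed since the quotient filtration is defined using the induced filtration on ideals in $\mc{L}(\tau)$. Passing to the Rees level does not help either: $\wt{I}_n = (F_nR \cap I)X^n$ and the graded component of degree $n$ of $\sum_i \wt{A_i}$ is $\sum_i(F_nR\cap A_i)X^n$, which is exactly the same proper inclusion.

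The paper avoids this obstruction entirely by choosing, from the definition of the degree $n>0$ of $x$, a \emph{single} ideal $I\in\mc{L}(\wedge\kappa_i)$ and then observing that this same $I$ belongs to every $\mc{L}(\kappa_i)$, so that $x_i\in F_0Q_{\kappa_i}(\Omega)$ forces $F_\gamma I x_i\subset F_\gamma(\Omega/\kappa_i\Omega)$ (using $\kappa_i$-separatedness to conclude the degree of $x_i$ does not depend on the defining ideal). Lifting along $\rho_i$ gives $F_\gamma Ix \subset F_\gamma(\Omega/\wedge\kappa_j\Omega) + \kappa_i(\Omega/\wedge\kappa_j\Omega)$, and the contradiction is extracted at the associated graded level: the symbol $\ov{i_\gamma x}$ is shown to lie in $\ov{\kappa_i}G(\Omega/\wedge\kappa_j\Omega)$ for every $i$ (Lemma~\ref{lem:sep2}), hence in $\ov{\wedge\kappa_i}G(\Omega/\wedge\kappa_j\Omega) = 0$ (Lemma~\ref{lem:sep3}). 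No decomposition of elements of $F_\gamma I$ into pieces from separate ideals is ever needed. If you want to salvage your approach, you would need to replace the sum $\sum_i A_i$ by a single ideal in $\mc{L}(\wedge\kappa_i)$ that simultaneously controls all the $y_i$ and annihilates all the $z_i$ --- but constructing such an ideal with the correct induced filtration is essentially what the paper's argument achieves by working with symbols instead.
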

\begin{proof}
Suppose $x \in \dot{F}_nQ_{\wedge \kappa_i}(\Omega)$ for some $n > 0$. There exists $I \in \mc{L}(\wedge \kappa_i)$ such that for all $\gamma$~: $F_\gamma I x \subset F_{n + \gamma}(\Omega / \wedge\kappa_i \Omega)$ and for some $\gamma$, $F_\gamma I x \not\subset F_{n+ \gamma - 1}(\Omega/ \wedge\kappa_i \Omega)$. Then $\rho_i (F_\gamma I x) = F_\gamma I x_i \subset F_{n+\gamma}(\Omega/ \kappa_i \Omega)$. Since $I \in \mc{L}(\kappa_i)$ for every $i$, $F_\gamma I x_i \subset \dot{F}_{n+\gamma}(\Omega / \kappa_i \Omega)$ would contradict that $x_i \in F_0Q_{\kappa_i}(\Omega)$ ($\Omega/\kappa_i \Omega$ is $\kappa_i$-separated so $\deg x_i$ does not depend on the ideal in $\mc{L}(\kappa_i)$ used to define it!). Consequently $F_\gamma I x_i \subset F_\gamma(\Omega / \kappa_i \Omega)$ for all $\gamma$, i.e.
$$F_\gamma I x_i \subset F_\gamma(\Omega / \wedge \kappa_j \Omega) + \kappa_i(\Omega / \wedge \kappa_j \Omega).$$
Since $F_\gamma I x \subset F_{n+\gamma} (\Omega / \wedge \kappa_i \Omega)$ with $n > 0$ we obtain
\begin{eqnarray*}
F_\gamma I x &\subset&( F_\gamma(\Omega / \wedge \kappa_i \Omega) + \kappa_i(\Omega / \wedge \kappa_i \Omega)) \cap F_{n+ \gamma}(\Omega / \wedge \kappa_i \Omega)\\
&\subset& F_\gamma(\Omega / \wedge \kappa_i \Omega) + F_{n+ \gamma}\kappa_i(\Omega / \wedge \kappa_i \Omega),
\end{eqnarray*}
hence for $i_\gamma \in F_\gamma I$ with $i_\gamma x \notin F_{n + \gamma - 1}(\Omega / \wedge \kappa_i \Omega)$:
$$i_\gamma x \mod F_{n + \gamma - 1} (\Omega / \wedge \kappa_i \Omega) \in \frac{F_{n + \gamma}\kappa_i(\Omega / \wedge \kappa_i \Omega)}{F_{n+ \gamma - 1} \kappa_i(\Omega / \wedge \kappa_i \Omega)}.$$
The latter means that $\ov{i_\gamma x} \in G_{n+ \gamma}(\kappa_i(\Omega / \wedge \kappa_i \Omega)) = \ov{\kappa_i}G_{n+\gamma}(\Omega / \wedge \kappa_i \Omega)$, where the last equality follows from \lemref{sep2}. This holds for all $i$, hence $\ov{i_\gamma x} \in \ov{\wedge \kappa_i}G(\Omega / \wedge \kappa_i \Omega)$ but as $\Omega / \wedge \kappa_i \Omega$ is $\wedge \kappa_i$ separated by \lemref{sep3} we have that $\ov{\wedge \kappa_i}G(\Omega / \wedge \kappa_i \Omega) = 0$. But then $i_\gamma x \in F_{n+\gamma - 1}(\Omega / \wedge \kappa_i\Omega)$, a contradiction. Hence $x \in F_0Q_{\wedge \kappa_i}(\Omega) = Q_{\wedge \kappa_i}(M)$.
\end{proof}

\begin{remark}
In fact, one analogously proves a slightly more general statement, saying that if $x_i \in F_mQ_{\kappa_i}(\Omega)$ for some $m \in \mathbb{Z}$ are compatible on the intersections, then if a glueing element $x$ exists, then it must be in $F_mQ_{\wedge \kappa_i}(\Omega)$. This proves for instance that if an $R$-module $\Omega$ yields a sheaf for some topology (e.g. the classical $\wt{\Omega}$ if $R$ is commutative), then a filtration such that $G(\Omega)$ is faithful as $G(R)$-module, yields a sheaf of filtered modules. In particular, if $FR$ is `almost commutative', then we obtain a filtered structure sheaf $\mc{O}_X$ for $X$ some suitable topological space (e.g. the Zariski topology). In this case, we denote the filtered sheaf by $\mc{FO}_X$.
\end{remark}
The previous proposition allows to define sheaves of glider representations for various topological spaces.
\begin{definition}\deflabel{glid}
Let $X$ be some topological space and $FR$ a filtered ring such that the structure sheaf $\mc{FO}_X$ is filtered. A sheaf $\mc{F}$ of glider representations is a sub-presheaf of a presheaf $\mc{G}$ of filtered $\mc{FO}_X$-modules such that for every open set $U \subset X$, $\mc{F}(U) = F_0\mc{G}(U)$ and such that $\mc{F}$ satisfies the separability and glueing axioms.
\end{definition}
Assume from now on that $FR$ is an `almost commutative' Zariskian ring. In particular, we know that $R$ is $\kappa$-separated for every kernel functor (or preradical) on $R$-mod.\\

First we consider the Zariski topology $\Spec(R)$ for a Noetherian prime ring (see \cite{MVo} for its construction). There is a basis of the Zariski topology consisting of open sets $X(I)$ where $I \triangleleft R$ is an ideal. The kernel functor $\kappa_I$ associated to an ideal $I$ is determined by the filter
$$\mc{L}(\kappa_I) = \{ {\rm left~ideals~} L {\rm~of~} R {\rm~such~that~} L \supset I^n {\rm~for~some~positive~integer~}n\},$$
and $\kappa_I$ is symmetric. Let $M$ be a left $R$-module, which is $\kappa_I$-torsion free for all ideals $I$.  Assigning $Q_{\kappa_I}(M) = Q_I(M)$ to the open set $X(I)$ defines a sheaf $\wt{M}$ of $R$-modules on $\Spec(R)$ (cf. \cite[Theorem 14]{MVo}). Since $X(I) \subset X(J)$ if and only if $\kappa_J \leq \kappa_I$, we have by \propref{filteredmor} that the restriction map
$$Q_I(M) \to Q_J(M)$$
is filtered. Moreover, if $X(I) = \cup X(I_j)$ is a finite cover, then it is shown in \cite[Theorem 13]{MVo} that $\cap \kappa_{I_j}(N) = \kappa_I(N)$ for all $N \in R$-mod. Hence $\mc{T}_{\kappa_I} = \cap \mc{T}_{\kappa_{I_j}} = \mc{T}_{\wedge \kappa_{I_j}}$ and $\kappa_I = \wedge \kappa_{I_j}$ follows.\\
If $M \subset \Omega$ is a glider representation, we denote by $\wt{M}$ the presheaf obtained by restricting $\wt{\Omega}$ to the degree zero part, that is, $\Gamma(X(I),\wt{M}) = F_0Q_I(\Omega) = Q_I(M)$. We denote by $\mc{F}^+\wt{R}$ the presheaf given by $\Gamma(X(I),\mc{F}^+\wt{R}) = F^+Q_I(R)$.

\begin{proposition} 
Let $R$ be a Zariskian prime ring (hence Noetherian) and $M$ a left $R$-module such that $G(M)$ is a faithful $G(R)$-module. Then $\wt{M}$ becomes a sheaf of filtered $\mc{F}\wt{R}$-modules. If $M \subset \Omega$ is glider such that $G(\Omega)$ is a faithful $G(R)$-module, then $\wt{M}$ is a sheaf of glider $\mc{F}^+\wt{R}$-representations.
\end{proposition}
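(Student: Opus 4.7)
The plan is to assemble the sheaf structure from three ingredients already in place: the classical $R$-module sheaf $\wt{M}$ on $\Spec(R)$ from \cite{MVo}, the filtered nature of the localization morphisms $\rho^\kappa_\tau$ given by \propref{filteredmor}, and the degree-control for glueings supplied by \propref{cover}. Throughout I use that $R$ being Zariskian and almost commutative makes $R$ automatically $\kappa$-separated for every kernel functor (\corref{almostcom}), so \theref{locfilt} equips each $Q_I(R)$ and each $Q_I(M)$ with its quotient filtration. Faithfulness of $G(M)$ (resp.\ $G(\Omega)$), combined with \lemref{sep2} as used in \exaref{glider}, lifts this separatedness from the ring to $M$ (resp.\ $\Omega$).

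First I would establish the filtered-module assertion. On basic opens, put $\Gamma(X(I),\wt{M}) = Q_I(M)$ with its quotient filtration, and similarly for $\mc{F}\wt{R}$. When $X(J) \subset X(I)$, i.e.\ $\kappa_I \leq \kappa_J$, \propref{filteredmor} guarantees that the restriction map is a strict filtered morphism. The sheaf axiom for the underlying $R$-module $\wt{M}$ is \cite[Theorem 14]{MVo}, so separability of the filtered presheaf is immediate. The genuinely new point is that a glueing of compatible local sections preserves filtration degree. Since $R$ is Noetherian, every cover of $X(I)$ by basic opens refines to a finite cover $X(I) = \bigcup_j X(I_j)$, whence $\kappa_I = \bigwedge_j \kappa_{I_j}$; the $\wedge$-separatedness of $M$ required for the argument then follows from \lemref{sep3}. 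The version of \propref{cover} noted in the remark immediately after it (which transports the proof from $F_0$ to any $F_m$) then controls the filtration degree of the glued element, yielding the filtered-sheaf property.

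For the glider assertion, $\wt{M}$ is by construction the sub-presheaf $F_0\wt{\Omega}$ of the filtered sheaf $\wt{\Omega}$ obtained by applying the first part to $\Omega$ (with $G(\Omega)$ faithful), so it fits the format of \defref{glid}. Separability is inherited from $\wt{\Omega}$. For glueing, compatible sections $x_j \in Q_{I_j}(M) = F_0 Q_{I_j}(\Omega)$ on a finite refinement produce a unique $x \in Q_I(\Omega)$ by the filtered sheaf property of $\wt{\Omega}$; to see that $x$ lies in $F_0Q_I(\Omega) = Q_I(M)$, I apply \propref{cover} directly with $\kappa_i = \kappa_{I_i}$. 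Its hypotheses hold: the required $\kappa_i \wedge \kappa_j$-separatedness of $\Omega$ comes from \exaref{glider} together with \lemref{sep3}, and the existence of the candidate glueing in $Q_{\wedge\kappa_i}(\Omega) = Q_I(\Omega)$ is precisely what the $R$-module sheaf property delivers.

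The main obstacle is this degree control under glueing: abstractly, $\wt{\Omega}$ produces an element of $Q_I(\Omega)$ with no a priori bound on its filtration degree, and forcing it into $F_0$ requires both the $\wedge$-separatedness hypothesis and a finite index set. The former is supplied by faithfulness of $G(\Omega)$ via \lemref{sep2} and \lemref{sep3}; the latter requires Noetherianity of $R$ to reduce arbitrary covers to finite ones so that \propref{cover}, stated only for finitely many kernel functors, is applicable. Everything else is routine bookkeeping on filtrations.
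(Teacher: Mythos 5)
Your proof is correct and follows essentially the same route as the paper's: establish the quotient filtrations via Corollary~\ref{cor:almostcom} and Theorem~\ref{the:locfilt}, use Proposition~\ref{prop:filteredmor} for the filtered restriction maps, and invoke Proposition~\ref{prop:cover} (with the remark following it for the $F_m$-version) to control the degree of a glued section, having reduced to finite covers with $\kappa_I = \wedge_j \kappa_{I_j}$ via Noetherianity and \cite[Theorem 13]{MVo}. The one cosmetic difference is that for the separated-presheaf axiom you argue directly from the underlying $R$-module sheaf of \cite[Theorem 14]{MVo}, whereas the paper forward-references Proposition~\ref{prop:sepa}; both handle this uncontroversial step adequately.
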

\begin{proof}
For the separated presheaf condition, we refer to \propref{sepa} where it is shown in a more general setting. Observe that we need that $\kappa_I = \wedge \kappa_{I_j}$ if $X(I) = \cup X(I_j)$ is a cover. The glueing axiom follows directly from \propref{cover}. 
\end{proof}

\begin{remark}
Since $G(R)$ is commutative, we have $Y = \Proj( G(R))$, the classical Zariski topology. In \cite[Section 2.4.A]{VoAG} it is shown that 
\begin{equation}
\left\{ \begin{array}{l}
\kappa_{<P>} = \vee \{\kappa_{<f>}, \ov{f} \notin P \} = \vee \{ \kappa_{<I>}, \ov{I} \not\subset P\}\\
\kappa_{<f>} = \wedge \{ \kappa_{<P>}, \ov{f} \notin P\}\\
\kappa_{<I>} = \wedge \{ \kappa_{<P>}, \ov{I} \not\subset P\} = \wedge_{i=1}^d \kappa_{<f_i>}, {\rm~if~} \ov{I} = (\ov{f}_1,\ldots, \ov{f}_n)
\end{array} \right.
\end{equation} 
with notations as in loc. cit. By deriving sheaf-conditions on the level of torsion theories, one obtains sheaves $\un{M}_Y$ for arbitrary $R$-modules $M$ (with no torsion freeness assumptions!) (cf. Theorem 2.4.5 and 2.5.6 in loc. cit.). In order to let everything work in our filtered setting, it suffices to have a basis of the Zariski topology for which the filtered module $M$ is separated. The filtration on a general open set can then be defined by an inverse limit, cf. \cite[Observation before Remark 2.6, p.8]{LiVo}.
\end{remark}

Next, we consider $R$-tors, the set of all kernel functors on $R$-mod, which has a (distributive) lattice structure (see \cite{Gol} for a detailed overview). We define a topology on $R$-tors by defining a basis consisting of the open sets $\gen(\tau) = \{ \tau' \in R-{\rm tors}, \tau \leq \tau'\}$. We call this the gen-topology of $R$-tors. Recall that $\tau \leq \tau'$ if $\mc{L}(\tau) \subset \mc{L}(\tau')$. We have 
$$\gen(\kappa) \cap \gen(\tau) = \gen(\kappa \vee \tau)$$
$$\gen(\kappa) \cup \gen(\tau) = \gen(\kappa \wedge \tau)$$
The distributivity of the lattice implies that
$$\gen(\kappa) \cup (\gen(\sigma) \cap \gen(\tau)) = (\gen(\kappa) \cup \gen(\sigma)) \cap (\gen(\kappa) \cup \gen(\tau)).$$
To a basic open set $U = \gen(\tau)$ we associate the localized ring $Q_{\wedge U}(R) = Q_\tau(R)$ which is filtered by \theref{locfilt}. Since $\gen(\tau) \subset \gen(\kappa)$ if and only if $\kappa \leq \tau$, the restriction morphism $Q_\kappa(R) \to Q_\tau(R)$ is also filtered. Hence we obtain a presheaf $\mc{F}\ov{Q}(-,R)$ of filtered rings. The same result holds for a filtered $R$-module $N$ such that $G(N)$ is faithful as $G(R)$-module, i.e. then $\ov{Q}(-,N)$ is a presheaf of filtered $\mc{F}\ov{Q}(-,R)$-modules. For $M \subset \Omega$ a glider representation, we can restrict the sections $\ov{Q}(\gen(\tau),\Omega) = Q_\tau(\Omega)$ to the degree zero part $F_0\ov{Q}(\gen(\tau),\Omega)$. We denote the latter by $\ov{Q}(\gen(\tau),M)$ and we obtain in this way a presheaf of $\mc{F}^+\ov{Q}(-,R)$ glider representations. We also use the notations $\mc{FO}_X, \mc{O}_\Omega, \mc{O}_M$ for the respective presheaves. 
 
\begin{proposition}\proplabel{sepa}
Let $M \subset \Omega$ be a glider representation such that $G(\Omega)$ is faithful as $G(R)$-module. If $\ov{Q}(-,\Omega)$ is a sheaf of filtered $\mc{F}\ov{Q}(-,R)$-modules we obtain a sheaf $\ov{Q}(\_,M)$ of glider $\mc{F}^+\ov{Q}(\-,R)$-representations. 
\end{proposition}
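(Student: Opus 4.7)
The plan is to verify the sheaf axioms for $\ov{Q}(-,M)$ on the basic open sets $\gen(\tau)$ by reducing them to the already-assumed sheaf property of the ambient presheaf $\ov{Q}(-,\Omega)$, and then to invoke \propref{cover} to guarantee that glueings stay in degree $0$.

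First I would fix a basic open cover $\gen(\tau) = \bigcup_i \gen(\tau_i)$. The lattice identities $\gen(\kappa) \cup \gen(\tau') = \gen(\kappa \wedge \tau')$ and $\gen(\kappa) \cap \gen(\tau') = \gen(\kappa \vee \tau')$ recorded before the proposition, together with injectivity of $\tau \mapsto \gen(\tau)$, translate such a cover into the identity $\tau = \wedge_i \tau_i$, while intersections $\gen(\tau_i) \cap \gen(\tau_j)$ correspond to $\tau_i \vee \tau_j$. The separated-presheaf axiom is then immediate: if $x,y \in \ov{Q}(\gen(\tau),M) = F_0Q_\tau(\Omega)$ have equal restrictions in each $Q_{\tau_i}(M)$, they have equal restrictions in each $Q_{\tau_i}(\Omega)$, and separatedness of the ambient sheaf forces $x = y$ in $Q_\tau(\Omega)$, hence in its subgroup $F_0Q_\tau(\Omega)$.

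For the glueing axiom, a compatible family $x_i \in Q_{\tau_i}(M) = F_0Q_{\tau_i}(\Omega)$ is \emph{a fortiori} compatible as a family of sections of $\ov{Q}(-,\Omega)$, so the sheaf property of the latter yields a unique $x \in Q_\tau(\Omega) = Q_{\wedge_i \tau_i}(\Omega)$ with $\rho_i(x) = x_i$ for all $i$. The real content of the statement is that this $x$ already lies in $F_0Q_\tau(\Omega)$, and this is exactly what \propref{cover} supplies, applied to the family $\{\tau_i\}$ and the element $x$ just produced. To invoke \propref{cover} I must check its $\tau_i \wedge \tau_j$-separatedness hypothesis on $\Omega$; but in the almost-commutative context of this section, $G(R)$ is a commutative domain, so \corref{almostcom}, \exaref{glider} and the faithfulness of $G(\Omega)$ together ensure that $\Omega/\kappa(\Omega)$ is $\kappa$-separated for every preradical $\kappa$, and \lemref{sep3} extends this to meets.

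The main obstacle, and the reason the statement is not a formal triviality, is precisely this preservation of the degree-zero condition under glueing: a glueing in $Q_\tau(\Omega)$ of sections living in $F_0Q_{\tau_i}(\Omega)$ might a priori acquire strictly positive filtration degree. The argument in \propref{cover} defuses this by lifting a hypothetical positive degree to a torsion statement in the graded level $G(\Omega / \wedge_i \tau_i \Omega)$, where the faithfulness of $G(\Omega)$, combined with the identity $G(\kappa(\Omega)) = \ov{\kappa}G(\Omega)$ from \lemref{sep2}, forces the putative degree to collapse to $0$. Once this step is in place, no further work is needed.
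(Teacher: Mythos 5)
Your proof is correct and follows essentially the same strategy as the paper: the glueing axiom is reduced to \propref{cover}, whose hypotheses you verify via the almost-commutative/faithfulness assumptions of this section. The one place you diverge is in the separated-presheaf axiom. The paper proves it from scratch: taking $x \in Q_{\wedge\kappa_i}(M)$ with $\rho_i(x)=0$ for all $i$, an $I \in \mc{L}(\wedge\kappa_i)$ with $F_\gamma I x \subset F_{\gamma+n}(M/\wedge\kappa_i M)$, it observes $\rho_i(F_\gamma I x)=0$ forces $F_\gamma I x \subset \kappa_i(M)/\wedge\kappa_i M$ for every $i$, hence $Ix=0$ and $x \in (\wedge\kappa_i)(Q_{\wedge\kappa_i}(M))=0$. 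You instead note that $Q_\tau(M)=F_0Q_\tau(\Omega)$ is a subgroup of $Q_\tau(\Omega)$, so separatedness is inherited formally from the hypothesis that $\ov{Q}(-,\Omega)$ is a sheaf. Both are valid; your route is shorter and more transparent (it isolates exactly where the hypothesis is used), while the paper's has the small advantage of showing that the separated-presheaf condition holds independently of the sheaf assumption on the ambient presheaf. Your identification of where the real work lies --- that a glueing in $Q_\tau(\Omega)$ of degree-zero local sections might a priori have positive degree, which \propref{cover} rules out via the torsion/faithfulness argument at the graded level --- is exactly right and matches the paper's intent.
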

\begin{proof}
We first have to check the separated presheaf condition, so consider $x \in Q_{\wedge \kappa_i}(M)$ such that $\rho_i(x) = 0$ for every $i$ in the covering of $\wedge \kappa_i$ by the $\kappa_i$. Take $I \in \mc{L}(\wedge\kappa_i)$ such that $F_\gamma I x \subset F_{\gamma + n} M/\wedge \kappa_i M$. Then $\rho_i(F_\gamma I x) = 0$ means $F_\gamma I x \subset \kappa_i (M)/ \wedge \kappa_i M)$. This holds for all $i$, so $F_\gamma I x \subset \wedge \kappa_i M/\wedge \kappa_i M = 0$. This holds for all $\gamma$, i.e. $I x = 0$, but then $x \in (\wedge \kappa_i)(Q_{\wedge\kappa_i}(M)) = 0$.\\
For the glueing condition, letting $\gen(\kappa) = \cup_i \gen(\kappa_i)$ be a finite cover, meaning that $\kappa = \wedge \kappa_i$ (see \cite[Proposition 8.4]{Gol}) we may apply \propref{cover} directly.
\end{proof}

As was the case for $\Spec(R)$, we do not need to restrict to faithful modules. Indeed, it suffices again to have a basis $\gen(\kappa)$ for which $M$ is $\kappa$-separated and use inverse limits to define filtrations.\\

Recall that on $\wt{R}$-grtors we have an open set given by $\gen(\wt{\kappa}_X)$ of the affine torsion theories. Under the equivalence of categories $\wt{R}_X$-gr $\cong R$-mod, a filtered module $M$, corresponds to $\wt{M}_X$. Let $\wt{\kappa} \geq \wt{\kappa}_X$ be an affine kernel functor with associated $\kappa$ on $R$-mod. Since localizing at $X$ is perfect and since $\mc{L}(\wt{\kappa})$ contains $\wt{R}X^n$ for all $n\in \mathbb{N}$, we obtain that $\wt{\kappa}(\wt{M}_X) = \wt{\kappa}(\wt{M})_X$. Therefore
$$Q_{\wt{\kappa}}^g(\wt{M}_X) = Q_{\wt{\kappa}}^g(\wt{M})_X = (Q_\kappa(M)^\sim)_X,$$
which corresponds to the $R$-module $Q_\kappa(M)$ with quotient filtration. Hence the sheaf $\mc{O}^g_{\wt{M}}$ restricted to the open set $\gen(\wt{\kappa}_X) \subset \wt{R}$-grtors is isomorphic to the sheaf $\mc{O}_M$ on $R$-tors. Similarly, for $M \subset \Omega$ a glider representation, we can restrict to the degree zero part and obtain a similar result for sheaves of glider fragments over $R$-tors.\\

Finally, we discuss the noncommutative site from \cite{VoAG} in the filtered case. We give a concise survey of how this site is built. We refer the reader to loc. cit. for a detailed treatment.
 \begin{definition}\deflabel{schematic}
A noncommutative ring $R$ is said the be affine schematic if there exists a finite set of nontrivial Ore sets of $R$, say $S_1,\ldots,S_n$ such that for every choice of $s_i \in S_i, i=1,\ldots,n$, we have that $\sum_{i=1}^n Rs_i = R$ or equivalently: $\cap_i \mc{L}(S_i) = \{R\}$, in which $\mc{L}(S)$ is the Gabriel filter associated to the Ore set $S$ 
\end{definition}
In torsion theoretic language the latter means that the infimum of the kernel functors $\kappa_{S_i}$ associated to $S_i$ equals the trivial kernel functor, i.e. $\wedge_{i=1}^n \kappa_{S_i} = \kappa_1$ ($\kappa_1$ associated to the trivial Ore set $\{1\}$). 
One considers the free monoid $\mc{W}(R)$ on $\theta(R)$, the set of left Ore sets $S$ of $R$ and one introduces the category $\un{\mc{W}}(R)$ with objects the elements of $\mc{W}(R)$ and concerning morphisms: if $W = S_1\ldots S_n, W' = T_1 \ldots T_m$, then $\Hom(W',W) = \{ W' \to W\}$ or $\emptyset$ depending whether there exists a strictly increasing map $\alpha: \{1,\ldots, n\} \to \{1, \ldots, m\}$ for which $S_i = T_{\alpha(i)}$. Let $\{W_i,~ i \in I\}$ be a finite subset of objects of $\un{\mc{W}}$ such that $\cap_{i \in I} \mc{L}(W_i) = \mc{L}(\kappa_1)$, in which $\mc{L}(W) = \{L,~ w \in L {\rm~for~some~} w \in W\}$ is the associated filter of left ideals. We call such a subset a global cover of $Y = \un{\mc{W}}(R)$. Observe that the schematic property exactly states there exists such a cover. For $W \in \un{\mc{W}}$ let $\Cov(W)$ be $\{W_iW \to W,~ i \in I\}$. Then one shows that we obtain a (noncommutative) Grothendieck topology on this category.\\

Let $W=S_1\ldots S_n$ then we have associated $\mc{L}(\kappa_{S_1}\kappa_{S_2}\ldots\kappa_{S_n})$ as in the foregoing section to $W$.  By choosing $s_j$ as generators for $Rs_j$ we have that $Rs_1 \cdot Rs_2 \cdot \ldots \cdot Rs_n = Rs_1s_2\ldots s_n$, but one may take different generators, yielding different left ideals in the filter. In \cite{VoAG} however, one considered $\mc{L}(W) = \{Rs_1\ldots s_n, {\rm~with~} s_i \in S_i, i=1,\ldots, n\}$. Under some torsion freeness assumptions, we have that both filters are the same.
\begin{lemma}
Assume that $G(R)$ is a domain. We have $\mc{L}(\kappa_{S_1}\kappa_{S_2}\ldots\kappa_{S_n}) = \mc{L}(W)$.
\end{lemma}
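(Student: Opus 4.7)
My plan is to prove both inclusions directly from the paper's iterative product-of-ideals formula $I \cdot J = \sum' I x_{\alpha}$ (where $J = \sum' R x_{\alpha}$), using only the Ore property of each $S_j$, which gives $\mc{L}(\kappa_{S_j}) = \{ L : L \cap S_j \neq \emptyset \}$. The assumption that $G(R)$ is a domain plays a single role: it guarantees that any product $s_1 \cdots s_n$ with $s_j \in S_j$ is nonzero, since $\sigma(s_1) \cdots \sigma(s_n)$ is a product of nonzero homogeneous elements of the domain $G(R)$. This keeps the principal left ideals $R s_1 \cdots s_n$ generating $\mc{L}(W)$ nontrivial and makes the iterated $\cdot$-product computation below unambiguous at every stage.

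For the inclusion $\mc{L}(W) \subseteq \mc{L}(\kappa_{S_1} \cdots \kappa_{S_n})$, it suffices to verify that each generator $R s_1 \cdots s_n$ of $\mc{L}(W)$ lies in the right-hand filter. Each $R s_j$ belongs to $\mc{L}(\kappa_{S_j})$ because $s_j \in R s_j \cap S_j$. Taking $s_{j+1}$ as the single generator of $R s_{j+1}$, the product formula collapses to $R s_j \cdot R s_{j+1} = R s_j \cdot s_{j+1} = R s_j s_{j+1}$. Iterating yields $R s_1 \cdot R s_2 \cdot \cdots \cdot R s_n = R s_1 s_2 \cdots s_n$, which is by construction a distinguished element of $\mc{L}(\kappa_{S_1} \cdots \kappa_{S_n})$.

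For the reverse inclusion, take $L \in \mc{L}(\kappa_{S_1} \cdots \kappa_{S_n})$, so $L \supseteq I_1 \cdot I_2 \cdots I_n$ for some $I_j \in \mc{L}(\kappa_{S_j})$. The Ore property furnishes some $s_j \in I_j \cap S_j$, whence $R s_j \subseteq I_j$. Since $s_{j+1} \in I_{j+1}$, we may adjoin $s_{j+1}$ to a generating set of $I_{j+1}$, which shows $I_j \cdot s_{j+1} \subseteq I_j \cdot I_{j+1}$; combined with $R s_j \subseteq I_j$ this gives $R s_j s_{j+1} \subseteq I_j \cdot I_{j+1}$. Iterating across the full product, $R s_1 s_2 \cdots s_n \subseteq I_1 \cdot I_2 \cdots I_n \subseteq L$, so $L \in \mc{L}(W)$. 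The only delicate point is the iterated product bookkeeping, which is controlled cleanly by the $G(R)$-domain hypothesis as above; I do not anticipate any real obstacle beyond this.
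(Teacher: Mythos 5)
Your forward inclusion is fine and matches the paper: taking $s_j$ itself as the single generator of $Rs_j$ collapses the iterated product to $Rs_1\cdots s_n$, which is a basic element of $\mc{L}(\kappa_{S_1}\cdots\kappa_{S_n})$. The difficulty the lemma is really about is entirely in the reverse inclusion, and your argument there has a genuine gap.

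You fix $L \supseteq I_1\cdot I_2\cdots I_n$ and then write ``we may adjoin $s_{j+1}$ to a generating set of $I_{j+1}$, which shows $I_j\cdot s_{j+1}\subseteq I_j\cdot I_{j+1}$.'' But the product $I_j\cdot I_{j+1}=\sum_\alpha I_j x_\alpha$ is computed with a \emph{fixed} generating set $\{x_\alpha\}$ of $I_{j+1}$, and $L$ is only known to contain \emph{that} product. Adjoining $s_{j+1}$ as a new generator produces a strictly larger sum $\sum_\alpha I_j x_\alpha + I_j s_{j+1}$, and there is no reason $L$ should contain it. Nor is $I_j s_{j+1}\subseteq \sum_\alpha I_j x_\alpha$ automatic: writing $s_{j+1}=\sum_\alpha r_\alpha x_\alpha$, for $i\in I_j$ one gets $i\,s_{j+1}=\sum_\alpha (i r_\alpha) x_\alpha$, and $i r_\alpha$ need not lie in $I_j$ since $I_j$ is only a left ideal. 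This dependence of the $\cdot$-product on the chosen generators is precisely the phenomenon the sentence preceding the lemma in the paper warns about, and it is exactly what makes the statement nontrivial. Your use of the domain hypothesis (``keeps the products nonzero'') is not where the hypothesis is actually needed; it is needed so that two single generators of the same principal $Rs_j$ differ by a unit, which is the starting point of the paper's argument.

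The paper's proof takes a different and essentially unavoidable route: it keeps the generating set fixed and uses repeated applications of the Ore conditions to \emph{produce} elements $s_j'\in S_j$ with $Rs_1'\cdots s_n'$ contained in the given, fixed iterated product. First it handles the case of a single (possibly different) generator of each $Rs_j$, using $G(R)$ a domain to get that the change of generator is by a unit $u_j$, then pushing the $u_j^{-1}$ to the left by Ore; then it handles a multi-generator set $\{us,vs\}$ of $Rs$ by another round of Ore juggling to land back inside the fixed sum. Without some such mechanism for moving elements of $S_j$ leftward past arbitrary ring elements, the inclusion $\mc{L}(\kappa_{S_1}\cdots\kappa_{S_n})\subseteq\mc{L}(W)$ does not follow.
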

\begin{proof}
For $Rs_i \in \mc{L}(\kappa_{S_i})$ we may choose another generator, $x$ say, $Rx = Rs_i$, thus $s_i = u_i x$ and $x = v_i s_i$. This yields $s_i = u_iv_i s_i$ and by $S$-torsion freeness of $R$, $1 = u_i v_i$. Since $G(R)$ is a domain, $R$ is a domain and then $u_i$ is invertible in $R$. As a filterbasis for $\mc{L}(\kappa_{S_1}\kappa_{S_2}\ldots\kappa_{S_n})$ we may take $Rs_1u_1s_2\ldots u_{n-1}s_n$ where the $u_j$ are invertible elements in $R$. By the Ore condition for $s_1$ we have $s_1'u_1^{-1} = r_1's_1$ for some $s_1' \in S_1, r_1' \in R$. Thus $s_1' = r_1's_1u_1$ and $Rs_1's_2u_2\ldots u_{n-1}s_n = Rr_1's_1u_1s_2u_2\ldots u_{n-1}s_n$, or $Rs_1's_2u_2\ldots u_{n-1}s_n \subset Rs_1u_1s_2u_2\ldots u_{n-1}s_n$. Now $R$ satisfies the second Ore condition with respect to $S_1S_2$ so $s_1''s_2''u_2^{-1} = r_2's_1's_2$ for some $s_1'' \in S_1, s_2'' \in S_2, r_2' \in R$. Hence $s_1''s_2'' = r_2's_1's_2u_2$ and therefore:
$$Rs_1''s_2''s_3u_3\ldots u_{n-1}s_n \subset Rs_1's_2u_2s_3\ldots u_{n-1}s_n \subset Rs_1u_1s_2u_2\ldots u_{n-1}s_n.$$
We repeat this procedure until we obtain $s_i^{(n)} \in S_i$ such that $Rs_1^{(n)}\ldots s_n^{(n)} \subset Rs_1u_1s_2 \ldots u_{n-1}s_n$, proving that $Rs_1u_1s_2\ldots u_{n-1}s_n \in \mc{L}(W)$ as desired.\\
Now let us look at $Rt, Rs$ for $s,t$ in Ore sets $S$ and $T$ resp. and we look at $Rt \cdot Rs$ corresponding to writing $Rs = Rus + Rvs$. So we have $Ru + Rv = R$, say $r_1u + r_2v = 1$. By the Ore conditions there are $t',t'' \in T$ and $x,y \in R$ such that $t'u = xt$ and $t''v = yt$. Take $t_1 \in Rt' \cap Rt'' \cap Rt$ then $t_1u = x't$ and $t_1v = y' t$ for some $x', y' \in R$. Again by the Ore conditions, $t_2r_1 = r_1't_1, t_3r_2 = r_2't_1$ and we may take again $t''' \in Rt_2 \cap Rt_3$ and obtain $t'''r_1 = r_1'' t_1$ and $t''' r_2 = r_2'' t_1$ for some $r_1'', r_2'' \in R$. Now from $r_1u + r_2v = 1$ we obtain that $t''' = t''' r_1 u + t''' r_2 v = r_1''t_1 u + r_2'' t_1 v$. Thus
$$t''' s = r_1''t_1 u s + r_2'' t_1 v s.$$
Since we took $t_1 \in Rt$ we have $t''' s = r_1''' tus + r_2''' tvs$ for some $r_1''', r_2''' \in R$, i.e. $t'''s \in Rt \cdot Rs$ with respect to the chosen generators $us, uv$ for $Rs$. The extension to more Ore sets in the word considered can easily be obtained by induction since the Ore property holds with respect to every word $S_1\ldots S_n$. For example, add another Ore set $U$ and consider $Ru\cdot Rt \cdot Rs$ corresponding to writing $Rs = Ras + Rbs$ and $Rt = Rct + Rdt$. By induction there exist $t''' \in T, u''' \in U$ such that 
$$ t'''s = r_1tas + r_2tbs, \quad u''' t= r_3uct + r_4udt, \quad r_i \in R.$$
By the Ore condition we find $u_1,u_2 \in U, r_1',r_2' \in R$ such that $u_1r_1 = r_1'u'''$ and $u_2r_2 = r_2'u'''$. Take some $u_3 \in Ru_1 \cap Ru_2$, then $u_3t''' s \in Ru \cdot Rt \cdot Rs$ with respect to the chosen generators. The extension to more generators is easy but technical to write down, essentially it follows from taking some $Rs_0$ in a finite intersection $Rs_1 \cap \ldots \cap Rs_n$, we leave the details to the reader. So we have proved that $\mc{L}(W) = \mc{L}(\kappa_{S_1}\kappa_{S_2}\ldots\kappa_{S_n})$, the localization of $R$ at the Ore set generated by the Ore sets $S_1, \ldots, S_n$.\\
\end{proof}

In \cite{VoAG} one constructed the structure sheaf $\mc{O}_R$ as the functor $\un{\mc{W}}(R)^{op} \to R$-mod sending $W= S_1\ldots S_n$ to $Q_{S_n}\ldots Q_{S_1}(R)$. The $Q_{S_i}(R)$ are rings but the $Q_W(R)$ are in general not rings but nevertheless we have a multiplication defined by $Q_{S_n}\ldots Q_{S_1}(R) \times Q_{T_m}\ldots Q_{T_1}(R) \to Q_{S_n}\ldots Q_{S_1}Q_{T_m}\ldots Q_{T_1}(R)$ (recall that the $Q_{S_n}\ldots Q_{S_1}(R)$ may be seen as the $n$-fold tensor product over $R$ $Q_{S_n}(R) \ot_R \ldots \ot_R Q_{S_1}(R)$ for every $W= S_1\ldots S_n$). In fact $Q_W(R)$ is a an $R$-submodule of $Q_{S_n \vee \ldots \vee S_1}(R)$.\\

In \cite{VoAG} one also defined $\mc{O}_M$ for an $R$-module $M$ and where we called $\mc{O}_R$ a structure sheaf of rings by ``abus de language" we also call $\mc{O}_M$ a structure sheaf of modules over the noncommutative Grothendieck topology considered. Again $\mc{O}_M$ is defined as the functor $\un{\mc{W}}(R)^{op} \to R$-mod sending the object $W$ to $Q_W(M) = Q_{S_n}\ldots Q_{S_1}(M)$. So $Q_W(M)$ is not a $Q_W(R)$-module but there is for every $Q_T(R)$ a scalar multiplication $Q_T(R) \times Q_W(M) \to Q_{WT}(M)$, hence also $Q_W(R) \times Q_W(M) \to Q_{WW}(M)$. In particular $Q_W(M)$ is a $Q_{S_n}(R)$-module!

\begin{definition}
We will call a presheaf of $\mc{O}_R$-modules, say $\mc{M}$, an $\mc{O}_R$-module if there are operations $\varphi(T,W): Q_T(R) \times \mc{M}(W) \to \mc{M}(WT)$ which respect the additions and act associatively i.e. $\varphi(ST,W) = \varphi(S, WT)$.
\end{definition}

In the case where we consider a filtration on $R$ we will usually restrict to a Zariskian filtration, in fact even a positive filtration on a Noetherian ring. Then when we assume that $G(R)$ is a domain, $R$ is a Noetherian domain and it has a classical total quotient ring $Q(R)$ which is a simple Artinian ring. The multiplication considered in $\mc{O}_R$ is just obtained from the multiplication in $Q(R)$, i.e. $Q_T(R)$ and $Q_W(R)$ can be multiplied in $Q(R)$ and in particular in $Q_{TW}(R) \subset Q(R)$. Similar, a filtered $R$-module $M$ such that $G(M)$ is $G(R)$-faithful is then a submodule of $Q(R) \ot_R M$ and the scalar multiplications $Q_W(R) \times Q_T(M) \to Q_{TW}(M)$ may be viewed as carried out in $Q(R) \ot_R M$.

\begin{definition}\deflabel{she}
A filtered $\mc{O}_R$-module is a presheaf of filtered $\mc{O}_R$-modules such that the restriction morphisms in $R$-filt are strict filtered morphisms. In particular for a filtered $\mc{O}_R$-module $\mc{M}$ each $\mc{M}(S_1\ldots S_n)$ is an $FQ_{S_n}(R)$-filtered module.\\
An $\mc{O}_R$-glider $\mc{F}$ is a subpresheaf of a filtered $\mc{O}_R$-module $\mc{G}$ obtained by taking for every $W$, $\mc{F}(W) = F_0\mc{G}(W)$ (compare \defref{glid}).\\
In case $\mc{G}$ is a sheaf such that the glueing axiom holds in $F_n\mc{G}$ for every $n$ then we call $\mc{G}$ a filtered $\mc{O}_R$-module sheaf. Similarly when $\mc{F}$ is a sheaf with the glueing axiom holding with respect to $\mc{F}_n$ for every $n$, then $\mc{F}$ is said to be an $\mc{F}^+\mc{O}_R$-glider sheaf.
\end{definition}
In the remainder of this paper we restrict to a noncommutative geometry situation: $R$ is Noetherian and positively filtered by $FR$ such that $F_0R = K$ is a field, and $R$ is affine schematic (cf. \defref{schematic}). Since we aim to work with separated filtrations on the localizations of $R$ we assume moreover that $G(R)$ is a domain. In the book \cite{VoAG} there are given many examples of such schematic algebras and these algebras have moreover standard filtrations. E.g. Weyl algebras, enveloping algebras of Lie algebras, Sklyanin algebras, Witten gauge algebras, rings of differential operators on regular varieties, quantum-sl(2), many quantum groups obtained as iterated Ore extensions, etc.\\
In this case we consider $\un{\mc{W}}(R)$, the set of words in Ore sets in nontrivial Ore sets, that is $S \in \un{\mc{W}}(R)$ if $S \cap K = \{1\}$.
\begin{proposition}\proplabel{backandforth2}
In the situation considered above, we consider an $\mc{F}^+\mc{O}_R$-glider $\mc{F}$ and its filtered $\mc{O}_R$-module $\mc{G}$, such that $\mc{F}(W) = F_0\mc{G}(W)$ for all $W \in \un{\mc{W}}(R)$. Then $\mc{F}$ is an $\mc{F}^+\mc{O}_R$-glider sheaf if and only if $\mc{G}$ is a filtered $\mc{O}_R$-module sheaf.
\end{proposition}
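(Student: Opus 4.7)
The plan is to use the identification $\mc{F}(W)_n = F_{-n}\mc{G}(W)$ from \exaref{zeropart} together with multiplication by suitably chosen Ore elements to pass between glueing in degree $0$ and glueing in arbitrary degrees. One direction is essentially tautological: if every $F_n\mc{G}$ satisfies glueing on the site $\un{\mc{W}}(R)$, then in particular $F_{-n}\mc{G} = \mc{F}_n$ does for all $n \geq 0$, so $\mc{F}$ is an $\mc{F}^+\mc{O}_R$-glider sheaf.

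For the converse, suppose $\mc{F}$ is an $\mc{F}^+\mc{O}_R$-glider sheaf. We must check that $F_n\mc{G}$ is a sheaf for every $n \in \mathbb{Z}$. For $n \leq 0$ this is immediate since $F_n\mc{G} = \mc{F}_{-n}$. Assume $n \geq 1$, fix $W = S_1\ldots S_k$ with a cover $\Cov(W) = \{W_jW \to W\}$, and take a compatible family $x_j \in F_n\mc{G}(W_jW)$. Since every Ore set in $\un{\mc{W}}(R)$ satisfies $S\cap K = \{1\}$ and $G(R)$ is a domain, we can pick $s \in S_k$ with $d:= \deg(s) \geq n$, by raising an element of positive degree to a sufficiently high power; such an $s$ is invertible with $s^{-1}$ of filtration degree $-d$ in the ring acting on the relevant sections. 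Multiplying via the $\mc{O}_R$-module structure, $y_j := s^{-1} x_j \in F_{n-d}\mc{G}(W_jW) \subseteq F_0\mc{G}(W_jW) = \mc{F}(W_jW)$, and the $y_j$ are still compatible because the restriction morphisms of $\mc{G}$ are $\mc{O}_R$-linear. The glider sheaf axiom produces a unique $y \in \mc{F}(W) = F_0\mc{G}(W)$ glueing the $y_j$, and then $x := sy \in \mc{G}(W)$ satisfies $\rho_j(x) = s y_j = x_j$, which is the required glueing in $\mc{G}(W)$. Separatedness at degree $n$ is handled by the same trick: a $z \in F_n\mc{G}(W)$ with vanishing restrictions yields $s^{-1}z \in \mc{F}(W)$ with the same property, hence $s^{-1}z = 0$ by the glider sheaf axiom, and thus $z = 0$.

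The main obstacle is verifying that the glueing $x = sy$ lies in $F_n\mc{G}(W)$ rather than only in $F_d\mc{G}(W)$. This is the content of the Remark following \propref{cover} transported to the site $\un{\mc{W}}(R)$: a glueing of sections sitting in a common filtration degree $\leq n$ automatically inherits that degree. The proof runs parallel to \propref{cover}, exploiting the separatedness of the quotient filtrations on the various $\mc{G}(W_jW)$ together with the identification $G(\kappa(\Omega)) = \ov{\kappa}G(\Omega)$ available under our standing hypotheses (via \lemref{2}, \lemref{sep2} and \corref{almostcom}): one passes to the associated graded of the putative highest degree, uses that $\ov{\wedge \kappa_i}$-torsion in the graded object must vanish, and deduces that the degree of $x$ actually drops to $n$. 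Once this step is in place, $x \in F_n\mc{G}(W)$ and $\mc{G}$ is a filtered $\mc{O}_R$-module sheaf.
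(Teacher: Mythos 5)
Your proof captures the central idea — multiply by a sufficiently high-degree Ore element to descend to degree zero, glue with the glider sheaf axiom, and multiply back — which is exactly what the paper does. But there are two genuine problems.

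First, the degree-tracking step. You identify correctly that the "main obstacle" is showing $x = sy$ lands in $F_n\mc{G}(W)$ rather than just $F_{\deg(s)}\mc{G}(W)$, but you then try to re-derive this from the remark after \propref{cover}. This is both unnecessary and not straightforward. It is unnecessary because \defref{she} stipulates that an $\mc{F}^+\mc{O}_R$-glider sheaf satisfies the glueing axiom with respect to $\mc{F}_m$ for \emph{every} $m$. Since your $y_j := s^{-1}x_j$ all lie in $F_{n-\deg(s)}\mc{G}(W_jW) = \mc{F}_{\deg(s)-n}(W_jW)$, glueing in $\mc{F}_{\deg(s)-n}$ directly gives $y \in \mc{F}_{\deg(s)-n}(W) = F_{n-\deg(s)}\mc{G}(W)$, and then $x=sy \in F_n\mc{G}(W)$. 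It is not a straightforward transport because \propref{cover} and its following remark live in the $R$-tors setting where sections are $Q_{\wedge\kappa_i}(\Omega)$, whereas the site $\un{\mc{W}}(R)$ has sections $Q_W(\Omega) = Q_{S_k}\cdots Q_{S_1}(\Omega)$ for a word $W$, and these iterated localizations at left exact preradicals are not of the form $Q_{\wedge\kappa_i}$; you would have to re-establish the argument, not just cite it.

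Second, and more seriously, your argument silently assumes there is a ``last letter'' $S_k$ of $W$ whose elements can be used to lower degree. This breaks down for the trivial word $W = 1$, where $\mc{G}(1)$ carries only the $R$-module structure and no $s^{-1}$ is available. The paper treats this case by a separate argument: since $G(R)$ is a domain, $R$ has a total quotient ring $Q(R)$, all sections $\mc{G}(W)$ embed into $Q(R)\otimes_R\mc{G}(1)$, and a compatible family over a finite global cover $\{W_i\}$ must already agree inside some $\mc{G}(W)$, hence lies in $\bigcap_i\mc{G}(W_i) = \mc{G}(1)$. Without some such argument your proof of the glueing axiom for $\mc{G}$ is incomplete.
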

\begin{proof}
We only have to check the separation and glueing axiom.\\ 
1. Assuming that $\mc{G}$ is a filtered $\mc{O}_R$-module sheaf, both the separation and glueing axiom are easily verified for $\mc{F}$ (using that the glueing axiom on $\mc{G}$ holds in fact in $F_n\mc{G}$ for every $n$).\\
2. Assume that $\mc{F}$ is an $\mc{F}^+\mc{O}_R$-glider sheaf. Look at a cover $\{ W \to W_iW\}$ and suppose $x \in \mc{G}(W)$ maps to 0 in each $\mc{G}(W_iW)$. If $W = S_1\ldots S_n$ then $\mc{G}(W)$ is a $Q_{S_n}(R)$-module and for every $s_n \in S_n$ we have $s_n^{-1} \in Q_{S_n}(R)$. Since $G(R)$ is a domain $\deg(s_n^{-1}) = - \deg(s_n) < 0$ since $S \cap K = \{1\}$. So we can choose $\deg(s_n)$ high enough so that $s_n^{-1}x \in F_0\mc{G}(W)$ and $s_n^{-1}x$ still maps to zero in each $\mc{G}(W_iW)$. The separation axiom in $\mc{F}$ then yields that $s_n^{-1}x = 0$, hence $x = 0$. So $\mc{G}$ is satisfying the separation axiom.\\
Consider a finite cover $\{W \to W_iW\}$ and look at $x_i \in \mc{G}(W_iW)$ such that $x_i$ and $x_j$ are mapping to the same element in $\mc{G}(W_iWW_jW)$ for every $i$ and $j$. Since $S_n$ is the last letter of $W, W_iW, W_iWW_jW$ for all $i$ and $j$, all sections are $Q_{S_n}(R)$-modules and again we may choose $s_n \in S_n$ such that $s_n^{-1}x_i \in F_0\mc{G}(W_iW) = \mc{F}(W_iW)$, say $s_n^{-1}x_i \in \mc{F}_d(W_iW)$ for some $d \leq 0$ (i.e. $x_i \in F_{d + \deg(s_n)}\mc{G}(W_iW)$) and we may assume $d$ is smallest possible. By the glueing property of $\mc{F}$ there is a $y \in \mc{F}(W)$ such that $y$ maps to $s_n^{-1}x_i$ under each restriction morphism, in fact since the glueing property of $\mc{F}$ respects the $\mc{F}_m$, we have $y \in \mc{F}_d(W)$. Then $s_ny$ maps to $x_i$ under each restriction from $W$ to $W_iW$, and $s_ny \in F_{d + \deg(s_n)}\mc{G}(W)$.\\
We still have to check the axioms for the trivial word $W=1$. Since $G(R)$ is a domain, we have a total quotient ring $Q(R)$ and we can consider $\mc{G}(W)$ as an $R$-submodule of $Q(R) \ot_R \mc{G}(1)$. If elements $x_i \in \mc{G}(W_i)$ are such that $x_i$ and $x_j$ map to the same element in $\mc{G}(W_iW_j)$ for every $i$ and $j$, then since the cover is finite, they become equal in $\mc{G}(W) \subset Q(R) \ot_R \mc{G}(1)$ for some word $W$. This implies that the $x_i$ are in the intersection of the $\mc{G}(W_i)$. But since $\{W_i\}$ is a global cover, this intersection is just $\mc{G}(1)$. So we established the filtered glueing property of $\mc{G}$.
\end{proof}
\begin{corollary}
Let $M \subset \Omega$ be a glider representation such that $G(\Omega)$ is a faithful $G(R)$-module (assumptions on $R, G(R)$ as before). The presheaves $\mc{O}_M$ and $\mc{O}_\Omega$, $\mc{O}_M = F_0\mc{O}_\Omega$ defined before \defref{she} are both sheaves if one is.
\end{corollary}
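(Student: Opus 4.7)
The plan is to apply \propref{backandforth2} directly, once we have verified that $(\mc{O}_M,\mc{O}_\Omega)$ is an instance of the pair $(\mc{F},\mc{G})$ appearing in that proposition.

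First, I would check that $\mc{O}_\Omega$ is a filtered $\mc{O}_R$-module in the sense of \defref{she}. For each word $W=S_1\ldots S_n$ the section $Q_W(\Omega)=Q_{S_n}\cdots Q_{S_1}(\Omega)$ carries the iterated quotient filtration built in \theref{locfilt} and \propref{filttwo}; the scalar action $Q_T(R)\times Q_W(\Omega)\to Q_{WT}(\Omega)$ is compatible with the filtrations because all multiplications are performed inside $Q(R)\otimes_R\Omega$ and the quotient filtrations on the various $Q_{W'}(\Omega)$ are just restrictions of the natural filtration there. For any morphism $W'\to W$ in $\un{\mc{W}}(R)$ the restriction $Q_W(\Omega)\to Q_{W'W}(\Omega)$ is the canonical localization map associated to a word extension; by \propref{filteredmor} (and the remark following it that generalizes to words of Ore sets) this map is strict filtered. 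The standing hypotheses ($R$ Noetherian with $G(R)$ a domain, $G(\Omega)$ faithful over $G(R)$) guarantee the $\kappa$-separatedness needed at every step so that the quotient filtrations are well defined and separated (via \corref{sep} and \lemref{sep3}).

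Next, by definition $\mc{O}_M(W)=Q_W(M)=F_0Q_W(\Omega)=F_0\mc{O}_\Omega(W)$ for every word $W$, and the restriction maps are obtained by restricting the strict filtered maps of $\mc{O}_\Omega$ to degree zero; these land in degree zero because the restrictions are filtered. Hence $\mc{O}_M$ is exactly the $\mc{F}^+\mc{O}_R$-glider associated to $\mc{O}_\Omega$ in the sense of \defref{she}.

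Finally I would invoke \propref{backandforth2}: if $\mc{O}_\Omega$ is a filtered $\mc{O}_R$-module sheaf then its degree-zero sub-presheaf $\mc{O}_M$ is an $\mc{F}^+\mc{O}_R$-glider sheaf, and conversely if $\mc{O}_M$ is an $\mc{F}^+\mc{O}_R$-glider sheaf then $\mc{O}_\Omega$ is a filtered $\mc{O}_R$-module sheaf. The only delicate point—and the one I expect to be the main obstacle if unpacked in detail—is verifying that the glueing axiom of $\mc{O}_\Omega$ holds \emph{filtered degree by filtered degree} (not just as plain modules); but this is precisely what the second half of the proof of \propref{backandforth2} handles, by using invertible elements $s_n^{-1}\in Q_{S_n}(R)$ of negative degree to shift any putative section into $F_0$, applying the glider glueing, and then multiplying back by $s_n$. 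This shifting argument relies on $S\cap K=\{1\}$ for all Ore sets in $\un{\mc{W}}(R)$, which is built into our schematic setup, and on $G(R)$ being a domain so that $\deg(s_n^{-1})=-\deg(s_n)<0$. Once this is granted, the corollary follows as an immediate specialization of \propref{backandforth2}.
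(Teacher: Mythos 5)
Your proposal is correct and takes the same route the paper intends: the corollary is stated immediately after \propref{backandforth2} without its own proof precisely because it is a direct specialization of that proposition to $(\mc{F},\mc{G})=(\mc{O}_M,\mc{O}_\Omega)$. Your verification that $\mc{O}_\Omega$ is a filtered $\mc{O}_R$-module with strict filtered restriction maps (using the faithfulness of $G(\Omega)$ for $\kappa$-separatedness, \propref{filteredmor} and the strictness observations following \propref{filttwo}) and that $\mc{O}_M=F_0\mc{O}_\Omega$ by definition is exactly the bookkeeping needed to fit the hypotheses, and is appropriate to include even though the paper leaves it implicit.
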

\begin{example}
For $FR$ commutative, say with $F_0R =K$ a field,  we have the classical Zariski topology $X = \Spec R$ with structure sheaf $\mc{O}_X$. If $G(R)$ is a domain, then $\mc{FO}_X$ is a filtered sheaf. The proof of the previous proposition reduces in this case to considering a finite cover $X = \bigcup_i\mathbb{X}(f_i)$, with nontrivial Ore sets $<f_i >$. For any $g \in R$, we have a cover $\mathbb{X}(g) = \cup_i\mathbb{X}(gf_i)$. Since $\mathbb{X}(gf_i) \subset \mathbb{X}(g)$ (if and only if $gf_i \in \sqrt{(g)}$) implies that $\Omega_{gf_i}$ is an $R_g$-module, we can lower the degree of $x_i \in \Omega_{gf_i}$ by multiplying with $g^{-1}$. 
\end{example}
\begin{definition}
A sheaf $\mc{F} = \mc{F}_0\mc{G}$ of $\mc{O}_R$-glider representations is quasi-coherent if there exists a global cover of Ore sets $\{T_i, i \in I\}$ together with $F^+Q_{T_i}(R)$ glider representations $M_i \subset \Omega_i$ such that for any morphism $V \to W$ in $\un{\mc{W}}(R)$ we obtain a commutative diagram of filtered morphisms:
\begin{equation} \label{dia}
\xymatrix{ & \mc{G}(T_iW) \ar[rr] \ar[dd] &&\mc{G}(T_iV) \ar[dd] \\
\mc{F}(T_iW) \ar@{^{(}->}[ru]  \ar[dd] \ar[rr] && \mc{F}(T_iV) \ar@{^{(}->}[ru]  \ar[dd] &\\
& Q_W(\Omega_i) \ar[rr] && Q_V(\Omega_i) \\
Q_W(M_i) \ar@{^{(}->}[ru]  \ar[rr] && Q_V(M_i) \ar@{^{(}->}[ru]  &}
\end{equation}
in which the vertical maps in the front plane are isomorphisms of glider representations and $\mc{G}$ is a strict sheaf in the sense that the restriction morphisms are strict filtered maps.
\end{definition}
Observe that by \propref{backandforth2} an isomorphism of sheaves between $\mc{F}(T_i -)$ and $\mc{O}_{M_i}$ implies an isomorphism of sheaves between $\mc{G}(T_i -)$ and $\mc{O}_{\Omega_i}$. Hence, the vertical maps in the back plane are strict isomorphisms of filtered $FQ_{WT_i}(R)$-modules.  Serre's global section theorem  for quasi-coherent sheaves of $\mc{O}_R$-modules (see \cite[Theorem 2.1.4]{VoAG}) remains valid in the glider case:
\begin{theorem}
Let $\mc{F} \subset \mc{G}$ be a quasi-coherent sheaf of $\mc{O}_R$ glider representations with $G\mc{F}(W)$ faithful over $G(R)$ for all $W$.  If $\mc{F}(1)$ denotes the global section $F^+R$-glider representation then $\mc{F}$ is sheaf isomorphic to the structure sheaf $\mc{O}_{\mc{F}(1)}$.
\end{theorem}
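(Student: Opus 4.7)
The plan is to reduce the glider statement to the module version of Serre's global section theorem already available in \cite[Theorem 2.1.4]{VoAG}, applied to the envelope sheaf $\mc{G}$, and then to take degree zero parts. The faithfulness assumption $G\mc{F}(W)$ faithful over $G(R)$ for all $W$ guarantees that $\mc{F}(W) \subset \mc{G}(W)$ sits inside a filtered module whose quotient filtrations at every word of Ore sets are well-behaved and separated (via \lemref{sep2} and \corref{sep}), so that the identification $\mc{F}(W) = F_0 \mc{G}(W)$ coexists well with all the localization functors involved.

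First, I would observe that $\mc{G}$ itself is a quasi-coherent sheaf of filtered $\mc{O}_R$-modules: the back plane of the defining diagram of quasi-coherence for $\mc{F}$ provides, for every $i$, a strict filtered isomorphism between $\mc{G}(T_i W)$ and $Q_W(\Omega_i)$ compatible with restrictions in $\un{\mc{W}}(R)$. Moreover, \propref{backandforth2} guarantees that $\mc{G}$ is a filtered $\mc{O}_R$-module sheaf, since $\mc{F} = F_0 \mc{G}$ is a sheaf by hypothesis.

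Next, I would invoke the module version of Serre's global section theorem on $\mc{G}$. Since $\mc{G}$ is quasi-coherent with local data $\Omega_i$ on the global cover $\{T_i\}$ and is a strict sheaf, the theorem of \cite[Theorem 2.1.4]{VoAG} applies and yields a sheaf isomorphism
\[
 \Phi : \mc{G} \stackrel{\sim}{\longrightarrow} \mc{O}_{\mc{G}(1)},
\]
i.e.\ $\mc{G}(W) \cong Q_{S_n}\cdots Q_{S_1}(\mc{G}(1))$ for each $W = S_1\cdots S_n$. Because all restriction morphisms in $\mc{G}$ are strict filtered and because the quotient filtration on each $Q_W(\mc{G}(1))$ is uniquely determined by the filtration on $\mc{G}(1)$ via \theref{locfilt} (and iteratively by \propref{filttwo} and the discussion of words of Ore sets following it), the isomorphism $\Phi$ is automatically a strict filtered isomorphism of filtered $\mc{O}_R$-module sheaves. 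This is the step I expect to be the main technical obstacle: one must check that the ``reconstruction from global sections'' map in the module-theoretic Serre theorem is compatible with the filtrations, and this boils down to the compatibility $Q_W(\mc{G}(1))$-restrictions already built into diagram \eqref{dia} together with the faithfulness hypothesis on $G\mc{F}(W)$ (and hence on $G\mc{G}(W)$) to control separatedness at every composite localization.

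Finally I would take degree zero parts over every object $W \in \un{\mc{W}}(R)$. By hypothesis $\mc{F}(W) = F_0 \mc{G}(W)$, and by \exaref{glider}, extended inductively to composite words exactly as in the construction of $\mc{O}_M$ before \defref{she}, one has
\[
 F_0 Q_{S_n}\cdots Q_{S_1}(\mc{G}(1)) \;=\; Q_{S_n}\cdots Q_{S_1}(F_0 \mc{G}(1)) \;=\; Q_W(\mc{F}(1)) \;=\; \mc{O}_{\mc{F}(1)}(W).
\]
Applying $F_0$ to $\Phi$ thus gives an isomorphism $\mc{F}(W) \cong \mc{O}_{\mc{F}(1)}(W)$ of $F^+Q_{S_n}(R)$-glider representations, functorial in $W$ because $\Phi$ itself is a sheaf morphism and all restriction maps in sight are strict filtered. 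This produces the desired sheaf isomorphism $\mc{F} \cong \mc{O}_{\mc{F}(1)}$, and \propref{backandforth2} confirms that the resulting presheaf $\mc{O}_{\mc{F}(1)}$ is indeed an $\mc{F}^+\mc{O}_R$-glider sheaf.
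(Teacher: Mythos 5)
Your high-level plan agrees with the paper's: reduce to the module-theoretic Serre global section theorem applied to the envelope sheaf $\mc{G}$, upgrade the resulting module isomorphism to a strict filtered isomorphism, then take degree zero parts. You also correctly flag the main technical obstacle. However, at that key step you do not actually supply the argument: you assert that the isomorphism $\Phi$ is ``automatically'' strict filtered because the quotient filtration on $Q_W(\mc{G}(1))$ is uniquely determined by $\mc{G}(1)$, but this is precisely what is in question. The module-theoretic theorem only gives you an isomorphism of $R$-modules $\mc{G}(W) \cong Q_W(\mc{G}(1))$; the source carries its own filtration $f\mc{G}$ (coming from the filtered $\mc{O}_R$-module structure), the target carries the quotient filtration $F$, and there is no general principle forcing these two filtrations to coincide. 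The uniqueness of the quotient filtration determines $F$, not $f$, so your reasoning is circular as written.

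The paper fills this gap with a concrete two-sided comparison, and this is the content you are missing. For the inclusion $f_m\mc{G}(W) \subset F_m\mc{G}(W)$: given $x \in f_m\mc{G}(W)$ with $W = S_1\ldots S_n$, choose $s_i \in S_i$ with $s_1\cdots s_n x \in \mc{G}(1)$. Since $\mc{G}(W)$ is a filtered $FR$-module, $s_1\cdots s_n x \in f_{m+d}\mc{G}(W)$ with $d = \deg(s_1\cdots s_n)$, and strictness of $\mc{G}(1) \to \mc{G}(W)$ then gives $s_1\cdots s_n x \in f_{m+d}\mc{G}(1) = F_{m+d}\mc{G}(1)$ (the two filtrations trivially agree on the global sections $W = 1$). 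By the very definition of the quotient filtration, this means $x \in F_m\mc{O}_{\mc{G}(1)}(W)$. For the reverse inclusion $F_m\mc{G}(W) \subset f_m\mc{G}(W)$, you need the faithfulness hypothesis on $G\mc{F}(W)$ (hence on $G\mc{G}(W)$): it ensures that multiplication by $s_1\cdots s_n$ does not lower the $f$-degree, so if $s_1\cdots s_n x \in F_{m+d}\mc{G}(1) = f_{m+d}\mc{G}(1)$ then $x \in f_m\mc{G}(W)$. Without this explicit argument the proposal does not establish the theorem; I recommend you insert it at the place where you currently claim the strict filtered compatibility is automatic.
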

\begin{proof}
\propref{backandforth2} shows that $\mc{F}$ being a sheaf is equivalent to $\mc{G}$ being a sheaf of filtered $\mc{O}_R$-modules. By the global section theorem for quasi-coherent sheaves of modules (cf. \cite{VoWi}), we have that $\mc{G} \cong \mc{O}_{\mc{G}(1)}$. The latter sheaf is a sheaf of filtered $\mc{O}_R$-modules by the quotient filtration. Denote this filtration by $F\mc{G}$. The original filtration on $\mc{G}$ is denoted by $f\mc{G}$. Since $\mc{G}$ is a strict sheaf $\mc{G}(1) \to \mc{G}(W) = Q_W(\mc{G}(1))$ is strict filtered for every $W \in \un{\mc{W}}(R)$. Let $x \in f_m\mc{G}(W)$, say $W = S_1\ldots S_n$. There are $s_i \in S_i$ such that $s_1\ldots s_n x \in \mc{G}(1)$. Moreover, since $\mc{G}$ is a sheaf of filtered $\mc{O}_R$-modules, $\mc{G}(W)$ is in particular a filtered $FR$-module and therefore $s_1\ldots s_n x \in f_{m+d}\mc{G}(W)$ where $d = \deg(s_1\ldots s_n)$. Hence $s_1\ldots s_n x \in f_{m+d}\mc{G}(W) \cap \mc{G}(1) = f_{m+d}\mc{G}(1)$ by strictness. For $W =1$, the quotient filtration corresponds with the original filtration: $F\mc{O}_{\mc{G}(1)}(1) = F\mc{G}(1) = f\mc{G}(1)$. Hence $s_1\ldots s_n x \in F_{m +d}\mc{G}(1)$ and by definition of the quotient filtration this means that $x \in F_{m}\mc{O}_{\mc{G}(1)}(W)$. This shows that $f_m \mc{G}(W) \subset F_m\mc{G}(W)$. By the faithfulness assumption, multiplying by $s_1\ldots s_n$ does not lower the degree for $f\mc{G}$. Hence the converse $F_m\mc{G}(W) \subset f_m \mc{G}(W)$ is obvious. We conclude that $\mc{G} \cong \mc{O}_{\mc{G}(1)}$ is an isomorphism of filtered $\mc{O}_R$-modules. Reducing to the degree 0 part, yields $\mc{F} \cong \mc{O}_{\mc{F}(1)}$ as desired.
\end{proof}

As we already mentioned in the introduction, we intend to apply these developed techniques a.o. for rings of differential operators (see remarks before \theref{locfilt}) and in classical algebraic geometry. For the latter, we lay out a few examples here.
\begin{example}
Consider the normalization of the cusp $C: X^3 = Y^2$, given by the embedding of coordinate rings 
$$\mathbb{C}[X^2,X^3] \subset \mathbb{C}[X],$$
where the standard filtration from the introduction is considered by choosing $X$ as an $\mathbb{C}[X^2,X^3]$-ring generator for $\mathbb{C}[X]$. For this filtration the associated graded is isomorphic to $\mathbb{C}[X^2,X^3][\epsilon]/(\epsilon^2)$. We have a glider representation 
$$\Omega = \mathbb{C}[X] \supset M = \mathbb{C}[X^2,X^3] \supset \mathbb{C}X^2 + (X^4) \supset (X^5) \supset \ldots$$
and one easily sees that the ideal generated by $\epsilon$ sits in $\Ann(M)$. If we localize the glider at $S = S_X$, then we obtain a glider inside $\mathbb{C}[X,X^{-1}]$ which has non-zero body. Indeed for any $n \geq 2$ and $\gamma \geq 0$ we have that $F_\gamma (X^n)X^4 = (X^{4+n}) \subset F_{-n - 1}\Omega \subset F_{\gamma -n - 1}\Omega$, so $X^4 \in F_{-n-1}(S_X^{-1}\Omega$), so $X^4$ sits in the body.
\end{example}
\begin{example}
Let $K$ be an algebraically closed field. Consider 
$$V = \Spec~ K[X,Y,T]/(XY - T) \quad  {\rm and} \quad W =\Spec~ K[T].$$ The natural morphism between both algebras gives an inclusion $i: W \to V$. If we choose $\ov{X},\ov{Y}$ to be $K[W]$-ring generators for $K[V]$, we have a standard filtration on $K[V]$ with associated graded $G(K[V]) = K[T][\ov{X},\ov{Y}]/(\ov{X}\ov{Y})$. Consider the glider representation 
$$ \Omega = K[V] \supset M = K[T,\ov{Y}] \supset (\ov{Y}) \supset (\ov{Y})^2 \supset \ldots$$\
which is bodyless. Its associated graded is 
$$G(M) = K[T][\ov{Y}],$$
with $\deg(T) = 0, \deg(\ov{Y}) = -1$. We have that $\Ann(M) = \gen(\kappa_{S_{\ov{Y}}})$, the torsion theories that contain $\ov{Y}$. Hence the strong characteristic variety is a proper subset of $V$. On the Zariski open $X(\ov{Y})$, we have that $\wt{M}(X(\ov{Y})) = Q_{\kappa_{S_{\ov{Y}}}}(M)$ has non-zero body. Indeed, it contains $K[\ov{Y}]$. Obviously, we can interchange the role of $X$ and $Y$.
\end{example}
The previous examples indicates a link with singularities or smoothness of morphisms between schemes. Recall that a morphism $f: X \to Y$ of schemes of finite type over some field $k$ is smooth if for every closed point $x \in X$, the induced map on the Zariski tangent spaces $T_f = (df)^*: T_x \to T_{f(x)}$ is surjective. 
\begin{proposition}
Let $K$ be algebraically closed and let $K[W] \subset K[V]$ be an embedding of coordinate rings corresponding to a dominant polynomial map $f: W \to V$, where $W$ and $V$ are embedded in $\mathbb{A}^m$, resp. $\mathbb{A}^n$ with $m < n$. If $f$ is not smooth, there exists a standard filtration on $K[V]$ with degree zero part $K[W]$ and a glider $M \subset K[V]$ with proper strong characteristic variety.
\end{proposition}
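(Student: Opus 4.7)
The plan is to generalize the preceding example ($V : XY = T$ in $\mathbb{A}^3$, $W = \mathbb{A}^1$) by using the failure of smoothness of $f$ to produce a zero divisor in $G(K[V])$ and then constructing a glider from it. First I would choose a finite set $b_1, \ldots, b_d \in K[V]$ of $K[W]$-ring generators (which exist since $V$ is of finite type over $K[W]$) and use them to define the standard filtration with $F_0 K[V] = K[W]$ as in \exaref{coordinate}(ii). Writing $K[V] = K[W][y_1, \ldots, y_d]/I$ with $\deg y_i = 1$, the associated graded is $G(K[V]) = K[W][y_1, \ldots, y_d]/\gr(I)$, where $\gr(I)$ is generated by the top $y$-degree parts of elements of $I$.

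The key step is to translate non-smoothness into an algebraic relation. At a closed point $p \in V$ where $f$ fails to be smooth, the cotangent map $f^\ast : \mathfrak{m}_{f(p)}/\mathfrak{m}_{f(p)}^2 \to \mathfrak{m}_p/\mathfrak{m}_p^2$ has a nonzero kernel, so there exists $t \in K[W]$, nonzero modulo $\mathfrak{m}_{f(p)}^2$, with $t \in \mathfrak{m}_p^2 \subset K[V]$. Centering at $p$ and expanding $t$ as a $K[W]$-polynomial in the $b_i$, the higher $y$-degree part must be a nonzero element of $I$ and contributes a nontrivial homogeneous form to $\gr(I)$. After a linear change of the $b_i$'s aligned with the tangent cone of $V$ at $p$, this form can be arranged to factor as $\bar b_i \cdot \bar q$ with $\bar q \neq 0$ in $G(K[V])$, mirroring the zero divisor $\bar X \bar Y = 0$ of the preceding example.

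With such a zero divisor in hand, I would pick an index $j$ so that $\bar b_j$ appears in $\bar q$ and set
\[
M = K[W][b_j] \subset \Omega = K[V], \qquad M_n = (b_j)^n \cdot K[W][b_j],
\]
as in the preceding example. The fragment axioms follow from $F_k K[V] \cdot M_n \subset M_{n-k}$, and $G(M) \subset K[W][\bar b_j]$ contains a nonzero element (essentially $\bar q$ or a suitable multiple) annihilated by $\bar b_i$. Since $G(K[V])$ is commutative, the multiplicative set $S_{\bar b_i} = \{\bar b_i^n\}_{n \geq 0}$ is an Ore set, and the left ideal $G(K[V])\bar b_i$ lies in $\mc{L}(\kappa_{S_{\bar b_i}}) \cap \Ann(M)$. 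Hence $\kappa_{S_{\bar b_i}} \notin \xi(M)$, so $\xi(M)$ is a proper gen-closed subset of $G(K[V])$-tors.

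The hard part will be the middle step: bootstrapping the purely local failure of smoothness at $p$ into a global element of $\gr(I)$ whose leading form factors as a zero divisor. The hypothesis $m < n$ guarantees that there are generators $b_i$ not already pulled back from $K[W]$, providing the combinatorial room for a useful linear change of coordinates; but one must verify that this change respects the $K[W]$-algebra structure and that the resulting factorization remains nontrivial after passing from the local ring at $p$ back to $K[V]$.
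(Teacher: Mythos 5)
Your proposal follows the same broad outline as the paper's proof — both start by translating non-smoothness at a point $p$ into the existence of an element $t\in K[W]$ with nonzero class in $\mathfrak{m}_{f(p)}/\mathfrak{m}_{f(p)}^2$ but lying in $\mathfrak{m}_p^2 \subset K[V]$, both aim to produce a zero divisor in $G(K[V])$, and both build a glider $M$ of the form $K[W][\ldots][f]\supset (f)\supset (f)^2\supset\ldots$ whose strong characteristic variety misses the torsion theory attached to the zero divisor. However, the middle step is handled quite differently, and this is where your plan has a real gap you yourself flag.

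The paper does not fix the generators of $K[V]$ over $K[W]$ in advance. Instead, it first writes the $K[W]$-element $\ov{X_1} = t$ as a product $fg$ in $K[V]$ with $f,g$ of zero constant term (this is where $t\in\mathfrak{m}_p^2$ is used), and only \emph{then} chooses $f$ and $g$ as two of the $K[W]$-ring generators defining the standard filtration. With that choice, the zero-divisor relation $\sigma(f)\sigma(g)=0$ in $G(K[V])$ is immediate: $fg=t$ lives in degree $0$, while $f,g$ are declared to have degree $1$. There is no need to pass through $\gr(I)$, no linear change of coordinates, and no appeal to the local tangent cone. Your plan, by contrast, fixes an arbitrary generating set $b_1,\ldots,b_d$ first and then tries to extract a factorable leading form from $\gr(I)$ after a linear change aligned with the tangent cone at $p$. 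Two issues make this step problematic: (i) the homogeneous form you get need not factor — over an algebraically closed field a homogeneous quadratic of rank $\geq 3$ is irreducible, and the rank can be high — so "can be arranged to factor" does not follow from the non-smoothness hypothesis alone; and (ii) the tangent cone of $V$ at $p$ is governed by the $\mathfrak{m}_p$-adic filtration on the local ring, which is not the same as the $K[W]$-degree filtration on $K[V]$ that underlies $G(K[V])$ and $\gr(I)$, so "aligning with the tangent cone" does not directly act on the generators of the standard filtration. Choosing the filtration \emph{after} finding the factorization, as the paper does, sidesteps both problems. Once the zero divisor is in hand, the remainder of your argument (building $M$, showing $\Ann(M)$ contains the corresponding principal ideal, concluding $\xi(M)$ is a proper gen-closed subset) matches the paper's.
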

\begin{proof}
Let $X_1,\ldots, X_m$ be the variables in $\mathbb{A}^m$ and extend them to $X_1,\ldots,X_n$ for $\mathbb{A}^n$. We may assume that $f$ is not smooth in the origin, so the map 
$$df : \frac{P \cap K[W]}{(P\cap K[W])^2} \to \frac{P}{P^2}$$
is not injective, where $P = (\ov{X_1},\ldots,\ov{X_n})$. W.l.o.g. we may assume that $\ov{X_1} \in P \cap K[W]$ is such that $\ov{X_1} \in P^2 \setminus K[W]$. Hence, we can write $\ov{X_1} = f(\un{X})g(\un{X})$, where $f,g$ are two polynomials with zero constant coefficient. Write $K[V] = K[W][f,g, a_1,\ldots, a_k]$ as a ring extension and use these generators to define a standard filtration on $K[V]$. We have a glider representation
$$K[V] \supset M = K[W][a_1,\ldots,a_k][f] \supset (f) \supset (f)^2 \supset \ldots$$
The associated graded is $G(M) = K[W][a_1,\ldots,a_k][\ov{f}]$, where $\deg(\ov{f}) = -1$. It is clear that the torsion theory $\ov{S}_{\ov{f}} \notin \xi(M)$ and we are done.
\end{proof}

However, the above (standard) filtrations appearing in algebraic geometry are not the only interesting ones. Admittedly, we laid out the sheaf theory for almost commutative rings, such that all filtered localizations were separated and induced the original filtration. This is however not necessary. Indeed, on the one hand we already discussed the existence of the strong characteristic variety related to separatedness of the filtered localization. On the other hand, there are interesting situations where a filtered localization does exist but is not inducing. Of course, the situations we have in mind are the second type of filtrations mentioned in the introduction and hinted at in \exaref{coordinate}. We end the paper by briefly looking at this second type of filtrations.\\

Let $R$ be a ring with given $\mathbb{Z}$-filtration 
$$ \ldots \subset 0 \subset 0 \subset S = F_0R \subset R_1 = F_1R \subset \ldots \subset R = F_nR \subset R \subset R \subset \ldots$$
where each $R_i$ is a subring of $R$. Consider a kernel functor $\kappa$ on $R$-mod and write $\pi: R \to \ov{R} = R/\kappa(R)$ for the canonical epimorphism, putting $F_i\ov{R} = \pi(R_i)$. Since the associated graded $G(R)$ is certainly not a domain, we can't hope for $\kappa$-separatedness. Nonetheless, we can still define a filtration on the localization
$$F_dQ_\kappa(R) = \{ x \in Q_\kappa(R), \exists I \in \mc{L}(\kappa) {\rm~such~that~} F_\gamma I x \subset F_{\gamma + d} \ov{R}, ~\forall \gamma \in \mathbb{Z}\}.$$
\begin{proposition}\proplabel{toren}
With notations as above: $FQ_\kappa(R)$ is a separated (exhaustive) filtration of the ring $Q_\kappa(R)$ making the canonical $j_\kappa: R \to Q_\kappa(R)$ into a filtered morphism, which need not be strict.
\end{proposition}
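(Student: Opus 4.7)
The plan is to adapt the proof of \theref{locfilt} (and its generalization \propref{filttwo}), noting what survives without $\kappa$-separatedness in this tower setting. The key simplification here is that $FR$ stabilizes at both ends: $F_\gamma R = 0$ for $\gamma < 0$ and $F_\gamma R = R$ for $\gamma \geq n$. This replaces the role played previously by Zariskian hypotheses, because it forces the defining condition $F_\gamma I x \subset F_{\gamma+d}\ov{R}$ to carry content only in the finite window $0 \leq \gamma < n-d$; at the extremes the condition is automatic.

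First, I would verify that $F_dQ_\kappa(R)$ defines an ascending filtration by additive subgroups and that $F_dQ_\kappa(R) \cdot F_eQ_\kappa(R) \subset F_{d+e}Q_\kappa(R)$. The inclusion $F_d \subset F_{d+1}$ is immediate from $F_{\gamma+d}\ov{R} \subset F_{\gamma+d+1}\ov{R}$, and for sums the intersection $I_1 \cap I_2 \in \mc{L}(\kappa)$ witnesses $x+y$. For products I reproduce the trick from \theref{locfilt}: given $x \in F_dQ_\kappa(R), y \in F_eQ_\kappa(R)$ with witnesses $I_1, I_2 \in \mc{L}(\kappa)$, set $J = (I_2:x) \in \mc{L}(\kappa)$ and $K = J \cap I_1$; then for $\beta \in F_\gamma K$ we have $\beta x \in F_{\gamma+d}\ov{R} \cap I_2 = F_{\gamma+d}I_2$ (induced filtration), hence $\beta xy \in F_{\gamma+d}I_2 \cdot y \subset F_{\gamma+d+e}\ov{R}$, so $K$ witnesses $xy \in F_{d+e}Q_\kappa(R)$.

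Exhaustivity follows because every $x \in Q_\kappa(R)$ admits some $I \in \mc{L}(\kappa)$ with $Ix \subset \ov{R} = F_n\ov{R}$: for any $d \geq n$ and any $\gamma$, either $\gamma < 0$ and $F_\gamma I = 0$, or $\gamma \geq 0$ and $F_\gamma I x \subset Ix \subset F_n\ov{R} \subset F_{\gamma+d}\ov{R}$. Separatedness is where the tower hypothesis is decisive: if $x \in \bigcap_d F_dQ_\kappa(R)$ then for each $d < -n$ pick $I_d \in \mc{L}(\kappa)$ witnessing $x \in F_dQ_\kappa(R)$; taking $\gamma = n$ we get $I_d x = F_n I_d \cdot x \subset F_{n+d}\ov{R} = 0$, so $x \in \kappa(Q_\kappa(R)) = 0$ by $\kappa$-torsion freeness of the localization.

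Finally, $j_\kappa$ is a filtered ring morphism: for $r \in F_n R$, taking $I = R$ we have $F_\gamma R \cdot r \subset F_{\gamma+n}R$ by the filtration axiom on $R$, so $F_\gamma R \cdot j_\kappa(r) \subset F_{\gamma+n}\ov{R}$, giving $j_\kappa(r) \in F_nQ_\kappa(R)$; and $j_\kappa(1) = 1 \in F_0Q_\kappa(R)$ for the same reason. The main obstacle, and the only substantive difference from \theref{locfilt}, is that strictness can genuinely fail: strictness would require $j_\kappa^{-1}(F_nQ_\kappa(R)) \cap R = F_nR$, but without $\kappa$-separatedness some $r \in R \setminus F_nR$ may satisfy $F_\gamma I r \subset F_{\gamma+n}R + \kappa(R)$ for all $\gamma$ and some $I \in \mc{L}(\kappa)$, so that $\pi(r) \in F_nQ_\kappa(R)$ while $r \notin F_nR$. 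In \theref{locfilt} the minimality argument for the filtration degree used $\kappa$-separatedness to exclude precisely this phenomenon; dropping the hypothesis is exactly what costs us strictness.
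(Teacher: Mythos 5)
Your proof is correct and follows essentially the same route as the paper's: the same $(I_2:x)$--trick for multiplicativity, the same use of $F_{-1}\ov{R}=0$ with $\gamma = n$ to force $Iz = 0$ for separatedness, and the same observation that the tower's boundedness replaces the Zariskian machinery. The only substantive difference is one of emphasis: the paper spells out in detail why $(\ov{J}:y) \in \mc{L}(\kappa)$ (via the radical property of $\kappa$ applied to the inclusion $(\ov{J}:y) \subset I_1$), which you assert without proof, while you in turn fill in the exhaustivity check that the paper dismisses as "easily checked."
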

\begin{proof}
We only have to show that for $y \in F_pQ_\kappa(R), z \in F_qQ_\kappa(R)$, we have that $yz \in F_{p+q}Q_\kappa(R)$, the other properties of an exhaustive filtration are easily checked. Say $I \in \mc{L}(\kappa)$ is such that $F_\gamma I y \subset F_{\gamma + p}\ov{R}$ for all $\gamma \in \mathbb{Z}$, $J \in \mc{L}(\kappa)$ is such that $F_\mu J z \subset F_{\mu + q}\ov{R}$ for all $\mu \in \mathbb{Z}$. Let $I_1 \in \mc{L}(\kappa)$ be the set $\{ x \in R, ~xy \in \ov{R}\}$. Now look at $(\ov{J}:y) = \{ x \in R,~xy \in \ov{J}\}$, then $(\ov{J}:y) \subset I_1$ and for every $i \in I_1$ we have that $(J:iy) = \{r \in R,~ riy \in \ov{J}\} \in \mc{L}(\kappa)$. We also have that $(J:iy)iy \subset \ov{J}$ hence $(J:iy)i \subset (\ov{J}:y)$, or $(J:iy) \subset ((\ov{J}:y):i)$. By idempotency of $\kappa$ it follows that $(\ov{J}:y) \in \mc{L}(\kappa)$ and hence $H = (\ov{J}:y) \cap I \in \mc{L}(\kappa)$. $F_\gamma H \subset F_\gamma I$ for all $\gamma$ thus $F_\gamma H y \subset F_{\gamma + p} \ov{J}$ and thus 
$$F_\gamma H yz \subset F_{\gamma + p} \ov{J} z \subset F_{\gamma + p + q} \ov{R}.$$
In other words $yz \in F_{p+q}Q_\kappa(R)$.\\
It remains to verify that $FQ_\kappa(R)$ is separated, so assume that $z \in \cap_d F_dQ_\kappa(R)$. This means that for any $d$ there is an $I \in \mc{L}(\kappa)$ such that $F_\gamma I z \subset F_{\gamma +d} \ov{R}$, in particular for $d < - n$. We have for $\gamma =n$ that $I z \subset F_{ n +d} \ov{R} \subset F_{-1}\ov{R} = 0$ with $I \in \mc{L}(\kappa)$. But $z \in Q_\kappa(R)$ then entails that $z =0$ and $FQ_\kappa(R)$ is indeed separated.
\end{proof}
For the localization morphism $R \to Q_\kappa(R)$ to be strict we would have to have that $\ov{R} \cap F_nQ_\kappa(R) = F_n \ov{R}$; however $F_\gamma I x \subset F_{\gamma + n} \ov{R}$ is possible for $x \notin F_n\ov{R}$. For example, take $ x \in F_{n+1}\ov{R} - F_n\ov{R}$ (such $n$ exists by separability) with $\max\{\gamma,n+1\} < \gamma +n$, then \newline $F_\gamma I x \subset F_{\max\{ \gamma, n +1\}}\ov{R} \subset F_{\gamma + n}\ov{R}$ with $x \notin F_n \ov{R}$.\\
In the above situation we now consider an $R$-module $M$ with discrete filtration of finite length $m+n$ say, starting at $-m$ and reaching $M$ at $F_nM = M$. Then $\ov{M} = M /\kappa(M)$ is discrete too and hence $F\ov{M}$ is separated as the induced filtration by $FM$. Define
$$F_dQ_\kappa(M) = \{ q \in Q_\kappa(M), \exists I \in \mc{L}(\kappa) {\rm~such~that~} F_{\gamma}I q \subset F_{\gamma + d} \ov{M}\}.$$
\begin{proposition}
With notation as above: $FQ_\kappa(M)$ is a separated (exhaustive) filtration and $Q_\kappa(M)$ is a filtered $Q_\kappa(R)$=module with respect to the quotient filtration on $Q_\kappa(R)$ as in \propref{toren}.
\end{proposition}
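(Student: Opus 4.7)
My plan is to mirror the proof of \propref{toren} essentially line by line, with the ring $R$ replaced by the module $M$ at the appropriate places, exploiting the two discreteness hypotheses: $FR$ is finite (so $F_nR = R$ and $F_{-1}R = 0$) and $FM$ is of finite length $m+n$ (so $F_{-m-1}M=0$ and $F_nM = M$). First I would verify that each $F_dQ_\kappa(M)$ is an additive subgroup and that they form an ascending exhaustive chain containing the image of $j_\kappa$: closure under addition follows from $I \cap I' \in \mc{L}(\kappa)$, and exhaustivity is automatic because for any $z \in Q_\kappa(M)$ and any $I \in \mc{L}(\kappa)$ with $Iz \subset \ov{M}$, one has $F_\gamma I z \subset \ov{M} = F_n\ov{M}$ for $\gamma \geq 0$ and $F_\gamma I = 0$ for $\gamma < 0$, so $z \in F_nQ_\kappa(M)$.

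For the module structure, let $y \in F_pQ_\kappa(R)$ and $z \in F_qQ_\kappa(M)$, and pick $I, J \in \mc{L}(\kappa)$ with $F_\gamma I y \subset F_{\gamma+p}\ov{R}$ and $F_\mu J z \subset F_{\mu+q}\ov{M}$ for all $\gamma, \mu$. Exactly as in \propref{toren} I would set $H = (\ov{J}:y) \cap I$: the ideal $(\ov{J}:y)$ lies in $\mc{L}(\kappa)$ by the idempotency argument there, which proceeded via the inclusion $(J:iy)\subset((\ov{J}:y):i)$ for $i$ in a suitable auxiliary ideal of $\mc{L}(\kappa)$ and used only the filter axioms and radicality of $\kappa$. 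Hence $H \in \mc{L}(\kappa)$, $F_\gamma H y \subset F_{\gamma+p}\ov{J}$, and applying $z$ on the right yields $F_\gamma H (yz) \subset F_{\gamma+p}\ov{J}\cdot z \subset F_{\gamma+p+q}\ov{M}$, so $yz \in F_{p+q}Q_\kappa(M)$.

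For separatedness, suppose $z \in \bigcap_d F_dQ_\kappa(M)$ and choose $d$ small enough that $n+d \leq -m-1$. By definition there is an $I \in \mc{L}(\kappa)$ with $F_\gamma I z \subset F_{\gamma+d}\ov{M}$ for every $\gamma$. Taking $\gamma = n$ and using $F_nR = R$ (so $F_nI = I$), one has $Iz \subset F_{n+d}\ov{M} \subset F_{-m-1}\ov{M}=0$. Since $Q_\kappa(M)$ is $\kappa$-torsion free by the very construction of the localization, $z=0$.

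The main (mild) obstacle, as in \propref{toren}, is producing $(\ov{J}:y) \in \mc{L}(\kappa)$; but this is a purely filter-theoretic idempotency argument that does not involve $M$ at all and may be imported verbatim. Everything else is a direct transcription from the ring case, the finite-length hypothesis on $FM$ playing the role that the finiteness of $FR$ played in the separatedness step of \propref{toren}.
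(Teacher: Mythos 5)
Your proof is correct and follows essentially the same route as the paper: the paper's own proof consists of exactly the separatedness argument you give (taking $d = -n-m-1$, $\gamma = n$, and using $F_nR=R$, $F_{-m-1}\ov{M}=0$, plus $\kappa$-torsion-freeness of $Q_\kappa(M)$), and it explicitly delegates the remaining verifications to "just as in \propref{toren}", which you spell out by transcribing the $(\ov{J}:y)\cap I$ argument from the ring case. Your additional observation that $F_nQ_\kappa(M)=Q_\kappa(M)$ (discreteness from above) is consistent with the paper's \remref{remarkOre}-style remark following the proposition.
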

\begin{proof}
We just point out the separability, the other properties are easily checked just as in \propref{toren}. So look at $q \in \cap_d F_dQ_\kappa(M)$. For $d = -n -m -1$, $\forall \gamma,~ F_\gamma I q \subset F_{q - n -m -1}\ov{M}$ for some $I \in \mc{L}(\kappa)$. For $\gamma =n$, we obtain that $I q \subset F_{-m-1}\ov{M} = 0$, hence $q = 0$.
\end{proof}
\begin{remark} We actually prove in the above propositions that $Q_\kappa(R), Q_\kappa(M)$ are discrete filtered ($F_{-n-1}Q_\kappa(R) = 0, F_{-n-m-1}Q_\kappa(M) = 0$).
\end{remark}
Again looking at a finite ring-filtered ring $R$ we now consider a glider representation $M \subset \Omega$ where $M$ is a finite length fragment, say $M_m =0, M_{m-1} \neq 0$. Then the filtration we introduced on $RM$ starts at $F_{-m}RM = 0, F_{-m+1}RM = M_{m-1}$ and reaches $RM$ at $F_nRM$, so it is discrete of length $m+n$. Put $\Omega = RM$ and consider $Q_\kappa(\Omega)$ with the quotient filtration $FQ_\kappa(\Omega)$ as defined above and we define $Q_\kappa(M) = F_0Q_\kappa(\Omega)$ as earlier in the paper. Then $Q_\kappa(M)$ is a finite glider representation for $Q_\kappa(R)$ in $Q_\kappa(\Omega)$.\\

Now we specify the nature of $\kappa$ with respect to $S = F_0R$. We assume that $\kappa$ induces a localization $\kappa_S$ on $S$ via $I_1 \in \mc{L}(\kappa_S)$ if $I_1 = I \cap S$ for some $I \in \mc{L}(\kappa)$ and for any $J \in \mc{L}(\kappa),~R(J\cap S) \in \mc{L}(\kappa)$. We also assume that $S$ is (left) Noetherian (in particular this applies to a tower of coordinate rings $K[W] \subset \ldots \subset K[V]$), then $\kappa_S$ and also $\kappa$ are of finite type, that is, $\mc{L}(\kappa_S)$ and $\mc{L}(\kappa)$ have a filterbasis of finitely generated left ideals. We say that the glider $M \subset \Omega$ is $(\kappa,\kappa_S)$-orthogonal if $\Omega/F_d\Omega$ is $\kappa_S$-torsion free for all $d$.
\begin{proposition}
If $M$ is a finite length glider over the finite ring-filtered $R$, $M \subset \Omega =RM$ and $\kappa, \kappa_S$ are symmetric kernel functors on $R$-mod, resp. $S$-mod, such that $M$ is $(\kappa,\kappa_S)$-orthogonal, then the separated filtration on $Q_\kappa(\Omega)$ with $F_0Q_\kappa(M) = Q_\kappa(M)$ induces the filtration $F\ov{\Omega}$ on $\ov{\Omega}$. Similarly, if $R$ is $(\kappa,\kappa_S)$-orthogonal then $FQ_\kappa(R)$ induces $F\ov{R}$, i.e. $j: R \to Q_\kappa(R)$ is strict filtered.
\end{proposition}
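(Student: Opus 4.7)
The plan is to observe first that the orthogonality hypothesis, combined with the fact that the filtration $F\Omega$ is bounded below ($F_{-m}\Omega = 0$ for some $m$), actually forces $\kappa(\Omega) = 0$, so that $\ov{\Omega}$ collapses to $\Omega$. Indeed, the quotient $\Omega/F_{-m}\Omega = \Omega$ is $\kappa_S$-torsion free by assumption. On the other hand, if $x \in \kappa(\Omega)$, then $Ix = 0$ for some $I \in \mc{L}(\kappa)$, whence $(I\cap S)x = 0$. By the hypothesis relating $\kappa$ and $\kappa_S$, $I \cap S \in \mc{L}(\kappa_S)$, so $x$ is $\kappa_S$-torsion in $\Omega$, forcing $x = 0$. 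Hence $\ov{\Omega} = \Omega$ and $F\ov{\Omega} = F\Omega$.

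Once this identification is made, I would prove the inducing equality $F_nQ_\kappa(\Omega)\cap \Omega = F_n\Omega$ directly. The inclusion $F_n\Omega \subset F_nQ_\kappa(\Omega)$ is immediate from the definition, taking $I = R \in \mc{L}(\kappa)$ and using $F_\gamma R \cdot F_n\Omega \subset F_{\gamma+n}\Omega$. For the converse, take $x \in \Omega$ with $x \in F_nQ_\kappa(\Omega)$. By definition there exists $I \in \mc{L}(\kappa)$ such that $F_\gamma I \cdot x \subset F_{\gamma+n}\Omega$ for every $\gamma \in \mathbb{Z}$. Specialising to $\gamma = 0$, and using the fact that the induced filtration on $I$ satisfies $F_0 I = I \cap F_0R = I \cap S$, one obtains $(I\cap S)x \subset F_n\Omega$. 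Thus the image of $x$ in $\Omega/F_n\Omega$ is annihilated by $I \cap S \in \mc{L}(\kappa_S)$, i.e.\ it lies in $\kappa_S(\Omega/F_n\Omega)$, which vanishes by the $(\kappa,\kappa_S)$-orthogonality hypothesis. Consequently $x \in F_n\Omega$.

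The second assertion, about $R$, is proved by the identical argument with $R$ replacing $\Omega$: the orthogonality of $R$ together with $F_{-1}R = 0$ yields $\kappa(R) = 0$, so $\ov{R} = R$, and the same trick of evaluating the defining condition of $F_nQ_\kappa(R)$ at $\gamma = 0$ reduces the problem to $\kappa_S$-torsion freeness of $R/F_nR$. This shows $F_nQ_\kappa(R)\cap R = F_nR$, which is exactly the statement that $j : R \to Q_\kappa(R)$ is a strict filtered morphism.

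There is no real obstacle: the proof essentially consists of a single clever substitution ($\gamma=0$) and the observation that a $\kappa_S$-torsion-free bottom of the filtration forces $\kappa(\Omega)$ (resp.\ $\kappa(R)$) to be trivial. The only place one needs to be careful is in invoking the precise way $\kappa$ induces $\kappa_S$, namely that intersections $I \cap S$ with $I \in \mc{L}(\kappa)$ belong to $\mc{L}(\kappa_S)$; this is guaranteed by the compatibility assumption between $\kappa$ and $\kappa_S$ made just before the proposition.
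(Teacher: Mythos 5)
Your proof is correct, but it takes a cleaner route than the paper's. Both arguments share the same pivotal step — specializing the defining condition $F_\gamma I\,x \subset F_{\gamma+n}\ov{\Omega}$ to $\gamma = 0$ to produce the inclusion $(I\cap S)x \subset F_n\ov{\Omega}$ with $I\cap S \in \mc{L}(\kappa_S)$, and then invoking the $(\kappa,\kappa_S)$-orthogonality of $\Omega/F_n\Omega$. The difference is how the two approaches handle the passage from $\ov{\Omega}=\Omega/\kappa(\Omega)$ back to $\Omega$. The paper picks a representative $q$ of $\ov{q}$ in $\Omega$, which yields $(I\cap S)q \subset F_d\Omega + \kappa(\Omega)$; the residual $\kappa(\Omega)$-error is then killed by using Noetherianity of $S$ (to write $I\cap S = \sum_j S i_j$ with finitely many generators) and symmetry of $\kappa_S$ (to show that the left ideal $\sum_j(S\cap J_j)i_j$ that annihilates $q$ modulo $F_d\Omega$ belongs to $\mc{L}(\kappa_S)$). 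You bypass all of this with the preliminary observation that, because the filtration on $\Omega$ is discrete and thus bottoms out at zero, the orthogonality hypothesis applied at the lowest degree already forces $\Omega$ itself to be $\kappa_S$-torsion free, and then $I\cap S\in\mc{L}(\kappa_S)$ immediately gives $\kappa(\Omega)\subset\kappa_S(\Omega)=0$. Once $\ov{\Omega}=\Omega$, there is no error term to control, and the Noetherian and symmetry hypotheses play no role. This is genuinely a simplification: you establish the same inducing equality $F_nQ_\kappa(\Omega)\cap\Omega = F_n\Omega$ (and likewise for $R$), but with fewer moving parts, and your argument makes visible that the symmetric/Noetherian assumptions in the statement are not actually needed for this particular proposition once orthogonality and discreteness of the filtration are in place.
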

\begin{proof}
We write the proof for $M, \Omega$, the proof for $R$ is similar. Take $\ov{q} \in F_dQ_\kappa(\Omega) \cap \ov{\Omega}$, i.e. there is an $I \in \mc{L}(\kappa)$ such that $F_\gamma I \ov{q} \subset F_{\gamma + d}\ov{\Omega}$ for all $\gamma$. In particular for $\gamma = 0$, we have that $(I \cap S)\ov{q} \subset F_d\ov{\Omega}$ with $I \cap S \in \mc{L}(\kappa_S)$. Pick a representative $q \in \Omega$ for $\ov{q}$, then $(I \cap S)q \subset F_d\Omega + \kappa_S(\Omega)$. Since $S$ is Noetherian, $I \cap S$ is $\sum_{j=1}^tSi_j$ for some finite $i_j \in I$. Then $i_jq \in F_d\Omega + \kappa(\Omega)$, say $i_jq = f_{d,j} + t_j$. Take $J_i \in \mc{L}(\kappa)$ such that $J_it_i = 0$, then $(S \cap J_i)i_jq \subset F_d \Omega$ and it follows that $(\sum (S \cap J_i)i_j)q \subset F_d\Omega$. Since $\kappa_S$ is symmetric, $\sum(S \cap J_i)i_j in \mc{L}(\kappa_S)$. So if $\Omega/F_d\Omega$ is $\kappa_S$-torsion free, $q \in F_d\Omega$ and $\ov{q} \in F_d\ov{\Omega}$. Thus $\ov{\Omega} \to Q_\kappa(\Omega)$ is strict and $j_\Omega \to Q_\kappa(\Omega)$ is strict too. In particular, the fragment structure of $F_0Q_\kappa(\Omega) =Q_\kappa(M)$ induces the chain of the fragment $M$.
\end{proof}
\begin{corollary}
Let $K[W] \subset \ldots \subset K[V_i] \subset \ldots \subset K[V] = K[V_n]$ be a tower of coordinate rings for a chain of varieties $V \to \cdots \to V_i \to \cdots \to W$ and $K(V_i) \cap K[V] = K[V_i]$, say all $K[V_i]$ are Noetherian domains, then every $\kappa$ inducing $\kappa_W$ as before is such that $K[V]$ is $(\kappa,\kappa_W)$-orthogonal, hence the discrete quotient filtration on $Q_\kappa(K[V])$ induces the finite ring filtration on $K[V]$.
\end{corollary}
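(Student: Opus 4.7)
The plan is to reduce the corollary to the preceding proposition by verifying its single hypothesis: that $K[V]$ is $(\kappa,\kappa_W)$-orthogonal. With $R = \Omega = K[V]$ and $F_dR = K[V_d]$, orthogonality amounts to showing that $K[V]/K[V_d]$ is $\kappa_W$-torsion free for every $d$; equivalently, whenever $f \in K[V]$ satisfies $I_1 f \subset K[V_d]$ for some $I_1 \in \mc{L}(\kappa_W)$, then $f \in K[V_d]$.

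The key step is a one-line fraction-field argument. I would pick any nonzero $i \in I_1$, which exists because $K[W]$ is a nonzero integral domain and we may exclude the trivial kernel functor (in which $0 \in \mc{L}(\kappa_W)$ and every module is torsion). Then $i \in K[W] \subset K[V_d]$ and $if \in K[V_d]$, so inside the fraction field $K(V_d)$ of the domain $K[V_d]$ we have $f = (if)/i \in K(V_d)$. Combined with $f \in K[V]$, the standing hypothesis $K(V_d) \cap K[V] = K[V_d]$ forces $f \in K[V_d]$, and $(\kappa,\kappa_W)$-orthogonality is established.

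Applying the preceding proposition now yields the discrete quotient filtration $FQ_\kappa(K[V])$ together with the fact that $j_\kappa \colon K[V] \to Q_\kappa(K[V])$ is a strict filtered morphism onto $\ov{K[V]} = K[V]/\kappa(K[V])$. To identify the filtration induced on $K[V]$ with the original finite ring filtration it remains only to note that $\kappa(K[V]) = 0$: indeed, $\kappa(K[V])$ is an ideal of the domain $K[V]$, hence either $0$ or all of $K[V]$, and the latter would force $0 \in \mc{L}(\kappa)$ and hence $\kappa$ to be trivial, contradicting that it induces the non-trivial $\kappa_W$. Thus $\ov{K[V]} = K[V]$ and its induced filtration is $F_dK[V] = K[V_d]$.

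The main ``obstacle'' is conceptual rather than technical: recognizing that the hypothesis $K(V_i) \cap K[V] = K[V_i]$ is engineered precisely so that any $\kappa_W$-torsion in the quotient $K[V]/K[V_d]$ is witnessed by a nonzero element of $K[W]$ and therefore detected in $K(V_d)$, where it is immediately killed by the intersection condition. Once this is noticed, everything else is bookkeeping from the preceding proposition.
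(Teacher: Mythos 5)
Your proof is correct and follows the same route as the paper: the paper's own argument takes $z \in K[V]$ with $(I \cap K[W])z \subset K[V_i]$, picks $a \in I \cap K[W]$ so that $az \in K[V_i]$ forces $z \in K(V_i)$, and concludes $z \in K(V_i) \cap K[V] = K[V_i]$ --- exactly your fraction-field argument. You additionally make explicit two points the paper leaves implicit, namely that $\kappa_W$ (hence $\kappa$) must be nontrivial so a nonzero element of $I_1$ exists, and that $\kappa(K[V]) = 0$ since $K[V]$ is a domain, which is what turns the proposition's conclusion about $\overline{K[V]}$ into a statement about $K[V]$ itself.
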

\begin{proof}
Let $z \in K[V]$ be such that $(I \cap K[W])z \subset K[V_i]$ for some $i < n$. Then for some $a \in I \cap K[W]$ we have that $az \in K[V_i]$ or that $z \in K(V_i)$. Hence $z \in K(V_i) \cap K[V] = K[V_i]$.
\end{proof}
\begin{remark} 
The condition concerning the function fields of the varieties rules out any birationalities in the chain of varieties. 
\end{remark}
\section*{Acknowledgement}
Part of this research was done while the first author visited the Department of Mathematics at Fudan University.
The first author thanks Quanshui Wu for his warm hospitality.

\end{document}